\documentclass{article}

\usepackage[utf8]{inputenc}
\usepackage{a4wide}
% AMS packages:
\usepackage{amsmath, amsthm, amsfonts,amssymb}
\usepackage{bm}
\usepackage{fancyhdr}
\usepackage{authblk}
\usepackage{graphicx, color}
\usepackage{subfigure}
\usepackage{epstopdf}
\usepackage{float}
\usepackage{hyperref}
\hypersetup{colorlinks,linkcolor={blue},citecolor={blue},urlcolor={red}}  

\usepackage[margin=1in]{geometry}
\makeatletter

\flushbottom

\usepackage{color}

\oddsidemargin=-0.1in
\evensidemargin=-0.1in
\textwidth=6.8in

\usepackage{xcolor}
\usepackage{mathrsfs}
\usepackage{stmaryrd} 
%-----------------------------------------------------------------
\theoremstyle{plain}
\newtheorem{thm}{Theorem}[section]
\newtheorem{cor}[thm]{Corollary}
\newtheorem{lem}[thm]{Lemma}

\newtheorem{prop}[thm]{Proposition}
\newtheorem{defn}[thm]{Definition}
\newtheorem{assum}[thm]{Assumption}
\newtheorem{rem}[thm]{Remark}

\newcommand{\dd}{\mathrm{d}}
\newcommand{\ii}{\mathrm{i}}
\newcommand{\rO}{\mathrm{O}}
\newcommand{\bv}{\mathbf{v}}
\newcommand{\bw}{\mathbf{w}}

%\tikzset{
%  mynode/.style={fill,circle,inner sep=2pt,outer sep=0pt}
%}

%\renewcommand\theequation{\thesection.\arabic{equation}}
\numberwithin{equation}{section}

% Shortcuts.
% One can define new commands to shorten frequently used
% constructions. As an example, this defines the R and Z used
% for the real and integer numbers.
%-----------------------------------------------------------------
%\def\RR{\mathbb{R}}
%\def\ZZ{\mathbb{Z}}

%\tableofcontents
% Similarly, one can define commands that take arguments. In this
% example we define a command for the absolute value.
% -----------------------------------------------------------------
%\newcommand{\abs}[1]{\left\vert#1\right\vert}

% Operators
% New operators must defined as such to have them typeset
% correctly. As an example we define the Jacobian:
% -----------------------------------------------------------------
%\DeclareMathOperator{\Jac}{Jac}
\date{}
%-----------------------------------------------------------------
\begin{document}

\title{Global and local CLTs for linear spectral statistics of general sample covariance matrices when the dimension is much larger than the sample size with applications}

\author[1]{Xiucai Ding \thanks{E-mail: xcading@ucdavis.edu. }}
\author[1]{Zhenggang Wang \thanks{Email: zggwang@ucdavis.edu}}
\affil[1]{Department of Statistics, University of California, Davis}

\maketitle

%\copyrightnotice

\begin{abstract} 
In this paper, under the assumption that the dimension is much larger than the sample size, i.e., $p \asymp n^{\alpha}, \alpha>1,$ we consider the (unnormalized) sample covariance matrices  $Q = \Sigma^{1/2} XX^*\Sigma^{1/2}$, where $X=(x_{ij})$ is a $p \times n$ random matrix with centered i.i.d entries whose  variances are $(pn)^{-1/2}$, and $\Sigma$ is the deterministic population covariance matrix.  We establish two classes of central limit theorems (CLTs) for the linear spectral statistics (LSS) for $Q,$ the global CLTs on the macroscopic scales  and the local CLTs on the mesoscopic scales. We prove that the LSS converge to some Gaussian processes whose mean and covariance functions depending on $\Sigma$, the ratio $p/n$ and the test functions, can be identified explicitly on both macroscopic and mesoscopic scales. We also show that even though the global CLTs depend on the fourth cumulant of $x_{ij},$ the local CLTs do not.  Based on these results, we propose two classes of statistics for testing the structures of $\Sigma,$ the global statistics and the local statistics, and analyze their superior power under general local alternatives. To our best knowledge, the local LSS testing statistics which do not rely on the fourth moment of $x_{ij},$ is used for the first time in hypothesis testing while the literature mostly uses the global statistics and requires the prior knowledge of the fourth cumulant. Numerical simulations also confirm the accuracy and powerfulness of our proposed statistics and illustrate  better performance compared to the existing methods in the literature.

\end{abstract}

%\tableofcontents 

\section{Introduction}\label{sec_introduction}
Covariance matrices are fundamental objects in multivariate analysis \cite{anderson2009introduction} and high dimensional statistics \cite{yao2015large}. Many statistical methodologies and techniques rely on the knowledge of the structure of the covariance matrix, to
name but a few, Principal Component Analysis, Discriminant Analysis and
Cluster Analysis. In order to correctly apply these tools, people usually need to conduct hypothesis testings to understand the population covariance structures. An important problem of significant interest is to test
\begin{equation}\label{eq_generaltesting}
\mathbf{H}_0: \Sigma=\Sigma_0,
\end{equation}
where $\Sigma$ is the population covariance matrix and $\Sigma_0$ is some pre-given positive definite matrix. Even though many classic statistics including the likelihood ratio statistic and Nagao statistic \cite{anderson2009introduction,Nagao1973} are useful in the setting when the dimension $p$ is fixed and the sample size goes to infinity, they lose their validity when $p$ diverges with $n.$ To overcome the curse of high dimensionality, many modified or novel statistics have been proposed in the last decades, to name but a few \cite{CaiMa2013,ZhangHuBai2019,Chen2020,ZBY,Mao2017,Butucea2016,BaiJiangYaoZheng2009,Srivastava2005,JiangYang2013,LedoitWolf2002,QiuPreprint,Ahmad2015,Fisher2012,Srivastava2011,Chen2010}. For a comprehensive review, we refer the readers to \cite{Cai2017}. 

We point out that the aforementioned statistics can be roughly divided into two classes. The first classes of the statistics rely on some $U$-statistics like \cite{CaiMa2013,Ahmad2015,Chen2010,Butucea2016,Mao2017}. Even though $U$-statistics can be made dimension-free, there exist several disadvantages. First, it is computationally expensive. Second, it usually requires the prior knowledge of the fourth cumulant of the random variables. Third, it usually needs a relatively  strong alternative to make the statistics powerful. The second classes of the statistics utilize the linear spectral statistics (LSS) of some sample covariance matrices like \cite{Fisher2012, Chen2020, QiuPreprint,ZhangHuBai2019,Srivastava2011,JiangYang2013,ZBY,BaiJiangYaoZheng2009}. The statistics' straightforward constructions make it highly computationally efficient. However, all the existing literature requires the prior knowledge of the fourth cumulant of the random vectors and most of the existing literature focuses on a specific regime that $p$ and $n$ are comparably large and do not apply to the setting when $p$ is much larger than $n$.  

In this paper, we aim to propose some LSS based statistics to simultaneously  address several aforementioned challenges. First, the proposed statistics are accurate in the regime that $p$ is much large than $n$ in the sense that  
\begin{equation}\label{eq_ratioassumption}
p \asymp n^{\alpha}, \  \text{for some constant} \ 1<\alpha<\infty.  
\end{equation}
Second, the proposed statistics do not necessarily rely on the cumulants of the random vectors. Finally, the proposed statistics should be powerful under weak local alternatives.

\subsection{Some related results on LSS of sample covariance matrices}
 
In this section, we first pause to give a brief review of the literature on results related to linear spectral statistics especially in the context of sample covariance matrices. In the literature of random matrix theory, two types of CLTs for LSS have been established: the global (a.k.a. macroscopic) CLTs and the local (a.k.a  mesoscopic) CLTs. The global CLTs are computed on the global scales so that all the eigenvalues will be utilized, while the local CLTs are calculated on the local scales that only  a relatively small part of the eigenvalues will be counted. 

Most of the results are established assuming that $p$ and $n$ are comparably large.  On the global scales, the CLTs for linear eigenvalue statistics were first studied in \cite{JONSSON19821} for Wishart matrices. Later on, the CLTs for linear eigenvalues statistics with analytic test functions for general sample covariance matrices were studied in \cite{Bai2004}. Then the regularity conditions on the test functions were weakened under various settings in \cite{Bai2010,Lytova2009b,Shcherbina2011,Najim2016}. The ideas and results were also applied to study various hypothesis testing problems in \cite{Li2016,Zhang2022,ZhangHuBai2019,Chen2020,Li2014,Bodnar2019, Jiang2016,BaiJiangYaoZheng2009,JiangYang2013,Debapratimma2022,Li2018,Zheng2017,Zheng2012,ZBY,Hu2019,Wang2014,LiYinZheng2021,Zou2022}. We point out that  the ideas have also been borrowed to study other types of random matrices related to sample covariance  matrices, for example, separable sample covariance matrices \cite{Bai2019,LiYao2018}, sample correlation matrices \cite{Yin2023,ZhengZhu2019,Gao2017} and Kendall's tau correlation matrices \cite{LiWangLi2021}. Furthermore, in addition to the eigenvalue LSS, the eigenvector LSS of the sample covariance matrices have also been studied in \cite{BMP, DT, xia2015functional,Yang2020}. Much less work has been done on the local scales. To our best knowledge, even though lots of works have been done related Wigner matrices \cite{He2017,Landon2020,Li2021,LiXu2021}, fewer concern the sample covariance matrices except for \cite{Li2021,Yang2020}.

Under our concerned regime (\ref{eq_ratioassumption}), to our best awareness, the very limited existing works are all global CLTs \cite{BAObao, Chen2015,QiuPreprint}. Moreover, instead of analyzing the sample covariance matrices, they work with a normalized sample covariance matrices
\begin{equation}\label{eq)A} 
A=\mathsf{c}_1 X^* \Sigma X-\mathsf{c}_2\sqrt{p/n} I,
\end{equation}
for some constants $\mathsf{c}_1$ and $\mathsf{c}_2.$ On the local scales, there does not exist any work. Motivated by these, our current work aims to fill these gaps by establishing both global and local CLTs under the setup (\ref{eq_ratioassumption}) considering the following more standard (unnormalized) sample covariance matrix 
\begin{equation}\label{eq_samplecovaraincematrix}
Q=\Sigma^{1/2} XX^* \Sigma^{1/2},
\end{equation} 	
where $X=(x_{ij})$ is a $p \times n$ random matrix with centered i.i.d entries whose  variances are $(pn)^{-1/2}$, and $\Sigma$ is the deterministic population covariance matrix.

\subsection{An overview of our results and contributions}
In this section, we provide a rough overview of our results and contributions. One purpose of this paper is to generalize both the global and local CLTs for $p \asymp n$ to the ultra high dimensional setting (\ref{eq_generaltesting}). More concretely, we will study the LSS for the sample covariance matrix  $Q$ in (\ref{eq_samplecovaraincematrix}) on both the global scales and local scales. We first show that the limiting ESD (LSD) of $Q$ still follows a deformed Marchenco-Pastur (MP) law as in the spirit of \cite{LL} which only handles $\Sigma=I.$ Even though the edges of the LSD diverge, it is supported on a single interval and the length is bounded once $\Sigma$ is a bounded positive definite matrix; see Lemma \ref{lem_proofgloballawresult}.  This is quite different from the comparable setting $p \asymp n$ that  people usually need additional regularity conditions on $\Sigma$ and the LSD may support on several disjoint intervals \cite{AILL}.

For CLTs on the global scales, we prove that the joint distribution of LSS indexed by different test functions converge to some Gaussian processes whose mean and covariance functions can be identified explicitly and depend on the fourth cumulant of $x_{ij}$; see Theorem \ref{thm_mainone}. The literature mostly focuses on studying CLTs under the regime $p \asymp n$ with stronger assumptions on the test functions, for example see \cite{yao2015large} for a review. The only exceptions are \cite{BAObao,Chen2015,QiuPreprint} where the authors focus on a normalized sample covariance matrix in (\ref{eq)A}).  First, the LSD of $A$ follows Wigner's semicircle law instead of MP law. Second, to facilitate statistical applications, they usually require stronger assumptions on $\alpha$ or $\Sigma$. For example, \cite{BAObao,Chen2015} assumes $\Sigma=I$ and \cite{QiuPreprint} needs $\alpha \geq 2.$  For CLTs on the local scales, we prove that the joint distribution of the LSS is also Gaussian. However, the mean and covariance functions do not depend on the fourth cumulant of $x_{ij};$ see Theorem \ref{thm_maintwo}. On the local scales, the most relevant work in the literature is \cite{Li2021} which only handles a single test function and requires $p \asymp n.$ To our best knowledge, it is the first time that the local CLTs are established under the regime (\ref{eq_ratioassumption}) and for multiple functions.  
 
To ease applications and numerical calculations, we further simplify the formulas of the mean and covariance functions for some commonly used test functions in Corollaries \ref{cor_globalexamples} and \ref{cor_localexample}.  
The theoretical results are not only interesting and natural on their own, they are also highly motivated by the statistical inference problem (\ref{eq_generaltesting}). In the related literature, people usually use global CLTs to construct statistics to test (\ref{eq_generaltesting}), see \cite{yao2015large} for the $p \asymp n$ setting and \cite{QiuPreprint} for (\ref{eq_ratioassumption}) with $\alpha \geq 2.$ A disadvantage is that people need to know the fourth cumulant of $x_{ij}$ in order to apply these tests. Motivated by this challenge, we propose two classes of testing statistics, global statistics and local statistics, in Section \ref{sec_stattheory} where the local testing statistics are moment free in the sense that it does not depend on fourth cumulant of $x_{ij}$. These statistics are not only accurate as in Corollary \ref{coro_statisticalapplications} but also powerful as in Corollary \ref{cor_poweranalysis} under weak local alternatives.

Finally, our proof strategies rely on the devices of cumulant expansions, Helffer-Sj{\" o}strand formula and Wick's probability theorem which follow and generalize those of \cite{Li2021, Yang2020}. We refer the readers to Section \ref{sec_examproofsrategy} for more details.  We point out that the key ingredients are the local laws (c.f. Theorem \ref{thm_locallaw}) for $Q$ under (\ref{eq_ratioassumption}) which is of self-interest and can be used to study other problems like spiked sample covariance matrices in the regime (\ref{eq_ratioassumption}).

The rest  of the paper is organized as follows. In Section \ref{sec_modelandlocalaw}, we introduce the model and the asymptotic global laws. In Section \ref{sec_mainresultandlocalaw}, we present our main results and outline the strategies and for the proofs.  Section \ref{sec_statapplication}  discusses the statistical applications. The remainder of the paper is devoted to the details of the proof. In Section \ref{sec_preliminary}, we summarize and provide some of the basic tools which will be used in the proofs. In Section \ref{sec_CLTresolvent}, we prove the CLTs for the resolvents which is an intermediate result for proving our final CLTs. In Section \ref{sec_proofofmainresult}, we prove the main theorems and in Section \ref{sec_subcorollaryproof}, we prove the corollaries. Finally, Appendix \ref{appendix_proof54} is devoted to the proof of the local laws and Appendix \ref{sec_appedixb} proves other auxiliary lemmas. \vspace*{0.1in}

\noindent {\bf Conventions and notations.} Throughout the paper, we always use $n$ as the fundamental large parameter, and write $p \equiv p(n)$. For an $n \times n$ symmetric matrix $H$, its empirical spectral distribution (ESD) is defined as $\mu_H:=\frac{1}{n} \sum_{i=1}^n \delta_{\lambda_i(H)}, $ where $\delta$ is the Dirac's delta function and $\{\lambda_i(H)\}$ are the eigenvalues of $H.$ For any probability measure $\nu$ defined on $\mathbb{R},$ its Stieltjes transform is defined as 
\begin{equation*}
m_{\nu}(z)=\int \frac{1}{x-z} \mathrm{d} \nu (x), 
\end{equation*}
where $z \in \mathbb{C}_+:=\{E+\mathrm{i} \eta: E \in \mathbb{R}, \eta>0\}.$
\vspace{5pt}

\noindent {\bf Acknowledgments.} The authors are supported by NSF-DMS  2113489 and a grant from UC Davis COVID-19
Research Accelerator Funding Track. 
\section{The model and asymptotic laws}\label{sec_modelandlocalaw}

%\subsection{Definition of the model and some notations}

We consider the sample covariance matrix $Q$ in (\ref{eq_samplecovaraincematrix}) and its eigenvalues $\lambda_1 \geq \lambda_2 \geq \cdots \geq \lambda_n>0=\lambda_{n+1}=\cdots=\lambda_p. $ Throughout the paper, to avoid repetition, we impose the following assumptions on $\Sigma$ and $X.$ 
\begin{assum}\label{assum:XSigma} We assume that the following conditions are satisfied:
\begin{enumerate}
\item[](1). For the $p \times n$ matrix $X=(x_{ij}),  1 \leq i \leq p, 1 \leq j \leq n,$ we assume that they are i.i.d. real random variables satisfying
\begin{equation}\label{eq:normalization}
	\mathbb{E} x_{ij}=0, \quad \mathbb{E} x_{i j}^{2}=\frac{1}{\sqrt{pn}}.
\end{equation}  
Moreover, for all $k \in \mathbb{N},$  we assume that there exists some constant $C_{k}$ such that
\begin{equation}\label{eq_momentassumption}
\mathbb{E}\left|(n p)^{1 / 4} x_{i j}\right|^{k} \leqslant C_{k} .
\end{equation}
\item[] (2). For the $p \times p$ deterministic population covariance matrix $\Sigma,$ we assume it is diagonal such that $\Sigma=\operatorname{diag}\left( \sigma_1, \sigma_2, \cdots, \sigma_p \right).$ Moreover, for some small constant $0<\tau<1,$
\begin{equation*}
\tau \leq \sigma_p \leq \sigma_{p-1} \leq \cdots \leq \sigma_1 \leq \tau^{-1}. 
\end{equation*}
\item[] (3). For the dimensionality, we assume that (\ref{eq_ratioassumption}) holds. 
\end{enumerate}
\end{assum}

\begin{rem}
We provide some remarks on Assumption \ref{assum:XSigma}. First, in (\ref{eq:normalization}), inspired by \cite{LL}, we choose the scaling $(pn)^{-1/2}$ to deal with the regime (\ref{eq_ratioassumption}) that $p$ is much larger than $n.$ From the viewpoint of multivariate statistics, it is different from the $n^{-1}$ scaling used in the low dimensional regime that $\alpha<1$ \cite{anderson2009introduction} and the mean field regime $\alpha=1$ \cite{yao2015large}. Second, motivated by the statistical applications, we mainly focus on the regime (\ref{eq_ratioassumption}) when $\alpha>1$. However, our techniques and arguments can also be applied to the setting $0<\alpha \leq 1.$ We will make this clear in our discussions from time to time. Third,  the moment assumption (\ref{eq_momentassumption}) holds for all $k \in \mathbb{N}$ can be easily generalized using the comparison argument as in \cite{erdHos2017dynamical}. Since this is not the focus of the paper, we will not pursue these generalizations. Finally, the diagonal assumption of $\Sigma,$ on the one hand, can improve the interpretability of our results in the context of statistical inference. On the other hand, it can simplify our technical arguments. Such an assumption can be removed with additional technical efforts; see Remark \ref{rem_mainresultone} for more details.     
\end{rem}

In what follows, we first summarize the results on the asymptotic law of the ESD of $Q$. Since its nonzero eigenvalues are identical to those of $\mathcal{Q}=X^* \Sigma X $, it is sufficient to focus on the asymptotic deterministic equivalent of ESD of $\mathcal Q,$ denoted as $\varrho,$ which can be best formulated by its Stieltjes transform. Denote 
\begin{equation}\label{eq_aspecratiodefinition}
\phi \equiv \phi_n:=\frac{p}{n},
\end{equation}
and $\pi$ be the ESD of $\Sigma,$ i.e., $\pi:=p^{-1} \sum_{i=1}^p  \delta_{\sigma_i}.$ 
\begin{lem}\label{lem_asymptoticlaw}
	Let $\pi$ be a compactly supported probability measure on $\mathbb{R}$, and let $\phi>0$. 
	Then for each $z\in\mathbb{C}_+,$ there exists a unique $m \equiv m(z) \in \mathbb{C}_+$ satisfying 
		\begin{equation}\label{eq:lsd}
			\frac{1}{m}=-z+\int\frac{\phi}{\phi^{1/2}x^{-1}+m}\pi(\dd x).
	\end{equation}
%			\begin{equation}\label{eq:lsd}
%			\frac{1}{m}=-z+\phi^{1/2}\int\frac{x}{1+\phi^{-1/2}mx}\pi(dx)=-z+\int\frac{\phi}{\phi^{1/2}x^{-1}+m}\pi(dx).
%	\end{equation}
	Moreover, $m(z)$ is the Stieltjes transform of a probability measure $\varrho$ with support in $[0,\infty)$.
\end{lem}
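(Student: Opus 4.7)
My plan is to prove the lemma in three stages: uniqueness of $m(z)\in\mathbb{C}_+$ for fixed $z$, existence together with analyticity in $z$, and the Stieltjes-transform plus support properties. The workhorse is the imaginary-part identity obtained by separating real and imaginary parts in (\ref{eq:lsd}) using $\operatorname{Im}(1/m) = -\operatorname{Im} m/|m|^2$ together with the reality of $\phi^{1/2}x^{-1}$, namely
\begin{equation*}
\int \frac{\phi}{|\phi^{1/2}x^{-1}+m|^2}\,\pi(\dd x) \;=\; \frac{1}{|m|^2} - \frac{\operatorname{Im} z}{\operatorname{Im} m} \;<\; \frac{1}{|m|^2},
\end{equation*}
valid for any $\mathbb{C}_+$-solution $m$. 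This strict inequality is the key leverage in all three stages.

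For uniqueness, I assume $m_1, m_2 \in \mathbb{C}_+$ both satisfy (\ref{eq:lsd}). Subtracting the two equations and dividing by $m_1 - m_2$ (on the hypothesis $m_1 \neq m_2$) yields
\begin{equation*}
\frac{1}{m_1 m_2} \;=\; \int \frac{\phi}{(\phi^{1/2}x^{-1}+m_1)(\phi^{1/2}x^{-1}+m_2)}\,\pi(\dd x).
\end{equation*}
Taking moduli and applying Cauchy--Schwarz with respect to $\pi$, then inserting the strict bound from the imaginary-part identity in each of the two factors, produces $1/(|m_1||m_2|) < 1/(|m_1||m_2|)$, a contradiction; hence $m_1 = m_2$.

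For existence and analyticity, I proceed by continuation. At a base point $z_0=\ii y$ with $y$ large, the map $T(m) := \bigl[-z_0 + \int \phi/(\phi^{1/2}x^{-1}+m)\,\pi(\dd x)\bigr]^{-1}$ is a contraction on a small disk centered at $-1/z_0$ (exploiting the compact support of $\pi$ bounded away from $0$), so Banach's fixed point theorem delivers a local analytic solution. I then continue along arbitrary curves in $\mathbb{C}_+$: with $F(m,z) := 1/m + z - \int \phi/(\phi^{1/2}x^{-1}+m)\,\pi(\dd x)$, the uniqueness computation shows $\partial_m F \neq 0$ at every $\mathbb{C}_+$-solution, so the implicit function theorem provides the extension. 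The bound $\operatorname{Im} m \geq |m|^2 \operatorname{Im} z$ (which yields $|m| \leq 1/\operatorname{Im} z$) together with a matching lower bound $|m| \geq c(z) > 0$, obtained by letting $|m| \to 0$ directly in (\ref{eq:lsd}) and noting the right-hand side stays bounded, keeps $m$ inside a compact subset of $\mathbb{C}_+$ along every continuation path; uniqueness then glues the local branches into a single analytic function $m : \mathbb{C}_+ \to \mathbb{C}_+$.

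That $m$ is the Stieltjes transform of a probability measure $\varrho$ follows from Nevanlinna's representation once I verify $-\ii y\, m(\ii y) \to 1$ as $y \to \infty$: since $\pi$ has compact support, $\int \phi/(\phi^{1/2}x^{-1}+m)\,\pi(\dd x) = \phi^{1/2} \int x\,\pi(\dd x) + O(m)$ for small $m$, so (\ref{eq:lsd}) gives $1/m = -z + O(1)$ and hence $m \sim -1/z$ at infinity. For the support property, I extend $m$ analytically across $(-\infty, 0)$: for each real $z < 0$, a monotone fixed-point argument on the interval $(0, 1/|z|)$ produces a real positive solution $\tilde m(z)$, which by Schwarz reflection and uniqueness in $\mathbb{C}_+$ must coincide with the boundary limit of $m$; thus $\operatorname{Im} m(E + \ii 0) = 0$ for $E < 0$, and the Stieltjes inversion formula yields $\varrho((-\infty, 0)) = 0$. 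The main anticipated obstacle is tracking the a priori bounds precisely enough in the continuation step to rule out degeneration onto $\mathbb{R}$ or at infinity along a continuation path; if this grows unwieldy, an alternative route is to work with finite-atomic approximations of $\pi$, where existence reduces to a polynomial equation, and pass to the limit via normal families together with the uniqueness obtained above.
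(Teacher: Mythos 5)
Your proof is correct and fills in precisely the standard argument that the paper simply delegates to \cite{AILL} and Silverstein's self-consistent-equation analysis: the imaginary-part identity driving the Cauchy--Schwarz uniqueness, the large-$\eta$ contraction with analytic continuation, the Nevanlinna check $-\ii y\,m(\ii y)\to 1$, and Stieltjes inversion combined with a real solution on $(-\infty,0)$ for the support statement. One small point worth tidying: the lemma only assumes $\pi$ compactly supported on $\mathbb{R}$, so the integrand should be read as $\phi x/(\phi^{1/2}+m x)$ (continuous through $x=0$), and the steps where you invoke $\operatorname{supp}\pi$ being bounded away from $0$ should be rephrased in that form --- though this is automatic once Assumption \ref{assum:XSigma} is in force.
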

\begin{proof} The proof follows from arguments similar to the well-known results as in \cite[Lemma 2.2]{AILL} or \cite[Section 5]{silverstein1995empirical}. The only difference is that we use the new scaling (\ref{eq:normalization}). We omit further details. 

\end{proof}

\begin{rem}\label{rem_selfconsistentequation} {\normalfont
First, we point out that the deterministic quantities $m(z)$ and its associated $\varrho$ depend implicitly on $n.$ Moreover, even though the current paper focuses on the regime (\ref{eq_ratioassumption}), 
the results of Lemma \ref{lem_asymptoticlaw} hold for all $0<\alpha<\infty$ in (\ref{eq_ratioassumption}). Second, the support of $\varrho$ generally depends on $n$ and even grows with $n.$ To see this, we consider the example when $\Sigma=I.$ In this setting, we find that (\ref{eq:lsd}) can be rewritten as 
$$
		m+\frac{1}{z+z \phi^{-1 / 2} m-\left(\phi^{1 / 2}-\phi^{-1 / 2}\right)}=0. 
		$$
		Solving the above equation with the branch cut that $m(z)$ is holomorphic in the upper half-plane and satisfies $m(z) \rightarrow 0$ as $z \rightarrow \infty$, we find that the unique solution of the above equation satisfying $\operatorname{Im} m(z)>0$ for $\operatorname{Im} z>0$ is 
			$$
	m(z)=\frac{\phi^{1 / 2}-\phi^{-1 / 2}-z+\mathrm{i} \sqrt{\left(z-\gamma_{-}\right)\left(\gamma_{+}-z\right)}}{2 \phi^{-1 / 2} z}.
	$$
	Using the inversion formula \cite{bai2010spectral}, we find that under (\ref{eq_ratioassumption}) 
		\begin{equation}\label{eq_explicitdensityfunction}
	\varrho(\mathrm{d} x):=\frac{\sqrt{\phi}}{2 \pi} \frac{\sqrt{\left[\left(x-\gamma_{-}\right)\left(\gamma_{+}-x\right)\right]_{+}}}{x} \mathrm{~d} x,
%		\varrho(\mathrm{d} x):=\frac{\sqrt{\phi}}{2 \pi} \frac{\sqrt{\left[\left(x-\gamma_{-}\right)\left(\gamma_{+}-x\right)\right]_{+}}}{x} \mathrm{~d} x+(1-\phi)_{+} \delta(\mathrm{d} x),
	\end{equation}
	where $\gamma_{\pm}:=\phi^{1/2}+\phi^{-1/2} \pm 2.$ These coincide with the results in equations (2.4)-(2.7) of \cite{LL}. 
	}
\end{rem}

According to Lemma \ref{lem_asymptoticlaw}, we find that $m \equiv m(z)$ can also be characterized as the unique solution of the equation
\begin{equation}\label{eq_otherformoflsd}
z=f(m), \quad \operatorname{Im} m>0,
\end{equation}
where we define
 \begin{equation}\label{eq:discrete}
		f(x):=-\frac{1}{x}+\frac{1}{n} \sum_{i=1}^p \frac{1}{x+s_i^{-1}}=-\frac{1}{x}+\frac{\phi^{1/2}}{p} \sum_{i=1}^p \frac{\sigma_i}{1+\phi^{-1/2}\sigma_ix},
	\end{equation}
	where $s_i=\phi^{-1/2}\sigma_i$. In what follows, we will show that the properties of $\varrho$ can be understood via the analysis of $f$. As in \cite{AILL}, we see that it is convenient to extend the domain of $f$ to the real projective line $\overline{\mathbb{R}}=\mathbb{R} \cup\{\infty\}$. Clearly, $f$ is smooth on the $p+1$ open intervals of $\overline{\mathbb{R}}$ defined through
$$
I_1:=\left(-s_1^{-1}, 0\right), \quad I_i:=\left(-s_i^{-1},-s_{i-1}^{-1}\right) \quad(i=2, \ldots, p), \quad I_0:=\overline{\mathbb{R}} \backslash \bigcup_{i=1}^p \bar{I}_i .
$$
\begin{lem}\label{lem_proofgloballawresult} Suppose Assumption \ref{assum:XSigma} holds, when $n$ is sufficiently large, for $f(x)$ defined in (\ref{eq:discrete}), there will be two critical points, denoted as $x_1$ and $x_{2},$ where $x_1 \in I_1$ and $x_{2} \in I_0.$ Moreover, define $\gamma_+:=f(x_1)$ and $\gamma_-:=f(x_2).$ We have that $\gamma_+ \geq \gamma_->0$ and 
\begin{equation*}
\operatorname{supp} \varrho \cap(0, \infty)=[\gamma_-, \gamma_+]. 
\end{equation*}   
Finally, we have that 
\begin{equation*}
\gamma_+-\gamma_-=\rO(1), \ \operatorname{and} \ \gamma_-, \gamma_+ \asymp \phi^{1/2}. 
\end{equation*}
\end{lem}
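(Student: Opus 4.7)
The plan is to reduce $f'(x)=0$ to $g(x)=1$, where
\[
g(x) := \frac{1}{n}\sum_{i=1}^{p}\left(\frac{x}{x+s_i^{-1}}\right)^{2},
\]
and analyze $g$ on the intervals $I_0,I_1,\dots,I_p$ to locate and count its roots. On $I_1=(-s_1^{-1},0)$, one computes $g'(x)=\frac{2}{n}\sum_i xs_i^{-1}/(x+s_i^{-1})^{3}<0$, so $g$ decreases from $+\infty$ (driven by the pole at $-s_1^{-1}$) to $0$ (at $0^{-}$), giving a unique $x_1$ with $g(x_1)=1$. On $(0,\infty)\subset I_0$ every summand $(x/(x+s_i^{-1}))^2$ increases strictly from $0$ to $1$, so $g$ increases from $0$ to $\phi$; since $\phi\gg 1$ by \eqref{eq_ratioassumption}, there is a unique $x_2>0$ with $g(x_2)=1$. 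On $(-\infty,-s_p^{-1})$, $(x/(x+s_j^{-1}))^{2}=(1-s_j^{-1}/x)^{-2}>1$ for every $j$ (since $-s_j^{-1}/x\in(0,1)$), whence $g>\phi>1$. On each $I_i$ with $2\le i\le p$, the two neighbor‐pole summands already dominate and force $g>1$ throughout $I_i$ by a direct two‐pole lower bound using $|x|\asymp\phi^{1/2}$; hence $x_1,x_2$ are the only critical points of $f$.

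For positivity, $\gamma_+=f(x_1)>0$ is immediate on $I_1$, where both $-1/x$ and each $1/(x+s_i^{-1})$ are positive; and $\gamma_-=f(x_2)>0$ follows because $f(x)\sim(\phi-1)/x>0$ as $x\to+\infty$ while $f\to-\infty$ at $0^{+}$, so $x_2$ is a strict maximum with positive value. The ordering $\gamma_+\ge\gamma_-$ follows from
\[
\gamma_+-\gamma_-=\left(\frac{1}{x_2}-\frac{1}{x_1}\right)+\frac{x_2-x_1}{n}\sum_{i=1}^{p}\frac{1}{(x_1+s_i^{-1})(x_2+s_i^{-1})},
\]
which is strictly positive since $x_1<0<x_2$ and all $x_{1,2}+s_i^{-1}>0$. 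For the size estimates, expanding in the regime $|x|\ll\phi^{1/2}$ gives $g(x)=x^{2}\int\sigma^{2}\,\dd\pi(\sigma)\,(1+O(|x|\phi^{-1/2}))$, and the uniform bound $\sigma\in[\tau,\tau^{-1}]$ from Assumption~\ref{assum:XSigma} forces $|x_1|,|x_2|=\Theta(1)$. Substituting back, $n^{-1}\sum_i(x+s_i^{-1})^{-1}=\phi^{1/2}\int\sigma\,\dd\pi+O(\phi^{-1/2})$, so $\gamma_\pm=\phi^{1/2}\int\sigma\,\dd\pi-1/x_{1,2}+O(\phi^{-1/2})\asymp\phi^{1/2}$ and $\gamma_+-\gamma_-=1/x_2-1/x_1+O(\phi^{-1/2})=O(1)$.

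Finally, $\operatorname{supp}\varrho\cap(0,\infty)=[\gamma_-,\gamma_+]$ follows from a standard Marchenko--Pastur--type support analysis in the spirit of \cite[Lemma~2.6]{AILL}: the density of $\varrho$ on $(0,\infty)$ is $\pi^{-1}\operatorname{Im} m(x+\mathrm{i}0)$, and the critical‐point structure above isolates exactly two real branches of $f^{-1}$---one on $(x_1,0)\subset I_1$ mapping onto $(\gamma_+,\infty)$ and one on $(0,x_2)\subset I_0$ mapping onto $(-\infty,\gamma_-)$---along which $m(z)$ extends continuously to $\mathbb{R}$; on the gap $(\gamma_-,\gamma_+)$ no real branch consistent with the selection rule $m(z)\to 0$ at $\infty$ exists, so $\operatorname{Im} m(x+\mathrm{i}0)>0$ there. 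The chief technical obstacle is the uniform exclusion of critical points on the narrow intervals $I_i$ for $i\ge 2$, which requires the quantitative use of $\phi\gg 1$ together with the $\sigma$‐bounds from Assumption~\ref{assum:XSigma}; once $x_1,x_2$ are isolated, the support statement and size estimates follow from classical branch‐cut arguments.
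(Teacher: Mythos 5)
Your route---reducing $f'(x)=0$ to $g(x)=1$ and arguing interval by interval---is genuinely different from the paper's proof, which imports the structural classification from \cite{AILL} ($|\mathcal{C}\cap I_i|\in\{0,2\}$ for $i\ge2$) and then excludes the nontrivial case not by estimating $g$ but by combining the identity $x_k=m(a_k+\mathrm{i}0)$ with the uniform bound $|m(z)|=\mathrm{O}(1)$ on $\mathbb{C}_+$ established independently in the proof of Lemma~\ref{lem:mphizabs}: any critical point in $I_2\cup\cdots\cup I_p$ would have $|x_k|\asymp\phi^{1/2}$, contradicting the bound on $m$. Your direct analysis avoids the prior bound on $m$ and can be made to work, but as written the key step has a gap.

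The ``two neighbor-pole'' lower bound does not force $g>1$ on $I_i$ for $i\ge2$, because $s_i^{-1}-s_{i-1}^{-1}$ need not be small. The two neighbor-pole summands $\frac{1}{n}\bigl(\frac{x}{x+s_{i-1}^{-1}}\bigr)^2+\frac{1}{n}\bigl(\frac{x}{x+s_i^{-1}}\bigr)^2$ are bounded below only by $\frac{8x^2}{n(s_i^{-1}-s_{i-1}^{-1})^2}$; when the interval width is of order $\phi^{1/2}$ (e.g.\ if $\pi$ is a two-atom measure, producing a single wide interval among $I_2,\dots,I_p$), $|x|\asymp\phi^{1/2}$ gives $\frac{8x^2}{n(s_i^{-1}-s_{i-1}^{-1})^2}\asymp n^{-1}\to0$, so the two neighbor-pole terms alone contribute negligibly. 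What does close the argument is that \emph{all} $p$ summands are uniformly bounded below: for every $j$ and every $x\in I_i$ with $i\ge2$ one has $s_j^{-1}/|x|\in[\tau^2,\tau^{-2}]$ (from Assumption~\ref{assum:XSigma} and $|x|\in(s_{i-1}^{-1},s_i^{-1})$), and since $x<0$,
\[
\Bigl(\frac{x}{x+s_j^{-1}}\Bigr)^{2}=\Bigl(1-\frac{s_j^{-1}}{|x|}\Bigr)^{-2}\ge\frac{1}{(\tau^{-2}-1)^2},
\]
so $g(x)\ge\phi\,(\tau^{-2}-1)^{-2}>1$ once $\phi$ is large, which holds for $n$ large since $\phi\to\infty$ under (\ref{eq_ratioassumption}). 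Replacing your two neighbor-pole claim with this observation fixes the proof. A smaller point: your expansion of $n^{-1}\sum_i(x+s_i^{-1})^{-1}$ drops an $O(1)$ term $-x\int\sigma^2\,\dd\pi$; this does not affect $\gamma_\pm\asymp\phi^{1/2}$ or $\gamma_+-\gamma_-=\mathrm{O}(1)$, but your displayed formula for $\gamma_\pm$ is incomplete.
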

\begin{proof}
See Appendix \ref{appendix_sectionlemma25}. 
\end{proof}

\begin{rem}\label{rem_lemma25} {\normalfont
We provide two remarks regarding (\ref{eq_ratioassumption}). First, analogous results of Lemma \ref{lem_proofgloballawresult} for $\alpha=1$ in (\ref{eq_ratioassumption}) have been proved in Lemmas 2.4-2.6 of \cite{AILL}. Our results in Lemma \ref{lem_proofgloballawresult} demonstrate a structural difference on $\varrho$ between the setting $\alpha=1$ and $\alpha>1.$ Especially, when $\alpha=1,$ the function $f$ may have an even number of more than two critical points so that the support of $f$ may have several components as a union for several closed intervals; see Figure \ref{bulkillustration} for an illustration.  Second, as discussed in Remark \ref{rem_selfconsistentequation}, our proof of Lemma \ref{lem_proofgloballawresult} can be easily generalized to study the setting $\alpha<1.$ In this case, the results will be similar to those of $\alpha=1$ in \cite{AILL}. Especially, by introducing the multiset $\mathcal{C} \subset \overline{\mathbb{R}}$ of critical points of $f$, using the conventions that a nondegenerate critical point is counted once and a degenerate critical point twice, we can show that $\left|\mathcal{C} \cap I_0\right|=\left|\mathcal{C} \cap I_1\right|=1$ and $\left|\mathcal{C} \cap I_i\right| \in\{0,2\}$ for $i=2, \ldots, q.$ Consequently, we see that $|\mathcal{C}|=2 q$ is even. We denote by $x_1 \geqslant x_2 \geqslant \cdots \geqslant x_{2 q-1}$ the $2 q-1$ critical points in $I_1 \cup \cdots \cup I_p$, and by $x_{2 q}$ the unique critical point in $I_0$. For $k=1, \ldots, 2 q$ we define the critical values $a_k:=f\left(x_k\right)$. We can show that $a_1 \geqslant a_2 \geqslant \cdots \geqslant a_{2 q} \geq 0$ and $a_k=\rO(\phi^{1/2}+\phi^{-1/2}).$ Consequently, this leads to  $
	\operatorname{supp} \varrho \cap(0, \infty)=\left(\bigcup_{k=1}^q\left[a_{2 k}, a_{2 k-1}\right]\right) \cap(0, \infty) . $ For more details, we refer the readers to Appendix \ref{appendix_sectionlemma25}. }
\end{rem}

\begin{figure}[!ht]
\centering
\subfigure[$\phi=0.6$]{\includegraphics[width=7cm,height=5cm]{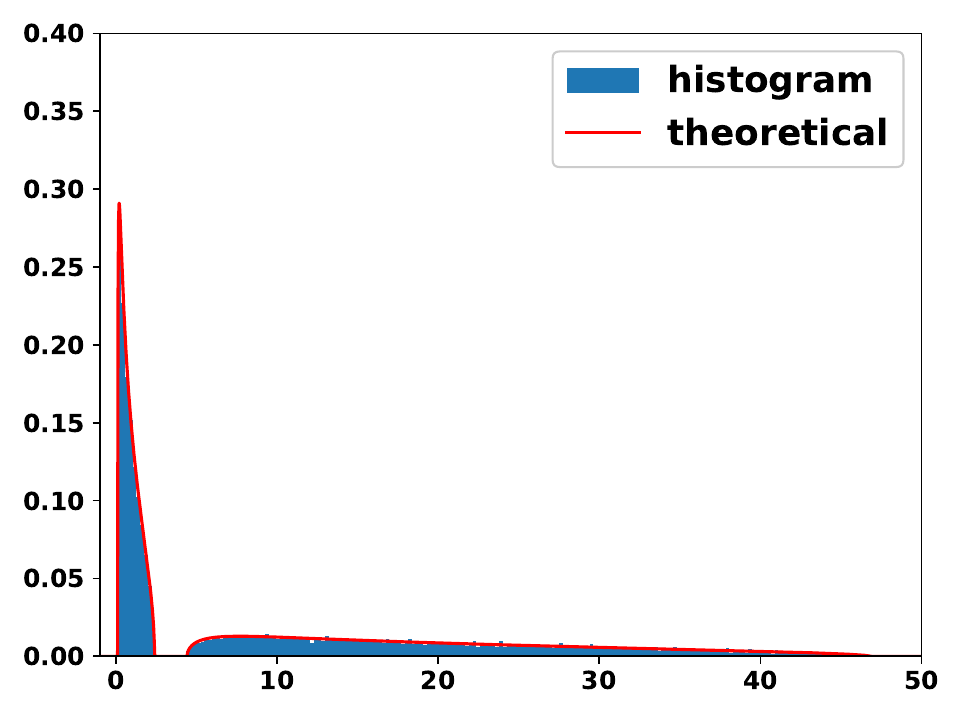}}
%\hfill
\subfigure[$\phi=100$]{\includegraphics[width=7cm,height=5cm]{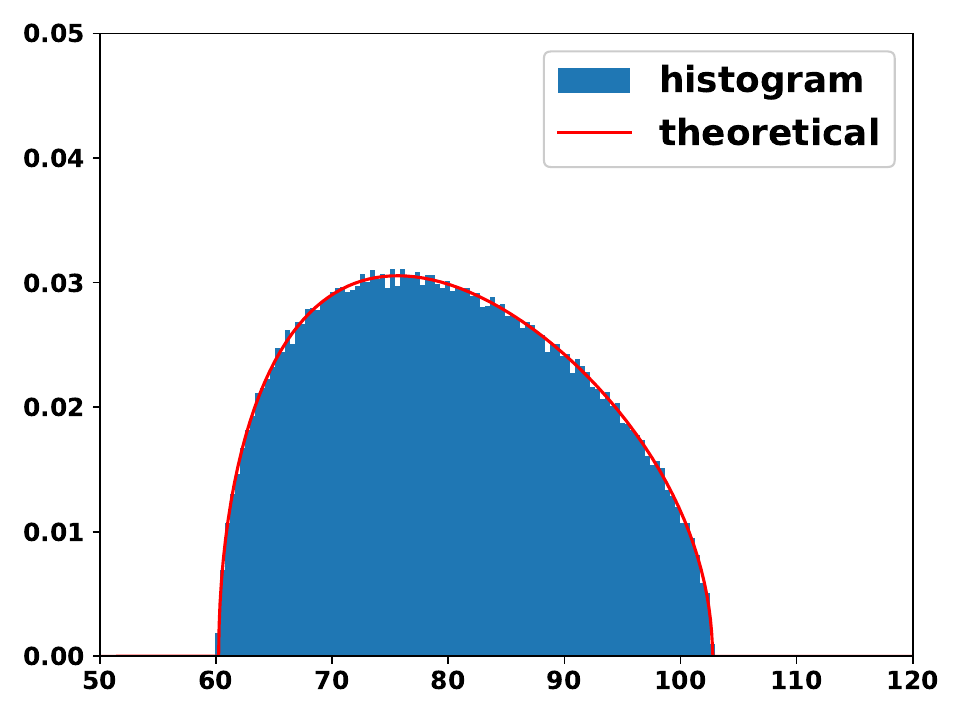}}
\caption{The densities and histograms for $\pi=0.5\delta_1+0.5 \delta_{15}$ for $\phi=0.6$ and $\phi=100.$ The densities are computed using the Stieltjes transform via (\ref{eq:lsd}) with the inversion formula of the Stieltjes transform that $\rho(x)=\lim_{\eta \downarrow 0} \operatorname{Im} m(x+\mathrm{i} \eta)$ and the histograms are obtained using (\ref{eq_samplecovaraincematrix}) with $x_{ij} \sim \mathcal{N}(0, (pn)^{-1/2}), n=800$ via $500$ Monte Carlo simulations. We can see that when $\phi=0.6$ is smaller, we can have two separate bulk components in the support while $\phi=100$ is larger we only have one bulk component. }
\label{bulkillustration}
\end{figure}

\begin{rem}
{\normalfont
Two remarks are in order. First, according to Lemma \ref{lem_proofgloballawresult}, the edges of the support of $\varrho$ can be determined if $\Sigma$ is known. For example, when the ESD of $\Sigma$ follows $\pi=\frac{1}{2}\delta_a+\frac{1}{2}\delta_b$ for some positive constants $a,b>0,$ it is easy to see that  $$\gamma_\pm=\frac{(a+b)\phi^{1/2}}{2} \pm \sqrt{2(a^2+b^2)}.$$ Second, as discussed in Section 3.2 of \cite{bloemendal2016principal}, after properly being scaled and centered, the asymptotic law obtained from Lemma \ref{lem_asymptoticlaw} has a natural connection with the Wigner's semicircle law. In fact, using Lemma \ref{lem:mphizabs} below and a straightforward expansion, we see from (\ref{eq:lsd}) that   
\begin{equation*}
	\begin{aligned}
		\frac{1}{m}=-z+\phi^{1/2}\int x \pi (\dd x)-\int x^2 \pi(\dd x)m+\mathrm{O}(\phi^{-1/2}).
	\end{aligned}
\end{equation*}
Especially, when $\pi(x)=\delta_1,$ we see that 
\begin{equation*}
\frac{1}{m}+m+z-(\phi^{1/2}+\rO(\phi^{-1/2}))=0.
\end{equation*}
This is nothing but Wigner's semicircle law centered at $\phi^{1/2}+\rO(\phi^{-1/2})$ which recovers the discussions of equations (3.16) and (3.17) of \cite{bloemendal2016principal} at the cost of $\rO(\phi^{-1/2}).$

%	It's worth noticing that as $\phi\rightarrow\infty$ we expect to see a transition from the MP distribution to the semicircle distribution. Starting from the equation for the Stietjes transform, note that $|m(z)|$ is bounded when $\phi>>1$, a expansion yield 
%\begin{equation}
%	\begin{aligned}
%		\frac{1}{m}&=-z+\sum_{i=1}^n \frac{\phi \pi\left(\left\{\bar{s}_i\right\}\right)}{m+\phi^{1/2}\bar{s}_i^{-1}}=-z+\sum_{i=1}^n \frac{\phi^{1/2} \pi\left(\left\{\bar{s}_i\right\}\right)}{\frac{m}{\phi^{1/2}}+\bar{s}_i^{-1}}\\ 
%		&=-z+\sum_{i=1}^n \frac{\phi^{1/2} \pi\left(\left\{\bar{s}_i\right\}\right)}{\bar{s}_i^{-1}}\left[1-\frac{m}{\bar{s}_i^{-1}\phi^{1/2}}+\frac{m^2}{\bar{s}_i^{-2}\phi}+\cdots+(-\frac{m}{\bar{s}_i^{-1}\phi^{1/2}})^k+\cdots\right]\\ 
%		&=-z+\phi^{1/2}\mu(\pi)-\int x^2 \pi(dx)m+\mathrm{O}(\frac{1}{\phi^{1/2}}),
%	\end{aligned}
%\end{equation}
%	which yields a semicircular global  law when $\phi>>1$. The detailed estimation of deviation from the relocated semicircular distribution relies on the stability analysis.
	}
\end{rem}

\section{Main results}\label{sec_mainresultandlocalaw}
In this section, we prove the main results and sketch the proof strategies. We first prepare some notations. Fix some small constant $0<\tau<1,$ we denote  
\begin{equation}\label{eq_realevalue}
\mathbf{R} \equiv \mathbf{R}(\tau):=\left\{ E \in \mathbb{R}: \ |E-\mathfrak{m}_1(\pi) \phi^{1/2}| \leq \tau^{-1} \right\}, \ \text{where} \  \mathfrak{m}_1(\pi)=\int x \pi (\dd x). 
\end{equation}
Let $\mathcal{C}_c^2(\mathbb{R})$ be the function space on some compact set $\mathbf{C} \subset \mathbb{R}$  of continuous functions that have continuous first two derivatives. For some fixed integer $\mathsf{K} \in \mathbb{N}$ and a sequence of test functions $g_1(x), g_2(x), \cdots, g_\mathsf{K}(x) \in \mathcal{C}^2_c (\mathbb{R})$ and some constants $E \in \mathbf{R}, \eta_0>0$ we define $f_i(x) \equiv f_i(x; E,\eta_0), 1 \leq i \leq \mathsf{K},$ 
\begin{equation}\label{eq_gxdefinition}
f_i(x) \equiv f_i(x; E,\eta_0):=g_i\left( \frac{x-E}{\eta_0} \right), \ 1 \leq i \leq \mathsf{K}.
\end{equation}
Recall from Lemma \ref{lem_asymptoticlaw} that $\varrho$ is the asymptotic density of the limiting ESD. For $1 \leq i \leq \mathsf{K},$ denote 
\begin{equation}\label{eq_zf}
Z_{\eta_0, E}(f_i)=\sum_{j=1}^n f_i(\lambda_j)-n \int_{\mathbb{R}} f_i(x) \mathrm{d} \varrho(x) . 
\end{equation} 

To ease our statements, following \cite[Definition 2.2]{bao2022statistical}, we use the following definition. 

\begin{defn}
Two sequences of random vectors $\bm{x}_n, \bm{y}_n  \in \mathbb{R}^{\mathsf{K}}, \ n \geq 1,$ are asymptotically equal in distribution, denoted by $\bm{x}_n \simeq \bm{y}_n,$ if they are tight (i.e., for any $\epsilon>0,$ there exists a $D>0$ such that $\sup_n \mathbb{P}(\| \bm{x}_n\| \geq D) \leq \epsilon$) and satisfy
\begin{equation*}
\lim_{n \rightarrow \infty} \left( \mathbb{E} h(\bm{x}_n)-\mathbb{E} h(\bm{y}_n) \right)=0,
\end{equation*}
for any bounded continuous function $h: \mathbb{R}^{\mathsf{K}} \rightarrow \mathbb{R}.$  
\end{defn}

\subsection{Global CLTs under (\ref{eq_ratioassumption})}\label{sec_CLTs}

Let $\kappa_k$ be the $k$-th cumulant of $(pn)^{1/4} x_{ij}$ in (\ref{eq:normalization}), given by $\kappa_k:=(-\mathrm{i})^k \frac{\mathrm{d}}{\mathrm{d} t} \log \mathbb{E} e^{\mathrm{i} t (pn)^{1/4}x_{ij}}|_{t=0}.$ Moreover, for $m(z)$ defined in (\ref{eq:lsd}), we denote
 	{\begin{equation}\label{eq_b(zdefinition)}
			b(z) :=b_1(z)+b_2(z),
		\end{equation}
		where $b_1(z)$ and $b_2(z)$ are defined as 
\begin{equation}\label{eq_b1b2definition}				
b_1(z):=\frac{{m}^{\prime \prime}(z)}{2 {m}^{\prime}(z)}-\frac{{m}^{\prime}(z)}{{m(z)}}, \ b_2(z)=\kappa_4 \left(\frac{m^2(z)m''(z)}{2{m'(z)}^2}-m(z)\right).
\end{equation}		
	Moreover, for any real number $x \in \mathbb{R},$ we use the convention that 
		$$b_1^{\pm}(x)=\lim_{\eta\downarrow 0}b_1(x \pm \mathrm{i}\eta), \ b_2^{\pm}(x)=\lim_{\eta\downarrow 0}b_2(x \pm \mathrm{i}\eta), \ \ m^{\pm}(x)=\lim_{\eta\downarrow 0}m(x \pm \mathrm{i}\eta).$$  }	
Armed with the above conventions, for $\mathfrak{a},\mathfrak{b}\in \{+,-\},$ we further define 
 $$\widehat{\alpha}_{\mathfrak{a} \mathfrak{b}}\left(x_1, x_2\right):=\kappa_4 \phi \frac{\partial^2}{\partial x_1 \partial x_2}\left(\frac{1}{p} \sum_{i=1}^p \frac{1}{\left(1+\phi^{-1/2}{m}^{\mathfrak{a}}(x_1) \sigma_i\right)\left(1+\phi^{-1/2}{m}^{\mathfrak{b}}(x_2) \sigma_i\right)}\right),$$
	$$
	\widehat{\beta}_{\mathfrak{a} \mathfrak{b}}\left(x_1, x_2\right):=2 \left(\frac{(m^{\mathfrak{a}})'(x_1)(m^{\mathfrak{b}})'(x_2)}{(m^{\mathfrak{a}}(x_1)-m^{\mathfrak{b}}(x_2))^2}-\frac{1}{(x_1-x_2)^2} \right).
	$$
We will consistently use the following conventions 	
\begin{equation}\label{eq_moreconvention}
\alpha(x_1,x_2) \equiv \alpha:=\widehat{\alpha}_{++}+\widehat{\alpha}_{--}-\widehat{\alpha}_{+-}-\widehat{\alpha}_{-+}, \ \ \beta(x_1, x_2) \equiv  \beta:=\widehat{\beta}_{++}+\widehat{\beta}_{--}-\widehat{\beta}_{+-}-\widehat{\beta}_{-+}. 
\end{equation}
The main results are the following two theorems. The first theorem establishes the CLT for the LSS on the global scale when $\eta_0 \asymp 1.$
\begin{thm}[Global CLT]\label{thm_mainone}
	 Suppose Assumption \ref{assum:XSigma} holds. For $E \in \mathbf{R}$ and $\eta_0 \asymp 1,$ the random vector $(Z_{\eta_0,E}(f_i))_{1 \leq i \leq \mathsf{K}}$ in (\ref{eq_zf}) follows that $(Z_{\eta_0,E}(f_i))_{1 \leq i \leq \mathsf{K}} \simeq \left(\mathscr{G}_i\right)_{1 \leq i \leq \mathsf{K}},$
where $\left(\mathscr{G}_i\right)_{1 \leq i \leq \mathsf{K}}$ is a Gaussian random vector with mean and covariance functions defined as
	\begin{equation}\label{eq_meanfunction}
		\mathbb{E}\left(\mathscr{G}_i \right)  =\frac{1}{2\pi\mathrm{i}}\left(\int_{\mathbb{R}} f_i(x)b^{\mathfrak{+}}(x) \mathrm{d} x-\int_{\mathbb{R}} f_i(x)b^{\mathfrak{-}}(x) \mathrm{d} x\right),
	\end{equation}
and
	\begin{align}\label{eq_varianceglobal}
		\operatorname{Cov}\left(\mathscr{G}_i, \mathscr{G}_j\right)  =-\frac{1}{4\pi^2} \iint_{\mathbb{R}^2} f_i\left(x_1\right) f_j\left(x_2\right) \alpha \left(x_1, x_2\right) \mathrm{d} x_1 \mathrm{d} x_2 -\frac{1}{4\pi^2}  \iint_{\mathbb{R}^2} {f_i\left(x_1\right) f_j\left(x_2\right)} \beta \left(x_1, x_2\right) \mathrm{d} x_1 \mathrm{d} x_2,
	\end{align}
	where $b^{\pm}(x), \alpha(x_1,x_2)$ and $\beta(x_1, x_2)$ are defined between (\ref{eq_b(zdefinition)}) and (\ref{eq_moreconvention}) and we assume their limits exist.  
\end{thm}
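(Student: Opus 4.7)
The plan is to reduce the LSS statement to a joint CLT for linear combinations of the resolvent $G(z) := (\mathcal{Q} - z)^{-1}$ of $\mathcal{Q} = X^*\Sigma X$ evaluated at finitely many spectral parameters, via the Helffer-Sjöstrand formula, and then to establish that resolvent-level CLT using the local law of Theorem \ref{thm_locallaw} together with a cumulant expansion of the type used in \cite{Li2021, Yang2020}. Concretely, for each test function $f_i \in \mathcal{C}_c^2$ I would pick an almost analytic extension $\tilde f_i$ supported in a rectangle $\{E+\mathrm{i}\eta : |E|\leq \tau^{-1}+1, |\eta|\leq \eta_*\}$ and write
\begin{equation*}
Z_{\eta_0,E}(f_i) \;=\; \frac{1}{\pi}\int_{\mathbb{R}^2} \partial_{\bar z}\tilde f_i(z)\,\bigl(\mathrm{tr}\,G(z) - n\,m(z)\bigr)\,\dd x\,\dd y,
\end{equation*}
which reduces the problem to showing joint asymptotic Gaussianity of the centered resolvent traces $M_n(z):=\mathrm{tr}\,G(z)-n\,m(z)$ at the relevant $z$ with $\mathrm{Im}\,z>0$. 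Because $f_i$ is compactly supported away from $0$ and the bulk length is $\rO(1)$ by Lemma \ref{lem_proofgloballawresult}, the contour is safely bounded and the $\eta\downarrow 0$ limits matching the $\mathfrak a,\mathfrak b\in\{+,-\}$ conventions in (\ref{eq_moreconvention}) will appear naturally from the two sides of the real axis.

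The heart of the proof is therefore the CLT for $(M_n(z_k))_k$. I would follow the Stein/cumulant-expansion route. Using $\mathbb{E}[x_{ij}\, H(X)] = \sum_{r\geq 1} \kappa_{r+1}((pn)^{1/4}x_{ij})\,(pn)^{-(r+1)/4}\,\mathbb{E}[\partial^r_{ij}H(X)]/r!$ applied to $H = (G)_{ab}$ and to products of resolvent entries, and using the local law to replace $G_{ii}(z)$ by its deterministic equivalent $-1/(z+\phi^{-1/2}\sigma_i m(z))^{-1}\cdot(\cdot)$ with optimal error, one derives a self-consistent equation for $\mathbb{E}\,\mathrm{tr}\,G(z)$. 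Carrying the expansion up to order four produces two bias contributions: the order-two (Gaussian) piece gives $b_1(z)$, while the order-four (non-Gaussian) piece, carried by $\kappa_4$, gives $b_2(z)$; after integrating against $\partial_{\bar z}\tilde f_i$ and collapsing $\eta\to 0$, these reproduce (\ref{eq_meanfunction}). For the covariance I would compute $\mathrm{Cov}(M_n(z_1),\overline{M_n(z_2)})$ by the same cumulant expansion applied to $G(z_1)\otimes G(z_2)$; the $\kappa_2$ (Gaussian/Wick) pairings yield the $\beta$ term, while the $\kappa_4$ pairings yield $\widehat\alpha$. Differentiation identities such as $\partial_{z_1}\partial_{z_2}\log(m(z_1)-m(z_2))^{-1} = m'(z_1)m'(z_2)/(m(z_1)-m(z_2))^2$ (which follows by differentiating the self-consistent equation (\ref{eq:lsd})) are what rewrite the raw expansion into the clean $\widehat\beta$ form; the $\kappa_4$ expression is rewritten using the chain rule applied to the $\Sigma$-dependent denominator appearing in (\ref{eq:lsd}), giving $\widehat\alpha$ exactly as defined.

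Joint Gaussianity of $\bigl(M_n(z_k)\bigr)_{k}$ would be established by Cramér-Wold, controlling the characteristic function $\varphi(t):=\mathbb{E}\exp(\mathrm{i} t\sum_k c_k M_n(z_k))$ through the ODE argument of \cite{Li2021, Yang2020}: cumulant expansion yields $\varphi'(t) = \mathrm{i}\mu\,\varphi(t) - \sigma^2 t\,\varphi(t) + o(1)$ uniformly on compact $t$-intervals, where $\mu,\sigma^2$ are the desired mean and variance of the linear combination; solving gives Gaussianity. Tightness of $(Z_{\eta_0,E}(f_i))$ follows from the second-moment bound on $M_n(z)$ coming from the local law and the Helffer-Sjöstrand representation, applied uniformly in the support of $\tilde f_i$. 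Finally, joint convergence with different $f_i$ is a direct consequence of linearity of $f\mapsto \int \partial_{\bar z}\tilde f\cdot M_n$ and the joint CLT for finitely many $M_n(z_k)$; deforming the contour to the real axis, where $m^\pm$ appears, yields the stated formulas.

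I expect the main obstacle to be twofold. First, the propagation of the local law uniformly up to the edges $\gamma_\pm\asymp \phi^{1/2}$ (which diverge with $n$) and at spectral parameters on scales needed to control the Helffer-Sjöstrand integral, since the standard $p\asymp n$ arguments must be rescaled to match the $(pn)^{-1/2}$ variance convention of (\ref{eq:normalization}); the $\Sigma$-dependent denominators $1+\phi^{-1/2}\sigma_i m$ are the right stable quantities to track. Second, justifying that the cumulant-expansion error terms arising from moments of order $\geq 5$ are negligible in the ultra-high-dimensional regime $\alpha>1$, which is where the sharp local-law bounds are most delicate; unlike in \cite{Yang2020}, here the entries are only $\rO((pn)^{-1/4})$ in size and the combinatorics of the expansion have to be tracked against the effective resolvent bound $|G_{ii}|\lesssim \phi^{-1/2}$ dictated by (\ref{eq:lsd}). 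Everything else (the integration against $\partial_{\bar z}\tilde f_i$, the passage to the boundary values $m^\pm,b^\pm$, and the tightness) is technical but standard given these ingredients.
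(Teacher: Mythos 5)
Your overall architecture matches the paper's: decompose $Z_{\eta_0,E}(f_i)$ into a deterministic bias and a centered fluctuation, reduce the fluctuation to resolvent traces via Helffer--Sj\"ostrand, establish a CLT for $(\operatorname{Tr}R_1(z_k)-\mathbb{E}\operatorname{Tr}R_1(z_k))_k$ by cumulant expansion together with the local law, and then integrate back. The bias computation you outline (orders two and four of the expansion giving $b_1$ and $b_2$), the identification of $\widehat\beta$ from the Wick pairings and $\widehat\alpha$ from the $\kappa_4$ pairings, and the final passage to the boundary values $m^\pm$ all agree with the paper.

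The genuine divergence is in how you certify joint Gaussianity. You propose Cram\'er--Wold plus the characteristic-function ODE argument $\varphi'(t)=\mathrm{i}\mu\varphi(t)-\sigma^2 t\varphi(t)+o(1)$, attributing it to \cite{Li2021}. The paper explicitly declines this route: it remarks (Section~\ref{sec_examproofsrategy}) that \cite{Li2021} handles only a single test function and that extending the characteristic-function approach to a random vector of LSS is nontrivial because the characteristic function of a vector is much more awkward to close under cumulant expansion. Instead the paper proves Wick factorization of \emph{all} joint moments $\mathbb{E}\bigl[\prod_{s}\mathcal{Y}(z_s)\bigr]$ (Lemma~\ref{lem:resolventwick}), then invokes Wick's theorem (Lemma~\ref{lem_wicktheorem}) and the Portmanteau theorem to deduce asymptotic multivariate Gaussianity; the same pattern is then lifted to the LSS level (Lemma~\ref{lem_CLTforgeneralfunction}). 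Your route could work, but you would still need to close the ODE uniformly for a continuum of complex spectral parameters entering the Helffer--Sj\"ostrand integral, and the real/imaginary decomposition of the complex-valued $\operatorname{Tr}R_1(z_k)$ complicates the Cram\'er--Wold step; the paper's moment-method factorization sidesteps both issues and is what actually makes the multi-function case tractable. If you intend to pursue the characteristic-function route you should address exactly this obstruction, which is the one the authors flag as the reason they departed from \cite{Li2021}.

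Two smaller points. First, the passage from the Helffer--Sj\"ostrand regional integral to the clean one-dimensional formulas \eqref{eq_meanfunction}--\eqref{eq_varianceglobal} is not merely ``deforming to the real axis'': the paper uses the complex form of Green's theorem (Lemma~\ref{lem_Greenthm}) to convert the two-dimensional region integral into a contour integral, and then the $\mathcal{K}_{\pm\pm}$ decomposition in \eqref{eq_reducedreducedreducedform}, together with the change of variables \eqref{eq_changeofvariable}, produces the $\alpha,\beta$ kernels of \eqref{eq_moreconvention}. Your sketch omits this step, which is where the explicit $m^{\pm}$ combinations in \eqref{eq_moreconvention} actually come from. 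Second, a sign nit: $\partial_{z_1}\partial_{z_2}\log\bigl(m(z_1)-m(z_2)\bigr)^{-1}=-m'(z_1)m'(z_2)/(m(z_1)-m(z_2))^2$, not $+$; the displayed $\widehat\beta$ in the paper is $2\bigl(m'(z_1)m'(z_2)/(m(z_1)-m(z_2))^2 - (z_1-z_2)^{-2}\bigr)$, which is obtained directly from the resolvent expansion rather than from this logarithmic derivative.
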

\begin{rem}\label{rem_contourform}
We point out that analogous results have been stated in the comparable regime (i.e., $\alpha=1$ in (\ref{eq_ratioassumption})) in \cite{ZBY} under stronger assumptions on the test functions. Even though the statements are different, we notice that the kernels of the covariance functions in (\ref{eq_varianceglobal}) are essentially the same with the one in \cite{ZBY} using a contour integral representation which is an intermediate step in our calculation; see (\ref{eq_maincontourrough}) below.  Compared to the contour integral form, (\ref{eq_varianceglobal}) eases the numerical approximations.    
\end{rem}

\begin{rem}\label{rem_mainresultone}
Several remarks are in order. First, we notice that the construction in (\ref{eq_gxdefinition}) involves using a function with compact support so that the test functions needed to be well constructed. For example, we cannot directly use $g(x)=x.$ However, this issue can be easily addressed using the following construction. For some $h(x) \in \mathcal{C}^2(\mathbb{R}),$ we define 
\begin{equation}\label{eq_gixform}
g(x)=h(x) \mathcal{K}(x),
\end{equation}
where $\mathcal{K}(x)$ is a mollifier that for some fixed small constant $a>0$ and large constant $b$
\begin{equation} \label{eq_millifier}
		\mathcal{K}(x)\equiv \mathcal{K}_{a,b}(x)  := \begin{cases}
			0 &  |x-b| \geq a \\
			1 &  |x| \leq b \\
			\exp\left(\frac{1}{a^2} - \frac{1}{a^2-(x +b)^2}\right) &  - (b + a) < x < -b \\
			\exp\left(\frac{1}{a^2}  - \frac{1}{a^2-(x -b)^2}\right) &  b < x < b + a
		\end{cases}.
	\end{equation} 
It is then straightforward to check that the above construction will generate a $\mathcal{C}_c^2$ function which preserves the properties of $h(x)$, e.g., when dealing with Theorem \ref{thm_mainone}, we can let $b$ be large enough such that the effects of test functions $g(x)$ on eigenvalues are identical with $h(x)$. Second, we believe that the condition $\mathcal{C}_c^2$ can be weakened to the class of  $\mathcal{C}^{1, a, b}\left(\mathbb{R}_{+}\right):=\left\{f \in \mathcal{C}_c^1\left(\mathbb{R}_{+}\right): f^{\prime}\right.$ is $a$ H\"{o}lder continuous uniformly in $x$, and $|f(x)|+\left|f^{\prime}(x)\right| \leqslant C(1+|x|)^{-(1+b)}$ for some constant $\left.C>0\right\}$ following \cite{He2017,Yang2020}. We will pursue this direction in the future works. Third, in the current paper, for simplicity, we assume that  $\Sigma$ is diagonal. However, such an assumption can be moved with additional technical efforts following \cite{Yang2020}. More specifically, we can assume that $\Sigma^{1/2}$ admits $\Sigma=O^{*}\Lambda O$ where $O$ is orthogonal and $\Lambda$ is diagonal. Consequently, we will need to work with  $\Lambda^{1 / 2} O X X^{*} O^{*} \Lambda^{1/2}.$ Since this is out of the scope of the current paper, we will consider this generalization in the future works. 

\end{rem}

The above theorem establishes the joint distribution for LSS indexed by different test functions on the global scale. In general, the mean and covariance functions depend on the test functions, $\phi,$ $\Sigma,$ the 4th cumulant, $E$ and $\eta_0.$ For some properly chosen test functions and $\Sigma,$ the mean and covariances can be largely simplified and some explicit formulas could be obtained. In the following corollary, based on the above results, we can obtain the  asymptotic normality for some individual specific test functions when $\Sigma=I$ and $\eta_0=1.$ These will be used in Section \ref{sec_statapplication} regarding the statistical applications. 

\begin{cor}\label{cor_globalexamples} Suppose Assumption \ref{assum:XSigma} holds with $\Sigma=I.$ 
\begin{enumerate}
\item[(1).] For $E \in \mathbf{R}$ and $\eta_0=1,$ we have that 
\begin{equation}
\frac{Z_{1,E}(f)-\mathsf{M}(f)}{\sqrt{\mathsf{V}(f)}} \simeq \mathcal{N}(0,1),
\end{equation}
where $\mathsf{M}(f)$ and $\mathsf{V}(f)$ are defined as
\begin{align}\label{eq_meanfunction}
\mathsf{M}(f)& := \lim_{r \downarrow 1} \frac{-1}{2\pi\mathrm{i}}\oint_{|\xi|=1}f \left(\phi^{1/2}+\phi^{-1/2}+\xi+\frac{1}{\xi} \right)\left(\frac{1}{\xi}-\frac{1}{2}\frac{1}{\xi+\frac{1}{r}}-\frac{1}{2}\frac{1}{\xi-\frac{1}{r}}\right)\mathrm{d} \xi  \nonumber \\
& +\kappa_4 \frac{-1}{2\pi\mathrm{i}}\oint_{|\xi|=1}f(\phi^{1/2}+\phi^{-1/2}+\xi+\frac{1}{\xi})\left(-\frac{1}{\xi^3}\right)\mathrm{d} \xi,
\end{align}
and 
\begin{align}\label{eq_variancefunction}
\mathsf{V}(f)& :=\lim_{\substack{r_2>r_1 \\ r_1, r_2 \downarrow 1}}-\frac{1}{2\pi^2}\oint_{|\xi_1|=1}\oint_{|\xi_2|=1} \frac{f(\phi^{1/2}+\phi^{-1/2}+\xi_1+\xi_1^{-1})f(\phi^{1/2}+\phi^{-1/2}+\xi_2+\xi_2^{-1})}{(\xi_1-r_2/r_1\xi_2)^2}\mathrm{d}\xi_1\mathrm{d}\xi_2 \nonumber \\
& -\frac{\kappa_4}{4\pi^2}\left( \oint_{|\xi|=1}f(\phi^{1/2}+\phi^{-1/2}+\xi+\xi^{-1})\frac{1}{\xi^2}\mathrm{d}\xi \right)^2. 
%\oint_{|\xi_2|=1}f(\phi^{1/2}+\phi^{-1/2}+{r}\xi_2+\frac{1}{{r}}\xi_2^{-1})\frac{r^{-1}}{\xi_2^2}\mathrm{d}\xi
\end{align}
\item[(2).] The formulas in (\ref{eq_meanfunction}) and (\ref{eq_variancefunction}) can be further simplified for some commonly used test functions and  properly parametrized $E.$ In particular, for $E=\phi^{1/2}+\phi^{-1/2}-c$ for some constant $c>0,$ when $h_1(x)=x$  in (\ref{eq_gixform}) and the associated $f_1(x)=(x-E) \mathcal{K}(x-E)$, we have that 
\begin{equation*}
\mathsf{M}(f_1)=0, \  \mathsf{V}(f_1)=2+\kappa_4. 
\end{equation*}
Moreover, for $E=\phi^{1/2}+\phi^{-1/2}-c$ for some constant $c>0,$ when $h_2(x)=x^2$ in (\ref{eq_gixform}) and the associated $f_2(x)=(x-E)^2 \mathcal{K}(x-E)$, we have that 
\begin{equation*}
\mathsf{M}(f_2)=1+\kappa_4, \  \mathsf{V}(f_2)=4+4c^2(\kappa_4+2). 
\end{equation*}
Finally, for $E=\phi^{1/2}+\phi^{-1/2}-(t+t^{-1})$ for some constant $t>1,$ when $h_3(x)=\log x$ in (\ref{eq_gixform}) and the associated $f_3(x)=\log(x-E) \mathcal{K}(x-E)$, we have that 
\begin{equation*}
\mathsf{M}(f_3)=\frac{1}{2} \log (1-t^{-2})-\frac{\kappa_4}{2 t^2}, \  \mathsf{V}(f_3)=2(\log t-\log(t-t^{-1}))+\frac{\kappa_4}{t^2}. 
\end{equation*}
\end{enumerate}
\end{cor}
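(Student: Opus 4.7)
My plan is to specialize Theorem \ref{thm_mainone} to $\Sigma=I$ and exploit the explicit form of $m(z)$ given in Remark \ref{rem_selfconsistentequation} via the Joukowski-type substitution
\begin{equation*}
z=\phi^{1/2}+\phi^{-1/2}+\xi+\xi^{-1},
\end{equation*}
which maps the unit circle to the cut $[\gamma_-,\gamma_+]$ and sends $|\xi|>1$ bijectively to the upper half-plane minus the support. A direct calculation using $(z-\gamma_-)(\gamma_+-z) = -(\xi-\xi^{-1})^2$ shows that $m(z)$ becomes the rational expression $m(z) = -(\xi+\phi^{1/2})/(\xi z)$, and consequently $m'(z)$, $m''(z)$, and hence the integrands $b_1, b_2$ in (\ref{eq_b1b2definition}) together with $\alpha, \beta$ in (\ref{eq_moreconvention}) all become rational in $\xi$.

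\textbf{From real integrals to contour integrals.} To prove part (1), I would rewrite the mean integral $\int f(x)(b^+(x)-b^-(x))\,\mathrm{d}x$ in (\ref{eq_meanfunction}) as $-\oint_\Gamma f(z)b(z)\,\mathrm{d}z$ over a contour $\Gamma$ encircling $[\gamma_-,\gamma_+]$, using analyticity of $b(z)$ off the cut. Changing variables to $\xi$ transports $\Gamma$ to a circle slightly outside $|\xi|=1$, and taking $r\downarrow 1$ recovers the unit-circle formula in (\ref{eq_meanfunction}); the poles at $\xi=\pm 1/r$ reflect the square-root behaviour of $m'$ at the two edges of the support. The covariance is handled identically: observing that the contributions of $(x_1-x_2)^{-2}$ in the four terms of $\beta$ cancel by the sign pattern $+,+,-,-$, the remaining kernel is the jump across both real axes of $2m'(z_1)m'(z_2)/(m(z_1)-m(z_2))^2$; under the $\xi$-substitution this simplifies to $-2/(\xi_1-(r_2/r_1)\xi_2)^2$, yielding the first term of (\ref{eq_variancefunction}) with $r_2/r_1>1$ ensuring disjoint contours. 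The $\kappa_4$-kernel $\widehat{\alpha}$ collapses for $\Sigma=I$ to a product $\partial_{x_1}\partial_{x_2}(1+\phi^{-1/2}m^{\mathfrak{a}})^{-1}(1+\phi^{-1/2}m^{\mathfrak{b}})^{-1}$; after substitution this separates into a single pole at $\xi=0$, producing the squared contour integral in (\ref{eq_variancefunction}).

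\textbf{Residue evaluations for part (2).} I would fold the mollifier $\mathcal{K}$ into the picture by choosing its support large enough that it equals one in a neighborhood of $[\gamma_-,\gamma_+]$, reducing the test function to $h_i(x-E)$ on the relevant region. For $h_1(x)=x$ with $E=\phi^{1/2}+\phi^{-1/2}-c$, the substitution gives $f_1 = \xi+\xi^{-1}+c$; direct residues at $\xi=0$ and $\xi=\pm 1/r$ cancel to give $\mathsf{M}(f_1)=0$, while performing the inner $\xi_1$-residue at $\xi_1 = (r_2/r_1)\xi_2$ followed by the outer residue at $\xi_2=0$ yields the $2$ in $\mathsf{V}(f_1)=2+\kappa_4$, with the last residue producing the $\kappa_4$. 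The case $h_2(x)=x^2$ is analogous with one extra monomial in the expansion responsible for the shifts $1+\kappa_4$ and $4c^2(\kappa_4+2)$. For $h_3(x)=\log x$, the parametrization $E=\phi^{1/2}+\phi^{-1/2}-(t+t^{-1})$ with $t>1$ is what makes everything tractable, since under the substitution
\begin{equation*}
\log(x-E) = \log\frac{(\xi+t)(\xi+t^{-1})}{\xi},
\end{equation*}
and the resulting contour integrals reduce to elementary integrals of the form $\oint \xi^{-k}\log(\xi+t)\,\mathrm{d}\xi$, readily evaluated by Taylor-expanding $\log(1+t^{-1}\xi)$ inside $|\xi|<t$.

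\textbf{Main obstacle.} The principal technical difficulty is justifying the $r\downarrow 1$ regularization rigorously: because $b_1$ has square-root divergences at the edges of the support, the naive unit-circle integrands have poles on the contour, and I must argue that the integrals over $|\xi|=1$ with denominators $(\xi\pm 1/r)$ converge as $r\downarrow 1$ and agree with the real-line expressions of Theorem \ref{thm_mainone}. I would handle this by splitting $b$ into its singular part (treated by explicit residue computation with an $r>1$ cutoff) and a regular remainder (controlled by dominated convergence), noting that the test functions $f_i$ vanish fast enough near the edges after the mollifier has been applied. A secondary point is the log case: since $t>1$ forces $E<\gamma_-$, the branch cut of $\log(x-E)$ can be placed on $(-\infty,E]$, which lies strictly outside the deformed contour, so the residue calculus proceeds without obstruction.
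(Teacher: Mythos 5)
Your proposal is correct and takes essentially the same route as the paper. The paper parametrizes via $m = -1/(\phi^{-1/2}+r\xi)$, $|\xi|=1$, which upon substitution in \eqref{eq:lsd} gives exactly your $z = \phi^{1/2}+\phi^{-1/2}+r\xi+r^{-1}\xi^{-1}$ (the two parametrizations determine each other, and your claimed identity $m(z)=-(\xi+\phi^{1/2})/(\xi z)$ indeed simplifies to $-1/(\phi^{-1/2}+\xi)$ at $r=1$). It then decomposes $\mathsf{M}=\mathsf{M}_1+\mathsf{M}_2$ and $\mathsf{V}=\mathsf{V}_1+\mathsf{V}_2$ into the $\kappa_4$-free and $\kappa_4$-dependent pieces, transports the contour integral form (already established as an intermediate step, see Remark \ref{rem_contourform} and \eqref{eq_maincontourrough}) to $|\xi|=1$, and evaluates by residues exactly as you sketch. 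Your factorization $\log(x-E)=\log\bigl((\xi+t)(\xi+t^{-1})/\xi\bigr)$ is the $r=1$ specialization of the paper's $\log\bigl((t+r\xi)(1+(rt\xi)^{-1})\bigr)$, and the branch-cut point you raise ($E<\gamma_-$ for $t>1$) is indeed the reason the log case proceeds cleanly. Two small remarks: (i) you re-derive the passage from the real-line formula \eqref{eq_varianceglobal} to the contour integral \eqref{eq_maincontourrough}, but the paper already has this available from the proof of Lemma \ref{lem_CLTforgeneralfunction}, so this step is redundant rather than wrong; (ii) you justify dropping the $(z_1-z_2)^{-2}$ piece of $\widehat{\beta}$ by the sign cancellation $+,+,-,-$, whereas the paper silently discards it at \eqref{eq_v1partnotice} — the cleanest justification is that $\oint_{\Gamma_2}\oint_{\Gamma_1} \tilde f(z_1)\tilde f(z_2)(z_1-z_2)^{-2}\,\mathrm{d}z_1\mathrm{d}z_2$ vanishes over nested non-overlapping contours since the inner residue produces a total derivative; your distributional-cancellation heuristic reaches the same conclusion. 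Neither of these affects the correctness of the overall argument.
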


\begin{rem}
In addition to the mean and covariance functions, in general, the second term of the right-hand side of (\ref{eq_zf}) needs to be computed numerically. However when $\Sigma=I,$ since the density function of MP can be made explicit as in (\ref{eq_explicitdensityfunction}), this term can be further simplified for some test functions. Especially, for $f_1(x)$, $f_2(x)$ and $E=\phi^{1/2}+\phi^{-1/2}-c$ considered in (2) of Corollary \ref{cor_globalexamples}, we see from (\ref{eq_explicitdensityfunction}) that    
\begin{equation}\label{eq_generalmeanform}
\int_{\mathbb{R}}f_1(x) \mathrm{d} \varrho(x) = c-\phi^{-1/2}, \ \int_{\mathbb{R}}f_2(x) \mathrm{d} \varrho(x) =1+c^2-2c \phi^{-1/2}+\phi^{-1}.
\end{equation}
\end{rem}

\subsection{Local CLTs under (\ref{eq_ratioassumption})}

The second theorem establishes the CLT for the LSS on the local scale when $\eta_0 \ll 1. $ Denote $$ \kappa \equiv \kappa(E):=\min \left\{ |E-\gamma_-|, |E-\gamma_+| \right\}.$$

\begin{thm}[Local CLT]\label{thm_maintwo}
 Suppose Assumption \ref{assum:XSigma} holds. Fix $E \in \mathbf{R}.$ For some small constants $\tau_1, \tau_2>0$, $ \eta_0 \leq n^{-\tau_1}$  and  $\eta_0 \sqrt{\kappa+\eta_0} \geq n^{-1+\tau_2},$ the random vector $(Z_{\eta_0,E}(f_i))_{1 \leq i \leq \mathsf{K}}$ in (\ref{eq_zf}) follows that $(Z_{\eta_0,E}(f_i))_{1 \leq i \leq \mathsf{K}} \simeq \left(\mathscr{G}_i\right)_{1 \leq i \leq \mathsf{K}},$
where $\left(\mathscr{G}_i\right)_{1 \leq i \leq \mathsf{K}}$ is a Gaussian random vector with the mean and covariance functions defined as	 
\begin{equation}\label{eq_meanlocal}
\mathbb{E} \mathscr{G}_i=\frac{1}{2\pi\mathrm{i}}\left(\int_{\mathbb{R}} f_i(x)b_1^{\mathfrak{+}}(x) \mathrm{d} x-\int_{\mathbb{R}} f_i(x)b_1^{\mathfrak{-}}(x) \mathrm{d} x\right),
\end{equation}
and 	
	\begin{align}\label{eq_covariancelocal}
		\operatorname{Cov}\left(\mathscr{G}_i, \mathscr{G}_j\right) =
		-\frac{1}{4\pi^2}  \iint_{\mathbb{R}^2 } {f_i\left(x_1\right) f_j\left(x_2\right)}  \beta\left(x_1, x_2\right) \mathrm{d} x_1 \mathrm{d} x_2, 
	\end{align}
where $b_1^{\pm}(x)$ and $\beta(x_1, x_2)$ are defined in (\ref{eq_b1b2definition}) and (\ref{eq_moreconvention}) and we assume their limits exist.
\end{thm}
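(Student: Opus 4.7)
The plan is to prove Theorem \ref{thm_maintwo} along the same general lines as Theorem \ref{thm_mainone}: first reduce, via the Helffer--Sj\"ostrand formula, to a joint CLT for the resolvent trace $m_n(z) := n^{-1}\sum_{j=1}^{n}(\lambda_j - z)^{-1}$ evaluated at several spectral parameters; then establish that CLT by a cumulant expansion adapted from \cite{Li2021, Yang2020}; and finally show that the contributions proportional to $\kappa_4$ vanish on mesoscopic scales. Concretely, with $\widetilde{f_i}$ a quasi-analytic extension of $f_i$, one writes
\begin{equation*}
Z_{\eta_0,E}(f_i) \;=\; -\frac{n}{\pi} \int_{\mathbb{C}_+} \partial_{\bar{z}} \widetilde{f_i}(z)\, \bigl( m_n(z) - m(z) \bigr) \,\mathrm{d}^2 z,
\end{equation*}
and uses the local law of Theorem \ref{thm_locallaw} to show that the part of the integral with $|\operatorname{Im} z| \leq \eta_\star := n^{-1+\epsilon_0}/\sqrt{\kappa+\eta_0}$ is negligible with high probability. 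It then suffices to prove a joint CLT for $\{m_n(z_k) - \mathbb{E} m_n(z_k)\}$, together with a deterministic expansion of $\mathbb{E} m_n(z) - m(z)$, at parameters $z_k = E + \eta_0 \xi_k$ whose imaginary parts are bounded below by $\eta_\star/\eta_0$.

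The CLT for $m_n(z)$ would then follow by iterating the cumulant expansion of $\mathbb{E}[x_{ij} F(X)]$ inside the resolvent identity for $Q$. Exactly as in the global case, this expansion produces a Gaussian piece responsible for the universal terms $b_1$ and the double-pole kernel $\widehat{\beta}_{\mathfrak{a}\mathfrak{b}}$, together with an inhomogeneous piece proportional to $\kappa_4$ responsible for $b_2$ and $\widehat{\alpha}_{\mathfrak{a}\mathfrak{b}}$. The crucial observation is what happens after integrating back through Helffer--Sj\"ostrand: since $b_2^\pm$ and $\widehat{\alpha}$ are smooth at scale $1$ while $f_i(x) = g_i((x-E)/\eta_0)$ is localized at scale $\eta_0 \ll 1$, a trivial rescaling $x = E + \eta_0 y$ yields
\begin{equation*}
\int_{\mathbb{R}} f_i(x) b_2^\pm(x) \,\mathrm{d} x = \mathrm{O}(\eta_0), \qquad \iint_{\mathbb{R}^2} f_i(x_1) f_j(x_2)\, \widehat{\alpha}(x_1, x_2) \,\mathrm{d} x_1 \mathrm{d} x_2 = \mathrm{O}(\eta_0^2),
\end{equation*}
both of which tend to $0$. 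By contrast, the kernel $\widehat{\beta}$ has a short-distance singularity of order $(x_1-x_2)^{-2}$ that survives the rescaling and produces an $\mathrm{O}(1)$ contribution, and the mean term $b_1$ survives analogously because its effective primitive is logarithmic. This accounts precisely for the fact that (\ref{eq_meanlocal})--(\ref{eq_covariancelocal}) retain only the $b_1$ and $\beta$ pieces.

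The main obstacle I anticipate is the resolvent CLT step itself. Executing the cumulant expansion uniformly in $z$ all the way down to $\operatorname{Im} z \sim \eta_\star$ requires sharp, simultaneous control of averaged and isotropic entries of the resolvent through every layer of the recursion, because naive bounds on $m_n(z) - m(z)$ degrade as $\operatorname{Im} z \downarrow 0$. This is made more delicate by the ultra-high-dimensional scaling $p \asymp n^{\alpha}, \alpha > 1$: the spectrum of $Q$ lies in a bulk of size $\asymp \phi^{1/2}$ while the Helffer--Sj\"ostrand contour sits in a window of size $\mathrm{O}(1)$ around $E$, so every combinatorial estimate must track the $\phi$-dependence through $s_i = \phi^{-1/2}\sigma_i$ carefully. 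The strong form of the local law provided by Theorem \ref{thm_locallaw} is tailored exactly to this purpose, and I expect the technical core of the proof to lie in propagating it cleanly through the cumulant expansion, after which the mean/covariance identification and the vanishing of the $\kappa_4$-terms become straightforward bookkeeping.
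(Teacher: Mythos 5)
Your proposal follows essentially the same route as the paper's proof: Helffer--Sj\"ostrand reduction to a joint CLT for centered resolvent traces at multiple spectral parameters, cumulant expansion as in \cite{Li2021,Yang2020} propagated through the local law of Theorem \ref{thm_locallaw}, and the rescaling $x = E+\eta_0 y$ to show that the $\kappa_4$-dependent pieces ($b_2$ and $\widehat{\alpha}$) vanish on mesoscopic scales while $b_1$ and the double-pole kernel $\widehat{\beta}$ survive. This matches the paper's split into $\mathcal{M}_{\eta_0,E}+\mathcal{Z}_{\eta_0,E}$, its good/bad region argument with cutoffs $\underline{\eta}\ll\tilde{\eta}\ll\eta_0$, and the Green's-theorem reduction of the regional integral to a contour integral, culminating in the computation (\ref{eq_controlcontrolsmall}).

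One small inaccuracy worth flagging: the claim that $b_2^{\pm}$ and $\widehat{\alpha}$ are \emph{smooth at scale $1$} fails when $E$ is at or near an edge, since by (\ref{eq_m(z)elementarybound}) both inherit square-root-type singularities from $m'(z)\asymp(z-\gamma_\pm)^{-1/2}$; the rescaling then gives $\mathrm{O}(\eta_0^{1/2})$ for the mean contribution and $\mathrm{O}(\eta_0)$ for the variance contribution, not the $\mathrm{O}(\eta_0)$ and $\mathrm{O}(\eta_0^2)$ you state. The conclusion that these terms vanish is nonetheless correct, because the singularities are integrable; the paper's bound (\ref{eq_controlcontrolsmall}) tracks the factor $1/\sqrt{\kappa_i+\underline{\eta}_i}$ precisely to cover both the bulk and edge cases uniformly. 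The rest of your sketch---including correctly identifying the resolvent CLT down to $\operatorname{Im} z\sim\underline{\eta}$ as the technical heart of the argument, and the need to carry the $\phi$-dependence through every layer of the expansion---accurately anticipates where the work lies.
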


\begin{rem}
Two remarks are in order. First, in the comparable regime when $p \asymp n,$ similar results have been obtained for a single test function in \cite{Li2021} with additional assumptions on $\Sigma$ that $|1+\phi^{-1/2}m\sigma_i|$ is bounded from below  to ensure the square root behavior of the LSD near the edges; see Assumption 8.3 therein. However, 
these conditions will be automatically satisfied under (\ref{eq_ratioassumption}). Second, the proof \cite{Li2021} relies on a characteristic function approach. It is not clear how easily it can be generalized to study the multiple functions setting. We will utilize a different approach; see Section \ref{sec_examproofsrategy} for more details.    
\end{rem}

The above theorem shows that the LSS converges to some Gaussian processes who mean and covariance functions can be explicitly identified on the local scale. Compared to Theorem \ref{thm_mainone}, we find that when $\eta_0=\mathrm{o}(1),$ the results do not reply on $\kappa_4$ which makes it possible to create some moment free statistics to test (\ref{eq_generaltesting}). We illustrate this significant difference between the local and global CLTs in Figure \ref{fig_generaldiscussionlocalglobal1111}.

\begin{figure}[!ht]
\centering
\subfigure[Global CLTs.]{\label{fig:a}\includegraphics[width=7cm,height=5cm]{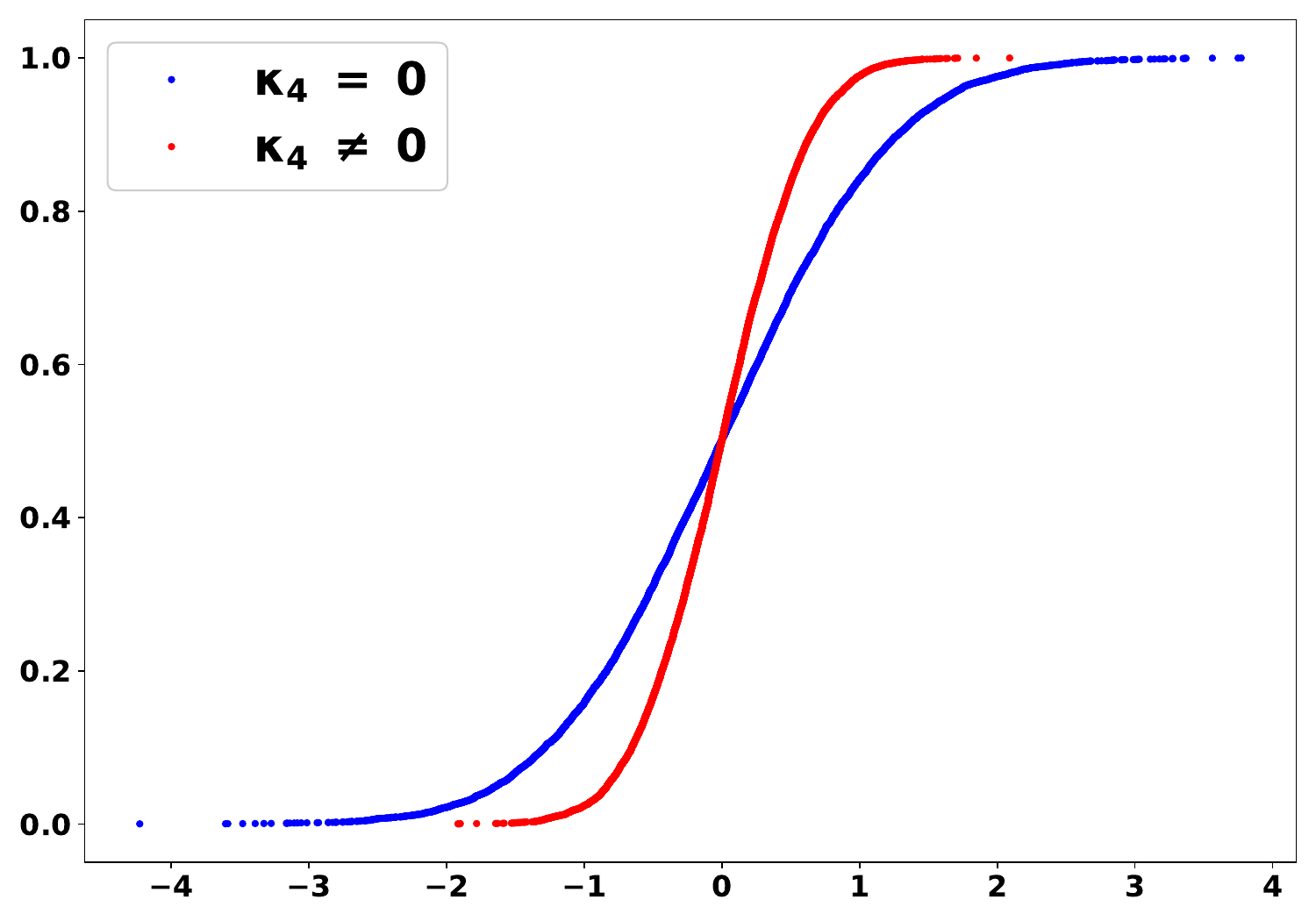}}
\hspace*{0.2cm}
\subfigure[Local CLTs.]{\label{fig:b}\includegraphics[width=7cm,height=5cm]{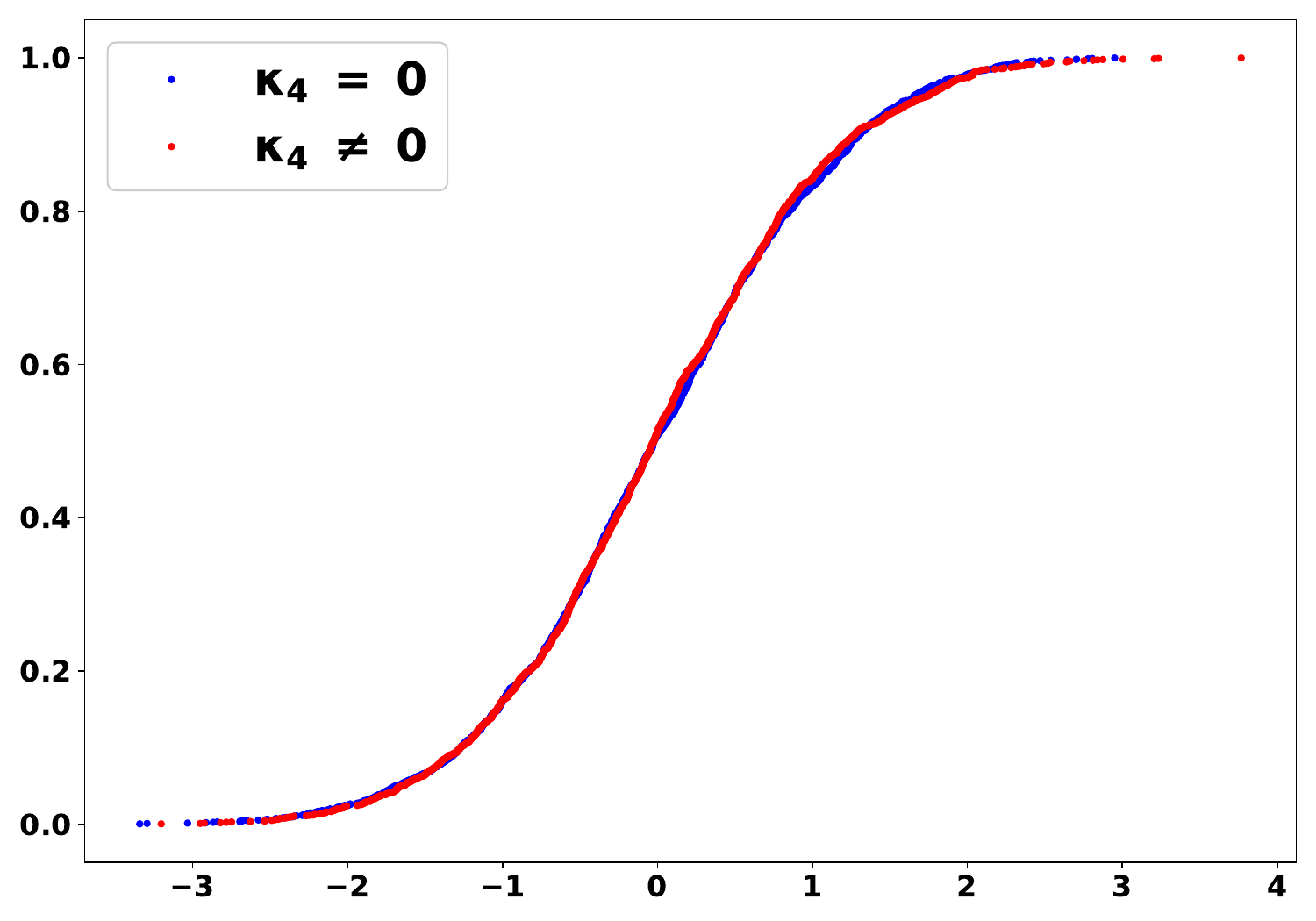}}
%\begin{subfigure}{0.3\textwidth}
%\includegraphics[width=8cm,height=5cm]{revfig/app1san2r3.eps}
%\caption{Accuracy of  $\mathbb{T}$ in (\ref{eq_ona intro}).}
%\end{subfigure}
%\hspace*{4cm}
%\begin{subfigure}{0.3\textwidth}
%\includegraphics[width=8cm,height=5cm]{revfig/app1san2r3new.eps}
%\caption{Accuracy of  $\mathbb{T}_{r_0}$ in (\ref{eq_ona intro1}).}
%\end{subfigure}
\caption{Empirical Cumulative Distribution Functions (ECDFs) for global and local CLTs with different $\kappa_4$. We consider two different distributions for $x_{ij}:$ the Gaussian distribution with $\kappa_4=0$ and the two-point distribution that $\frac{1}{3} \delta_{\sqrt{2}}+\frac{2}{3} \delta_{-1/\sqrt{2}}$ with $\kappa_4=-3/2.$ We can conclude that for the CLTs with $\kappa_4=0$ and $\kappa_4 \neq 0$ are very different on the scale that $\eta_0 \asymp 1$ but identical on the scale that $\eta_0=\mathrm{o}(1).$ Here we choose a single test function that $h(x)=x$ in (\ref{eq_gixform}), $n=400, \phi=100,$ $\eta_0=1$ for global CLTs and $\eta_0=n^{-1/4}$ for local CLTs, and the results are reported based on 1,000 repetitions.
}
\label{fig_generaldiscussionlocalglobal1111}
\end{figure}

Next, we show that when $E$ is properly chosen, the formulas in Theorem \ref{thm_maintwo} can be further simplified. 

\begin{cor}\label{cor_localexample}
Suppose the assumptions of Theorem \ref{thm_maintwo} hold. Recall the function $g_i(x)$ in (\ref{eq_gxdefinition}). Then the followings holds.   
\begin{enumerate}
\item[(1).] If $E$ lies in the bulk of $\varrho$ in the sense that $E \in (\gamma_-+\tau', \gamma_+-\tau')$ for some fixed small constant $\tau'>0,$ then (\ref{eq_meanlocal}) and (\ref{eq_covariancelocal}) can be further simplified as $\mathbb{E} \mathscr{G}_i=0$ and 
$$
 \operatorname{Cov}\left(\mathscr{G}_i, \mathscr{G}_j\right)=\frac{1}{2 \pi^2} \int_{\mathbb{R}} \int_{\mathbb{R}} \frac{\left(g_i\left(x_1\right)-g_i\left(x_2\right)\right)\left(g_j\left(x_1\right)-g_j\left(x_2\right)\right)}{\left(x_1-x_2\right)^2} \mathrm{d} x_1 \mathrm{d} x_2. 
 $$
 \item[(2).] If $E=\gamma_+,$ then (\ref{eq_meanlocal}) and (\ref{eq_covariancelocal}) can be further simplified as $\mathbb{E} \mathscr{G}_i=g_i(0)/4$ if $g_i(0)$ exists and
 $$ \operatorname{Cov}\left(\mathscr{G}_i, \mathscr{G}_j\right)=\frac{1}{4  \pi^2} \int_{\mathbb{R}} \int_{\mathbb{R}}\frac{\left(g_i\left(-x_1^2\right)-g_i\left(-x_2^2\right)\right)\left(g_j\left(-x_1^2\right)-g_j\left(-x_2^2\right)\right)}{(x_1-x_2)^2} \mathrm{d} x_1 \mathrm{d} x_2.$$
 Similarly, if $E=\gamma_-,$ we have that $\mathbb{E} \mathscr{G}_i=g_i(0)/4$ and  
  $$ \operatorname{Cov}\left(\mathscr{G}_i, \mathscr{G}_j\right)=\frac{1}{4  \pi^2} \int_{\mathbb{R}} \int_{\mathbb{R}}\frac{\left(g_i\left(x_1^2\right)-g_i\left(x_2^2\right)\right)\left(g_j\left(x_1^2\right)-g_j\left(x_2^2\right)\right)}{(x_1-x_2)^2} \mathrm{d} x_1 \mathrm{d} x_2.$$
 \end{enumerate}
\end{cor}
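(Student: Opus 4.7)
The plan is to start from (\ref{eq_meanlocal}) and (\ref{eq_covariancelocal}) and exploit the local behavior of $m$ at $E$ (smooth in the bulk, square-root singularity at the edges) combined with the change of scale $x_i = E + \eta_0 y_i$, under which $f_i(x) = g_i(y)$ and $f_i'(x_i)\,dx_i = g_i'(y_i)\,dy_i$. The central algebraic identity I will use is
$$\beta(x_1, x_2) \;=\; 2\,\partial_{x_1}\partial_{x_2}\,L(x_1, x_2),\qquad L(x_1, x_2) := \log\frac{|m^+(x_1) - m^+(x_2)|^2}{|m^+(x_1) - m^-(x_2)|^2},$$
which follows directly from the definition of $\widehat{\beta}_{\mathfrak{ab}}$ together with $\partial_1\partial_2\log(a(x_1)-a(x_2)) = a'(x_1)a'(x_2)/(a(x_1)-a(x_2))^2$ and $\partial_1\partial_2\log(x_1-x_2) = (x_1-x_2)^{-2}$. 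Two integrations by parts (justified by the compact support of the $f_i$) then convert (\ref{eq_covariancelocal}) into
$$\operatorname{Cov}(\mathscr{G}_i, \mathscr{G}_j) \;=\; -\frac{1}{2\pi^2}\iint f_i'(x_1)\,f_j'(x_2)\,L(x_1, x_2)\,dx_1\,dx_2,$$
a form amenable to analysis once the local behavior of $L$ is known. I will also use the auxiliary identity $\iint g'(y_1)h'(y_2)\log|y_1-y_2|\,dy_1\,dy_2 = -\tfrac{1}{2}\iint(g(y_1)-g(y_2))(h(y_1)-h(y_2))/(y_1-y_2)^2\,dy_1\,dy_2$, obtained by two further integrations by parts using $\partial_{y_1}\partial_{y_2}\log|y_1-y_2| = (y_1-y_2)^{-2}$.

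For part (1), since $m$ is smooth at the bulk point $E$ with $\operatorname{Im} m^+(E)>0$, we have $m^+(E) - m^-(E) \neq 0$, and Taylor expansion yields
$$L(E+\eta_0 y_1,\,E+\eta_0 y_2) \;=\; 2\log|y_1-y_2| + 2\log\eta_0 + C(E) + O(\eta_0),$$
with $C(E) := 2\log|(m^+)'(E)| - \log|m^+(E)-m^-(E)|^2$ independent of $y_1, y_2$. The constant and $\log\eta_0$ pieces integrate to zero against $g_i'(y_1)g_j'(y_2)$ because $\int g_i'(y)\,dy = 0$ by compact support, and what remains is $-\pi^{-2}\iint g_i'(y_1)g_j'(y_2)\log|y_1-y_2|\,dy_1\,dy_2$. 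The auxiliary identity then produces the claimed symmetric-kernel form with prefactor $1/(2\pi^2)$. The mean is $O(\eta_0)$ since $b_1^+ - b_1^-$ is uniformly bounded in a neighborhood of any bulk point, whence $\mathbb{E}\mathscr{G}_i = 0$ in the limit.

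For part (2) at $E = \gamma_+$, Lemma \ref{lem_proofgloballawresult} and an implicit-function argument at the critical point $m_+ := m(\gamma_+)$ of $z = f(m)$ give $m(z) - m_+ = \tilde c\sqrt{z - \gamma_+}\,(1+o(1))$ with some real positive constant $\tilde c$ and the branch chosen so that $\sqrt{\,\cdot\,}$ maps $\mathbb{C}_+$ into $\mathbb{C}_+$. Setting $u_i := \sqrt{y_i + i0}$ (equal to $\sqrt{y_i}$ for $y_i > 0$ and $i\sqrt{|y_i|}$ for $y_i < 0$), the rescaled $L$ tends pointwise to $\log\bigl(|u_1-u_2|^2/|u_1-\bar u_2|^2\bigr)$. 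A short case analysis shows this limit vanishes unless both $y_1, y_2 < 0$, where it equals $2\log|(t_1-t_2)/(t_1+t_2)|$ with $t_i := \sqrt{-y_i}$. Substituting $y_i = -t_i^2$ (so that $g_i'(y_i)\,dy_i$ becomes $G_i'(t_i)\,dt_i$ up to the overall sign, with $G_i(t) := g_i(-t^2)$) and using the oddness of $G_i'$ to extend the $t_i$-integrals from $(0,\infty)^2$ to $\mathbb{R}^2$ converts $\log|(t_1-t_2)/(t_1+t_2)|$ on $(0,\infty)^2$ to $\log|t_1-t_2|$ on $\mathbb{R}^2$, and one more application of the auxiliary identity delivers the stated covariance. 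The case $E = \gamma_-$ is identical up to a reflection (replace $-t^2$ by $t^2$). For the mean, the expansion $b_1(z) = 1/(4(\gamma_+ - z)) + O(|z-\gamma_+|^{-1/2})$ combined with Sokhotski-Plemelj on the simple pole yields a $\delta$-function contribution $\tfrac{1}{4}\delta(x - \gamma_+)$ to $(2\pi i)^{-1}(b_1^+ - b_1^-)$, while the remaining $O(|z-\gamma_+|^{-1/2})$ part contributes $O(\sqrt{\eta_0}) \to 0$ after rescaling, giving $\mathbb{E}\mathscr{G}_i \to \tfrac{1}{4} f_i(\gamma_+) = \tfrac{1}{4} g_i(0)$.

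The main obstacle is the edge analysis, specifically setting up consistent branches of $\sqrt{\,\cdot\,}$ on both sides of $\gamma_\pm$, unifying them through $u = \sqrt{y + i0}$, and tracking the cancellations that make $L$ vanish whenever at least one of $y_1, y_2$ lies outside the spectrum. A secondary technical point is justifying passage to the limit in the rescaled integrals: $\beta$ itself is bounded but its rescaled pointwise limit is non-integrable on the diagonal, so the two integrations by parts producing the locally integrable kernel $L$ must be performed first, after which dominated convergence applies uniformly.
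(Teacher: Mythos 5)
Your argument is correct and reaches the same final formulas, but it is organized quite differently from the paper's proof. The paper works directly with the regularized contour/region integral from the proof of Lemma~\ref{lem_CLTforgeneralfunction}, decomposing $\operatorname{Var}(\mathcal Z)$ into $(\mathcal{K}_2)_{++}+(\mathcal{K}_2)_{--}-(\mathcal{K}_2)_{+-}-(\mathcal{K}_2)_{-+}$ and then each $\widehat\beta$ into its $m'm'/(m-m)^2$ piece $(\mathcal K_{2,1})$ and its $1/(z_1-z_2)^2$ piece $(\mathcal K_{2,2})$; it then determines which pieces survive as $\eta_0\to0$ by a separate analysis in each regime: in the bulk only $(\mathcal K_{2,2})$ for opposite half-planes contributes (via the Taylor expansion (\ref{eq_kerneltaylor})--(\ref{eq_samehalfplanelocalcontrol})), while at the edge the $(\mathcal K_{2,2})$ pieces cancel in the four-term combination and only $(\mathcal K_{2,1})$ survives after the $\psi(z)=\sqrt z$ substitution. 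You instead observe once and for all that $\beta=2\,\partial_{x_1}\partial_{x_2}L$ for the logarithmic kernel $L(x_1,x_2)=\log\bigl(|m^+(x_1)-m^+(x_2)|^2/|m^+(x_1)-m^-(x_2)|^2\bigr)$ (the four $(x_1-x_2)^{-2}$ pieces cancel in the $\pm$-combination), integrate by parts twice to move to the locally integrable kernel $L$, take the pointwise limit of $L$ after rescaling, and close with the standard $H^{1/2}$-seminorm identity $\iint g'h'\log|y_1-y_2|=-\tfrac12\iint(g(y_1)-g(y_2))(h(y_1)-h(y_2))/(y_1-y_2)^2$. This route treats the bulk and edge cases by a single mechanism and makes the $H^{1/2}$ structure of the limiting variance transparent; the paper's route is more computational but stays closer to the regularized integrals coming out of Theorem~\ref{them_CLTresolvent}. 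Both proofs share the symmetrization trick (adding vanishing $g^2$ terms) and the square-root expansion of $m$ at $\gamma_\pm$. Two small points. First, your remark that ``$\beta$ itself is bounded'' is not quite right: the boundary-value $\beta(x_1,x_2)$ carries a $4/(x_1-x_2)^2$ singularity on the diagonal, since the $(x_1-x_2)^{-2}$ subtractions in $\widehat\beta_{+-}$ and $\widehat\beta_{-+}$ are not compensated by a matching divergence in their $m'm'/(m-m)^2$ parts; what is bounded is $\beta$ evaluated at $x_j\pm i\eta$ at fixed $\eta>0$, so as you say the integrations by parts should really be carried out before $\eta\downarrow0$. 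Also, the integration-by-parts justification you offer for the $H^{1/2}$ identity glosses over a distributional subtlety ($\partial_1\partial_2\log|y_1-y_2|$ is a finite-part distribution, not a function), though the identity itself is a standard fact, provable e.g.\ on the Fourier side. Second, your sign for the pole, $b_1(z)=\tfrac14(\gamma_+-z)^{-1}+O(|z-\gamma_+|^{-1/2})$, i.e.\ $-\tfrac14(z-\gamma_+)^{-1}$, is the correct one; the paper's displayed expansion following (\ref{eq_m(z)elementarybound}) carries a sign typo, since with its stated $+\tfrac14(z-\gamma_+)^{-1}$ the Sokhotski--Plemelj computation would yield $-g_i(0)/4$ rather than the asserted $g_i(0)/4$.
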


\subsection{Proof strategies and routine}\label{sec_examproofsrategy}
In this section, we give a sketch of the proof strategies.
We first review the results and methods used in the most related works \cite{Li2021,Yang2020,Zheng2012,ZBY}, and then discuss how to generalize their methods and highlight our novelties.

 We emphasize that all of the aforementioned works focus on the regime that $p$ and $n$ are comparably large, i.e., $\alpha=1$ in (\ref{eq_ratioassumption}), where the support of $\varrho$ is bounded. Under this setup, \cite{Zheng2012,ZBY} established the global CLTs when $\eta_0=1.$ On the technical level, \cite{Zheng2012,ZBY} follow and generalize the strategies developed in \cite{Bai2004,bai2010spectral}. However, to our best knowledge, their strategies require the testing functions to be sufficiently smooth and cannot be applied to study the local CLTs. When $\eta_0=\mathrm{o}(1),$ \cite{Li2021} studied the local CLTs by studying the characteristic functions of the LSS for a single test functions, i.e., $\mathsf{K}=1$ in (\ref{eq_gxdefinition}). Even though it is possible  to generalize \cite{Li2021} to general $\mathsf{K},$  we believe it is a nontrivial task considering the complicated form of characteristic functions for random vectors. Finally, in \cite{Yang2020}, the author studies both the global and local CLTs for LSS of the eigenvectors whose mean part is always zero. We point out that the detailed analysis of the LSS for the eigenvalues and eigenvectors is quite different in terms of their tracial representation. For concreteness, while both statistics can be written as $\operatorname{Tr}f(Q)A,$ the eigenvalue LSS corresponds to a full rank matrix $A=I$ and the eigenvector LSS corresponds to a rank-one matrix $A=\mathbf{v}\mathbf{v}^*$ for some deterministic vector $\mathbf{v}.$ 

To prove the main results, we need to generalize the methods used in these works, especially \cite{Li2021, Yang2020}. Our first step is to decompose the LSS in (\ref{eq_zf}) into a deterministic mean bias part and a random mean zero part in the sense that
\begin{equation}\label{eq_decomposition}
Z_{\eta_0, E}(f_i)=\mathbb{E} \operatorname{Tr} f_i\left(Q\right)-n \int_{\mathbb{R}} f_i(x) \mathrm{d} \varrho(x) +\mathcal{Z}_{{\eta_0},E}(f_i):=\mathcal{M}_{\eta_0,E}(f_i)+ \mathcal{Z}_{{\eta_0},E}(f_i).
\end{equation}        
For the random part $\mathcal{Z}_{{\eta_0},E}(f_i),$ similar to \cite{Yang2020}, with the help of Helffer-Sj{\" o}strand formula (c.f. Lemma \ref{lem_HSformula}), we find that it suffices to understand distributions of $\mathcal{Y}(z_i):=\operatorname{Tr} R_1(z_i)-\mathbb{E} \operatorname{Tr} R_1(z_i), \ R_1(z_i)=(Q-z_i)^{-1}$ at multiple points $z_i=E_i+\mathrm{i} \eta_i \in \mathbb{C},  1 \leq i \leq l.$ We now focus our explanation on $l=2.$ According to Wick's theorem (c.f. Lemma \ref{lem_wicktheorem}), it suffices to prove that for any $l_1, l_2 \in \mathbb{N},$ 
\begin{align}\label{eq_wickl2case}
	\mathbb{E}\left[\mathcal{Y}^{l_1}\left(z_1\right) \mathcal{Y}^{l_2}\left( z_2\right)\right] & =l_1\mathbb{E}\left[\mathcal{Y}^{l_1-1}\left(z_1\right) \mathcal{Y}^{l_2-1}\left( z_2\right)\right]\omega(z_1,z_2) \nonumber \\
&	+(l_2-1)\mathbb{E}\left[\mathcal{Y}^{l_1}\left(z_1\right) \mathcal{Y}^{l_2-2}\left( z_2\right)\right]\omega(z_1,z_2)+\mathrm{o}(1), 
	\end{align}
where $\omega(z_1,z_2)$ encodes the variances and covariance (i.e., $\omega(z_1, z_1), \omega(z_2,z_2)$ and $\omega(z_1,z_2)$ are respectively the variances and covariance.) To prove (\ref{eq_wickl2case}), we utilize the cumulant expansions (c.f. Lemma \ref{lem_cumulantexpansion}) .  Roughly speaking, we need to estimate the terms of the following form 
\begin{equation}\label{eq_controlledterms}
\sum_{i,j} \sum_{k=0}^3 \frac{1}{k!} \kappa_{k+1} \mathbb{E} \frac{\partial^k (\mathcal{Y}(z_1)^{l_1} \mathcal{Y}(z_2)^{l_2-1}(R_1(z_2) \Sigma^{1/2} X)_{ij})}{\partial x_{ij}^k},
\end{equation}
plus a negligible  error term. A key ingredient is the local laws for $Q$ under (\ref{eq_ratioassumption}) (c.f. Theorem \ref{thm_locallaw}) which we will prove in Appendix \ref{appendix_proof54}. With the local laws, the detailed discussion for estimating (\ref{eq_controlledterms}) will be provided in the proof of Lemma \ref{lem:resolventwick}. We point out that in the actual result Theorem \ref{them_CLTresolvent}, in order to unify the regimes $\eta_0 \asymp 1$ and $\eta_0=\mathrm{o}(1),$ we  need to work with $ \eta_i \mathcal{Y}(z_i).$

Armed with the distributions of the resolvents, we can proceed to establish the distributions for $\mathcal{Z}_{\eta_0, E}(f_i)$ using the integral representation as in (\ref{eq_mathcalzformformformform}). The arguments are similar to those in \cite{Li2021, Yang2020} while we need address some challenges in order to get our final results. We now elaborate this as follows. In contrast to our statements in Theorems \ref{thm_mainone} and \ref{thm_maintwo},  the results stated in \cite{Li2021} are in a regional integral form which may not be friendly for numerical calculations. In \cite{Yang2020}, the author simplifies the contour integral by decomposing the extended functions into three parts and each part can be well controlled (see Section 6.2 therein). However, such a strategy fails in the eigenvalue LSS setting under the setup $p \gg n$ as in (\ref{eq_ratioassumption}). Instead, we first use the complex Green's theorem (c.f. Lemma \ref{lem_Greenthm}) to translate the regional integral to a contour integral and then refine the formulas by some finer controls as in (\ref{eq_reducedreducedreducedform}). For the mean part $\mathcal{M}_{\eta_0,E}(f_i),$ the arguments are similar. Again using the Helffer-Sj{\" o}strand formula (c.f. (\ref{eq_meanonereduce}) and (\ref{eq_meanonereduceone})), it suffices to estimate $\mathbb{E}(\operatorname{Tr}(R_1)-p m_p)$ which can be calculated in the same way as $\mathcal{Y}(z).$ The analysis is provided in Section \ref{sec_subproofmaintheorem} and the result is provided in (\ref{eq_biaserrb4integral}).

We point out that there are two advantages of the proof strategies. One is that  it unifies the regimes $\eta_0 \asymp 1$ and $\eta_0=\mathrm{o}(1).$ The other is that it demonstrates why the $\kappa_4$ dependence vanishes for local CLTs. More specifically, by using a simple change of variable (\ref{eq_changeofvariable}), the terms involving $\kappa_4$ can be shown to be dependent on $\eta_0$ explicitly (c.f. (\ref{eq_controlcontrolsmall})) so that they will disappear when $\eta_0=\mathrm{o}(1).$ Finally, to facilitate statistical applications, in Corollaries \ref{cor_globalexamples} and \ref{cor_localexample}, we provide more compact formulas for some important examples. 
Technically, for Corollary \ref{cor_globalexamples}, we use the  change of variables from \cite{ZBY} to further calculate (\ref{eq_meanfunction}) and (\ref{eq_varianceglobal}), and we follow \cite{Li2021} to simplify (\ref{eq_meanlocal}) and (\ref{eq_covariancelocal})  depending on whether $E$ is in the bulk or at edges.

\section{Statistical applications}\label{sec_statapplication}
In this section, we consider statistical applications of our results in Theorems \ref{thm_mainone} and \ref{thm_maintwo} and Corollaries \ref{cor_globalexamples} and \ref{cor_localexample}  to hypothesis testings on large covariance matrices. Especially, we want to test whether the population covariance matrix  $\Sigma$ in (\ref{eq_samplecovaraincematrix}) is equal to a given deterministic positive definite matrix $\Sigma_0.$ This is an important problem in multivariate data analysis \cite{anderson2009introduction} and high dimensional statistics \cite{yao2015large}. Without loss of generality, we can set $\Sigma_0=I$ so that the hypothesis testing problem can be formulated as 
\begin{equation}\label{eq_hypothesistesting}
\mathbf{H}_0: \ \Sigma=I. 
\end{equation}

\subsection{Testing statistics and their asymptotic normality}\label{sec_stattheory}

Inspired by the above discussions in Section \ref{sec_introduction} and motivated by our results in Section \ref{sec_CLTs}, we consider two classes of statistics, the \emph{global statistics} and the \emph{local statistics}.  For the global statistics, following the research line of \cite{ZBY}, for some test functions, we utilize (\ref{eq_zf}) at the edge $\gamma_+$ with $\eta_0=1.$ As mentioned earlier, the successful application of the global statistics requires the prior knowledge of the fourth cumulant of $x_{ij}.$ If such information is unavailable, we can use the local statistics with $\eta_0=\mathrm{o}(1).$ 
%In this setting, for definiteness, we set 
%\begin{equation}
%\eta_0=n^{-\beta}, 0<\beta<1.
%\end{equation}
Recall (\ref{eq_millifier}). For concreteness and computational simplicity, in view of Corollaries \ref{cor_globalexamples} and \ref{cor_localexample}, we propose the following groups of statistics to test $\mathbf{H}_0$ in (\ref{eq_hypothesistesting}): 
\begin{enumerate}
\item  We consider the statistics based on the test function $h_1(x)=x$ in (\ref{eq_gixform}) that  
\begin{equation*}
\mathcal{T}_1^{\mathrm{g}}:=\sum_{j=1}^n h_1(\lambda_j-\phi^{1/2}-\phi^{-1/2}+c), \ \  \mathcal{T}_1^{\mathrm{l}}:=\sum_{j=1}^n h_1 \left( \frac{\lambda_j-\gamma_+}{\eta_0} \right) \mathcal{K} \left( \frac{\lambda_j-\gamma_+}{\eta_0} \right). 
\end{equation*}
%\begin{equation*}
%\mathcal{T}_1^{\mathrm{g}}:=\sum_{j=1}^n(\lambda_j-\phi^{1/2}-\phi^{-1/2}+c), \ \  \mathcal{T}_1^{\mathrm{l}}:=n^{\beta}\sum_{j=1}^n(\lambda_j-\phi^{1/2}-\phi^{-1/2}+c) \mathcal{X}(\gamma_+-\lambda_j \leq n^{-\beta}).
%\end{equation*}
\item  We consider the statistics based on the test function $h_2(x)=x^2$ in (\ref{eq_gixform}) that  
\begin{equation*}
\mathcal{T}_2^{\mathrm{g}}:=\sum_{j=1}^n h_2(\lambda_j-\phi^{1/2}-\phi^{-1/2}+c), \  \mathcal{T}_2^{\mathrm{l}}:= \sum_{j=1}^n h_2\left(\frac{\lambda_j-\gamma_+}{\eta_0} \right) \mathcal{K} \left( \frac{\lambda_j-\gamma_+}{\eta_0} \right).
\end{equation*}
\item We consider the modified log-likelihood ratio test based on test function $h_3(x)=(x+c)-\log (x+c)$, $c>0$ is some fixed small constant, in (\ref{eq_gixform}) that 
\begin{equation*}
\mathcal{T}_3^{\mathrm{g}}:=\sum_{j=1}^n h_3(\lambda_j-\phi^{1/2}-\phi^{-1/2}), \  \mathcal{T}_3^{\mathrm{l}}:= \sum_{j=1}^n h_3\left(\frac{\lambda_j-\gamma_+}{\eta_0} \right) \mathcal{K} \left( \frac{\lambda_j-\gamma_+}{\eta_0} \right).
\end{equation*}
\item We consider the modified John's statistic that 
\begin{equation*}
\mathcal{T}_4^{\mathrm{g}}:=n^2 \frac{\mathcal{T}_2^{\mathrm{g}}}{(\mathcal{T}_1^{\mathrm{g}})^2}, \  \mathcal{T}_4^{\mathrm{l}}:=n^2 \frac{\mathcal{T}_2^{\mathrm{l}}}{(\mathcal{T}_1^{\mathrm{l}})^2}.
\end{equation*}
\end{enumerate} 
 
\begin{rem}
Two remarks are in order. First, in the four groups of test functions, except for the third group to make the logarithm function valid, $c$ can be zero in all other statistics. In fact, our statistics are robust again the choices of $c.$ Second, for the local statistics, we propose statistics based on LSS evaluated at the right-most edge, i.e., $E=\gamma_+$. According to Corollary \ref{cor_localexample}, one can also construct statistics for $E$ exactly in the bulk or at the left-most edge. They will have similar performance to the current ones.  
\end{rem}

Then we establish the asymptotics of the proposed statistics under $\mathbf{H}_0$ in (\ref{eq_hypothesistesting}). For notional convenience, in this section, we let $\varrho_0$ be the measure associated with the null hypothesis (\ref{eq_hypothesistesting}) as in Remark \ref{rem_selfconsistentequation}. For $1 \leq k_1, k_2 \leq 3,$
\begin{equation*}
\mathsf{m}_{k_1}^{\mathrm{l}}= \int_{\mathbb{R}} h_{k_1} \left( \frac{x-\gamma_+}{\eta_0} \right) \mathrm{d} \varrho_0(x), \ \ \mathsf{v}_{k_1, k_2}^{\mathrm{l}}=\frac{1}{4 \pi^2} \int_{\mathbb{R}} \int_{\mathbb{R}}  \frac{ \left( h_{k_1}(-x_1^2) \mathcal{K}(-x_1^2)-h_{k_2}(-x_2^2) \mathcal{K}(-x_2^2) \right)^2}{(x_1-x_2)^2} \mathrm{d} x_1 \mathrm{d} x_2.  
\end{equation*}
If $k_1=k_2,$ we simply write $\mathsf{v}_{k_1}^{\mathrm{l}} \equiv \mathsf{v}_{k_1, k_1}^{\mathrm{l}}.$ Moreover, we define 
\begin{equation*}
\mathsf{v}_4^{\mathrm{l}}:=\left(  \frac{-2(\mathsf{m}_2^{\mathrm{l}})}{(\mathsf{m}_1^{\mathrm{l}})^3} ,\frac{1}{(\mathsf{m}_1^{\mathrm{l}})^2} \right)\left(\begin{array}{cc}
	\mathsf{v}_1^{l}& \mathsf{v}_{1,2}^{\mathrm{l}}\\
	\mathsf{v}_{1,2}^{\mathrm{l}} &\mathsf{v}_2^{l}
\end{array}\right) \left(  \frac{-2(\mathsf{m}_2^{\mathrm{l}})}{(\mathsf{m}_1^{\mathrm{l}})^3} ,\frac{1}{(\mathsf{m}_1^{\mathrm{l}})^2} \right)^*.
\end{equation*}

\begin{cor}\label{coro_statisticalapplications}
Suppose the assumptions of Theorems \ref{thm_mainone} and \ref{thm_maintwo} hold. Then when the null hypothesis $\mathbf{H}_0$ in (\ref{eq_hypothesistesting}) holds, we have that:
\begin{enumerate}
\item For the global statistics, we have that 
\begin{equation}\label{eq_globalnullone}
\mathsf{T}_1^{\mathrm{g}}:=\frac{\mathcal{T}_1^{\mathrm{g}}-n(c-\phi^{-1/2})}{\sqrt{2+\kappa_4}} \simeq \mathcal{N}(0,1), \  \mathsf{T}_2^{\mathrm{g}}:=\frac{\mathcal{T}_2^{\mathrm{g}}-n(1+c^2-2c \phi^{-1/2}+\phi^{-1})-1-\kappa_4}{\sqrt{4+4c^2(\kappa_4+2)}}  \simeq \mathcal{N}(0,1),
\end{equation}
\begin{equation}\label{eq_globalnullthree}
\mathsf{T}_4^{\mathrm{g}}:= \frac{\mathcal{T}_4^{\mathrm{g}}-n(1+(c-2\phi^{-1/2})^{-2})-(\kappa_4+1)(c-2\phi^{-1/2})^2}{\sqrt{c^{-4} \left(4+4(\kappa_4+2)/c^2)^2 \right) }} \simeq \mathcal{N}(0,1), 
\end{equation}
and for $c=t+t^{-1}, t>1$
\begin{equation}\label{eq_globalnulltwo}
\mathsf{T}_3^{\mathrm{g}}:=\frac{\mathcal{T}_3^{\mathrm{g}}-n \left[(c-\phi^{-1/2})+\int \log (x-\phi^{1/2}-\phi^{-1/2}+c)\mathrm{d}\varrho_0(x) \right]-\frac{1}{2}\log(1-t^{-2})+\kappa_4/2t^2}{\sqrt{(2+\kappa_4)\left(1-2/t\right)+ 2\left(\log t-\log(t-1/t)\right)+\kappa_4/t^2}} \simeq \mathcal{N}(0,1). 
\end{equation} 
\item For the local statistics, we have that for $1 \leq \ell \leq 3,$
\begin{equation}\label{eq_locallnullone}
\mathsf{T}_\ell^{\mathrm{l}}:=\frac{\mathcal{T}_\ell^{\mathrm{l}}-n\mathsf{m}_\ell^{\mathrm{l}}-h_\ell(0)/4}{\sqrt{\mathsf{v}_\ell^{\mathrm{l}}}} \simeq \mathcal{N}(0,1),
\end{equation}
and 
\begin{equation}\label{eq_locallnulltwo}
\mathsf{T}_4^{\mathrm{l}}:=\frac{\mathcal{T}_4^{\mathrm{l}}-n(\mathsf{m}_2^{\mathrm{l}}+h_2(0)/4n)/(\mathsf{m}_1^{\mathrm{l}}+h_1(0)/4n)^2}{\sqrt{\mathsf{v}_4^{\mathrm{l}}}} \simeq \mathcal{N}(0,1). 
\end{equation}
\end{enumerate}
\end{cor}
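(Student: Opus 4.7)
The plan is to recognize each statistic as a linear spectral statistic, or a smooth function of two such, and apply the joint CLT of Theorem~\ref{thm_mainone} in the global case and of Theorem~\ref{thm_maintwo} at $E=\gamma_+$ in the local case, handling ratios by the delta method. Under $\mathbf{H}_0$ we have $\Sigma=I$, so the explicit formulas of Corollaries~\ref{cor_globalexamples}(2) and \ref{cor_localexample}(2) apply directly. For $\mathcal{T}_1^{\mathrm{g}}$ and $\mathcal{T}_2^{\mathrm{g}}$ I would set $E=\phi^{1/2}+\phi^{-1/2}-c$ and $f_k(x)=h_k(x-E)\mathcal{K}(x-E)$ with the mollifier $\mathcal{K}$ of (\ref{eq_millifier}) chosen wide enough to be invisible to every $\lambda_j$ (Remark~\ref{rem_mainresultone}), so that $\mathcal{T}_k^{\mathrm{g}}=Z_{1,E}(f_k)+n\int f_k\,\mathrm{d}\varrho_0$. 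The deterministic integrals come from (\ref{eq_generalmeanform}) and the asymptotic mean/variance pairs from Corollary~\ref{cor_globalexamples}(2), which gives directly the two standardizations in (\ref{eq_globalnullone}).

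For $\mathcal{T}_3^{\mathrm{g}}$ I would decompose $h_3(x)=(x+c)-\log(x+c)$ with $c=t+t^{-1}$, so that $\mathcal{T}_3^{\mathrm{g}}$ is a linear combination of the LSS already treated and the log LSS of Corollary~\ref{cor_globalexamples}(2). Theorem~\ref{thm_mainone} supplies the joint asymptotic normality, the two marginal variances are read off from Corollary~\ref{cor_globalexamples}(2), and the cross-covariance between the linear and log test functions is obtained by specializing (\ref{eq_varianceglobal}) via the contour substitution $\xi \leftrightarrow m$ underlying the proof of Corollary~\ref{cor_globalexamples}. Combining the three contributions as $\operatorname{Var}(h_1)+\operatorname{Var}(\log)-2\operatorname{Cov}(h_1,\log)$, together with summing the individual means, reproduces the centering and denominator of (\ref{eq_globalnulltwo}); the centering term $n\int\log(x-\phi^{1/2}-\phi^{-1/2}+c)\,\mathrm{d}\varrho_0(x)$ is kept as an explicit deterministic quantity. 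For the modified John's statistic $\mathcal{T}_4^{\mathrm{g}}$ I would apply the delta method to $(x,y)\mapsto n^2 y/x^2$ at the asymptotic mean vector of $(\mathcal{T}_1^{\mathrm{g}},\mathcal{T}_2^{\mathrm{g}})$, with Theorem~\ref{thm_mainone} providing the bivariate Gaussian limit and the cross-covariance $\operatorname{Cov}(f_1,f_2)$ obtained by the same contour calculation; the Jacobian computation then yields (\ref{eq_globalnullthree}).

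The local half is structurally parallel. I would fix $\eta_0=n^{-\tau_1}$ for a small $\tau_1>0$ satisfying the hypotheses of Theorem~\ref{thm_maintwo} and take $E=\gamma_+$. Corollary~\ref{cor_localexample}(2) then gives $\mathbb{E}\mathscr{G}_\ell=g_\ell(0)/4=h_\ell(0)/4$ (since the mollifier equals $1$ at zero), and the covariance kernel integrated against the rescaled $h_{k_1},h_{k_2}$ is exactly $\mathsf{v}_{k_1,k_2}^{\mathrm{l}}$; substitution yields (\ref{eq_locallnullone}) for $\ell=1,2,3$. For $\mathsf{T}_4^{\mathrm{l}}$ I would apply the delta method to $(x,y)\mapsto y/x^2$ at $(\mathsf{m}_1^{\mathrm{l}}+h_1(0)/(4n),\mathsf{m}_2^{\mathrm{l}}+h_2(0)/(4n))$, using the bivariate local CLT with covariance $(\mathsf{v}_{k_1,k_2}^{\mathrm{l}})$; the resulting quadratic form is exactly $\mathsf{v}_4^{\mathrm{l}}$ in (\ref{eq_locallnulltwo}). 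The main obstacle I anticipate is the off-diagonal covariance evaluations $\operatorname{Cov}(f_1,f_2)$ and $\operatorname{Cov}(f_1,\log)$ in the global case: Corollary~\ref{cor_globalexamples} only tabulates the diagonal variances, so one must redo the same contour substitution on the bivariate form (\ref{eq_varianceglobal}) with two distinct test functions. Mollifier removal, the collection of constant terms, and the two delta-method applications are routine.
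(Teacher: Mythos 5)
Your proposal follows essentially the same route as the paper's proof: both read off the mean--variance pairs from Corollary~\ref{cor_globalexamples}(2) and Corollary~\ref{cor_localexample}(2), use~(\ref{eq_generalmeanform}) for the deterministic centerings, and apply the Delta method with $\mathfrak{g}(y_1,y_2)=y_2/y_1^2$ to the two John-type ratio statistics. The off-diagonal covariance you flag as the main gap is precisely the computation the paper carries out explicitly by the contour substitution of Section~\ref{proofofcorollary34} (obtaining $\mathsf{V}(f_1,f_2)=4c+2c\kappa_4$ for the quadratic/linear pair and the analogous linear/log cross-term entering the stated variance of $\mathsf{T}_3^{\mathrm{g}}$), so your anticipated obstacle is resolved exactly as you propose.
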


Before concluding this section, we conduct a few numerical simulations to illustrate the accuracy of our proposed statistics in Figures \ref{fig_typeonefigone} and \ref{fig_typeonefigtwo}. We can conclude that all the proposed statistics are accurate and robust against various different choices of parameters.

\begin{figure}[!ht]
\hspace*{-0.5cm}
\subfigure{\includegraphics[width=4.3cm,height=4cm]{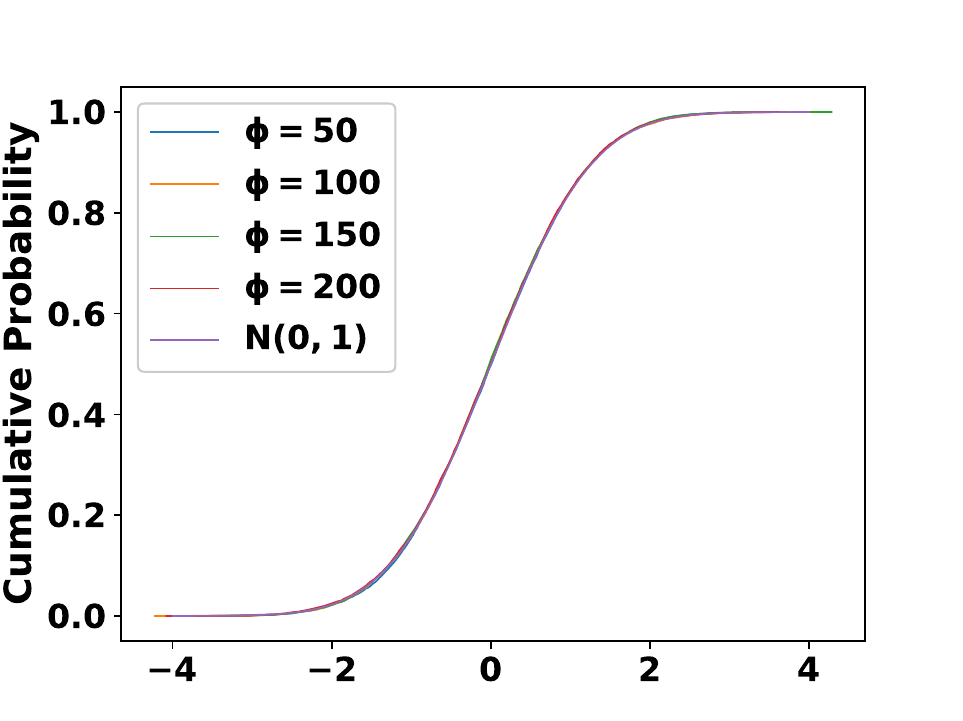}}
\hspace*{-0.5cm}
\subfigure{\includegraphics[width=4.3cm,height=4cm]{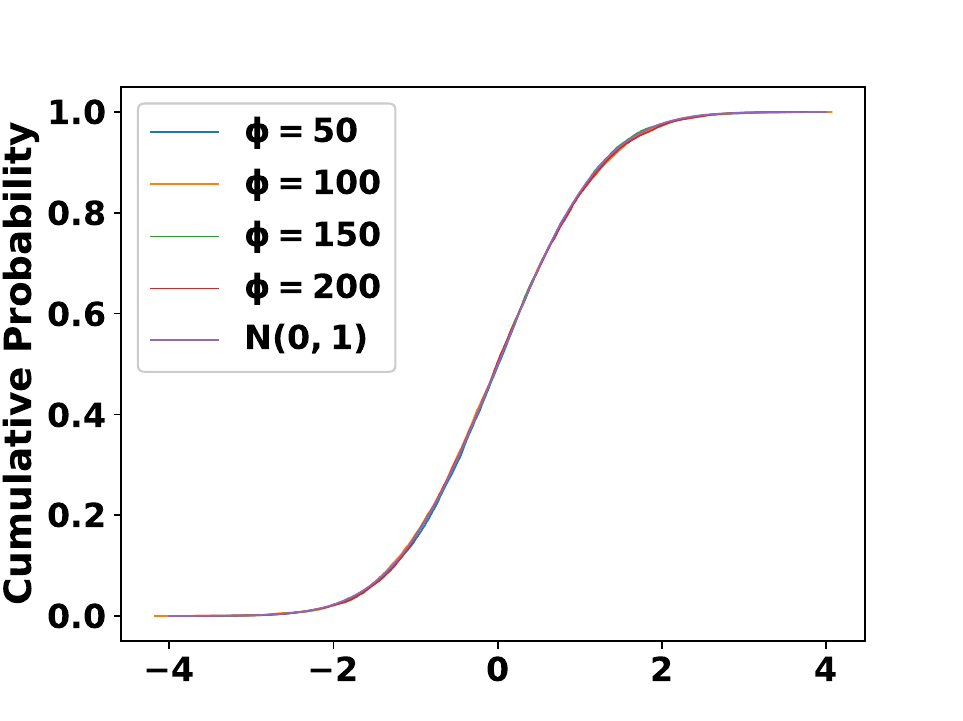}}
\hspace*{-0.2cm}
\subfigure{\includegraphics[width=4.3cm,height=4cm]{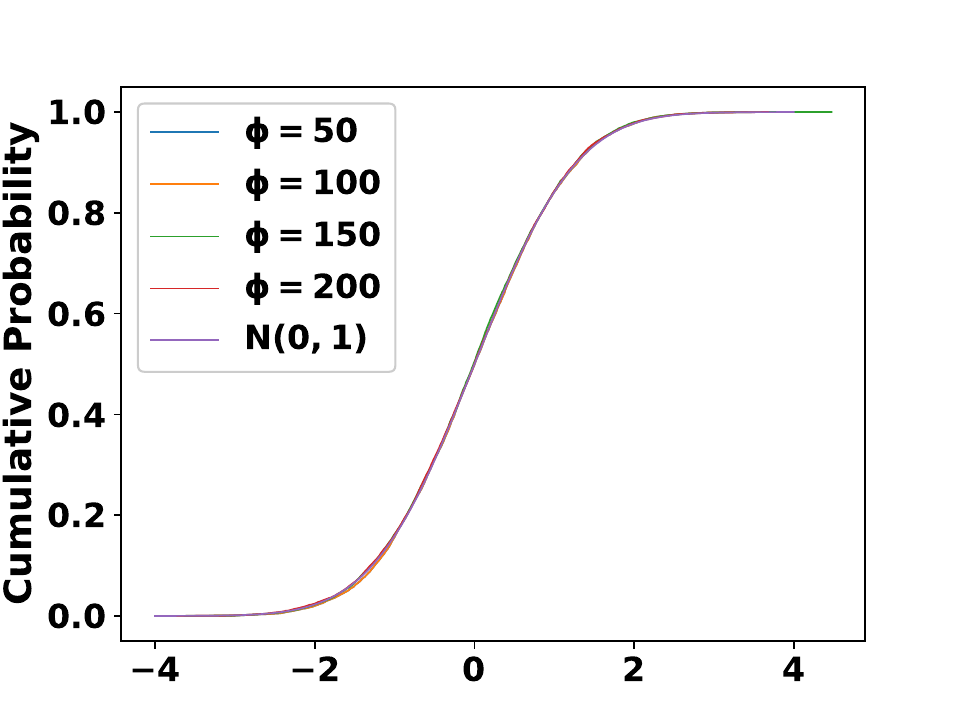}}
\hspace*{-0.2cm}
\subfigure{\includegraphics[width=4.3cm,height=4cm]{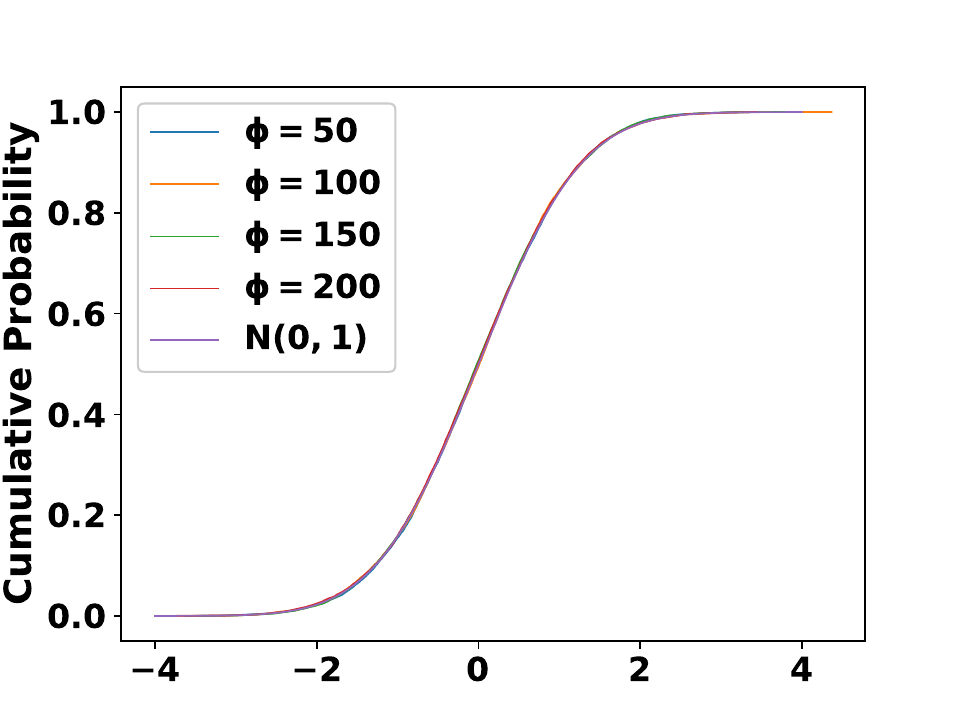}}
%\begin{subfigure}{0.3\textwidth}
%\includegraphics[width=8cm,height=5cm]{revfig/app1san2r3.eps}
%\caption{Accuracy of  $\mathbb{T}$ in (\ref{eq_ona intro}).}
%\end{subfigure}
%\hspace*{4cm}
%\begin{subfigure}{0.3\textwidth}
%\includegraphics[width=8cm,height=5cm]{revfig/app1san2r3new.eps}
%\caption{Accuracy of  $\mathbb{T}_{r_0}$ in (\ref{eq_ona intro1}).}
%\end{subfigure}
\caption{ECDFs for global testing statistics with different $\phi$'s. From left to right are: $\mathsf{T}_1^\mathrm{g}$ with $c=3$,$\mathsf{T}_2^\mathrm{g}$ with $c=3,$ $\mathsf{T}_3^\mathrm{g}$ with $c=3+3^{-1}$ and $\mathsf{T}_4^\mathrm{g}$ with $c=3.$ Here $x_{ij} \sim \mathcal{N}(0, (pn)^{-1/2})$,  $n=400$ and the results are reported based on 1,000 repetitions.  }\label{fig_typeonefigone}
\end{figure}

\begin{figure}[!ht]
\hspace*{-0.5cm}
\subfigure{\includegraphics[width=4.3cm,height=4cm]{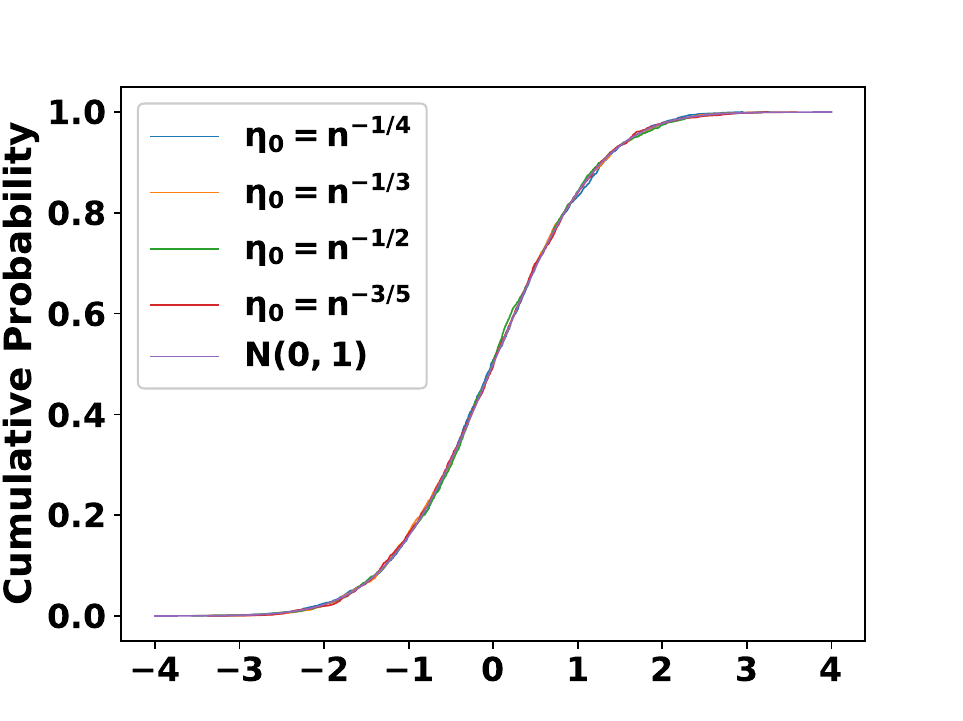}}
\hspace*{-0.5cm}
\subfigure{\includegraphics[width=4.3cm,height=4cm]{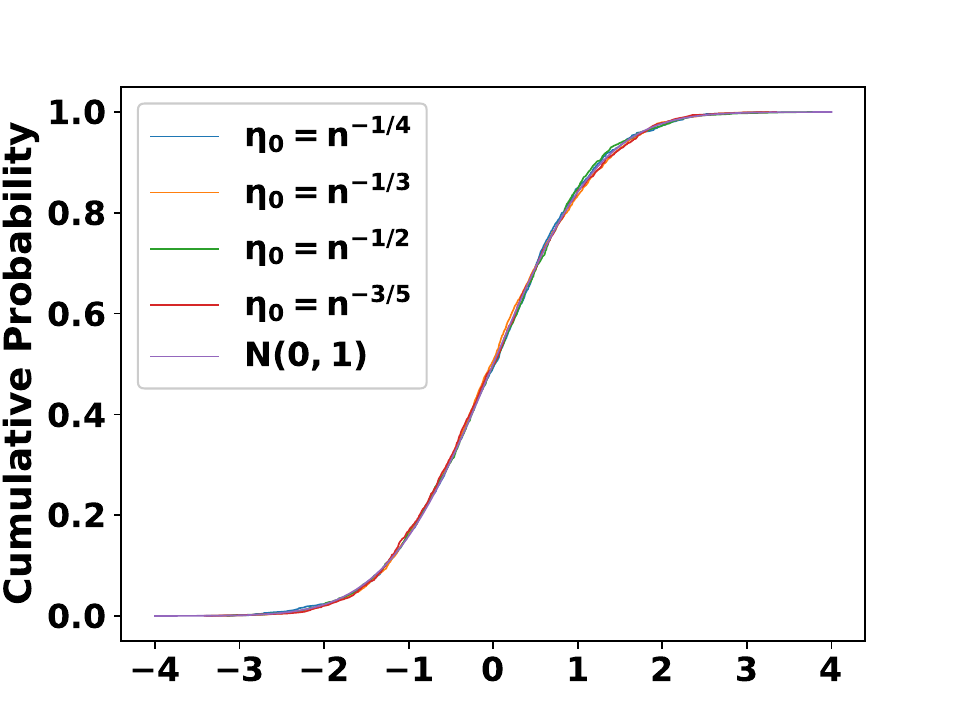}}
\hspace*{-0.2cm}
\subfigure{\includegraphics[width=4.3cm,height=4cm]{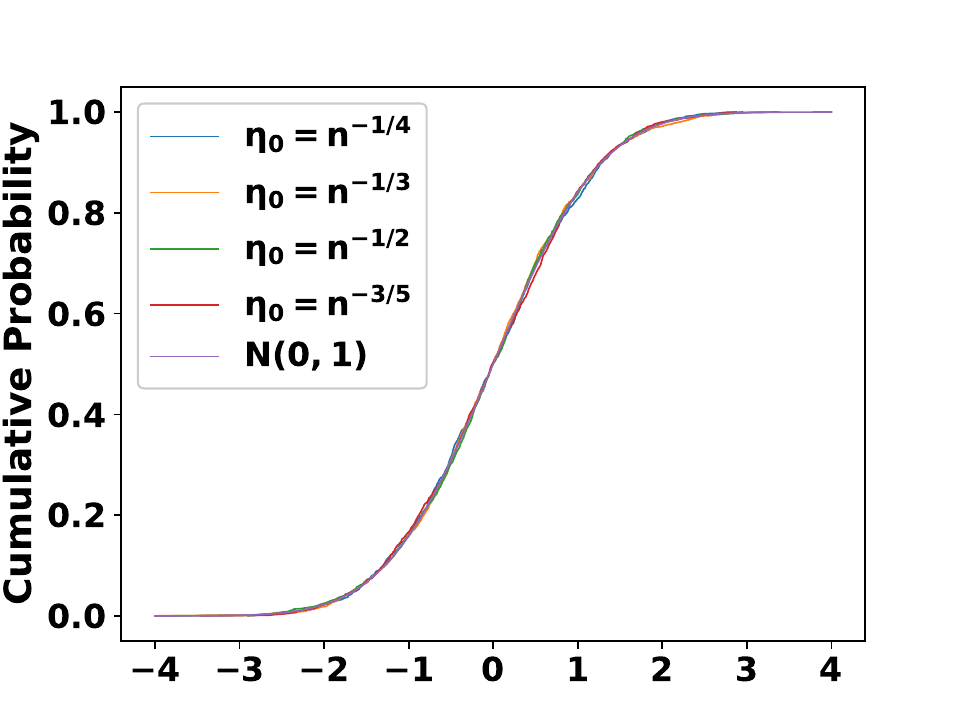}}
\hspace*{-0.2cm}
\subfigure{\includegraphics[width=4.3cm,height=4cm]{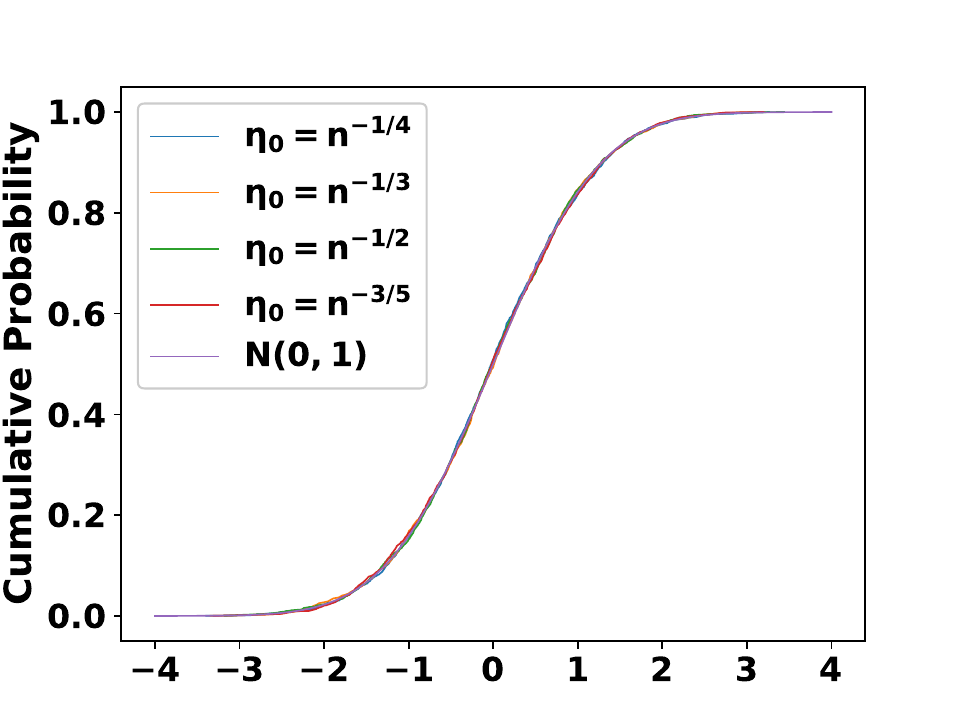}}
%\begin{subfigure}{0.3\textwidth}
%\includegraphics[width=8cm,height=5cm]{revfig/app1san2r3.eps}
%\caption{Accuracy of  $\mathbb{T}$ in (\ref{eq_ona intro}).}
%\end{subfigure}
%\hspace*{4cm}
%\begin{subfigure}{0.3\textwidth}
%\includegraphics[width=8cm,height=5cm]{revfig/app1san2r3new.eps}
%\caption{Accuracy of  $\mathbb{T}_{r_0}$ in (\ref{eq_ona intro1}).}
%\end{subfigure}
\caption{ECDFs for local testing statistics with different $\eta_0$'s. From left to right are: $\mathsf{T}_1^\mathrm{l}$,$\mathsf{T}_2^\mathrm{g}$, $\mathsf{T}_3^\mathrm{g}$ with $c=3+3^{-1}$ and $\mathsf{T}_4^\mathrm{g}$. Here $x_{ij} \sim \mathcal{N}(0, (pn)^{-1/2})$,  $\phi=100, n=400$ and the results are reported based on 1,000 repetitions.  }\label{fig_typeonefigtwo}
\end{figure}

\subsection{Power analysis of the proposed statistics}
In this section, we analyze the power of statistics proposed in Section \ref{sec_stattheory} under the alternative that
\begin{equation}\label{alternative}
\mathbf{H}_a: \Sigma \neq I.
\end{equation} 
For simplicity, we denote the ESD of $\Sigma$ under (\ref{alternative}) as $\pi_a$ and for $s=1,2,$
\begin{equation}\label{eq_momentalternative}
\mathfrak{m}_s(\pi_a)=\int x^s \pi_a(\mathrm{d}x). 
\end{equation}
Let $z_{1-\alpha/2}$ be the $1-\alpha/2$ quantitle of the $\mathcal{N}(0,1)$ random variable. The power of our proposed statistics is summarized in the following corollary.

\begin{cor}\label{cor_poweranalysis}
Suppose the assumptions of Theorems \ref{thm_mainone} and \ref{thm_maintwo} hold. Given some type I error rate $\alpha,$ under (\ref{alternative}), we have that  
\begin{equation*}
\lim_{n \rightarrow \infty} \mathbb{P} \left(|\mathsf{T}_\ell^{\mathsf{sy}}|>z_{1-\alpha/2} \right)=1,
\end{equation*}
where $1 \leq \ell \leq 4$ and $\mathsf{sy}=\mathrm{g}, \mathrm{l}$ are defined in Corollary \ref{coro_statisticalapplications}, if the following conditions are satisfied for some divergent sequence $C_{\alpha} \equiv C_{\alpha}(n) \rightarrow \infty$: 
\begin{enumerate}
\item For the global statistics $\mathsf{sy}=\mathrm{g},$ when $\ell=1,3,$ we require 
\begin{equation}\label{eq_conditiononeoeoneonenenen}
|\mathfrak{m}_1(\pi_a)-1|>C_{\alpha}n^{-1} \phi^{-1/2},
\end{equation}
and when $\ell=2,4$
\begin{equation}\label{eq_conditiontwo}
\left((\mathfrak{m}_1(\pi_{a})-1)^2{\phi}+c|\mathfrak{m}_1(\pi_{a})-1|\phi^{1/2}+|\mathfrak{m}_2(\pi_{a})^2-1|\right)>C_{\alpha} n^{-1}.
\end{equation}
\item For the local statistics $\mathsf{sy}=\mathrm{l},$ we require (\ref{eq_conditiontwo}) for all $\ell=1,2,3,4.$ 
\end{enumerate}
\end{cor}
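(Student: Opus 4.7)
The main observation is that Theorems \ref{thm_mainone} and \ref{thm_maintwo}, being established under the general Assumption \ref{assum:XSigma}, apply equally under $\mathbf{H}_a$ with the deformed MP law $\varrho_a$ associated to $\pi_a$ in place of $\varrho_0$. Hence under $\mathbf{H}_a$, for each test function $f_\ell$ used in Section \ref{sec_stattheory},
\begin{equation*}
\sum_{j=1}^n f_\ell(\lambda_j) \;=\; n \int f_\ell \, \mathrm{d}\varrho_a \;+\; \mathcal{M}_a(f_\ell) \;+\; \mathcal{G}_a(f_\ell),
\end{equation*}
where $\mathcal{M}_a(f_\ell)=O(1)$ is the mean bias predicted by (\ref{eq_meanfunction}) or (\ref{eq_meanlocal}) with $\pi=\pi_a$, and $\mathcal{G}_a(f_\ell)$ is asymptotically Gaussian with $O(1)$ variance. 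Since each $\mathsf{T}_\ell^{\mathsf{sy}}$ is centered and normalized using the \emph{null} quantities, the difference between that null centering and the actual $\mathbf{H}_a$-mean produces a deterministic shift of size $n \cdot \bigl|\int f_\ell\, \mathrm{d}\varrho_a - \int f_\ell\,\mathrm{d}\varrho_0\bigr|$, while the normalization remains $\asymp 1$. The power analysis therefore reduces to showing that this standardized shift diverges, which yields $\mathbb{P}(|\mathsf{T}_\ell^{\mathsf{sy}}|>z_{1-\alpha/2})\to 1$ by a routine application of Chebyshev's inequality together with the $\mathbf{H}_a$ CLT.

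The second step is to compute the shift for each statistic in terms of $\mathfrak{m}_1(\pi_a)-1$ and $\mathfrak{m}_2(\pi_a)-1$. For the global statistics I use the moment identities
\begin{equation*}
\int x\,\mathrm{d}\varrho(x) = \phi^{1/2}\mathfrak{m}_1(\pi), \qquad \int x^2\,\mathrm{d}\varrho(x) = \phi\,\mathfrak{m}_1(\pi)^2 + \mathfrak{m}_2(\pi),
\end{equation*}
obtained by matching coefficients in the large-$|z|$ expansion of (\ref{eq:lsd}). For $\mathcal{T}_1^{\mathrm{g}}$ the shift is then $n\phi^{1/2}(\mathfrak{m}_1(\pi_a)-1)$, which immediately gives (\ref{eq_conditiononeoeoneonenenen}). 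For $\mathcal{T}_2^{\mathrm{g}}$, after expanding $(x-D)^2$ with $D=\phi^{1/2}+\phi^{-1/2}-c$, the shift becomes $n\bigl[\phi(\mathfrak{m}_1(\pi_a)-1)^2 + 2c\phi^{1/2}(\mathfrak{m}_1(\pi_a)-1) + (\mathfrak{m}_2(\pi_a)-1)\bigr] + O(|\mathfrak{m}_1(\pi_a)-1|)$, which matches (\ref{eq_conditiontwo}) since $|\mathfrak{m}_2^2-1|$ is comparable to $|\mathfrak{m}_2-1|$ under the boundedness in Assumption \ref{assum:XSigma}. The modified John's statistic $\mathcal{T}_4^{\mathrm{g}}$ is handled by the delta method applied to $(\mathcal{T}_1^{\mathrm{g}},\mathcal{T}_2^{\mathrm{g}})$, and the log statistic $\mathcal{T}_3^{\mathrm{g}}$ is treated by splitting $h_3(x)=(x+c)-\log(x+c)$: the linear part falls under the previous calculation, while the logarithmic part is handled via a contour/Stieltjes-transform representation and a first-order perturbation of $\pi_a$ around $\delta_1$.

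For the local statistics the shift computation is more delicate because the test function is supported in a window of width $\eta_0$ around $\gamma_+$. Perturbing the critical-point equation for $f$ in (\ref{eq:discrete}) around $\pi=\delta_1$ shows that the right edge $\gamma_+(\pi_a)$ of $\varrho_a$ shifts from $\gamma_+$ by an amount of order $\phi^{1/2}(\mathfrak{m}_1(\pi_a)-1) + O(\mathfrak{m}_2(\pi_a)-1)$. Combining this edge shift with the square-root density guaranteed by Lemma \ref{lem_proofgloballawresult} and substituting $y=(x-\gamma_+)/\eta_0$ in the localized integrand, the bias $n\int f_\ell\,\mathrm{d}(\varrho_a-\varrho_0)$ is shown to be of order $n\bigl[\phi(\mathfrak{m}_1(\pi_a)-1)^2 + c\phi^{1/2}|\mathfrak{m}_1(\pi_a)-1| + |\mathfrak{m}_2(\pi_a)-1|\bigr]$, producing (\ref{eq_conditiontwo}) after division by the $O(1)$ null variance identified in Corollary \ref{coro_statisticalapplications}.

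The main obstacle is this last bias computation for the local statistics: one must quantify both the shift $\gamma_+(\pi_a)-\gamma_+$ and the behaviour of $\varrho_a$ inside the narrow window $[\gamma_+-O(\eta_0),\gamma_++O(\eta_0)]$ uniformly over small perturbations of $\pi_a$ away from $\delta_1$. This requires an asymptotic analysis of (\ref{eq:lsd}) and (\ref{eq:discrete}) near the edge that extends Lemma \ref{lem_proofgloballawresult} perturbatively in $\pi_a-\delta_1$, keeping track of the $\phi$-dependence of each contribution so that the resulting lower bound matches (\ref{eq_conditiontwo}) sharply. Once this analysis is in hand, everything else—passing from the shift to the divergence of the standardized statistic, and concluding via the $\mathbf{H}_a$ CLT—is straightforward.
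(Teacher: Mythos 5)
Your proposal follows the same route as the paper: write the test statistic under $\mathbf{H}_a$ as a CLT fluctuation around the $\varrho_a$-centering plus a deterministic shift coming from the mismatch between the null centering and $\int f_0\,\mathrm{d}\varrho_a$, then show that shift, divided by the $O(1)$ standard deviation, diverges. The paper formalizes this as a decomposition $T_1+T_2+T_3+T_4$ with $T_1,T_2,T_3=O(1)$ and $T_4:=-n\bigl(\int f_0\,\mathrm{d}\varrho_a-\int f_0\,\mathrm{d}\varrho_0\bigr)/\sqrt{\mathrm{v}_a}$, and your moment identities $\int x\,\mathrm{d}\varrho=\phi^{1/2}\mathfrak{m}_1(\pi)$ and $\int x^2\,\mathrm{d}\varrho=\phi\,\mathfrak{m}_1(\pi)^2+\mathfrak{m}_2(\pi)$ reproduce the paper's leading-order expansions for the global $\ell=1,2$ cases (indeed your $\mathfrak{m}_2$ is the clean form; the paper's $\mathfrak{m}_2(\pi_a)^2$ is comparable under Assumption \ref{assum:XSigma} but looks like a slip). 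Both you and the paper carry out explicitly only $\mathsf{sy}=\mathrm{g}$, $\ell=1,2$.

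One substantive caution about the part you flag as the main obstacle. For the local statistics the paper merely asserts the other cases are ``similar,'' but they are not similar in the sense you sketch: because $f_\ell$ is supported in a window of width $\eta_0\to 0$ around $\gamma_+$, the shift $\int f_\ell\,\mathrm{d}(\varrho_a-\varrho_0)$ carries an explicit $\eta_0$-dependence. A short computation using the square-root edge density from Lemma \ref{lem_proofgloballawresult} gives $\int f_\ell\,\mathrm{d}\varrho_0\asymp\eta_0^{3/2}$ and, when the edge shift $\Delta:=\gamma_+(\pi_a)-\gamma_+$ satisfies $|\Delta|\ll\eta_0$, $\int f_\ell\,\mathrm{d}(\varrho_a-\varrho_0)\asymp\Delta\,\eta_0^{1/2}$ rather than $\asymp\Delta$. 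So your claim that the local bias is of order $n\bigl[\phi(\mathfrak{m}_1-1)^2+c\phi^{1/2}|\mathfrak{m}_1-1|+|\mathfrak{m}_2-1|\bigr]$ with no $\eta_0$ factor is not supported by the perturbative edge analysis as you describe it; a correct derivation would have to track the $\eta_0^{1/2}$ factor and thus could only match (\ref{eq_conditiontwo}) if the local variance $\mathsf{v}_\ell^{\mathrm{l}}$ carries a compensating $\eta_0$-dependence, or if the condition for $\mathsf{sy}=\mathrm{l}$ is in fact stronger than stated. You are right to call this the main unresolved step; just be aware that ``producing (\ref{eq_conditiontwo}) after division by the $O(1)$ null variance'' is not yet established and likely requires keeping the explicit $\eta_0$-scaling throughout.
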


We conclude that our proposed statistics are powerful even for weak local alternatives once (\ref{eq_conditiononeoeoneonenenen}) or (\ref{eq_conditiontwo}) are satisfied. These conditions can be easily satisfied by some commonly used alternatives. For example, we can consider the spiked covariance models in the sense that
\begin{equation*}
\mathbf{H}_a: \Sigma= \epsilon I_r+I,
\end{equation*}
where $I_r$ is a $r \times r$ identity matrix. For this case, to make (\ref{eq_conditiononeoeoneonenenen}) valid, we only require that $\epsilon r>C_{\alpha} p^{-1/2} n^{-1/2}.$ For another instance,  we can consider the covariance matrices with different clusters in the sense that 
\begin{equation}\label{eq_usedalternative}
\mathbf{H}_a: \Sigma= a \delta_{1+\epsilon}+(1-a) \delta_1. 
\end{equation}
In this setting, to satisfy (\ref{eq_conditiononeoeoneonenenen}), we only require $a \epsilon>C_{\alpha} p^{-1/2} n^{-1/2}.$ The requirements are weaker than those used in the most relevant work \cite{QiuPreprint} which utilizes global statistics based on the normalized matrix (\ref{eq)A}). This partially explains why we prefer (\ref{eq_samplecovaraincematrix}) to (\ref{eq)A}).

Before concluding this section, we conduct a few numerical simulations to illustrate the powerfulness of our proposed statistics in Figures \ref{fig_power11} under the alternative in (\ref{eq_usedalternative}). We can see that our proposed statistics are very powerful even for small values of $\epsilon.$
\begin{figure}[!ht]
\centering
\subfigure[Global testing statistics.]{\label{fig:a}\includegraphics[width=7.5cm,height=5cm]{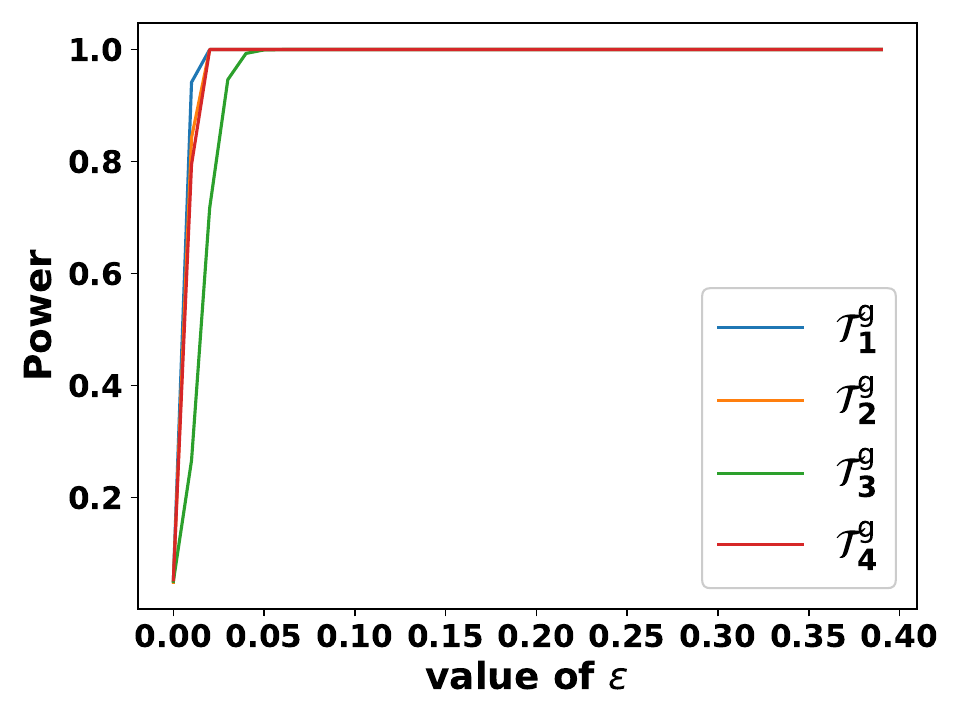}}
\hspace*{0.2cm}
\subfigure[Local testing statistics.]{\label{fig:b}\includegraphics[width=7.5cm,height=5cm]{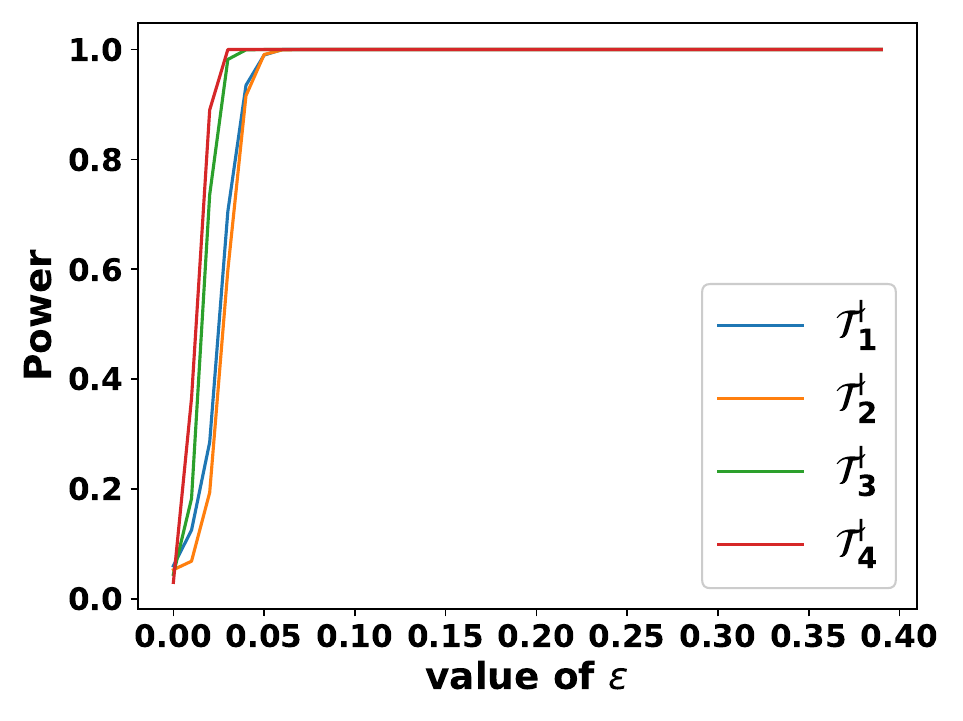}}
%\begin{subfigure}{0.3\textwidth}
%\includegraphics[width=8cm,height=5cm]{revfig/app1san2r3.eps}
%\caption{Accuracy of  $\mathbb{T}$ in (\ref{eq_ona intro}).}
%\end{subfigure}
%\hspace*{4cm}
%\begin{subfigure}{0.3\textwidth}
%\includegraphics[width=8cm,height=5cm]{revfig/app1san2r3new.eps}
%\caption{Accuracy of  $\mathbb{T}_{r_0}$ in (\ref{eq_ona intro1}).}
%\end{subfigure}
\caption{Simulated power for proposed statistics. Here we use Gaussian distributions for $x_{ij},$ $a=0.5$ in (\ref{eq_usedalternative}), $\phi=100, n=400$ and the other parameters $c$ and $\eta_0$ are chosen in the same way as in the captions of Figures \ref{fig_typeonefigone} and \ref{fig_typeonefigtwo}. The simulations are reported based on 1,000 repetitions.}
\label{fig_power11}
\end{figure}

\subsection{Comparison with existing methods}
In this section, we compare our proposed methods with  several existing ones in the literature. For concreteness, we focus on 
	\cite{Nagao1973,LedoitWolf2002,Srivastava2005,Chen2010,Srivastava2011,Fisher2012,BaiJiangYaoZheng2009,Ahmad2015,QiuPreprint}. 
	
To evaluate the performance of these tests, and to discern the trade-off between Type I and Type II error rates, we employ the Receiver Operating Characteristic (ROC) curve and the Area Under Curve (AUC) score. The ROC curve is a plot where the $x$-axis represents the Type I error rate, or the probability of falsely rejecting a true null hypothesis (also known as a false positive rate), and the $y$-axis represents the power of the statistical test, which is the true positive rate (the ability to correctly accept an alternative hypothesis when it is true). The AUC score then quantifies the overall ability of a test to maximize the true positive rate while minimizing the Type I error rate. It does this by measuring the total area under the ROC curve - the larger the area, the better the test is at balancing these two types of errors.

%	
%	 We can then use the ROC curve to find the test which has higher power (true positive rate) when the size is at a fixed level or observe under the same level of power, or  which tests potentially suffer from higher sizes, and use AUC to quantify the overall ability of the tests to achieve a higher power while maintaining a reasonable level of size. 

%	 To generate this ROC curve and calculate the AUC score,  we simulate large amount of samples (500) from both the null and alternative hypothesis.  We then perform the tests on these samples at various Type I error levels. By knowing the true labels of the samples, we can evaluate the true positve rates, false positive rates at different levels and generate the ROC curves.

We report our simulation results in Figures \ref{fig_ROCglobal} and \ref{fig_ROClocal} below. In addition to the aforementioned nine methods in the literature, we also study our proposed statistics. For concreteness, we use  $\mathsf{T}_1^{\mathrm{g}}$ and $\mathsf{T}_1^{\mathrm{l}}$ with $c=0$ and $\eta_0=n^{-1/4},$ denoted as PM1 and PM2 in the simulations. For the alternative, we consider  (\ref{eq_usedalternative}) with $a=0.5.$

As shown in Figure \ref{fig:6a}, when $p\gg n$ and $\epsilon=0.03$ is small, only three methods demonstrate significant power while maintaining appropriate size control. These methods are: our methods PM1, PM2 and the method suggested by \cite{QiuPreprint} (Qiu21) which coincides with our PM1 in this particular scenario. The remaining tests perform at a level that is largely indistinguishable from a random guess, indicated by an AUC score around $0.5$. Among the tests compared with unsatisfactory performance, \cite{Nagao1973} (Nagao73)  is derived in the low dimensional regime when $p$ is fixed. Moreover, \cite{LedoitWolf2002,BaiJiangYaoZheng2009,Fisher2012,Srivastava2005} denoted as Wolf02, CLRT09, Fisher12, Srivastava05, respectively in the figures, are derived in the comparable regime $p \asymp n$. Further,  \cite{Chen2010, Srivastava2011, Ahmad2015} denoted as Chen10,  Srivastava11, Ahmad15 in the figures, respectively require stronger alternatives for powerfulness.  When $\epsilon$ increases to 0.3, as shown in Figure \ref{fig:6b}, more methodologies like \cite{Srivastava2011,Chen2010,Fisher2012,BaiJiangYaoZheng2009} show significant improvements while \cite{QiuPreprint} and our two proposed methods are still among the best. 

We emphasize again, except for our PM2, all the other methods require prior knowledge of $\kappa_4.$	In Figure \ref{fig_ROClocal}, we examine the situation when wrongly estimated $\kappa_4$ is used to the conduct tests. It turns out our PM2 which is independent of $\kappa_4$ has the best performance.

\begin{figure}[!ht]
\centering
\subfigure[$\epsilon=0.03$]{\label{fig:6a}\includegraphics[width=8cm,height=6.5cm]{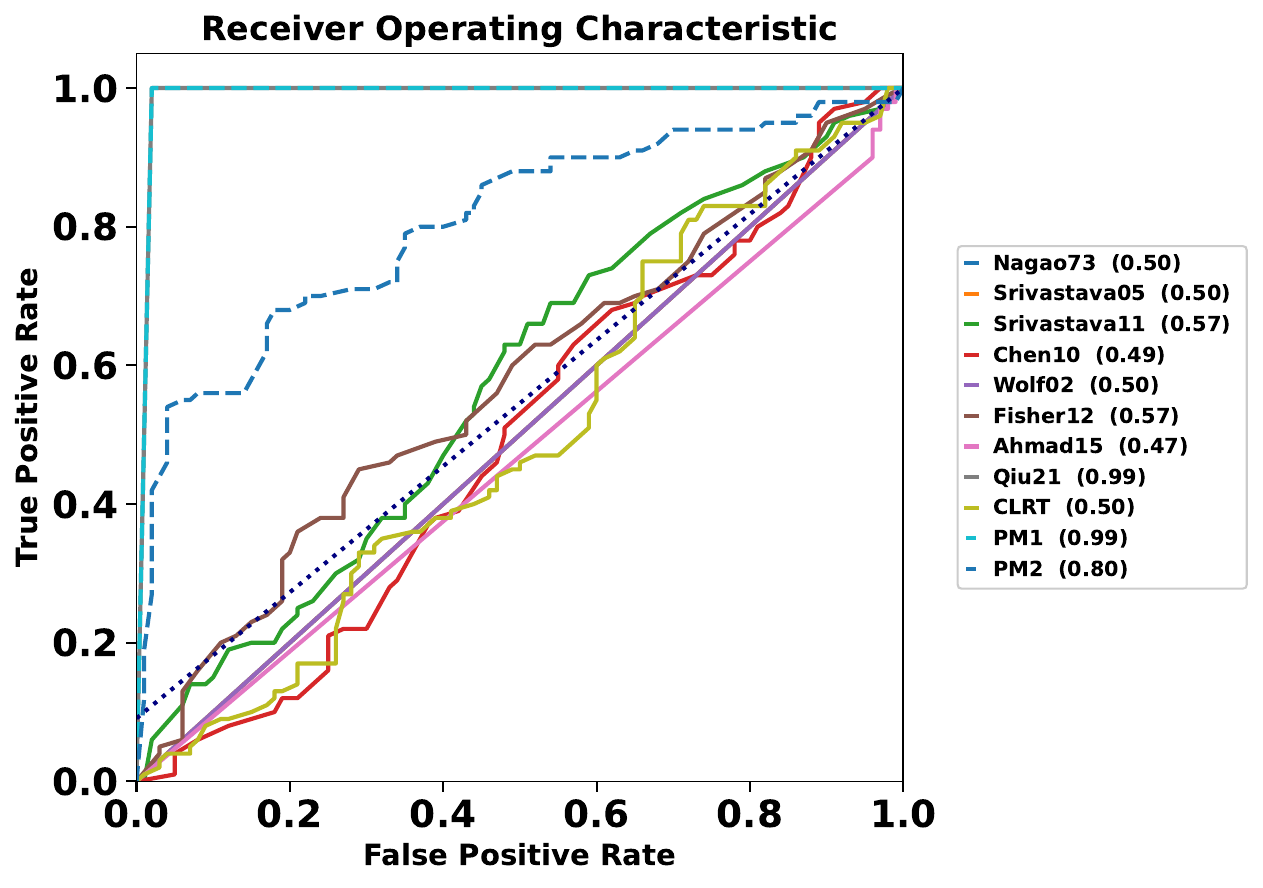}}
\hspace*{0.2cm}
\subfigure[$\epsilon=0.3$]{\label{fig:6b}\includegraphics[width=8cm,height=6.5cm]{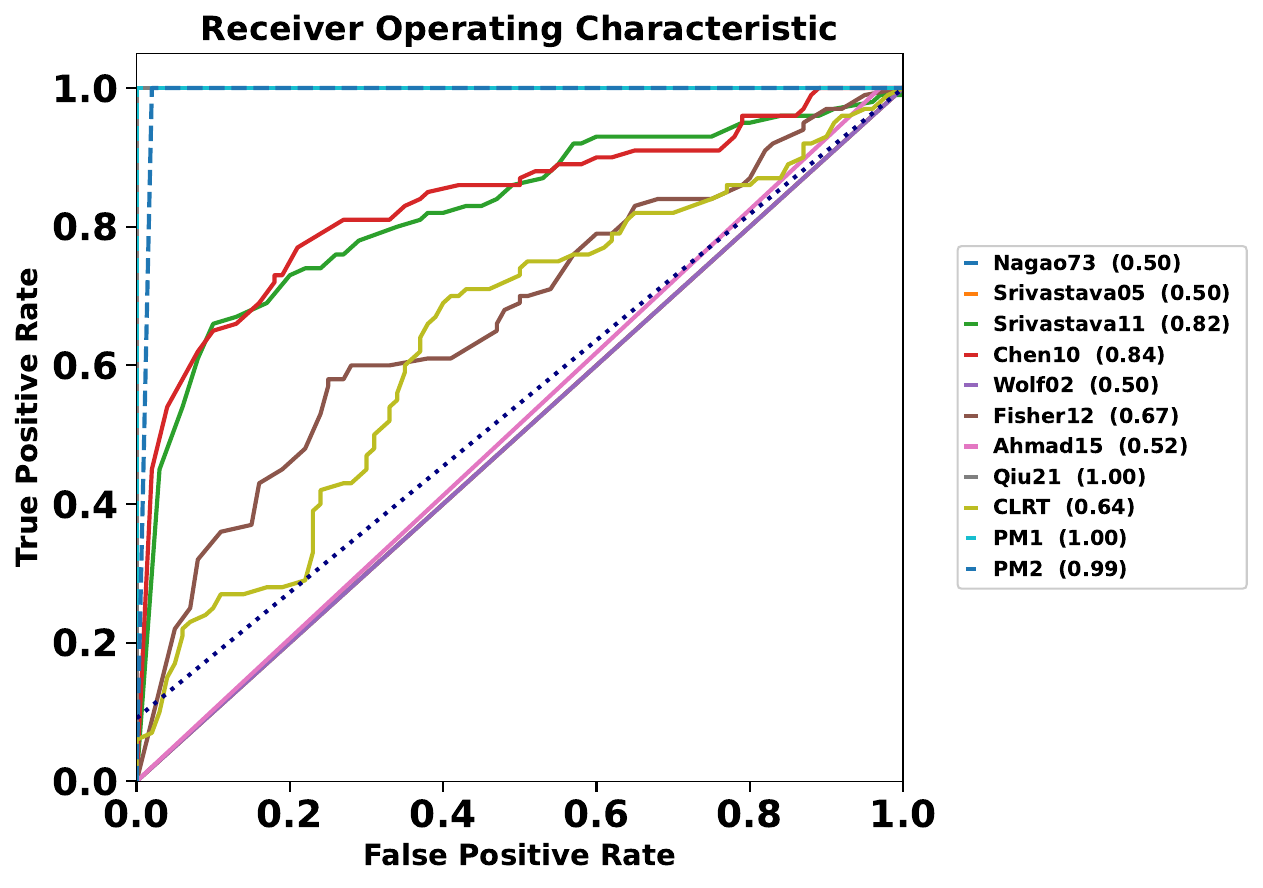}}
%\begin{subfigure}{0.3\textwidth}
%\includegraphics[width=8cm,height=5cm]{revfig/app1san2r3.eps}
%\caption{Accuracy of  $\mathbb{T}$ in (\ref{eq_ona intro}).}
%\end{subfigure}
%\hspace*{4cm}
%\begin{subfigure}{0.3\textwidth}
%\includegraphics[width=8cm,height=5cm]{revfig/app1san2r3new.eps}
%\caption{Accuracy of  $\mathbb{T}_{r_0}$ in (\ref{eq_ona intro1}).}
%\end{subfigure}
\caption{ROC curves assuming $\kappa_4=0$ is known. Here we use Gaussian distribution for $x_{ij},$ $\phi=100, n=400$ and the results are reported based on 1,000 repetitions. For the legends, the values inside the parentheses are the AUC scores.} \label{fig_ROCglobal}
\end{figure}

\begin{figure}[!ht]
\centering
\includegraphics[width=8cm,height=6cm]{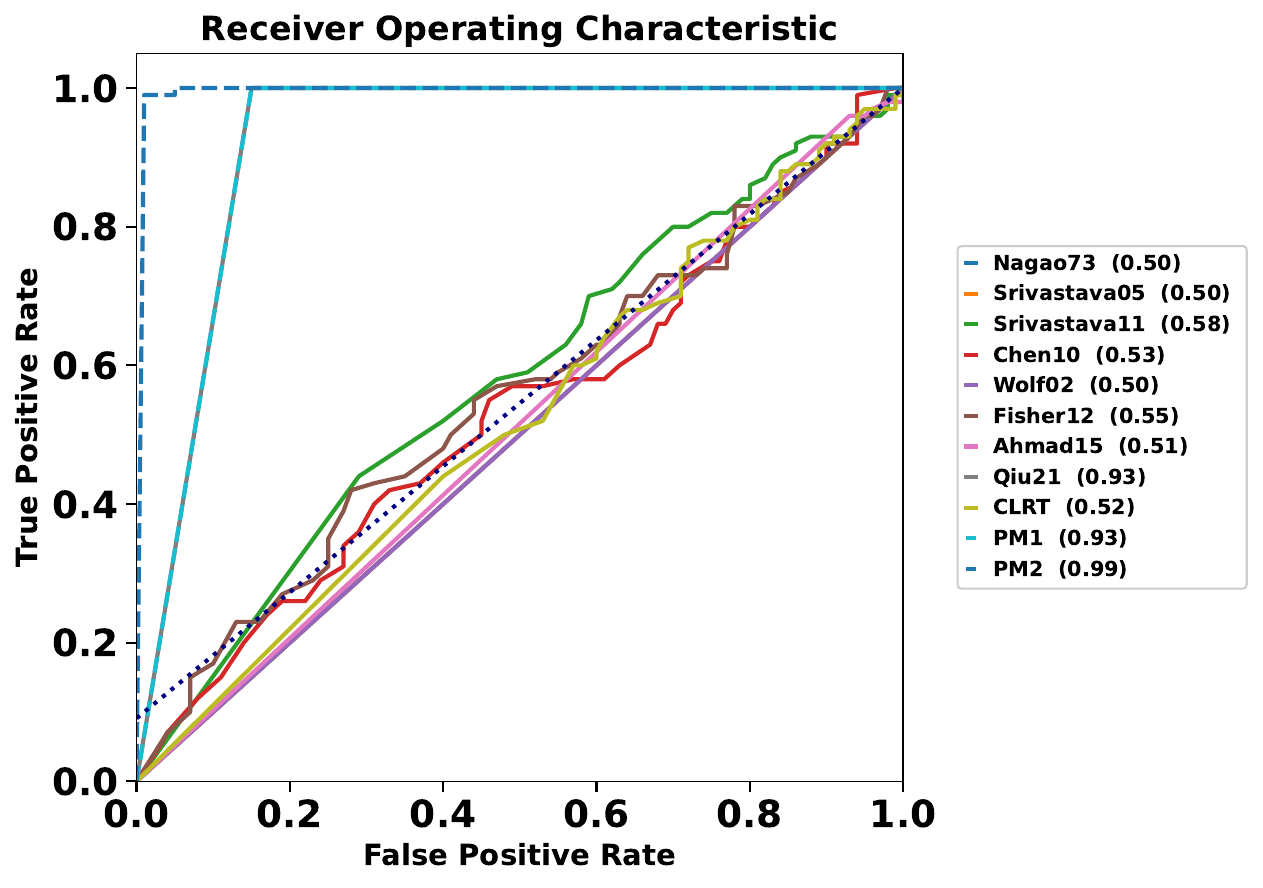}
%\begin{subfigure}{0.3\textwidth}
%\includegraphics[width=8cm,height=5cm]{revfig/app1san2r3.eps}
%\caption{Accuracy of  $\mathbb{T}$ in (\ref{eq_ona intro}).}
%\end{subfigure}
%\hspace*{4cm}
%\begin{subfigure}{0.3\textwidth}
%\includegraphics[width=8cm,height=5cm]{revfig/app1san2r3new.eps}
%\caption{Accuracy of  $\mathbb{T}_{r_0}$ in (\ref{eq_ona intro1}).}
%\end{subfigure}
\caption{ROC curves assuming $\kappa_4 \neq 0$ is misspecified. Here we use $\frac{\sqrt{2}+1}{2\sqrt{2}} \delta_{\sqrt{2}-1}+\frac{\sqrt{2}-1}{2\sqrt{2}} \delta_{-1-\sqrt{2}}$ with true $\kappa_4=2.$ In actual implementation, we used the misspecified estimate $\widehat{\kappa}_4=0.$ Here  $\epsilon=0.08, \phi=100, n=400$ and the results are reported based on 1,000 repetitions. }\label{fig_ROClocal}
\end{figure}

\section{Preliminaries, local laws and tools}\label{sec_preliminary}
The rest of the paper is devoted to the proof of the results in Sections  \ref{sec_CLTs} and \ref{sec_stattheory}. In this section, we provide the key ingredients for our proofs. We first summarize the elementary properties of $m(z)$ defined in Lemma \ref{lem_asymptoticlaw}. 
For $z=E+\ii \eta \in \mathbb{C}$, define 
	$$
	\kappa \equiv \kappa(z):=\min \left\{\left|\gamma_{+}-E\right|,\left|\gamma_{-}-E\right|\right\} .
	$$  
Recall $\mathfrak{m}_1(\pi)=\int x \pi (\dd x).$ Moreover, for some fixed small constant $0<\tau<1,$ denote the spectral parameter set as 	
			\begin{equation}\label{eq_spectralparameterset}
				\begin{aligned}
					\mathbf{S} \equiv \mathbf{S}(\tau)
					:=\left\{z=E+\mathrm{i} \eta \in \mathbb{C}: |E-\mathfrak{m}_1(\pi)\phi^{1/2}|\le \tau^{-1}, n^{-1+\tau} \leqslant \eta \leqslant \tau^{-1},|z| \geqslant \tau\right\}.
				\end{aligned}
			\end{equation}

\begin{lem} \label{lem:mphizabs}
Fix some small constant $0<\tau<1.$ Suppose the assumptions of Lemma \ref{lem_asymptoticlaw} and (\ref{eq_ratioassumption}) hold. Then for all $z \in \mathbf{S}$ in (\ref{eq_spectralparameterset}), we have that for some constant $c>0,$
\begin{equation}\label{eq_constantbound}
m(z) \asymp 1, \ \  \operatorname{Im} m(z) \geq  c \eta,
\end{equation}
and 
\begin{equation}\label{eq_squarerootbehavior}
 \operatorname{Im} m(z) \asymp 
 \begin{cases}
 \sqrt{\kappa+\eta} , & E \in [\gamma_-,\gamma_+] \\
 \frac{\eta}{\sqrt{\kappa+\eta}}, & E \notin [\gamma_-, \gamma_+]. 
 \end{cases}
\end{equation}
Moreover, for its derivative $m',$ we have that 
\begin{equation}\label{eq_derivativecontrol}
m' \asymp \frac{1}{\sqrt{\kappa+\eta}}.
\end{equation}
\end{lem}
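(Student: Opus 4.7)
The overall strategy is to leverage the fixed-point equation $z = f(m)$ from (\ref{eq_otherformoflsd})-(\ref{eq:discrete}), together with the structural information about the critical points and edges provided by Lemma \ref{lem_proofgloballawresult}. The first step is to rewrite the self-consistent equation (\ref{eq:lsd}) in a form that exposes the dominant $\phi^{1/2}$-scale. By a geometric expansion of the integrand,
\begin{equation*}
\frac{1}{m} = -z + \phi^{1/2}\int\!\frac{x}{1+\phi^{-1/2}xm}\,\pi(\dd x) = -z + \phi^{1/2}\mathfrak{m}_1(\pi) - \mathfrak{m}_2(\pi)\,m + \rO(\phi^{-1/2}|m|^2),
\end{equation*}
which, after subtracting $\phi^{1/2}\mathfrak{m}_1(\pi)$ from $z$, becomes a small perturbation of a Wigner-type quadratic equation $\tilde{z} + m^{-1} - \mathfrak{m}_2(\pi)m = \rO(\phi^{-1/2})$ with $|\tilde{z}|\le \tau^{-1}$. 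Since $\mathfrak{m}_2(\pi)\asymp 1$ under Assumption \ref{assum:XSigma}, the unperturbed equation has a unique solution in $\mathbb{C}_+$ of size $\asymp 1$, and a stability/perturbation argument transfers this to the exact solution.

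Given $m \asymp 1$, the lower bound $\operatorname{Im} m(z) \ge c\eta$ in (\ref{eq_constantbound}) follows directly from the Stieltjes representation: because Lemma \ref{lem_proofgloballawresult} pins the support $[\gamma_-,\gamma_+]$ inside a window of length $\rO(1)$ around $\mathfrak{m}_1(\pi)\phi^{1/2}$, and $|E-\mathfrak{m}_1(\pi)\phi^{1/2}|\le \tau^{-1}$ by definition of $\mathbf{S}$, one has $|x-E|\le C$ for every $x\in\operatorname{supp}\varrho$, so
\begin{equation*}
\operatorname{Im} m(z) = \int \frac{\eta}{(x-E)^2+\eta^2}\,\varrho(\dd x) \ge \frac{\eta}{C^2+\tau^{-2}}.
\end{equation*}

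The square root behavior (\ref{eq_squarerootbehavior}) is where the main work lies. Near the right edge, I would Taylor expand $f$ about its critical point $x_1\in I_1$: since $f'(x_1)=0$ and, using the bounds $\sigma_i \asymp 1$ and the position of $x_1$ from Lemma \ref{lem_proofgloballawresult}, one can verify $f''(x_1)\asymp 1$ and the cubic term is $\rO(|m-x_1|^3)$. Inverting
\begin{equation*}
z - \gamma_+ = \tfrac{1}{2}f''(x_1)(m-x_1)^2 + \rO(|m-x_1|^3)
\end{equation*}
and choosing the branch with $\operatorname{Im} m>0$ yields $m(z)-x_1 \asymp \sqrt{\gamma_+ - z}$ in the sense of the two regimes $E\in[\gamma_-,\gamma_+]$ and $E\notin[\gamma_-,\gamma_+]$. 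The usual computation of the imaginary part of a square root then produces the case split in (\ref{eq_squarerootbehavior}). An analogous expansion about $x_2\in I_0$ handles the left edge, and in the bulk interior (when $\kappa\asymp 1$) the density is bounded away from zero there, so $\operatorname{Im} m\asymp 1$ matches $\sqrt{\kappa+\eta}\asymp 1$.

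Finally, (\ref{eq_derivativecontrol}) follows by implicit differentiation of $z=f(m)$: $m'(z) = 1/f'(m(z))$. Using $f'(x_1)=0$ and $f''(x_1)\asymp 1$ gives $f'(m)\asymp |m-x_1|\asymp \sqrt{\kappa+\eta}$ near the edge, and by the bulk density bound $f'(m)\asymp 1$ deep in the bulk; both cases combine into $|m'(z)|\asymp 1/\sqrt{\kappa+\eta}$. The main technical obstacle I anticipate is the uniform verification that $f''(x_1), f''(x_2)\asymp 1$ with constants independent of $n$, and more broadly the uniform control of $f$ and its derivatives across $\mathbf{S}$ when the support of $\pi$ is discrete and the critical points depend on $n$ through $\phi$ and $\Sigma$. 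This amounts to careful bookkeeping of the sums $p^{-1}\sum_i \sigma_i^k/(1+\phi^{-1/2}\sigma_i m)^{j}$ for small $k,j$, exploiting $\sigma_i\in[\tau,\tau^{-1}]$ and $m\asymp 1$, and is the natural $\alpha>1$ counterpart of the analysis performed in \cite{AILL}.
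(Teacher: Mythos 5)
Your overall outline reaches the right conclusions and the square-root analysis (\ref{eq_squarerootbehavior}) via Taylor expansion of $f$ around $x_1,x_2$ matches the paper exactly, but for (\ref{eq_constantbound}) and (\ref{eq_derivativecontrol}) you take genuinely different routes, and one of them has a real gap.

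For $m(z)\asymp 1$ your perturbation argument is circular as written. The geometric expansion $\phi^{1/2}\int\frac{x}{1+\phi^{-1/2}xm}\,\pi(\dd x) = \phi^{1/2}\mathfrak{m}_1(\pi) - \mathfrak{m}_2(\pi)m + \rO(\phi^{-1/2}|m|^2)$ is only valid once you already know $\phi^{-1/2}|m|\sigma_i < 1$, i.e.\ $|m| = \rO(1)$, which is precisely what you are trying to prove; the Stieltjes-transform bound $|m|\le 1/\eta$ is far too weak on $\mathbf{S}$. The paper sidesteps this entirely: taking the imaginary part of (\ref{eq:lsd}), positivity of $\operatorname{Im} m$ forces $(\phi^{1/2}x_0^{-1}+R)^2 + I^2 - \phi(R^2+I^2) > 0$ for some $x_0\in\operatorname{supp}\pi$, which is a quadratic inequality in $|m|$ that yields $|m|\lesssim 1$ \emph{without any expansion}; only then is the expansion invoked to get the matching lower bound. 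Your approach can be repaired by a bootstrap/continuity argument in $\eta$ (for $\eta$ large $|m|\le 1/\eta$ is small; as $\eta$ decreases, if $|m|$ first reaches $2C$ then the expansion is valid and forces $|m|<2C$, a contradiction), but as stated you have a genuine gap, and the obstacle you flag at the end (uniform control of $f''$) is not the one that needs to be closed here.

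For $\operatorname{Im} m \geq c\eta$ your Stieltjes-representation route works, but it imports Lemma \ref{lem_proofgloballawresult} (support localization), which in the paper is itself proved using $m\asymp 1$, so the logical ordering must be spelled out: first $m\asymp1$ by a support-free argument, then the support lemma, then the imaginary-part bound. The paper's route is more elementary and self-contained: the imaginary part of (\ref{eq:lsd}), after dividing, reads $\operatorname{Im} m = |m|^2\eta + |m|^2\int\frac{\phi\operatorname{Im}m}{|\phi^{1/2}x^{-1}+m|^2}\pi(\dd x) \geq |m|^2\eta \gtrsim\eta$, needing no knowledge of $\operatorname{supp}\varrho$.

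Your route to (\ref{eq_derivativecontrol}) via $m'(z)=1/f'(m(z))$ together with the edge expansion $f'(m)\approx f''(x_k)(m-x_k)$ is cleaner than the paper's, which instead derives the identity $|m/m'| = |{-z} + \frac{\phi^{1/2}}{p}\sum_i\sigma_i(1+\phi^{-1/2}m\sigma_i)^{-2}|$ from (\ref{eq_mmprimeeq1}) and bounds this quantity directly (following Lemma 8.5 of \cite{Li2021}), with a case split on $E\in[\gamma_-,\gamma_+]$. Your version is essentially the same statement read through the inverse function, and is arguably the more transparent way to see why $m'\asymp(\kappa+\eta)^{-1/2}$. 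The one detail you gloss over is the bulk regime: ``$f'(m)\asymp 1$ deep in the bulk'' requires an argument that $m$ is uniformly separated from both real critical points when $\operatorname{Im}m\asymp1$, since the configuration of $f$ depends on $n$; this is true but needs a sentence, and the paper's identity-based estimate handles it automatically.
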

\begin{proof}
See Appendices \ref{appendix_section_priorbound} and \ref{appendix_sectionlemma25}. 
\end{proof}

Let  $m_p(z)$ be the limit of Stieltjes transform of $\mathcal{Q}.$ Since $Q$ and $\mathcal{Q}$ share the same non-zero eigenvalues, for $m(z)$ defined in (\ref{eq:lsd}), we have that 
\begin{equation}\label{eq_transformtransferdefinition}
m(z)=-\frac{1-{\phi}}{z}+{\phi} m_{p}(z).
\end{equation}
Further, we use $\rho_p$ to denote the measure induced by $m_p(z).$ In the following lemma, we summarize some important identities. 

\begin{lem}\label{lem_elementaryidentityes}
For $m(z)$ defined in (\ref{eq_otherformoflsd}) and $m_p(z)$ defined in (\ref{eq_transformtransferdefinition}), we have the following identities 
\begin{equation}\label{eq_eeeeeee}
		\begin{aligned}
			\frac{1}{m}=&-z+\frac{\phi}{p}\sum_{i=1}^p\frac{\phi^{-1/2}\sigma_i}{1+\phi^{-1/2}m\sigma_i}, \\ 
			\frac{1}{z}=&-m+\frac{\phi}{p}\sum_{i=1}^p\frac{\phi^{-1/2}\sigma_im}{z(1+\phi^{-1/2}m\sigma_i)}=-m+\frac{\phi}{pz}\sum_{i=1}^p\left(1-\frac{1}{1+\phi^{-1/2}m\sigma_i} \right),\\ 
			m_p=&\frac{1}{\phi}\left(m+\frac{1-\phi}{z} \right)=-\frac{1}{p}\sum_{i=1}^p\frac{1}{z(1+\phi^{-1/2}m\sigma_i)},
		\end{aligned}
	\end{equation}
and 
\begin{equation}\label{eq_mmprimeeq1}
	1-\frac{1}{p} \sum_{i=1}^p \frac{\phi^{1/2} \sigma_i}{z\left(1+\phi^{-1/2}{m}(z) \sigma_i\right)^2}=-\frac{{m}(z)}{z {m}^{\prime}(z)}.
\end{equation}	
\end{lem}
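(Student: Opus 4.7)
The plan is to show that each of the four identities follows from (\ref{eq:lsd}), equivalently from the defining equation $z=f(m)$ in (\ref{eq_otherformoflsd})--(\ref{eq:discrete}), combined with the relation (\ref{eq_transformtransferdefinition}) between $m$ and $m_p$. The whole lemma is then a sequence of elementary algebraic manipulations together with one application of the chain rule; no probabilistic input is needed.

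First I would verify the first line of (\ref{eq_eeeeeee}) by directly rewriting (\ref{eq:discrete}) after noting the identity $\phi\cdot\phi^{-1/2}=\phi^{1/2}$: rearranging $z=f(m)$ gives $1/m = -z + (\phi/p)\sum_i \phi^{-1/2}\sigma_i/(1+\phi^{-1/2}m\sigma_i)$. For the first equality on the second line, I would multiply this identity through by $m/z$. For the second equality on the same line, I would apply the elementary relation $a/(1+a)=1-1/(1+a)$ with $a=\phi^{-1/2}m\sigma_i$ to each summand. For the third line, the first equality is just (\ref{eq_transformtransferdefinition}) solved for $m_p$. For the second equality, I would write $m+(1-\phi)/z = (m+1/z) - \phi/z$, substitute the second form of line 2 for $m+1/z$, and cancel the free $\phi/z$ term to get $m+(1-\phi)/z = -(\phi/pz)\sum_i 1/(1+\phi^{-1/2}m\sigma_i)$; dividing by $\phi$ yields the claim.

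For (\ref{eq_mmprimeeq1}), the plan is to differentiate the clearance form $1 = -zm + (1/p)\sum_i \phi^{1/2}m\sigma_i/(1+\phi^{-1/2}m\sigma_i)$ (which is line 1 multiplied by $m$) with respect to $z$, treating $m$ as a function of $z$. The product and quotient rules give
\begin{equation*}
0 = -m - zm' + \frac{m'}{p}\sum_{i=1}^p \frac{\phi^{1/2}\sigma_i(1+\phi^{-1/2}m\sigma_i) - \phi^{1/2}\sigma_i\cdot\phi^{-1/2}m\sigma_i}{(1+\phi^{-1/2}m\sigma_i)^2},
\end{equation*}
and the numerator of the combined fraction collapses to $\phi^{1/2}\sigma_i$. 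This leaves $m+zm' = (m'/p)\sum_i \phi^{1/2}\sigma_i/(1+\phi^{-1/2}m\sigma_i)^2$; dividing both sides by $zm'$ produces $m/(zm') + 1 = (1/(pz))\sum_i \phi^{1/2}\sigma_i/(1+\phi^{-1/2}m\sigma_i)^2$, which is exactly (\ref{eq_mmprimeeq1}) after moving the $1$ to the other side.

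There is no genuine obstacle, since every step is bookkeeping that repackages $z=f(m)$ and (\ref{eq_transformtransferdefinition}) into the exact forms needed later for the local-law analysis and the cumulant/resolvent expansions. The only point requiring a little care is the differentiation step for (\ref{eq_mmprimeeq1}): one must verify that the numerator after combining fractions equals $\phi^{1/2}\sigma_i$ rather than $\sigma_i^2$, so that the factor $\phi^{1/2}\sigma_i$ in the statement of (\ref{eq_mmprimeeq1}) is correctly produced and not accidentally converted to $\sigma_i^2$ via the product $\phi^{1/2}\sigma_i\cdot\phi^{-1/2}\sigma_i$.
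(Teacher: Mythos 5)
Your proof is correct and takes essentially the same route as the paper: both treat \eqref{eq_eeeeeee} as direct algebraic consequences of $z=f(m)$ and \eqref{eq_transformtransferdefinition}, and both obtain \eqref{eq_mmprimeeq1} by differentiating a cleared form of the self-consistent equation (the paper first simplifies $1+zm$ to $\phi-\frac{1}{p}\sum_i\frac{\phi}{1+\phi^{-1/2}\sigma_i m}$ before differentiating, while you differentiate the equivalent form $1=-zm+\frac{1}{p}\sum_i\frac{\phi^{1/2}m\sigma_i}{1+\phi^{-1/2}m\sigma_i}$; the numerator cancellation you flag is exactly the point where the two bookkeepings reconvene).
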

\begin{proof}
(\ref{eq_eeeeeee}) follows directly from (\ref{eq_otherformoflsd}) and (\ref{eq_transformtransferdefinition}). To see (\ref{eq_mmprimeeq1}), we can rewrite (\ref{eq_otherformoflsd}) as 
\begin{equation}\label{eq_mmprimeeq2}
	1+zm(z)=\phi-\frac{1}{p}\sum_{i=1}^p\frac{\phi}{1+\phi^{-1/2}\sigma_im(z)}.
\end{equation}
Then taking derivative on both sides of the above equation, we obtain that 
\begin{equation}\label{eq_mmprimeeq3}
	m(z)+zm^{\prime}(z)=\frac{1}{p}\sum_{i=1}^p\frac{\phi^{1/2}\sigma_im^{\prime}(z)}{(1+\phi^{-1/2}\sigma_im(z))^2}.
\end{equation}
This proves (\ref{eq_mmprimeeq1}). 
\end{proof}

Next, as in \cite{DY, AILL, LS, Yang2020}, we see that it is more convenient to work with some linearization of the matrix $X.$ For notational convenience, we introduce the following conventions.  

\begin{defn}\label{eq_indexset} We introduce the index sets
	$$
	\mathcal{I}_1:=\llbracket 1, p \rrbracket, \quad \mathcal{I}_2:=\llbracket p+1, p+n \rrbracket, \quad \mathcal{I}:=\mathcal{I}_1 \cup \mathcal{I}_2=\llbracket 1, p+n \rrbracket .
	$$
	We consistently use the letters $i, j \in \mathcal{I}_1, \mu, \nu \in \mathcal{I}_2$, and $s, t \in \mathcal{I}$. With the above notations,  we can label the indices of the matrices according to
	$$
	X=\left(X_{i \mu}: i \in \mathcal{I}_1, \mu \in \mathcal{I}_2\right), \quad \Sigma=\left(\Sigma_{i j}: i, j \in \mathcal{I}_1\right) .
	$$
\end{defn}
The following linearizing block matrices will play important roles in our calculations. For $z \in \mathbb{C}_+,$ define  
\begin{equation}\label{eq_BIGG}
G(z)\equiv G(z, \Sigma, X):=\left(\begin{array}{cc}
	-\Sigma^{-1} & X \\
	X^{*} & -z I_n
\end{array}\right)^{-1}.
\end{equation}
Moreover, for $m(z)$ defined in (\ref{eq:lsd}), we denote 
\begin{equation}\label{eq:pidefi}
 \Pi(z)\equiv \Pi(z, \Sigma):=-z^{-1}\left(I_p+\phi^{-1/2}m(z) \Sigma\right)^{-1}. 
\end{equation}
For $z \in \mathbb{C}_+,$ we define the resolvents of $Q$ and $\mathcal{Q}$ respectively as 
\begin{equation}\label{eq_twoblockmatrices}
R_1(z):=\left(\Sigma^{1/2} X X^* \Sigma^{1/2}-z\right)^{-1}, \ R_2(z):=\left(X^{*}\Sigma X-z\right)^{-1}. 
\end{equation}
We will see later from Theorem \ref{thm_locallaw} that $\Pi(z)$ is the deterministic counterpart of $R_1(z).$  Denote the control parameter $\Psi(z)$ as 
\begin{equation}\label{eq_fundementalparameter}
\Psi(z):=\sqrt{\frac{\operatorname{Im} m(z)}{n \eta}}+\frac{1}{n \eta}. 
\end{equation}
We now state the main results on the local laws which provide precise controls on $G, R_1$ and $R_2.$  We will need the following notion of stochastic domination which is commonly used in modern random matrix theory \cite{erdHos2017dynamical}. 
\begin{defn}[Stochastic domination]
(i) Let
\[A=\left(A^{(n)}(u):n\in\mathbb{N}, u\in U^{(n)}\right),\hskip 10pt B=\left(B^{(n)}(u):n\in\mathbb{N}, u\in U^{(n)}\right),\]
be two families of nonnegative random variables, where $U^{(n)}$ is a possibly $n$-dependent parameter set. We say $A$ is stochastically dominated by $B$, uniformly in $u$, if for any fixed (small) $\epsilon>0$ and (large) $D>0$, 
\[\sup_{u\in U^{(n)}}\mathbb{P}\left(A^{(n)}(u)>n^\epsilon B^{(n)}(u)\right)\le n^{-D},\]
for large enough $n \ge n_0(\epsilon, D)$, and we shall use the notation $A\prec B$. Throughout this paper, the stochastic domination will always be uniform in all parameters that are not explicitly fixed, such as the matrix indices and the spectral parameter $z$.  Throughout the paper, even for negative or complex random variables, we also write $A \prec B$ or $A=\mathrm{O}_\prec(B)$ for $|A| \prec B$.
\vspace{5pt}

\noindent (ii) We say an event $\Xi$ holds with high probability if for any constant $D>0$, $\mathbb P(\Xi)\ge 1- n^{-D}$ for large enough $n$.
\end{defn}

\begin{thm}\label{thm_locallaw}  Suppose the assumptions of Theorem \ref{thm_mainone} hold. For all $z \in \mathbf{S}$ defined in (\ref{eq_spectralparameterset}) uniformly and  any deterministic vectors $\mathbf{u}_k \in \mathbb{R}^p, \mathbf{v}_k \in \mathbb{R}^n, k=1,2,$ we have that:
\begin{enumerate}
\item[(1).] For the resolvents of $Q$ and $\mathcal{Q}$ defined in (\ref{eq_twoblockmatrices}), we have that 
\begin{equation}\label{eq_locallawequationone}
\mathbf{u}_1^* R_1(z) \mathbf{u}_2=\mathbf{u}_1^* \Pi(z) \mathbf{u}_2+\rO_{\prec} \left( \phi^{-1} \Psi(z) \right),
\end{equation} 
where $\Pi(z)$ is defined in (\ref{eq:pidefi}) and $\Psi(z)$ is defined in (\ref{eq_fundementalparameter}). Moreover, we have that
\begin{equation}\label{eq_locallawequationtwo}
\mathbf{v}_1^* R_2(z) \mathbf{v}_2=m(z)\mathbf{v}_1^*\mathbf{v}_2+\rO_{\prec} \left( \Psi(z) \right).
\end{equation} 
\item[(2).] Denote the normalized resolvents as 
\begin{equation*}
m_{1n}(z)=\frac{1}{p} \sum_{i=1}^p (R_1(z))_{ii},  \  m_{2n}(z)=\frac{1}{n} \sum_{i=1}^n (R_2(z))_{ii}. 
\end{equation*}
We have that
\begin{equation}\label{eq_averagelocallawone}
m_{1n}(z)=-\frac{1}{zp} \sum_{i=1}^p \frac{1}{1+ \sigma_i \phi^{-1/2} m(z)}+\rO_{\prec} \left( (p \eta)^{-1}\right) ,  
\end{equation}
and 
\begin{equation}\label{eq_averagelocallawtwo}
m_{2n}(z)=m(z)+\rO_{\prec} \left( (n \eta)^{-1}\right).
\end{equation}
\item[(3).] Recall the conventions for the indices in Definition \ref{eq_indexset}. For $G(z)$ defined in (\ref{eq_BIGG}), we have that uniformly in $\mu\in \mathcal{I}_2$ and $i\in \mathcal{I}_1$ 
	\begin{equation}\label{eq:entrywiselaw}	
	G_{i\mu}(z)=\mathrm{O}_{\prec}\left(\phi^{-1/4} \Psi(z)\right).
	\end{equation}
%
%	|G_{\mu\nu}|=\mathrm{O}_{\prec}\left( \Psi(z)\right), \ 
%			|G_{ij}|=\mathrm{O}_{\prec}\left(\phi^{-1} \Psi(z)\right), \ 
\end{enumerate}

\end{thm}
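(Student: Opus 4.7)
The plan is to prove Theorem \ref{thm_locallaw} by adapting the linearizing block matrix strategy of \cite{AILL,DY,LS,Yang2020} to the ultra high dimensional regime (\ref{eq_ratioassumption}). The central device is the $(p+n)\times(p+n)$ Hermitian matrix $G(z)$ from (\ref{eq_BIGG}): by Schur complement its lower right $\mathcal{I}_2\times\mathcal{I}_2$ block is precisely $R_2(z)$, its upper left $\mathcal{I}_1\times\mathcal{I}_1$ block is related to $R_1(z)$ through $\Sigma$ and a factor of $z$, and its off-diagonal block is what appears in (\ref{eq:entrywiselaw}). The first goal is therefore to establish an entrywise local law for $G$ with deterministic equivalent $\mathrm{diag}(\Pi(z), m(z)I_n)$. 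Given such a statement, a polarization upgrade yields the anisotropic bounds (\ref{eq_locallawequationone})--(\ref{eq_locallawequationtwo}), averaging over the diagonal yields (\ref{eq_averagelocallawone})--(\ref{eq_averagelocallawtwo}), and the off-diagonal block produces (\ref{eq:entrywiselaw}).

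The entrywise law is derived along the classical Schur-complement route. For each $s\in\mathcal{I}$, $G_{ss}^{-1}$ and the off-diagonals $G_{st}$ can be expressed in terms of the minor $G^{(s)}$ plus quadratic forms in the removed row and column of $X$. Using (\ref{eq_momentassumption}) and standard Burkholder-type large deviation estimates, these quadratic forms concentrate on their conditional expectations with fluctuations of size $\Psi(z)$ on the scale set by (\ref{eq:normalization}). This produces a random perturbation of the self-consistent equation (\ref{eq_otherformoflsd}) satisfied by the block diagonal of $G$, with error of order $\Psi(z)$. Stability of (\ref{eq_otherformoflsd}) is then established using the quantitative estimates of Lemma \ref{lem:mphizabs}, in particular the square-root behavior (\ref{eq_squarerootbehavior}) near the edges and the derivative bound (\ref{eq_derivativecontrol}); these transfer the input perturbation into the sharp averaged bound (\ref{eq_averagelocallawtwo}), from which (\ref{eq_averagelocallawone}) follows via the identity (\ref{eq_transformtransferdefinition}). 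A continuity argument on a decreasing net of $\eta$, initialized at $\eta\asymp 1$ where the weak bound is trivial, then makes the local law uniform on the spectral set $\mathbf{S}$ in (\ref{eq_spectralparameterset}).

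To lift the entrywise control to the anisotropic statements, I would follow the polynomialization / multilinear-form technique employed in \cite{Yang2020}, estimating high moments of the generalized entries $\mathbf{u}_1^*(R_k-\Pi_k)\mathbf{u}_2$ by expanding them into sums indexed by combinatorial structures and feeding the entrywise and averaged bounds as seeds. The additional $\phi^{-1}$ factor in (\ref{eq_locallawequationone}), which is absent in (\ref{eq_locallawequationtwo}), reflects that the natural size of $\Pi(z)$ is of order $\phi^{-1/2}$ because $z\asymp\phi^{1/2}$ in (\ref{eq_spectralparameterset}); the anisotropic bound for $R_1$ inherits this scale twice, once from the deterministic equivalent and once from the fluctuation, by means of the push-through identity $R_1=z^{-1}(\Sigma^{1/2}X R_2 X^*\Sigma^{1/2}-I)$ together with the anisotropic bound for $R_2$.

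The main obstacle, I expect, is carrying out the stability analysis of (\ref{eq_otherformoflsd}) and the bootstrapping uniformly up to the spectral edges $\gamma_\pm$, which diverge at rate $\phi^{1/2}$ rather than remaining in a bounded window as in \cite{AILL,DY}. A careful rescaling is needed to extract uniform stability constants, and the $\phi$-dependence must be tracked delicately through each Schur-complement expansion and each moment estimate so that the final errors come out as the sharp $\Psi(z)$ and $\phi^{-1}\Psi(z)$ claimed, rather than off by a power of $\phi$. This is where the bulk of the technical work in Appendix \ref{appendix_proof54} should lie.
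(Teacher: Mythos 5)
Your overall blueprint matches the paper's: linearize via the block matrix $G(z)$ of (\ref{eq_BIGG}), derive an entrywise law through Schur-complement identities and large-deviation bounds, establish stability of the self-consistent equation using Lemma \ref{lem:mphizabs}, bootstrap in $\eta$ from the trivial region $\eta\asymp 1$, and lift to the anisotropic statements by a high-moment expansion of $\mathcal{Z}=\sum_{a\neq b}v_a G_{ab}v_b$. The paper realizes this last step by following Section 5 of \cite{LL} rather than the polynomialization of \cite{Yang2020}, but these are essentially the same moment technique, so that is not a substantive difference. Your heuristic for the extra $\phi^{-1}$ in (\ref{eq_locallawequationone}) is also correct: one power of $\phi^{-1/2}$ is inherited from the deterministic equivalent $\Pi(z)\asymp z^{-1}\asymp\phi^{-1/2}$, and a second from the push-through by $z^{-1}$ that converts the bound for $G_1=z\Sigma^{1/2}R_1\Sigma^{1/2}$ into one for $R_1$.

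There is, however, a genuine gap in how you claim to obtain the averaged laws (\ref{eq_averagelocallawone})--(\ref{eq_averagelocallawtwo}). You write that the entrywise law combined with averaging over the diagonal and the stability estimate for (\ref{eq_otherformoflsd}) ``transfer the input perturbation into the sharp averaged bound.'' This cannot work as stated. The entrywise bound gives $|G_{\mu\mu}-m|\prec\Psi(z)$, so naive averaging gives $|m_{2n}-m|\prec\Psi(z)$, which in the bulk is of order $(n\eta)^{-1/2}$ --- strictly larger than the claimed $(n\eta)^{-1}$. The crucial missing ingredient is the \emph{fluctuation averaging} mechanism (the paper's Lemma \ref{lem:fa}), which exploits cancellations among the centered Schur-complement fluctuations $Z_\mu=(1-\mathbb{E}_\mu)(X^*G^{(\mu)}X)_{\mu\mu}$ to show that the weighted averages $[Z]_1,[Z]_2$ defined in (\ref{eq_Z1Z2definition}) are of order $\Psi^2$ rather than $\Psi$. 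Feeding this improved estimate into the stability inequality (\ref{eq:stability}) and iterating (Lemma \ref{lem_recursiveestimate}) is what produces the self-improving rate that converges to $(n\eta)^{-1}$. Without fluctuation averaging the bootstrap stalls at $(n\eta)^{-1/2}$, and (\ref{eq_averagelocallawtwo}) --- and hence (\ref{eq_averagelocallawone}) via (\ref{eq_transformtransferdefinition}) --- would not be reachable. Since this step also feeds the error estimates used throughout Section \ref{sec_CLTresolvent}, it is not a cosmetic omission.
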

\begin{proof}
See Appendix \ref{appendix_proof54}. 
\end{proof}

\begin{rem} {\normalfont
Several remarks are in order. First, the counterparts of the results for $\alpha=1$ have been proved in \cite{AILL}. In their setting, since $p$ and $n$ are comparably large, the results are essentially stated for the linearization matrix $G(z)$. However, as can be seen from our results in Theorem \ref{thm_locallaw}, the convergence rates of different blocks of $G(z)$ vary according to their sizes so that we need to state the results separately by carefully keeping tracking of $p,n$ and $\phi$. Second, for the results in \cite{AILL}, as mentioned in Remark \ref{rem_lemma25}, since there may exist several bulk components so that we need more regularity conditions as discussed in Remark \ref{rem_mainresultone}. In our setting (\ref{eq_ratioassumption}), most of these conditions are satisfied automatically. In fact, as can be concluded from the proofs of Theorem \ref{thm_locallaw}, the arguments applies to all $0<\alpha<\infty$ so that the results of \cite{AILL} can be actually recovered by our results. Third, in the current paper, we only need the entry-wise control (\ref{eq:entrywiselaw}) for the off-diagonal block terms of the matrix $G(z)$ in (\ref{eq_BIGG}). In fact, with additional technical efforts, we can show that for all $\bar{\mathbf{u}}_1 \in \mathbb{R}^{p+n}$ and $\bar{\mathbf{v}}_1 \in \mathbb{R}^{p+n},$ where  $\bar{\mathbf{u}}_1$  is the natural embedding of $\mathbf{u}_1$ and $\bar{\mathbf{v}}_1 $ is that of $\mathbf{v}_1,$ we have that 
\begin{equation*}
\bar{\mathbf{u}}_1^* G(z) \bar{\mathbf{v}}_1=\mathrm{O}_{\prec}\left(\phi^{-1/4} \Psi(z)\right). 
\end{equation*}
}
%\begin{equation}\label{eq:pidefi}
% \Pi(z)\equiv \Pi(z, \Sigma):=\left(\begin{array}{cc}
%	-\Sigma\left(I_p+\phi^{-1/2}m(z) \Sigma\right)^{-1} & 0 \\
%	0 & m(z) I_n
%\end{array}\right). 
%\end{equation}
\end{rem}

Before concluding this section, we collect some useful formulas which will be used in our proof. The following cumulant expansion formula plays a crucial role in our calculations which have been used in calculating the CLTs for various random matrix models, to name but a few, \cite{BDW,bao2022statistical,Li2021,Yang2020}.  

\begin{lem}\label{lem_cumulantexpansion} Fix $\ell \in \mathbb{N}$ and let $h$ be a real-valued random variable with finite moments up to order $2 \ell +2.$ Moreover, let $f$ be a complex-valued smooth function that $f \in \mathcal{C}^{\ell+1}(\mathbb{R}).$ Let $\kappa_k$ be the $k$-th cumulant of $h,$ given by $\kappa_k:=(-\mathrm{i})^k \frac{\mathrm{d}}{\mathrm{d} t} \log \mathbb{E} e^{\mathrm{i} th}|_{t=0}.$ Then we have that 
\begin{equation*}
\mathbb{E}\left[ h f(h) \right]=\sum_{k=0}^{\ell} \frac{1}{k!} \kappa_{k+1} \mathbb{E} f^{(k)}(h)+R_{\ell+1}, 
\end{equation*} 
where the error term satisfies 
\begin{equation*}
|R_{\ell+1}| \leq C_{\ell} \mathbb{E} |h|^{\ell+2} \sup_{|x| \leq M}|f^{(\ell+1)}(x)|+C_{\ell} \mathbb{E} \left[ |h|^{\ell+2} \mathbf{1}_{|h|>M} \right] \|f^{(\ell+1)}(x)\|_{\infty},
\end{equation*}
for some constant $C_{\ell}>0$ and $M>0$ is an arbitrary fixed cutoff.  
\end{lem}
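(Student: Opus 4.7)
The plan is to prove the identity first for pure exponential test functions $f(x)=e^{\mathrm{i} tx}$ via the characteristic function, then transfer to smooth compactly supported $f$ by Fourier inversion, and finally pass to general $f\in\mathcal{C}^{\ell+1}(\mathbb{R})$ by splitting the probability space into $\{|h|\le M\}$ and $\{|h|>M\}$ to match the stated two-term error.

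For the exponential case, set $\phi(t):=\mathbb{E}e^{\mathrm{i} t h}$ and $K(t):=\log\phi(t)$, so by definition $K^{(k)}(0)=\mathrm{i}^{k}\kappa_{k}$ for $k\ge 1$. Differentiating $\phi=e^{K}$ gives $\mathbb{E}[h e^{\mathrm{i} t h}]=-\mathrm{i}\phi'(t)=\bigl(-\mathrm{i} K'(t)\bigr)\phi(t)$. Since $h$ has $2\ell+2$ finite moments, $K$ is $\mathcal{C}^{\ell+2}$ near the origin, and Taylor's theorem with integral remainder yields
\begin{equation*}
-\mathrm{i} K'(t)=\sum_{k=0}^{\ell}\frac{(\mathrm{i} t)^{k}}{k!}\kappa_{k+1}+r_{\ell}(t),\qquad |r_{\ell}(t)|\le C_{\ell}|t|^{\ell+1}
\end{equation*}
on bounded intervals. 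This gives the claimed expansion for $f(x)=e^{\mathrm{i} tx}$ with an explicit Fourier-type remainder. To handle a Schwartz or smooth compactly supported $f$, I would write $f(h)=(2\pi)^{-1}\int\hat{f}(t)e^{\mathrm{i} th}\,\mathrm{d} t$, multiply by $h$, take expectation, and apply Fubini. The identities $(\mathrm{i} t)^{k}\hat{f}(t)=\widehat{f^{(k)}}(t)$ together with the inverse Fourier representation of $\phi$ convert $(2\pi)^{-1}\int\hat{f}(t)(\mathrm{i} t)^{k}\phi(t)\,\mathrm{d} t$ into $\mathbb{E}[f^{(k)}(h)]$, producing the main sum in the lemma.

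To extend to $f\in\mathcal{C}^{\ell+1}$ and to recover the precise form of $R_{\ell+1}$, I would use a direct Taylor expansion of $f$ on $\{|h|\le M\}$ rather than continuing the Fourier route. Fix a smooth cutoff $\chi_{M}$ equal to $1$ on $[-M,M]$; on the event $\{|h|\le M\}$, apply Taylor's theorem with Lagrange remainder to $f$ about a reference point, use the moment-to-cumulant inversion $\kappa_{k+1}$ in terms of $\mathbb{E}[h^{j}]$ to rearrange the moment sums into cumulant sums times $\mathbb{E}[f^{(k)}(h)]$ (which is the standard conversion matching the characteristic function expansion), and bound the Taylor remainder by $\sup_{|x|\le M}|f^{(\ell+1)}(x)|$ times $|h|^{\ell+2}$. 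Crucially, after this bound the $|h|^{\ell+2}$ factor is integrated over \emph{all} of $\Omega$, giving the first term $C_{\ell}\mathbb{E}|h|^{\ell+2}\sup_{|x|\le M}|f^{(\ell+1)}(x)|$. On the complementary event $\{|h|>M\}$, bound the Taylor remainder crudely by $\|f^{(\ell+1)}\|_{\infty}|h|^{\ell+2}$, producing the second term $C_{\ell}\mathbb{E}[|h|^{\ell+2}\mathbf{1}_{|h|>M}]\|f^{(\ell+1)}\|_{\infty}$. The $2\ell+2$ moment assumption guarantees finiteness of all moments involved and justifies the moment-cumulant identity to order $\ell+1$.

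The main obstacle is the bookkeeping in the third step: a direct Fourier argument gives only the coarser bound $\|f^{(\ell+1)}\|_{\infty}\mathbb{E}|h|^{\ell+2}$, whereas the lemma separates a \emph{local} sup on $[-M,M]$ paired with the full moment from a \emph{tail} mass $\mathbb{E}[|h|^{\ell+2}\mathbf{1}_{|h|>M}]$ paired with the global sup. To match this precisely I need to run the Taylor expansion argument pointwise in $h$, carefully track which derivative bound is used on which event, and verify that the conversion between moments and cumulants (performed via the recursion $m_{k+1}=\sum_{j=0}^{k}\binom{k}{j}\kappa_{k+1-j}m_{j}$ applied iteratively) does not introduce uncontrolled cross terms; that is the only subtle point, and it can be handled by organizing the expansion so that the cumulant coefficients appear naturally from the characteristic-function step already established.
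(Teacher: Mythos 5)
The paper does not actually prove this lemma; it simply cites Proposition 3.1 of Lytova--Pastur, so there is no internal proof to compare against. Your plan is a genuine from-scratch attempt, and the high-level strategy (characteristic-function identity for exponentials, Fourier inversion for nice test functions, then direct Taylor bookkeeping with an $\{|h|\le M\}$/$\{|h|>M\}$ split to recover the refined two-term error) is morally the route taken in the literature (e.g.\ He--Knowles, Lemma 7.1). One streamlining remark: you do not need the Fourier warm-up at all --- once one verifies that the linear functional $T(f):=\mathbb{E}[hf(h)]-\sum_{k=0}^{\ell}\frac{\kappa_{k+1}}{k!}\mathbb{E}[f^{(k)}(h)]$ annihilates every polynomial of degree $\le\ell$ (which is exactly the moment--cumulant recursion $m_{n+1}=\sum_{k=0}^{n}\binom{n}{k}\kappa_{k+1}m_{n-k}$), the estimate follows by writing $R_{\ell+1}=T(f)=T(f-P_\ell)$ with $P_\ell$ the degree-$\ell$ Taylor polynomial of $f$ at the origin and bounding each term.

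The genuine gap is in your step 3. You assert that the Taylor remainder can be bounded pointwise by $|h|^{\ell+2}$ times a sup of $f^{(\ell+1)}$, but this is only true for the term $\mathbb{E}[h(f-P_\ell)(h)]$. The remaining pieces $\frac{\kappa_{k+1}}{k!}\mathbb{E}[(f-P_\ell)^{(k)}(h)]$ for $0\le k\le\ell$ produce, via Taylor with Lagrange remainder, factors of order $|h|^{\ell+1-k}$ (not $|h|^{\ell+2}$) multiplied by $\kappa_{k+1}$. To match the stated error one must expand each $\kappa_{k+1}$ as a polynomial in the absolute moments $m_1,\dots,m_{k+1}$, observe that the total degree in $h$ is always $(k+1)+(\ell+1-k)=\ell+2$, and then apply Jensen/H\"older (in the form $\mathbb{E}|h|^{j}\le(\mathbb{E}|h|^{\ell+2})^{j/(\ell+2)}$) to combine these products into $C_\ell\,\mathbb{E}|h|^{\ell+2}$. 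This Jensen step --- and carrying it through consistently on each of the two events so that the tail piece lands on $\mathbb{E}[|h|^{\ell+2}\mathbf{1}_{|h|>M}]$ --- is precisely the ``bookkeeping'' you flag as the subtle point, but you mislocate it: the danger is not ``uncontrolled cross terms in the moment--cumulant inversion'' (that inversion is exact and finite) but rather the mismatch in powers of $|h|$ across the $k$ indices, which only resolves after the convexity inequalities are invoked. Without that step the proposed proof gives an error with extraneous lower moments of $h$, which is strictly weaker than what the lemma asserts.
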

\begin{proof}
See Proposition 3.1 of \cite{LPAOP}. 
\end{proof}
Then we introduce the Helffer-Sj{\" o}strand formula which will connect the LSS with the resolvents. 
\begin{lem}\label{lem_HSformula}
Let $r \in \mathbb{N}$ and $f \in \mathcal{C}^{r+1}(\mathbb{R})$ and $\chi(y)$ be a smooth cutoff function with bounded support. Define its almost-analytic extension of degree $r$ as 
\begin{equation*}
\widetilde{f}_r(x+\mathrm{i}y):=\sum_{k=0}^r \frac{1}{k!} (\mathrm{i} y)^k f^{(k)}(x). 
%(f(x)+\mathrm{i}yf'(x))\chi(y). 
\end{equation*}
	Let $\chi \in \mathcal{C}_c^{\infty}(\mathbb{C} ;[0,1])$ be a smooth cutoff function. Then for any $\lambda \in \mathbb{R}$ satisfying $\chi(\lambda)=1$ we have
$$
	f(\lambda)=\frac{1}{\pi} \int_{\mathbb{C}} \frac{\bar{\partial}\left(\tilde{f}_r(z) \chi(z)\right)}{\lambda-z} \mathrm{~d}^2 z, \ z=x+\mathrm{i} y, 
	$$
where $\bar{\partial}=\frac{1}{2} \left( \frac{\partial}{\partial x}+\mathrm{i} \frac{\partial}{\partial y} \right)$ is the antiholomorphic derivative and $\mathrm{d}^2 z$ is the Lebesgue measure on $\mathbb{C}.$ 
\end{lem}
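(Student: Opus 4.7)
The plan is to reduce the identity to the Cauchy--Pompeiu (generalized Cauchy) formula by exploiting the fact that the almost-analytic extension $\tilde f_r$ is annihilated by $\bar\partial$ to order $r$ in $y$.

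First I would compute $\bar\partial\tilde f_r$ explicitly. Using $\tilde f_r(x+\mathrm{i}y)=\sum_{k=0}^r \frac{(\mathrm{i}y)^k}{k!}f^{(k)}(x)$, a direct differentiation gives
\begin{equation*}
\partial_x\tilde f_r=\sum_{k=0}^r\frac{(\mathrm{i}y)^k}{k!}f^{(k+1)}(x),\qquad \mathrm{i}\partial_y\tilde f_r=-\sum_{k=0}^{r-1}\frac{(\mathrm{i}y)^k}{k!}f^{(k+1)}(x),
\end{equation*}
so after cancellation
\begin{equation*}
\bar\partial\tilde f_r(x+\mathrm{i}y)=\frac{1}{2}\,\frac{(\mathrm{i}y)^r}{r!}\,f^{(r+1)}(x).
\end{equation*}
Hence $\bar\partial\tilde f_r$ vanishes of order $r$ along the real axis. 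Combined with the product rule, $\bar\partial(\tilde f_r\chi)=(\bar\partial\tilde f_r)\chi+\tilde f_r(\bar\partial\chi)$, and since $\chi\equiv 1$ in a neighborhood of $\lambda$, the second term vanishes near $\lambda$ so the integrand has no singularity coming from $\bar\partial\chi$ when $z$ is close to $\lambda$.

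Next I would invoke the Cauchy--Pompeiu formula: for any $g\in\mathcal{C}^1_c(\mathbb{C})$ and $\lambda\in\mathbb{C}$,
\begin{equation*}
g(\lambda)=\frac{1}{\pi}\int_{\mathbb{C}}\frac{\bar\partial g(z)}{\lambda-z}\,\mathrm{d}^2z.
\end{equation*}
This is proved by applying Stokes' theorem (complex form) to $g(z)\,\mathrm{d}z/(z-\lambda)$ on the region $\{|z-\lambda|\geq\varepsilon\}\cap\{|z|\leq R\}$, sending $R\to\infty$ (killed by compact support of $g$) and $\varepsilon\downarrow 0$ (the circular integral over $|z-\lambda|=\varepsilon$ contributes $-2\pi\mathrm{i}\,g(\lambda)$ in the limit, which matches the sign convention above). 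I would apply this with $g=\tilde f_r\chi$, which is $\mathcal{C}^1$ (since $f\in\mathcal{C}^{r+1}$) and compactly supported (since $\chi$ is). Because $\chi(\lambda)=1$ and $\tilde f_r(\lambda)=f(\lambda)$ on $\mathbb{R}$, the left-hand side evaluates to $f(\lambda)$, yielding exactly the claimed identity.

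The main subtlety—rather than obstacle—is verifying integrability of $\bar\partial(\tilde f_r\chi)/(\lambda-z)$ near $z=\lambda$, which is what justifies plugging the real $\lambda$ into Cauchy--Pompeiu. In polar coordinates $z-\lambda=\rho e^{\mathrm{i}\theta}$ one has $|y|^r=\rho^r|\sin\theta|^r$ and $|\lambda-z|^{-1}=\rho^{-1}$, so the contribution from $(\bar\partial\tilde f_r)\chi$ scales like $\rho^{r-1}|\sin\theta|^r$ against the measure $\rho\,\mathrm{d}\rho\,\mathrm{d}\theta$, i.e.\ $\rho^r\,\mathrm{d}\rho$, which is integrable for all $r\geq 0$; the contribution from $\tilde f_r\bar\partial\chi$ is smooth and compactly supported away from $\lambda$. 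Thus the Cauchy--Pompeiu identity applies without regularization, and the proof is complete.
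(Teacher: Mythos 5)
Your proof is correct, and it is the standard derivation of the Helffer--Sj\"ostrand formula; the paper simply cites \cite{davies1995functional} and \cite[Section 1.13.3]{benaych2017advanced} rather than reproducing the argument, and those references prove the identity exactly as you do: computing $\bar\partial\tilde f_r=\tfrac{1}{2}\tfrac{(\mathrm{i}y)^r}{r!}f^{(r+1)}(x)$, observing that $\tilde f_r\chi\in\mathcal{C}^1_c(\mathbb{C})$ with $\tilde f_r(\lambda)\chi(\lambda)=f(\lambda)$, and invoking the Cauchy--Pompeiu formula. One minor remark: the local-integrability check at the end is already guaranteed by Cauchy--Pompeiu itself (since $\bar\partial(\tilde f_r\chi)$ is continuous and compactly supported while $|z-\lambda|^{-1}$ is locally $L^1$ on $\mathbb{R}^2$), so the vanishing of order $r$ along the real axis is not needed to validate the identity, though it is of course the crucial quantitative input when the formula is later applied with $\eta\to 0$.
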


\begin{proof}
See \cite{davies1995functional} or \cite[Section 1.13.3]{benaych2017advanced}. 
\end{proof}

Next, we record the Green's theorem in the complex form which  can be used to transfer the region integral to contour integral. 

\begin{lem}\label{lem_Greenthm}
Suppose $B(z, \bar{z})$ is continuous and has continuous partial derivatives in a region $\mathcal{R}$ and on its boundary. We have the following complex form of Green's theorem $$
	\oint_{\partial \mathcal{R}} B(z, \bar{z}) \mathrm{d} z=2 \mathrm{i} \iint_{\mathcal{R}} \frac{\partial B}{\partial \bar{z}} \mathrm{d} x \mathrm{d} y.
	$$
	More specifically, when $B(z,\bar{z})=F(z,\bar{z})h(z)$ with $h(z)$ being holomorphic in $\mathcal{R}$ and on $\partial \mathcal{R}$, we have
	$$
	\oint_{\partial \mathcal{R}} B(z, \bar{z}) \mathrm{d} z=\oint_{\partial \mathcal{R}} F(z, \bar{z})h(z) \mathrm{d} z =2 \mathrm{i} \iint_{\mathcal{R}} \frac{\partial F(z,\bar{z})}{\partial \bar{z}}h(z) \mathrm{d} x \mathrm{d} y.
	$$
\end{lem}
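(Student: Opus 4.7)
The plan is to reduce the complex-variable statement to the classical real Green's theorem applied to an appropriately chosen $1$-form, and then invoke the Leibniz rule for the Wirtinger derivative to obtain the second identity. First, writing $dz = dx + \mathrm{i}\, dy$, I view $B(z,\bar z)\, dz$ as a complex-valued $1$-form $B\, dx + (\mathrm{i} B)\, dy$. Applying Green's theorem componentwise (separating real and imaginary parts, which is legitimate since $B$ has continuous partial derivatives in $\mathcal{R}$ and on $\partial \mathcal{R}$) gives
\[
\oint_{\partial \mathcal{R}} B\, dz \;=\; \iint_{\mathcal{R}} \left(\frac{\partial (\mathrm{i} B)}{\partial x} - \frac{\partial B}{\partial y}\right) dx\, dy \;=\; \mathrm{i}\iint_{\mathcal{R}} \left(\frac{\partial B}{\partial x} + \mathrm{i}\frac{\partial B}{\partial y}\right) dx\, dy.
\]
Next, I would recognize the integrand on the right as $2\mathrm{i}\cdot \partial_{\bar z} B$, using the Wirtinger derivative convention $\partial_{\bar z} = \tfrac{1}{2}(\partial_x + \mathrm{i}\partial_y)$. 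This yields the first identity of the lemma.

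For the second identity, the key observation is the Leibniz rule for $\partial_{\bar z}$, namely
\[
\frac{\partial (Fh)}{\partial \bar z} \;=\; h\,\frac{\partial F}{\partial \bar z} + F\,\frac{\partial h}{\partial \bar z}.
\]
Since $h$ is holomorphic on $\mathcal{R}$ and on $\partial \mathcal{R}$, the Cauchy--Riemann equations give $\partial_{\bar z} h = 0$ throughout, so the second term drops out. Substituting $B = Fh$ into the already-established first identity then yields
\[
\oint_{\partial \mathcal{R}} F(z,\bar z) h(z)\, dz \;=\; 2\mathrm{i}\iint_{\mathcal{R}} \frac{\partial F}{\partial \bar z}\, h(z)\, dx\, dy,
\]
as claimed.

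This proof is largely bookkeeping, and I do not foresee any serious obstacle. The only points meriting care are the regularity hypothesis (continuity of partial derivatives of $B$ up to $\partial \mathcal{R}$ is exactly what is needed to legitimize Green's theorem), and the orientation convention on $\partial \mathcal{R}$ (taken to be positive/counterclockwise so that the sign in Green's theorem matches the $+2\mathrm{i}$ on the right-hand side). Both are standard and explicitly consistent with the statement of the lemma.
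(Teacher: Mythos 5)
Your proof is correct and is the standard textbook derivation: reduce to the real Green's theorem applied to the $1$-form $B\,dx + (\mathrm{i}B)\,dy$, recognize $\mathrm{i}\partial_x B - \partial_y B = 2\mathrm{i}\,\partial_{\bar z}B$, and then apply the Leibniz rule for $\partial_{\bar z}$ together with the Cauchy--Riemann equations ($\partial_{\bar z}h=0$) for the second identity. The paper itself does not reproduce a proof but simply cites Theorem 4.10 of Spiegel's \emph{Schaum's Outline of Complex Variables}, which proves the first identity by exactly the reduction to real Green's theorem that you carry out; your addition of the Leibniz-rule step for the product $Fh$ is the obvious and correct way to obtain the second displayed identity, which the paper treats as an immediate consequence. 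No gaps.
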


\begin{proof}
See Theorem 4.10 of \cite{spiegel2009schaum}. 
\end{proof} 

Then we introduce the Sokhotski-Plemelj lemma. 
\begin{lem}\label{lem_SPformula}
	Let $f$ be a complex-valued function which can be continuously extended to the real line, and let
$ a<0<b$ be some real numbers. Then
$$\lim _{\varepsilon \to 0^{+}}\int _{a}^{b}{\frac {f(x)}{x\pm \mathrm{i}\varepsilon }} \mathrm{d}x=\mp \mathrm{i}\pi f(0)+{\mathcal {P}}\int _{a}^{b}{\frac {f(x)}{x}} \mathrm{d}x,$$
	where 	$\mathcal {P}$ denotes the Cauchy principal value.
\end{lem}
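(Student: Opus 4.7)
The plan is to split the singular kernel into its real and imaginary parts via the identity
\begin{equation*}
\frac{1}{x \pm \mathrm{i}\varepsilon} = \frac{x}{x^2+\varepsilon^2} \mp \mathrm{i}\,\frac{\varepsilon}{x^2+\varepsilon^2},
\end{equation*}
and then pass to the limit $\varepsilon \to 0^+$ in each piece separately. This reduces the lemma to two well-known one-dimensional limits: the Poisson kernel $\tfrac{1}{\pi}\tfrac{\varepsilon}{x^2+\varepsilon^2}$ converging weakly to the Dirac mass at the origin, and the truncated Cauchy kernel $\tfrac{x}{x^2+\varepsilon^2}$ converging to the principal value $\mathcal{P}\,\tfrac{1}{x}$.

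For the imaginary part, I would write $f(x) = f(0) + (f(x)-f(0))$ and bound the second piece by a continuity modulus; since $f$ is continuously extendable to the real axis, on a small interval $[-\delta,\delta]$ we have $|f(x)-f(0)|$ arbitrarily small, so its contribution is $\rO(1)$ times $\int_{-\delta}^{\delta}\tfrac{\varepsilon}{x^2+\varepsilon^2}\,\mathrm{d}x \le \pi$, while on $[a,b]\setminus[-\delta,\delta]$ the integrand is bounded by $\varepsilon\,\|f\|_\infty/\delta^2 \to 0$. Meanwhile the $f(0)$ piece gives $f(0)\int_{a}^{b}\tfrac{\varepsilon}{x^2+\varepsilon^2}\,\mathrm{d}x = f(0)\bigl[\arctan(b/\varepsilon)-\arctan(a/\varepsilon)\bigr] \to f(0)\pi$ as $\varepsilon \downarrow 0$ since $a<0<b$. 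Multiplying by the $\mp\mathrm{i}$ prefactor yields the $\mp\mathrm{i}\pi f(0)$ term.

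For the real part, the plan is to symmetrize about $0$: write
\begin{equation*}
\int_{a}^{b} \frac{x\,f(x)}{x^2+\varepsilon^2}\,\mathrm{d}x
= \int_{-\delta}^{\delta} \frac{x\,[f(x)-f(0)]}{x^2+\varepsilon^2}\,\mathrm{d}x
+ f(0)\int_{-\delta}^{\delta} \frac{x}{x^2+\varepsilon^2}\,\mathrm{d}x
+ \int_{[a,b]\setminus[-\delta,\delta]} \frac{x\,f(x)}{x^2+\varepsilon^2}\,\mathrm{d}x.
\end{equation*}
The middle term vanishes by odd symmetry. The first term is controlled by the modulus of continuity of $f$ and is $\rO(\delta)$ uniformly in $\varepsilon$. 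The third term converges as $\varepsilon \to 0^+$ by dominated convergence to $\int_{[a,b]\setminus[-\delta,\delta]} f(x)/x\,\mathrm{d}x$, which is exactly the truncated principal value. Sending $\delta \to 0$ afterwards recovers $\mathcal{P}\int_a^b f(x)/x\,\mathrm{d}x$.

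The main technical point—really the only one that requires any care—is synchronizing the two limits $\varepsilon \to 0$ and $\delta \to 0$ so that the error estimates do not blow up. This is handled by choosing $\delta$ first depending on the target accuracy (using only continuity of $f$ at $0$), then taking $\varepsilon$ small relative to $\delta$, so that all the bulk terms become negligible in the correct order. No further analytic input is needed beyond continuity of $f$ on $[a,b]$, which is provided by the hypothesis that $f$ extends continuously to the real line.
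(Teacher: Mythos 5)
The paper does not prove this lemma: it simply cites Chapter~12 of Muskhelishvili. Your argument is the standard textbook derivation via the decomposition $\tfrac{1}{x\pm\mathrm{i}\varepsilon}=\tfrac{x}{x^2+\varepsilon^2}\mp\mathrm{i}\tfrac{\varepsilon}{x^2+\varepsilon^2}$, and the imaginary (Poisson-kernel) part of your proof is completely correct and needs only continuity of $f$ at $0$.

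The real-part piece is where you overclaim. You assert that $\int_{-\delta}^{\delta}\tfrac{x[f(x)-f(0)]}{x^2+\varepsilon^2}\,\mathrm{d}x$ is ``$\rO(\delta)$ uniformly in $\varepsilon$'' and that ``no further analytic input is needed beyond continuity.'' That is not true for merely continuous $f$: the kernel $\tfrac{|x|}{x^2+\varepsilon^2}$ has $\int_{-\delta}^{\delta}\tfrac{|x|}{x^2+\varepsilon^2}\,\mathrm{d}x=\log\bigl((\delta^2+\varepsilon^2)/\varepsilon^2\bigr)$, which blows up as $\varepsilon\to 0^+$, so the modulus of continuity alone does not yield a bound uniform in $\varepsilon$. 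To get your estimate you need $|f(x)-f(0)|\le L|x|^{\alpha}$ (H\"older) or at least a Dini condition $\int_0^{\delta}\omega_f(t)\,t^{-1}\,\mathrm{d}t<\infty$, and without such a condition the right-hand side $\mathcal{P}\int_a^b f(x)/x\,\mathrm{d}x$ need not even exist (take $f(x)=1/\log(1/x)$ on $(0,1/e)$ and $f\equiv 0$ on $(-1/e,0]$; both sides diverge). To be fair, the paper's own hypothesis ``continuously extended to the real line'' is equally imprecise, and Muskhelishvili's version assumes H\"older continuity; the lemma is only applied to smooth test functions downstream, so this gap is harmless in context. Still, your proof should state the needed regularity explicitly rather than claim continuity suffices.
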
 
\begin{proof}
See Chapter 12 of \cite{muskhelishvili2010some}. 
\end{proof}

Finally, we provide the Wick's probability theorem which can be used to characterize the multivariate Gaussian distribution.  

\begin{lem}\label{lem_wicktheorem}
Suppose $(y_1, y_2, \cdots, y_k)^*$ follows mean zero multivariate Gaussian distribution $\mathcal{N}(0, \Lambda),$ and let $x_1, x_2,\cdots, x_n \in \{y_1, y_2, \cdots, y_k\}$ for all $n \in \mathbb{N},$ then 
\begin{equation*}
\mathbb{E}(x_1 x_2 \cdots x_n)=\sum_{\mathsf{p} \in \mathrm{P}_n^2} \prod_{\{i,j\} \in p} \mathbb{E} x_i x_j=\sum_{\mathsf{p} \in \mathrm{P}_n^2} \prod_{\{i,j\} \in \mathsf{p}} \operatorname{Cov}(x_i, x_j),
\end{equation*}
where $\mathrm{P}_n^2$ constains all distinct ways of partitioning $\{1,2,\cdots, n\}$ into pairs $\{i, j\}.$ Equivalently, by combining the terms that are the same, we have that for $r_1, r_2, \cdots, r_k \in \mathbb{N},$
\begin{equation*}
\mathbb{E}(y_1^{r_1} y_2^{r_2} \cdots y_k^{r_k})=(r_1-1) \Lambda_{11} \mathbb{E}(y_1^{r_1-2} y_2^{r_2} \cdots y_k^{r_k})+\sum_{i=2}^k r_i \Lambda_{1i}  \mathbb{E}(y_1^{r_1} y_2^{r_2} \cdots y_k^{r_k}/(y_1 y_i)). 
\end{equation*} 
Especially, when $k=2,$ we have that 
\begin{equation*}
\mathbb{E} y_i^s y_j^t =(s-1) \Lambda_{ii} \mathbb{E} y_i^{s-2} y_j^t+t \Lambda_{ij} \mathbb{E} y_i^{s-1} y_j^{t-1}.  
\end{equation*}
\end{lem}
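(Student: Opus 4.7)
The plan is to prove the three equivalent statements in a unified way using multivariate Gaussian integration by parts (Stein's lemma), from which the recursion drops out immediately and from which the pairing formula follows by induction. An alternative route is to use the moment generating function $\mathbb{E}[\exp(\sum_i t_i y_i)] = \exp(\frac{1}{2}\sum_{i,j} t_i t_j \Lambda_{ij})$, expand the right-hand side, and read off the Taylor coefficient of $t_1^{r_1}\cdots t_k^{r_k}$; the pairings appear from choosing which factor of $\frac{1}{2}\Lambda_{ij}t_it_j$ contributes each $t$-factor, and the factorials that appear cancel against the combinatorial weights. I will take the Stein's lemma route since it yields the recursion directly.

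The first step is to establish the multivariate Stein identity: for any smooth $g:\mathbb{R}^k\to\mathbb{R}$ of polynomial growth and $(y_1,\ldots,y_k)\sim\mathcal{N}(0,\Lambda)$ with $\Lambda$ positive definite,
\begin{equation*}
\mathbb{E}[y_i\, g(y_1,\ldots,y_k)] \;=\; \sum_{j=1}^k \Lambda_{ij}\,\mathbb{E}\bigl[\partial_j g(y_1,\ldots,y_k)\bigr].
\end{equation*}
This is obtained by integration by parts against the Gaussian density $\rho(y)\propto \exp(-\tfrac{1}{2}y^\top \Lambda^{-1} y)$: since $\partial_j\rho(y) = -\sum_{\ell}(\Lambda^{-1})_{j\ell}y_\ell\,\rho(y)$, solving the linear system gives $y_i\,\rho(y)=-\sum_j\Lambda_{ij}\partial_j\rho(y)$, and the boundary terms vanish by Gaussian decay. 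Applying this identity with $i=1$ and $g(y)=y_1^{r_1-1}y_2^{r_2}\cdots y_k^{r_k}$ and computing $\partial_j g$ gives precisely the claimed recursion
\begin{equation*}
\mathbb{E}[y_1^{r_1}\cdots y_k^{r_k}] = (r_1-1)\Lambda_{11}\mathbb{E}[y_1^{r_1-2}y_2^{r_2}\cdots y_k^{r_k}] + \sum_{j=2}^k r_j\Lambda_{1j}\mathbb{E}[y_1^{r_1}\cdots y_k^{r_k}/(y_1y_j)].
\end{equation*}
The $k=2$ case is the direct specialization. The case where $\Lambda$ is only positive semi-definite follows by perturbing to $\Lambda+\varepsilon I$ and passing to the limit, using continuity of the polynomial moments.

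For the pairing formula $\mathbb{E}[x_1\cdots x_n]=\sum_{\mathsf{p}\in P_n^2}\prod_{\{i,j\}\in\mathsf{p}}\mathbb{E}[x_ix_j]$, I proceed by induction on $n$. The $n=1$ (and indeed any odd $n$) case gives zero on both sides by Gaussian symmetry. For even $n$, I single out the first factor $x_1$ and apply the Stein identity with $g(y)=x_2\cdots x_n$, obtaining $\mathbb{E}[x_1x_2\cdots x_n]=\sum_{j=2}^n \mathbb{E}[x_1 x_j]\,\mathbb{E}[x_2\cdots \widehat{x_j}\cdots x_n]$, where $\widehat{x_j}$ means omission. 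By the induction hypothesis each inner expectation expands as a sum over perfect matchings of $\{2,\ldots,n\}\setminus\{j\}$, and pairing $x_1$ with $x_j$ at the front recovers exactly the sum over all perfect matchings of $\{1,\ldots,n\}$. No step here is a serious obstacle; the main care is only in checking the polynomial-growth/boundary conditions needed to justify integration by parts, but this is immediate for our polynomial $g$ and Gaussian density.
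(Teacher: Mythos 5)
Your proof is correct and self-contained. Worth noting, though: the paper does not actually prove this lemma — it simply cites Section 1.2 of \cite{zinn2004path}, as is customary for Wick's (Isserlis') theorem. So your Stein's-lemma route is not ``a different approach from the paper'' so much as ``a proof where the paper offers none.'' Your derivation of the multivariate Gaussian integration-by-parts identity $\mathbb{E}[y_i g(y)] = \sum_j \Lambda_{ij}\,\mathbb{E}[\partial_j g(y)]$ is sound, the specialization to monomial $g$ immediately yields the recursion in the statement, and the induction for the pairing formula is carried out correctly: applying Stein's identity to $g = x_2 \cdots x_n$ gives $\mathbb{E}[x_1 \cdots x_n] = \sum_{j=2}^n \mathbb{E}[x_1 x_j]\,\mathbb{E}[x_2 \cdots \widehat{x_j} \cdots x_n]$ (where the covariance $\Lambda_{a,b_j}$ equals $\mathbb{E}[x_1 x_j]$), and iterating recovers the sum over perfect matchings of $\{1,\ldots,n\}$. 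Two small points you could tighten but which do not affect correctness: the induction actually drops from $n$ to $n-2$, so the even base case should be stated explicitly ($n=0$ or $n=2$, both trivial); and the degenerate-$\Lambda$ limiting argument you sketch is fine but could equivalently be avoided by noting that the Stein identity for polynomial $g$ already holds for degenerate Gaussians by writing $y = A w$ with $w$ standard Gaussian and applying the one-dimensional Stein lemma coordinatewise.
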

\begin{proof}
See Section 1.2 of \cite{zinn2004path}. 
\end{proof}

\section{CLTs for the resolvents}\label{sec_CLTresolvent}

We first prepare some notations. Denote 
\begin{equation}\label{eq_mathcalY}
\mathcal{Y}(z_s)=(1-\mathbb{E})\operatorname{Tr}{{R_1}}(z_s), s=1,2,\ldots, l.
\end{equation}
Denote
$$\widehat{\alpha}\left(z_1, z_2\right):=\kappa_4 \phi \frac{\partial^2}{\partial z_1 \partial z_2}\left(\frac{1}{p} \sum_{i=1}^p \frac{1}{\left(1+\phi^{-1/2}{m}^{\mathfrak{a}}(z_1) \sigma_i\right)\left(1+\phi^{-1/2}{m}^{\mathfrak{b}}(z_2) \sigma_i\right)}\right),$$
	$$
	\widehat{\beta}\left(z_1, z_2\right):=2 \left(\frac{(m)'(z_1)(m)'(z_2)}{(m(z_1)-m(z_2))^2}-\frac{1}{(z_1-z_2)^2} \right).
	$$

\begin{thm}\label{them_CLTresolvent}
	Suppose $X$ and $\Sigma$ satisfy Assumptions  \ref{assum:XSigma}. Fix any $l \in \mathbb{N}$ and  $z_1, \ldots, z_l \in \mathbf{S}$, we have that  $\left(\eta_1\mathcal{Y}(z_1),\ldots,\eta_l\mathcal{Y}(z_l)\right) \simeq \left(\mathscr{G}(z_1),\ldots,\mathscr{G}(z_l)\right),$ where $\left(\mathscr{G}(z_1),\ldots,\mathscr{G}(z_l)\right)$ is a complex Gaussian vector   with mean $\mathbf{0}$ and covariances     
	\begin{equation*}
		\operatorname{Cov}\left(\mathscr{G}(z_i),\mathscr{G}(z_j)\right)=\eta_i\eta_j\left[\hat{\alpha}(z_i,z_j)+\hat{\beta}(z_i,z_j)\right],\quad 1\le i,j\le l.
	\end{equation*}
\end{thm}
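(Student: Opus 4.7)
My plan is to prove the joint Gaussian convergence by verifying a Wick moment recursion, following the route sketched in Section \ref{sec_examproofsrategy}. Concretely, for arbitrary integers $l_1,\dots,l_l\ge 0$ with $L=\sum_s l_s$ even, I will show
\begin{equation*}
\mathbb{E}\Bigl[\prod_{s=1}^l (\eta_s\mathcal{Y}(z_s))^{l_s}\Bigr]
=\sum_{t\ge 2}l_t\,\eta_1\eta_t\bigl[\widehat\alpha(z_1,z_t)+\widehat\beta(z_1,z_t)\bigr]\,
\mathbb{E}\Bigl[(\eta_1\mathcal{Y}(z_1))^{l_1-1}(\eta_t\mathcal{Y}(z_t))^{l_t-1}\prod_{s\ne 1,t}(\eta_s\mathcal{Y}(z_s))^{l_s}\Bigr]
+\mathrm{o}(1),
\end{equation*}
plus the analogous term with $(l_1-1)[\widehat\alpha(z_1,z_1)+\widehat\beta(z_1,z_1)]$ when $l_1\ge 2$, and show the corresponding quantity is $\mathrm{o}(1)$ when $L$ is odd. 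By Lemma \ref{lem_wicktheorem} together with moment tightness from the local laws (Theorem \ref{thm_locallaw}), this recursion identifies the limit with the claimed Gaussian process and yields the stated covariances.

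\textbf{Key steps.} First, I would fix $z=z_1$ and isolate one factor of $\mathcal{Y}(z_1)$ using the resolvent identity $z R_1(z)=-I+\Sigma^{1/2}XX^*\Sigma^{1/2}R_1(z)$; taking centered traces gives
\begin{equation*}
z\,\mathcal{Y}(z)=(1-\mathbb{E})\sum_{i\in\mathcal{I}_1,\,\mu\in\mathcal{I}_2}
(\Sigma^{1/2})_{ii}\,X_{i\mu}\,\bigl(X^{*}\Sigma^{1/2}R_1(z)\bigr)_{\mu i}.
\end{equation*}
Then multiplying by $\prod_{s\ge 2}\mathcal{Y}(z_s)^{l_s}\,\mathcal{Y}(z_1)^{l_1-1}$, taking expectation, and applying the cumulant expansion formula (Lemma \ref{lem_cumulantexpansion}) to each $X_{i\mu}$ produces a sum of terms indexed by the cumulant order $k=1,2,3,4,\dots$. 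The $k=1$ contribution vanishes because $\mathbb{E}X_{i\mu}=0$. The $k=2$ term, after differentiating $R_1$'s (each derivative producing an extra $R_1\Sigma^{1/2}XX^*\Sigma^{1/2}R_1$ or $R_1\Sigma^{1/2}Xe_\mu e_i^*\Sigma^{1/2}R_1$ factor) and using the averaged local law (\ref{eq_averagelocallawone}) to replace random traces by deterministic equivalents, gives two kinds of contributions: one in which the two ``free'' ends of the differentiation reconnect inside the same $\mathcal{Y}(z_1)^{l_1-1}\prod\mathcal{Y}(z_s)^{l_s}$ factor (producing the Wick recursion with covariance coming from a two-resolvent trace) and one that closes the loop on $R_1(z)$ (absorbed into the self-consistent equation via (\ref{eq_mmprimeeq1})). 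The $k=4$ term produces the $\kappa_4$-dependent piece $\widehat\alpha$, and terms of order $k=3$ and $k\ge 5$ are shown to be $\mathrm{o}(1)$ via the moment bound (\ref{eq_momentassumption}) together with the entrywise control (\ref{eq:entrywiselaw}).

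\textbf{Identifying the covariances.} The two-resolvent traces appearing in the $k=2$ contribution, after symmetrization and self-consistency, give rise precisely to the kernel $\widehat\beta(z_i,z_j)$. The combinatorial identity
\begin{equation*}
2\,\partial_{z_1}\partial_{z_2}\log\bigl(m(z_1)-m(z_2)\bigr)
=\frac{2m'(z_1)m'(z_2)}{(m(z_1)-m(z_2))^2},
\end{equation*}
together with the chain rule applied to the self-consistent equation (\ref{eq:lsd}) and the derivative identity (\ref{eq_mmprimeeq1}), connects the averaged two-resolvent sum $\sum_i(1+\phi^{-1/2}m(z_1)\sigma_i)^{-1}(1+\phi^{-1/2}m(z_2)\sigma_i)^{-1}$ to $\widehat\beta$ modulo the $(z_1-z_2)^{-2}$ counterterm that arises from the exact identity $(R_1(z_1)-R_1(z_2))/(z_1-z_2)=R_1(z_1)R_1(z_2)$. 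The $k=4$ cumulant contribution produces four copies of the averaged resolvent expansion $\sum_i(1+\phi^{-1/2}m(z_r)\sigma_i)^{-1}$; differentiating twice yields $\widehat\alpha$ by definition. The factor of $\eta_s$ in each $\eta_s\mathcal{Y}(z_s)$ is what makes all error terms from the local laws of size $\mathrm{o}(1)$ uniformly in the regimes $\eta_s\asymp 1$ and $\eta_s=\mathrm{o}(1)$, since $\Psi(z)$ has matching growth near the spectral edges.

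\textbf{Main obstacle.} The hardest part is controlling the ``reconnection'' terms at order $k=2$ to sufficient precision: when one differentiates $R_1(z_s)^{l_s}$ with respect to $X_{i\mu}$, many off-diagonal entries of $R_1$ appear, and one must show that only the pairings matching the Wick structure survive while the rest are $\mathrm{o}(1)$. This requires a careful use of both the anisotropic local law (\ref{eq_locallawequationone}) (to bound individual resolvent entries sharply) and the isotropic/average laws (\ref{eq_averagelocallawone})–(\ref{eq_averagelocallawtwo}), together with the explicit size of $\operatorname{Im} m$ from (\ref{eq_squarerootbehavior}) to convert powers of $\Psi(z)$ into powers of $\eta$. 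Once this accounting is done, the recursion closes, Wick's theorem gives the Gaussianity of the limit, and tightness plus uniqueness of moments complete the proof.
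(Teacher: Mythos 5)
Your proposal follows essentially the same route as the paper: isolate one factor of $\mathcal{Y}$ via the resolvent identity, apply the cumulant expansion (Lemma \ref{lem_cumulantexpansion}), split the $\kappa_2$ contribution into a self-consistent piece and a Wick-pairing piece (the paper's $\mathsf{A}_i$ and $\mathsf{B}_i$ in the proof of Lemma \ref{lem:resolventwick}), identify the $\kappa_4$ piece with $\widehat\alpha$, and control $\kappa_3$ and higher cumulants as $\mathrm{o}(1)$ using the local laws, then close via Wick's theorem (Lemma \ref{lem_wicktheorem}) and convergence of moments. The decomposition, the role of the $(z_1-z_2)^{-2}$ counterterm from the resolvent difference identity, and the error accounting in terms of $\Psi(z)$ and $\eta$ all match the paper's argument, so the proof sketch is correct and not a genuinely different route.
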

The proof of Theorem \ref{them_CLTresolvent} replies on the following lemma. Denote 
\begin{equation*}
\mathsf{Re}:=\frac{1}{\prod_{i=1}^l|\eta_i|}\left(\frac{1}{n \min_{i=1}^l\{|\eta_i|\sqrt{\kappa_i+|\eta_i|}\}}+\frac{1}{n^{3/4}p^{1/4} \min_{i=1}^l\{\sqrt{|\eta_i|(\kappa_i+|\eta_i|)}\}}+\frac{1}{\sqrt{n}}\right).
\end{equation*}

\begin{lem}\label{lem:resolventwick}
Suppose $X$ and $\Sigma$ satisfy Assumptions  \ref{assum:XSigma}. Fix any $l \in \mathbb{N}$ and  $z_1, \ldots, z_l \in \mathbf{S},\ i=1,\ldots,l$ with imaginary part $\ge n^{-1+\varepsilon}$ for some positive $\varepsilon>0$, we have 
	\begin{equation}\label{eq_resolventwick}
		\mathbb{E}\left[\prod_{i=1}^l \mathcal{Y}\left(z_i\right)\right]=\left\{\begin{array}{ll}
			\sum \prod  \omega\left(z_s, z_t\right)+\mathrm{O}_{\prec}\left(\mathsf{Re}\right), & \text { if } l \in 2 \mathbb{N} \\
			\mathrm{O}_{\prec}\left(\mathsf{Re}\right), & \text { otherwise }
		\end{array} .\right.
	\end{equation}
	Here we denoted $\omega\left(z_s, z_t\right):=\widehat{\alpha}\left(z_s, z_t\right)+\widehat{\beta}\left(z_s, z_t\right)$, and $\sum \prod$ means summing over all distinct ways of partitioning of indices into pairs.
\end{lem}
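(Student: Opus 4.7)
The plan is to prove the identity by induction on $l$, using a cumulant expansion to reduce $\mathbb{E}[\prod_i \mathcal{Y}(z_i)]$ to expectations of products with two fewer factors. Specifically, the main step will be to establish the reduction formula
\begin{equation*}
\mathbb{E}\Bigl[\prod_{i=1}^l \mathcal{Y}(z_i)\Bigr] \;=\; \sum_{s=1}^{l-1} \omega(z_l,z_s)\, \mathbb{E}\Bigl[\prod_{\substack{i=1\\ i\neq s,l}}^{l-1} \mathcal{Y}(z_i)\Bigr] \;+\; \mathrm{O}_{\prec}(\mathsf{Re}).
\end{equation*}
Granted this, iteration is standard: for $l$ odd, the procedure eventually terminates at a single factor $\mathbb{E}[\mathcal{Y}(z_j)]=0$ and every term is absorbed into $\mathrm{O}_{\prec}(\mathsf{Re})$; for $l$ even, iterating the reduction generates precisely the sum over pairings $\sum\prod \omega(z_s,z_t)$.

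To derive the reduction, I would start from the algebraic identity
\begin{equation*}
z_l\, \operatorname{Tr} R_1(z_l) + p \;=\; \operatorname{Tr}\bigl(X X^* \Sigma^{1/2} R_1(z_l)\Sigma^{1/2}\bigr) \;=\; \sum_{i\in\mathcal{I}_1,\, \mu\in\mathcal{I}_2} X_{i\mu}\, \bigl(\Sigma^{1/2} R_1(z_l) \Sigma^{1/2} X\bigr)_{i\mu},
\end{equation*}
and apply the $(1-\mathbb{E})$ projection on both sides. Substituting this into $z_l\, \mathbb{E}[\mathcal{Y}(z_l)\prod_{s<l}\mathcal{Y}(z_s)]$ and invoking the cumulant expansion (Lemma 5.5) entry by entry on the $X_{i\mu}$, I obtain a sum
\begin{equation*}
\sum_{i,\mu}\sum_{k=1}^{3}\frac{\kappa_{k+1}}{k!}\,\mathbb{E}\,\partial_{X_{i\mu}}^{k}\!\Bigl[(\Sigma^{1/2}R_1(z_l)\Sigma^{1/2}X)_{i\mu}\prod_{s<l}\mathcal{Y}(z_s)\Bigr] + (\text{remainder}),
\end{equation*}
with the remainder controlled by Lemma 5.5 and the high-probability bound $|x_{ij}|\prec (pn)^{-1/4}$. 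The derivatives are computed using $\partial_{X_{i\mu}} R_1 = -R_1\Sigma^{1/2}(E_{i\mu}X^* + X E_{i\mu}^*)\Sigma^{1/2}R_1$, producing resolvent entries of $R_1$ and $R_2$ that are then replaced by their deterministic equivalents via Theorem 5.4.

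The resulting terms split naturally into two types. \emph{Self-contributions}, where every derivative lands on the $(i,\mu)$-factor coming from $\mathcal{Y}(z_l)$, reconstruct the deterministic equation (5.2) for $m(z_l)$ up to fluctuations; these cancel the factor $z_l$ on the left-hand side via Lemma 5.2 and contribute only to the error $\mathsf{Re}$. \emph{Cross-contributions}, where at least one derivative falls on a factor $\mathcal{Y}(z_s)$, $s<l$, produce traces of the form $\operatorname{Tr}(\Sigma^{1/2} R_1(z_l) \Sigma^{1/2}\,\text{(stuff)}\,\Sigma^{1/2}R_1(z_s)\Sigma^{1/2})$. The $k=1$ (that is, $\kappa_2$) terms, after averaging via the local laws, yield exactly $\widehat{\beta}(z_l,z_s)$ via the identity $\operatorname{Tr}(R_1(z_1)R_1(z_2)) = (m_p'(z_1)-m_p'(z_2))/(z_1-z_2)$ combined with (5.5); the $k=3$ (that is, $\kappa_4$) terms yield $\widehat{\alpha}(z_l,z_s)$, recognized as the mixed second derivative in (6.2) using (5.5) and (5.4). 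Odd-cumulant contributions ($k=2$, $\kappa_3$) come paired with an odd number of off-diagonal $G_{i\mu}$ factors and, after applying the entrywise bound (5.11), contribute only $\mathrm{O}_{\prec}(n^{-1/2})/\prod_i|\eta_i|$, which is absorbed into $\mathsf{Re}$.

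The main obstacle is the error bookkeeping needed to reach the exact bound $\mathsf{Re}$ uniformly over $z_i\in\mathbf{S}$. Each substitution via Theorem 5.4 produces an error of size $\Psi(z) = \sqrt{\operatorname{Im} m/(n\eta)} + (n\eta)^{-1}$, which via the edge behavior $\operatorname{Im} m\asymp\eta/\sqrt{\kappa+\eta}$ near and outside the spectrum translates into the $n\min_i\{|\eta_i|\sqrt{\kappa_i+|\eta_i|}\}$ denominator in $\mathsf{Re}$; the off-diagonal gain $\phi^{-1/4}$ in (5.11) is what produces the $n^{3/4}p^{1/4}$ factor. Multiplicatively tracking how many resolvent substitutions and how many off-diagonal $G_{i\mu}$ entries each derivative introduces, and ensuring that the factors of $z_l$ on the left and of $\eta_i$ on the right balance so that the $\prod_i |\eta_i|^{-1}$ prefactor in $\mathsf{Re}$ is correctly reproduced, is the most delicate part of the argument. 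Once this is verified for the reduction step, the lemma follows at once by induction on $l$.
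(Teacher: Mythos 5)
Your proposal follows the same line as the paper's proof: expand $z_l\operatorname{Tr}R_1(z_l)+p$ by cumulant expansion in the entries $x_{i\mu}$, let the self-terms reconstruct the self-consistent equation for $m$ (cancelling the $z_l$ prefactor via Lemma~\ref{lem_elementaryidentityes}), identify the cross-terms with $\widehat\alpha$ (from the $\kappa_4$ contribution) and $\widehat\beta$ (from the $\kappa_2$/chain-rule contribution), absorb the $\kappa_3$ contributions and all local-law substitution errors into $\mathsf{Re}$ using $\phi^{-1/4}\Psi$ bounds for the off-diagonal entries, and then iterate the reduction $\mathbb{E}\prod_i\mathcal{Y}(z_i)=\sum_s\omega(z_l,z_s)\,\mathbb{E}\prod_{i\neq l,s}\mathcal{Y}(z_i)+\mathrm{O}_{\prec}(\mathsf{Re})$, which is exactly the paper's (\ref{eq_referrefer}). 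One slip to fix: the correct resolvent identity is $\operatorname{Tr}\bigl(R_1(z_1)R_1(z_2)\bigr)=\bigl(\operatorname{Tr}R_1(z_1)-\operatorname{Tr}R_1(z_2)\bigr)/(z_1-z_2)$, with no primes; the derivatives $m'(z_1),m'(z_2)$ appearing in $\widehat\beta$ come afterward from the $\partial_{z_1},\partial_{z_2}$ produced when the derivative of a factor $(1-\mathbb{E})\operatorname{Tr}R_1(z_s)$ in $x_{ij}$ is rewritten via (\ref{eq_resolventderi2}).
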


\begin{proof}[\bf Proof of Theorem \ref{them_CLTresolvent}]
The proof follows directly from Lemmas \ref{lem:resolventwick}, \ref{lem_wicktheorem} and the Portmanteau theorem. 
\end{proof}

\subsection{Proof of Lemma \ref{lem:resolventwick}}

For simplicity, we focus on $l=2$ and then discuss how to handle general $l$ in the end of the proof. In this setting, according to Lemma \ref{lem_wicktheorem}, it suffices to prove (\ref{eq_wickl2case}). Furthermore, using (\ref{eq_mathcalY}) and the identity that for  any two random variables $a$ and $b,$ $\mathbb{E}[a(1-\mathbb{E})b]=\mathbb{E}[(1-\mathbb{E})a(1-\mathbb{E})b]=\mathbb{E}[b(1-\mathbb{E})a],$ we see it suffices to control  
$$\begin{aligned}
		&\mathbb{E}\left[((1-\mathbb{E})\operatorname{Tr}{R_1}(z_1))^{l_1}((1-\mathbb{E})\operatorname{Tr}{R_1}(z_2))^{l_2}\right]\\ =& \sum_{i=1}^p \mathbb{E}\left\{
		(1-\mathbb{E})[{((1-\mathbb{E})\operatorname{Tr}{R_1}(z_1))^{l_1}((1-\mathbb{E})\operatorname{Tr}{R_1}(z_2))^{l_2-1}}] ({R_1}(z_2))_{ii}	\right\}.\\ 
	\end{aligned}
	$$
	
Using the identity $zR_1(z)=R_1(z)Q-I$, Lemma \ref{lem_cumulantexpansion} and the convention $Y=\Sigma^{1/2}X$, we have that 
	\begin{equation}\label{eq_ithvariancerough}
		\begin{aligned}
			&z_2\mathbb{E}\left\{
			(1-\mathbb{E})[{((1-\mathbb{E})\operatorname{Tr}{R_1}(z_1))^{l_1}((1-\mathbb{E})\operatorname{Tr}{R_1}(z_2))^{l_2-1}}] ({R_1}(z_2))_{ii}	\right\}\\ 
			& =\mathbb{E}\left\{ (1-\mathbb{E})[{((1-\mathbb{E})\operatorname{Tr}{R_1}(z_1))^{l_1}((1-\mathbb{E})\operatorname{Tr}{R_1}(z_2))^{l_2-1}}]  ({{R_1}(z_2) Q})_{ii}\right\}\\ 
			& =\mathbb{E} \left\{(1-\mathbb{E})[{((1-\mathbb{E})\operatorname{Tr}{R_1}(z_1))^{l_1}((1-\mathbb{E})\operatorname{Tr}{R_1}(z_2))^{l_2-1}}]  ({{R_1}(z_2) \Sigma^{1/2}XX^*\Sigma^{1/2}})_{ii}\right\}\\ 
			& = \mathbb{E}\left\{(1-\mathbb{E})[{((1-\mathbb{E})\operatorname{Tr}{R_1}(z_1))^{l_1}((1-\mathbb{E})\operatorname{Tr}{R_1}(z_2))^{l_2-1}}] \sum_{j=1}^n \sqrt{\sigma_i}( {R_1}(z_2)Y)_{ij}x_{ij}\right\}\\ 
			& = \sum_{j=1}^n\frac{\sqrt{\sigma_i}}{\sqrt{pn}}  \mathbb{E}\left\{ \frac{\partial(1-\mathbb{E})[{((1-\mathbb{E})\operatorname{Tr}{R_1}(z_1))^{l_1}((1-\mathbb{E})\operatorname{Tr}{R_1}(z_2))^{l_2-1}}] ( {R_1}(z_2)Y)_{i j}}{\partial x_{ij}}\right\} +\mathsf{L}_i\\
		&	=  \sum_{j=1}^n\frac{\sqrt{\sigma_i}}{\sqrt{pn}} \mathbb{E}\left\{(1-\mathbb{E})[{((1-\mathbb{E})\operatorname{Tr}{R_1}(z_1))^{l_1}((1-\mathbb{E})\operatorname{Tr}{R_1}(z_2))^{l_2-1}}] \frac{\partial ( {R_1}(z_2)Y)_{i j}}{\partial x_{ij}}\right\} \\
			& +\sum_{j=1}^n\frac{\sqrt{\sigma_i}}{\sqrt{pn}}  \mathbb{E}\left\{ \frac{\partial((1-\mathbb{E})\operatorname{Tr}{R_1}(z_1))^{l_1}((1-\mathbb{E})\operatorname{Tr}{R_1}(z_2))^{l_2-1}}{\partial x_{ij}} ( {R_1}(z_2)Y)_{i j}\right\}+\mathsf{L}_{i} \\
		&	=  :\mathsf{A}_{i}+\mathsf{B}_{i}+\mathsf{L}_{i},
		\end{aligned}
	\end{equation}
	where	
	$$
	\mathsf{L}_{i}:= \sum_{j=1}^n\sqrt{\sigma_i}\left[\sum_{k=2}^{l_0} \frac{1}{k !} \kappa_{k+1}\mathbb{E} \frac{\partial^k\left([{((1-\mathbb{E})\operatorname{Tr}{R_1}(z_1))^{l_1}((1-\mathbb{E})\operatorname{Tr}{R_1}(z_2))^{l_2-1}}] ( {R_1}(z_2)Y)_{i j}\right)}{\partial x_{ij}^k}+\varepsilon_{l_0+1}^{(ij)}\right].
	$$
with $\varepsilon$'s being the remainder term. By the Lemma \ref{lem_cumulantexpansion} and the fact that $\kappa_{k+1}(x_{ij})\sim(np)^{-\frac{k+1}{4}}$, we choose $l_0=4$ so that $\sum_{j}\sqrt{\sigma_i}\varepsilon_{l_0+1}^{(ij)}=\mathrm{o}(p^{-1})$.

Before proceeding to the control of $\mathsf{A}_i, \mathsf{B}_i$ and $\mathsf{L}_i,$ we first prepare some useful identities. 
\begin{equation}\label{eq_resolventderi1}
	\frac{\partial\left({R_1}(z)\right)_{a b}}{\partial y_{ij }}=-\left({R_1}(z)\right)_{a i}\left(Y^* {R_1}(z)\right)_{j b}-\left({R_1}(z) Y\right)_{a j} ({R_1}(z))_{ i b}, \quad \frac{\partial\left({R_1}(z)\right)_{a b}}{\partial x_{ij}}=\frac{\partial\left({R_1}(z)\right)_{a b}}{\partial y_{ij}} \sqrt{\sigma_i}.
\end{equation}

\begin{equation}\label{eq_resolventderi2}
	\sum_{l=1}^p \frac{\partial ({R_1}(z))_{l l}}{\partial x_{ij}}=-2\sqrt{\sigma_i}\frac{\mathrm{d}}{\mathrm{d} z}({R_1}(z) Y)_{ij}.
\end{equation}

\begin{equation}\label{eq_resolventderi3}
	\begin{aligned}
		\frac{\partial ({R_1}Y)_{ij}}{\partial x_{ij}}=&(R_1\Sigma^{1/2}E_{ij})_{ij}-[R_1(\Sigma^{1/2}E_{ij}X^*\Sigma^{1/2}+\Sigma^{1/2}XE_{ji}\Sigma^{1/2})R_1\Sigma^{1/2}X]_{ij}\\ 
		=&\sqrt{\sigma_i}[({R_1})_{ii}-({R_1})_{ii}(Y^*R_1Y)_{jj}-(R_1Y)_{ij}(Y^*R_1)_{ij}],\\ 
	\end{aligned}
\end{equation}
where $E_{ij}$ is a $p \times n$ matrix whose only nonzero entry is one and in the ($i,j$) position.  

For $\mathsf{A}_i,$ using (\ref{eq_resolventderi1})--\ref{eq_resolventderi3}, we can write 
	$$
	\begin{aligned}
		\mathsf{A}_{i}=& 	z_2\mathbb{E}\left\{((1-\mathbb{E})\operatorname{Tr}{R_1}(z_1))^{l_1}((1-\mathbb{E})\operatorname{Tr}{R_1}(z_2))^{l_2-1}(1-\mathbb{E}){\left[({{R_1}(z_2)})_{i i}\right]}\right\}\\
		=& \sum_{ j}\frac{\sqrt{\sigma_i}}{\sqrt{pn}} \mathbb{E}\left\{(1-\mathbb{E})[{((1-\mathbb{E})\operatorname{Tr}{R_1}(z_1))^{l_1}((1-\mathbb{E})\operatorname{Tr}{R_1}(z_2))^{l_2-1}}] \frac{\partial ( {R_1}(z_2)Y)_{i j}}{\partial x_{ij}}\right\} \\
		=&  \sum_{ j}\frac{\sigma_i}{{\sqrt{pn}}} \mathbb{E}\left\{((1-\mathbb{E})\operatorname{Tr}{R_1}(z_1))^{l_1}((1-\mathbb{E})\operatorname{Tr}{R_1}(z_2))^{l_2-1}(1-\mathbb{E})[({{R_1}(z_2)})_{i i}\right.\\ &-\left.\left(Y^* {{R_1}(z_2)}Y\right)_{j j} ({{R_1}(z_2)})_{i i}-({{R_1}(z_2)}Y)_{i j}({{R_1}(z_2)}Y)_{i j}]\right\}\\ 
		:=&(\mathsf{A}_{i})_1+(\mathsf{A}_{i})_2+(\mathsf{A}_{i})_3	\end{aligned}
	$$
	where $(\mathsf{A}_i)_k, 1 \leq k \leq 3,$ are defined as 
\begin{equation*}
	\begin{aligned}
		(\mathsf{A}_{i})_1=& 	\sum_{j=1}^n\frac{\sigma_i}{{\sqrt{pn}}} \mathbb{E}\left\{((1-\mathbb{E})\operatorname{Tr}{R_1}(z_1))^{l_1}((1-\mathbb{E})\operatorname{Tr}{R_1}(z_2))^{l_2-1}(1-\mathbb{E}){\left[({{R_1}(z_2)})_{i i}\right]}\right\}\\ 
		(\mathsf{A}_{i})_2=& -\sum_{j=1}^n\frac{\sigma_i}{{\sqrt{pn}}} \mathbb{E}\left\{((1-\mathbb{E})\operatorname{Tr}{R_1}(z_1))^{l_1}((1-\mathbb{E})\operatorname{Tr}{R_1}(z_2))^{l_2-1}(1-\mathbb{E})[\left(Y^* {{R_1}(z_2)}Y\right)_{j j} ({{R_1}(z_2)})_{i i}]\right\}\\ 
		(\mathsf{A}_{i})_3=&-\sum_{j=1}^n\frac{\sigma_i}{{\sqrt{pn}}} \mathbb{E}\left\{((1-\mathbb{E})\operatorname{Tr}{R_1}(z_1))^{l_1}((1-\mathbb{E})\operatorname{Tr}{R_1}(z_2))^{l_2-1}(1-\mathbb{E})[({{R_1}(z_2)}Y)_{i j}({{R_1}(z_2)}Y)_{i j}]\right\}\\ 
	\end{aligned}
\end{equation*}
Note for $	(\mathsf{A}_{i})_1=\phi^{-1/2}\sigma_i\mathbb{E}\left\{((1-\mathbb{E})\operatorname{Tr}{R_1}(z_1))^{l_1}((1-\mathbb{E})\operatorname{Tr}{R_1}(z_2))^{l_2-1}(1-\mathbb{E}){\left[({{R_1}(z_2)})_{i i}\right]}\right\}$, since it has the same structure in definition of $\mathsf{A}_{i}$, we will merge it in the end. For $(\mathsf{A}_{i})_2$, using Theorem \ref{thm_locallaw}, we have that 
\begin{equation*}
	\begin{aligned}
		(\mathsf{A}_{i})_2=& -\sum_{j=1}^n\frac{\sigma_i}{{\sqrt{pn}}} \mathbb{E}\left\{((1-\mathbb{E})\operatorname{Tr}{R_1}(z_1))^{l_1}((1-\mathbb{E})\operatorname{Tr}{R_1}(z_2))^{l_2-1}(1-\mathbb{E})[\left(Y^* {{R_1}(z_2)}Y\right)_{j j} ({{R_1}(z_2)})_{i i}]\right\}\\ 
		=& -\sum_{j=1}^n\frac{\sigma_i}{{\sqrt{pn}}} \mathbb{E}\left\{((1-\mathbb{E})\operatorname{Tr}{R_1}(z_1))^{l_1}((1-\mathbb{E})\operatorname{Tr}{R_1}(z_2))^{l_2-1}(1-\mathbb{E})[\operatorname{Tr}({R_1}(z_2)YY^*) ( {R_1}(z_2))_{i i}]\right\}\\
		=&-\sigma_i\phi^{1/2}  \mathbb{E}\left[((1-\mathbb{E})\operatorname{Tr}{R_1}(z_1))^{l_1}((1-\mathbb{E})\operatorname{Tr}{R_1}(z_2))^{l_2-1}(1-\mathbb{E})\left(z_2 p^{-1}({R_1}(z_2))_{i i}\right) \operatorname{Tr} {R_1}(z_2) \right]\\ &-\sigma_i\phi^{1/2} \mathbb{E}\left[((1-\mathbb{E})\operatorname{Tr}{R_1}(z_1))^{l_1}((1-\mathbb{E})\operatorname{Tr}{R_1}(z_2))^{l_2-1}(1-\mathbb{E})\left[ ({R_1}(z_2))_{i i}\right]\right] \\
		%		&=\phi^{1/2}\left(- z m_{p}-1\right) \mathbb{E}\left[e_0(\lambda)(1-\mathbb{E})\left((\Sigma {R_1})_{i i}\right)\right]-\phi^{1/2}(\Sigma^{1/2}\Pi\Sigma^{-1/2})_{ii} \mathbb{E}\left[e_0(\lambda)(1-\mathbb{E})\left(p^{-1} \operatorname{Tr} {R_1}\right)\right]+\mathrm{O}_{\prec}(\phi^{1/2}\Psi_p(z) \Theta_p(z))\\ 
		=&\sigma_i\phi^{1/2}\left(- z_2 m_{p}(z_2)-1\right) \mathbb{E}\left[((1-\mathbb{E})\operatorname{Tr}{R_1}(z_1))^{l_1}((1-\mathbb{E})\operatorname{Tr}{R_1}(z_2))^{l_2-1}(1-\mathbb{E})\left(({R_1}(z_2))_{i i}\right)\right]\\ 
		&+\sigma_i\phi^{1/2}/(1+\phi^{-1/2}m(z_2)\sigma_i) \mathbb{E}\left[((1-\mathbb{E})\operatorname{Tr}{R_1}(z_1))^{l_1}((1-\mathbb{E})\operatorname{Tr}{R_1}(z_2))^{l_2-1}(1-\mathbb{E})\left(p^{-1} \operatorname{Tr} {R_1}(z_2)\right)\right]\\ 
		&+\mathrm{O}_{\prec}\left(\frac{1}{|\eta_1|^{l_1}|\eta_2|^{l_2-1}}\frac{\sqrt{\phi}}{p|\eta_2|}\Psi(z_2)\right).
	\end{aligned}
\end{equation*}
Similarly, for $(\mathsf{A}_{i})_3$, using Theorem \ref{thm_locallaw}, we see that  		
\begin{equation*}
	\begin{aligned}
		(\mathsf{A}_{i})_3&=-\sum_{j=1}^n\frac{\sigma_i}{{\sqrt{pn}}} \mathbb{E}\left\{((1-\mathbb{E})\operatorname{Tr}{R_1}(z_1))^{l_1}((1-\mathbb{E})\operatorname{Tr}{R_1}(z_2))^{l_2-1}(1-\mathbb{E})[({{R_1}(z_2)}Y)_{i j}({{R_1}(z_2)}Y)_{i j}]\right\}\\ 
		&=-\frac{\sigma_i}{{\sqrt{pn}}} \mathbb{E}\left[((1-\mathbb{E})\operatorname{Tr}{R_1}(z_2))^{l_1}((1-\mathbb{E})\operatorname{Tr}{R_1}(z_2))^{l_2-1}(1-\mathbb{E}) \left(R_1(z_2)YY^*R_1(z_2)\right)_{i i} \right] \\
		&=\mathrm{O}_{\prec}\left(\frac{1}{|\eta_1|^{l_1}|\eta_2|^{l_2-1}}\frac{\sqrt{\phi}}{p|\eta_2|}\Psi(z_2)\right).
	\end{aligned}
\end{equation*}	
Plugging the above estimates back into \eqref{eq_ithvariancerough}, we get
	\begin{equation*}
	\begin{aligned}
		&z_2\mathbb{E}\left\{
		(1-\mathbb{E})[{((1-\mathbb{E})\operatorname{Tr}{R_1}(z_1))^{l_1}((1-\mathbb{E})\operatorname{Tr}{R_1}(z_2))^{l_2-1}}] ({R_1}(z_2))_{ii}	\right\}\\ 	
		& =  (\mathsf{A}_{i})_1+(\mathsf{A}_{i})_2+(\mathsf{A}_{i})_3+\mathsf{B}_{i}+\mathsf{L}_{i}\\ 
	&	=(\phi^{-1/2}\sigma_i-z_2\phi^{-1/2}\sigma_im_p(z_2)-\phi^{1/2}\sigma_i)\mathbb{E}\left\{((1-\mathbb{E})\operatorname{Tr}{R_1}(z_1))^{l_1}((1-\mathbb{E})\operatorname{Tr}{R_1}(z_2))^{l_2-1}({{R_1}(z_2)})_{ii}\right\}\\ 
	&+\frac{\phi^{1/2}\sigma_i}{p(1+\phi^{-1/2}m(z_2)\sigma_i)}\mathbb{E}\left\{((1-\mathbb{E})\operatorname{Tr}{R_1}(z_1))^{l_1}((1-\mathbb{E})\operatorname{Tr}{R_1}(z_2))^{l_2}\right\}\\ 
		&+\mathrm{O}_{\prec}\left(\frac{1}{|\eta_1|^{l_1}|\eta_2|^{l_2-1}}\frac{\sqrt{\phi}}{p|\eta_2|}\Psi(z_2)\right)+\mathsf{B}_{i}+\mathsf{L}_{i}.\\ 
	\end{aligned}
\end{equation*}
With straightforward calculations, we can further obtain
	\begin{equation}\label{eq_variancerough}
	\begin{aligned}
				&\mathbb{E}
		[{((1-\mathbb{E})\operatorname{Tr}{R_1}(z_1))^{l_1}((1-\mathbb{E})\operatorname{Tr}{R_1}(z_2))^{l_2}}]\\ 	
	&	=
		\mathbb{E}\left\{
		(1-\mathbb{E})[{((1-\mathbb{E})\operatorname{Tr}{R_1}(z_1))^{l_1}((1-\mathbb{E})\operatorname{Tr}{R_1}(z_2))^{l_2-1}}] \operatorname{Tr}({R_1}(z_2))	\right\}\\ 	
	&	= \sum_{i=1}^p\frac{1}{z_2(1+\sigma_i\phi^{-1/2}m(z_2))} \left[\frac{\phi^{1/2}\sigma_i}{p(1+\phi^{-1/2}m(z_2)\sigma_i)}\mathbb{E}\left\{((1-\mathbb{E})\operatorname{Tr}{R_1}(z_1))^{l_1}((1-\mathbb{E})\operatorname{Tr}{R_1}(z_2))^{l_2}\right\}\right.\\
		&\left.+\mathrm{O}_{\prec}\left(\frac{1}{|\eta_1|^{l_1}|\eta_2|^{l_2-1}}\frac{\sqrt{\phi}}{p|\eta_2|}\Psi(z_2)\right)+\mathsf{B}_{i}+\mathsf{L}_{i}\right].\\ 
	\end{aligned}
\end{equation}

Then we proceed to the calculation of $\mathsf{B}_i.$ Again using (\ref{eq_resolventderi1})--(\ref{eq_resolventderi3}), we have that 
\begin{equation}\label{eq_Bitwosplit}
	\begin{aligned}
		\mathsf{B}_{i}=& \frac{\sqrt{\sigma_i}}{\sqrt{np}}\sum_{j=1}^n\mathbb{E}\left\{ \frac{\partial\left[(1-\mathbb{E}){((1-\mathbb{E})\operatorname{Tr}{R_1}(z_1))^{l_1}((1-\mathbb{E})\operatorname{Tr}{R_1}(z_2))^{l_2-1}}\right]}{\partial x_{ij}} ({{R_1}(z_2)}Y)_{i j}\right\}\\ 
		=&-2\frac{\sigma_i}{\sqrt{np}}\sum_{j=1}^n\mathbb{E}\left[ l_1((1-\mathbb{E})\operatorname{Tr}{R_1}(z_1))^{l_1-1}((1-\mathbb{E})\operatorname{Tr}{R_1}(z_2))^{l_2-1}( {R_1}(z_2)Y)_{i j}\frac{\mathrm{d}}{\mathrm{d} z_1}( {R_1}(z_1)Y)_{ij} \right.\\
		&\left. + (l_2-1)((1-\mathbb{E})\operatorname{Tr}{R_1}(z_1))^{l_1}((1-\mathbb{E})\operatorname{Tr}{R_1}(z_2))^{l_2-2}( {R_1}(z_2)Y)_{i j}\frac{\mathrm{d}}{\mathrm{d} z_2}( {R_1}(z_2)Y)_{ij}\right] \\
		%		=&-2\sqrt{\sigma_i}\sum_{j=1}^n\left[\mathbb{E} l_1\left\langle\underline{ {R_1}(z_1)}\right\rangle^{n-1}\langle\underline{ {R_1}(z_2)}\rangle^{l_2-1}(Y^*\ {R_1}(z_2))_{ji} ({{R_1}(z_1)}^2Y)_{ij} \right.\\
		%		&\left. +\mathbb{E} (l_2-1)\left\langle\underline{ {R_1}(z_1)}\right\rangle^n\langle\underline{ {R_1}(z_2)}\rangle^{l_2-2}(Y^*\ {R_1}(z_2))_{ji} ({{R_1}(z_2)}^2Y)_{ij}( {R_1}(z_2)Y)_{i j}\right] \\  
		=&-2\frac{\sigma_i}{\sqrt{np}}\mathbb{E}\left[ l_1((1-\mathbb{E})\operatorname{Tr}{R_1}(z_1))^{l_1-1}((1-\mathbb{E})\operatorname{Tr}{R_1}(z_2))^{l_2-1}\left( \frac{\partial}{\partial z_1}({R_1}(z_1)YY^* {{R_1}(z_2)})\right)_{ii} \right.\\
		&\left. + (l_2-1)((1-\mathbb{E})\operatorname{Tr}{R_1}(z_1))^{l_1}((1-\mathbb{E})\operatorname{Tr}{R_1}(z_2))^{l_2-2}\left( \frac{\partial}{\partial z_2}\frac{1}{2}({R_1}(z_2)YY^* {{R_1}(z_2)})\right)_{ii} \right]  \\  
		&:=(	\mathsf{B}_{i})_1+(	\mathsf{B}_{i})_2.
	\end{aligned}
\end{equation}
For the first term, we have that 
\begin{equation*}
	\begin{aligned}
		(	\mathsf{B}_{i})_1&=-\frac{2\sigma_i}{\sqrt{np}}\mathbb{E}\left[ l_1((1-\mathbb{E})\operatorname{Tr}{R_1}(z_1))^{l_1-1}((1-\mathbb{E})\operatorname{Tr}{R_1}(z_2))^{l_2-1}\left( \frac{\partial}{\partial z_1}({R_1}(z_1)YY^* {{R_1}(z_2)})\right)_{ii} \right]\\
		&=-\frac{2\sigma_i}{{\sqrt{pn}}} \mathbb{E}\left[l_1((1-\mathbb{E})\operatorname{Tr}{R_1}(z_1))^{l_1-1}((1-\mathbb{E})\operatorname{Tr}{R_1}(z_2))^{l_2-1} \frac{\partial}{\partial z_1}\left( {R_1}\left(z_1\right) Q {R_1}\left(z_2\right)\right)_{i i} \right] \\
		&=-\frac{2\sigma_i}{{\sqrt{pn}}} \mathbb{E}\left[l_1((1-\mathbb{E})\operatorname{Tr}{R_1}(z_1))^{l_1-1}((1-\mathbb{E})\operatorname{Tr}{R_1}(z_2))^{l_2-1} \frac{\partial}{\partial z_1}\left(z_1  {R_1}\left(z_1\right) {R_1}\left(z_2\right)\right)_{i i} \right]\\ 
		&=-\frac{2\sigma_i}{{\sqrt{pn}}} \mathbb{E}\left[l_1((1-\mathbb{E})\operatorname{Tr}{R_1}(z_1))^{l_1-1}((1-\mathbb{E})\operatorname{Tr}{R_1}(z_2))^{l_2-1} \frac{\partial}{\partial z_1}\left(z_1\frac{{R_1}\left(z_1\right)-{R_1}\left(z_2\right)}{z_1-z_2}\right)_{i i} \right]\\ 
%		&=-\frac{2}{{\sqrt{pn}}} \mathbb{E}\left[l_1((1-\mathbb{E})\operatorname{Tr}{R_1}(z_1))^{l_1-1}((1-\mathbb{E})\operatorname{Tr}{R_1}(z_2))^{l_2-1} \frac{\partial}{\partial z_1}\left( \frac{z_1 \sigma_i(\frac{-1}{z_1(1+\phi^{-1/2}m_1\sigma_i)}-\frac{-1}{z_2(1+\phi^{-1/2}m_2\sigma_i)}}{z_1-z_2}\right) \right]+\mathcal{E}_{B,1}(i),\\ 
		&=-\frac{2}{{\sqrt{pn}}} \mathbb{E}\left[l_1((1-\mathbb{E})\operatorname{Tr}{R_1}(z_1))^{l_1-1}((1-\mathbb{E})\operatorname{Tr}{R_1}(z_2))^{l_2-1} \frac{\partial}{\partial z_1}\left( \frac{z_1 \sigma_i(\mathfrak{g}_i(z_1)-\mathfrak{g_i}(z_2))}{z_1-z_2}\right) \right]+\mathcal{E}_{B,1}(i),\\ 
	\end{aligned}
\end{equation*}	
where  $$\begin{aligned}
	\mathcal{E}_{B,1}(i):=-\frac{2\sigma_i}{{\sqrt{pn}}} \mathbb{E}\left[l_1((1-\mathbb{E})\operatorname{Tr}{R_1}(z_1))^{l_1-1}((1-\mathbb{E})\operatorname{Tr}{R_1}(z_2))^{l_2-1} \frac{\partial}{\partial z_1}\left(z_1\frac{({R_1}\left(z_1\right))_{ii}-\mathfrak{g}_i(z_1)-({R_1}\left(z_2\right))_{ii}+\mathfrak{g}_i(z_2)}{z_1-z_2}\right)\right],
\end{aligned}
 $$
 \begin{equation}\label{eq_gidefinition}
 \mathfrak{g}_i(z):=-\frac{1}{z(1+\phi^{-1/2}m(z)\sigma_i)}.
\end{equation} 
  From \eqref{eq_variancerough} we can see that $\sum_{i=1}^p\mathfrak{g}_i(z_2)\mathcal{E}_{B,1}(i)$ is the error term we need to control. Let $\eta_i=\operatorname{Im} z_i, \ i=1,2.$ On the one hand, $|z_1-z_2|\succ |\eta_2|$, using the fact  that $z_1\asymp \sqrt{\phi}$, $\mathfrak{g}_i(z_2)\asymp \phi^{-1/2}$ and  $\frac{\partial}{\partial z_1}\left( \cdot\right) \asymp |\eta_1|^{-1}$ , we can  use the fluctuation averaging argument as in the proof of (\ref{eq_averagelocallawtwo}) (c.f. Lemma \ref{lem:fa}) to obtain that
$$\sum_{i}\mathfrak{g}_i(z_2)\sigma_i((R_1)_{ii}(z)-\mathfrak{g}_i(z))=\mathrm{O}_{\prec}\left(\frac{\phi^{-1/2}}{|\operatorname{Im}(z)|} \right).$$ 
This further results in 
$$\sum_{i=1}^p\mathfrak{g}_i(z_2)\mathcal{E}_{B,1}(i)=\mathrm{O}_{\prec}\left(\frac{1}{\sqrt{pn}}\frac{1}{|\eta_1|^{l_1-1}|\eta_2|^{l_2-1}}\frac{1}{|\eta_1|}\frac{1}{|\eta_2|}\left(\frac{1}{|\eta_1|}+\frac{1}{|\eta_2|}\right)\right)=\mathrm{O}_{\prec}\left(\frac{1}{|\eta_1|^{l_1}|\eta_2|^{l_2}}\frac{1}{\sqrt{pn}}\left(\frac{1}{|\eta_1|}+\frac{1}{|\eta_2|}\right)\right).$$ 

On the other hand,  when $z_1$ and $z_2$ are close, without loss of generality, we assume that $\operatorname{Im}z_1\succ |\eta_2|$. Denote the contour $\Gamma=\partial \mathsf{B}_{c|\eta_2|}(z_1)\cup\partial \mathsf{B}_{c|\eta_2|}(z_2)$ for some constant $c>0$, where choose $c>0$ small enough so that $\Gamma\subset \mathbb{C}_+$ and $\min_{\xi\in\Gamma}\operatorname{Im}\xi\succ|\eta_2|$. Using Theorem \ref{thm_locallaw}, we have that
\begin{equation*}
	\begin{aligned}
		\sum_{i=1}^p\sigma_i\mathfrak{g}_i( R_1(z_1)R_1(z_2))_{ii}
		=&\sum_{i=1}^p\sigma_i\frac{(R_1(z_1)-R_1(z_2))_{ii}}{(z_1-z_2)}=\frac{1}{2 \pi\mathrm{i}}\sum_{i=1}^p\sigma_i  \int_{\Gamma}\frac{(R_1(\xi))_{ii}}{(\xi-z_1)(\xi-z_2)}\mathrm{~d}\xi\\ 
		=&\frac{1}{2 \pi\mathrm{i}}\int_{\Gamma}\frac{\sum_{i=1}^p\sigma_i\left(\Pi(\xi)\right)_{ii}+\mathrm{O}_{\prec}\left(\frac{1}{|\eta_2|}\right)}{(\xi-z_1)(\xi-z_2)}\mathrm{~d}\xi 
		=\frac{\sum_{i=1}^p\sigma_i(\mathfrak{g}_i(z_1)-\mathfrak{g_i}(z_2))}{z_1-z_2}+\mathrm{O}_{\prec}\left(\frac{1}{|\eta_2|^2}\right),
	\end{aligned}
\end{equation*}
which implies that $\sum_{i=1}^p\mathfrak{g}_i(z)\mathcal{E}_{B,1}(i)=\mathrm{O}_{\prec}\left(\frac{1}{|\eta_1|^{l_1-1}|\eta_2|^{l_2-1}}\frac{1}{|\eta_1|}\frac{1}{\sqrt{pn}(|\eta_2|)^2}\right)=\mathrm{O}_{\prec}\left(\frac{1}{|\eta_1|^{l_1}|\eta_2|^{l_2}}\frac{1}{\sqrt{pn}|\eta_2|}\right).$ In summary, we always have that	
\begin{equation}\label{eq_errorgiB1i}	
	\sum_{i=1}^p\mathfrak{g}_i(z_2)\mathcal{E}_{B,1}(i)=\mathrm{O}_{\prec}\left(\frac{1}{|\eta_1|^{l_1}|\eta_2|^{l_2}}\frac{1}{\sqrt{pn}}\left(\frac{1}{|\eta_1|}+\frac{1}{|\eta_2|}\right)\right).
\end{equation}

Similarly, we can handle	 $(\mathsf{B}_{i})_2$ in (\ref{eq_Bitwosplit}) as follows
\begin{equation*}
	\begin{aligned}
		(\mathsf{B}_{i})_2=&-\frac{2\sigma_i}{\sqrt{np}}\mathbb{E}\left[ (l_2-1)((1-\mathbb{E})\operatorname{Tr}{R_1}(z_1))^{l_1}((1-\mathbb{E})\operatorname{Tr}{R_1}(z_2))^{l_2-2}\left( \frac{\partial}{\partial z_2}\frac{1}{2}({R_1}(z_2)YY^* {{R_1}(z_2)})\right)_{ii} \right] \\  	
		=&-\frac{2\sigma_i}{\sqrt{np}}\mathbb{E}\left[ (l_2-1)((1-\mathbb{E})\operatorname{Tr}{R_1}(z_1))^{l_1}((1-\mathbb{E})\operatorname{Tr}{R_1}(z_2))^{l_2-2}\left( \frac{1}{2}\frac{\partial}{\partial z_2}({R_1}(z_2)+z_2({R_1}(z_2))^2)\right)_{ii} \right] \\  	
		=&-\frac{2\sigma_i}{\sqrt{np}}\mathbb{E}\left[ (l_2-1)((1-\mathbb{E})\operatorname{Tr}{R_1}(z_1))^{l_1}((1-\mathbb{E})\operatorname{Tr}{R_1}(z_2))^{l_2-2} \frac{1}{2}\frac{\partial}{\partial z_2}\left((\mathfrak{g}_i(z_2)+z_2\mathfrak{g}_i^{\prime}(z_2))\right) \right] +\mathcal{E}_{B,2}(i),
	\end{aligned}
\end{equation*}	
where  $$\begin{aligned}
	\mathcal{E}_{B,2}(i):=-\frac{2\sigma_i}{{\sqrt{pn}}} \mathbb{E}\left[l_1((1-\mathbb{E})\operatorname{Tr}{R_1}(z_1))^{l_1}((1-\mathbb{E})\operatorname{Tr}{R_1}(z_2))^{l_2-1} \frac{1}{2}\frac{\partial}{\partial z_2}\Big(({R_1}\left(z_2\right))_{ii}-\mathfrak{g}_i(z_2)+z_2(({R_1}\left(z_2\right))_{ii}-\mathfrak{g}_i(z_2))^{\prime}\Big)\right].
\end{aligned}
$$
For the error term, by a discussion similar to (\ref{eq_errorgiB1i}), we have that  
\begin{equation}\label{eq_errorgiB2i}	
	\sum_{i=1}^p\mathfrak{g}_i(z_2)\mathcal{E}_{B,2}(i)=\mathrm{O}_{\prec}\left(\frac{1}{|\eta_1|^{l_1}|\eta_2|^{l_2-2}}\frac{1}{\sqrt{pn}}\frac{1}{|\eta_2|}\frac{1}{|\eta_2|^2}\right)=\mathrm{O}_{\prec}\left(\frac{1}{|\eta_1|^{l_1}|\eta_2|^{l_2}}\frac{1}{\sqrt{pn}|\eta_2|}\right). 
\end{equation} 
This completes the analysis of $\mathsf{B}_i$ in view of (\ref{eq_Bitwosplit}). For $\mathsf{L}_i,$ we summarize the results in the following lemma and put its proof into next subsection. 

\begin{lem}\label{lem_controlofLi}
For $\mathsf{L}_i$ defined in (\ref{eq_ithvariancerough}), we have that 
	\begin{align*}
		&\sum_{i=1}^p\frac{1}{z_2(1+\sigma_i\phi^{-1/2}m(z_2))} \mathsf{L}_{i}\\
			=&\sum_{i=1}^p\frac{1}{z_2(1+\sigma_i\phi^{-1/2}m(z_2))}
		\frac{-\kappa_4}{p}\\ 
		&\mathbb{E}\left\{l_1((1-\mathbb{E})\operatorname{Tr}{R_1}(z_1))^{l_1-1}((1-\mathbb{E})\operatorname{Tr}{R_1}(z_2))^{l_2-1}\frac{\partial}{\partial z_1}\left(\frac{	-\sigma_im(z_1)}{1+\phi^{-1/2}m(z_1) \sigma_i}\frac{	-\sigma_im(z_2)}{1+\phi^{-1/2}m(z_2) \sigma_i}\right)\right.\\ 
		&+\left. (l_2-1)((1-\mathbb{E})\operatorname{Tr}{R_1}(z_1))^{l_1}((1-\mathbb{E})\operatorname{Tr}{R_1}(z_2))^{l_2-2}\frac{1}{2}\frac{\partial}{\partial z_2}\left(\frac{	-\sigma_im(z_2)}{1+\phi^{-1/2}m(z_2) \sigma_i}\right)^2\right\}\\
		&+\mathrm{O}_{\prec}\left( \frac{1}{|\eta_1|^{l_1}|\eta_2|^{l_2}}\left(\frac{n^{1/4}}{p^{3/4}}\Psi(z_2)+\frac{1}{p|\eta_1|}+\frac{1}{p|\eta_2|}+\phi^{-1/2}\frac{\sqrt{|\eta_2|}}{\sqrt{n}}\right) \right). 
	\end{align*}
\end{lem}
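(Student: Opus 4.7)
The plan is to handle $\mathsf{L}_i$ by systematically organizing the cumulant expansion in Lemma \ref{lem_cumulantexpansion} with $l_0=4$, identifying the single surviving main contribution (coming from $\kappa_4$), and showing that all other pieces fall into the claimed error bound. First, I would justify dropping the remainder: since $\kappa_{k+1}\bigl((np)^{1/4}x_{ij}\bigr)$ is of order one and $\kappa_{k+1}(x_{ij})\sim(np)^{-(k+1)/4}$, the tail $\varepsilon_{l_0+1}^{(ij)}$, after summing over $j$ and using the entrywise local law (\ref{eq:entrywiselaw}) plus the Ward-type bound on $(R_1 Y Y^* R_1)_{ii}$, contributes $\mathrm{o}(p^{-1})$ uniformly, hence is absorbed into the $n^{1/4}p^{-3/4}\Psi(z_2)$ error. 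So only $k=2$ and $k=3$ need to be analyzed.

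For the $k=3$ (i.e., $\kappa_4$) term — the source of the stated main contribution — I would apply Leibniz's rule to
$\partial_{x_{ij}}^{3}\bigl[\mathcal{P}\cdot (R_1(z_2)Y)_{ij}\bigr]$,
where $\mathcal{P}:=\bigl((1-\mathbb{E})\operatorname{Tr}R_1(z_1)\bigr)^{l_1}\bigl((1-\mathbb{E})\operatorname{Tr}R_1(z_2)\bigr)^{l_2-1}$, using the resolvent identities (\ref{eq_resolventderi1})--(\ref{eq_resolventderi3}). Only the terms in which two derivatives hit $\mathcal{P}$ (one through each $\operatorname{Tr}R_1(z_s)$ factor via (\ref{eq_resolventderi2})) and one hits $(R_1(z_2)Y)_{ij}$ survive: any configuration in which fewer than two derivatives land on $\mathcal P$ either yields a mean-zero factor $(1-\mathbb{E})(\cdots)$ of size $\prec(n\eta)^{-1}$ or produces an extra off-diagonal $(R_1Y)_{ij}$ small by (\ref{eq:entrywiselaw}). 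For the surviving terms I will replace the diagonal entries $(R_1(z_s))_{kk}$, $(Y^*R_1(z_s)Y)_{jj}$ by their deterministic limits $\mathfrak{g}_k(z_s)$ and $-\sigma_k\phi^{1/2} m(z_s)/(1+\phi^{-1/2}m(z_s)\sigma_k)$, respectively, via Theorem \ref{thm_locallaw}; the resulting structure matches exactly the terms $\partial_{z_1}\bigl(\tfrac{-\sigma_i m(z_1)}{1+\phi^{-1/2}m(z_1)\sigma_i}\tfrac{-\sigma_i m(z_2)}{1+\phi^{-1/2}m(z_2)\sigma_i}\bigr)$ and $\tfrac{1}{2}\partial_{z_2}\bigl(\tfrac{-\sigma_i m(z_2)}{1+\phi^{-1/2}m(z_2)\sigma_i}\bigr)^2$ stated in the lemma, multiplied by $\mathcal P$ with the correct Leibniz coefficients $l_1$ and $(l_2-1)$.

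For the $k=2$ (i.e., $\kappa_3$) term, the prefactor is $\kappa_3(np)^{-3/4}\sqrt{\sigma_i}$, summed over $j$. The Leibniz expansion of $\partial_{x_{ij}}^{2}\bigl[\mathcal P\cdot(R_1(z_2)Y)_{ij}\bigr]$ gives either (i) pieces containing an odd number of off-diagonal factors $(R_1 Y)_{ij}$ whose average over $j$ is controlled by (\ref{eq:entrywiselaw}) together with Ward's identity, or (ii) pieces in which one derivative lands on $\operatorname{Tr}R_1$ through (\ref{eq_resolventderi2}) leaving a fluctuation factor $(1-\mathbb{E})[\cdots]$ of size $\prec(n\eta)^{-1}$. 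Multiplying by the prefactor $n \cdot (np)^{-3/4}$ from the summation over $j$ yields a bound of the claimed form $n^{1/4}p^{-3/4}\Psi(z_2)$ (and, from the sub-leading pieces, the $(p|\eta_s|)^{-1}$ and $\phi^{-1/2}\sqrt{|\eta_2|}/\sqrt n$ terms).

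The main obstacle is bookkeeping: $\partial_{x_{ij}}^{3}$ distributed over a product of three slots (two traces and one off-diagonal entry) generates many monomials in resolvent entries, and one must identify which contribute to the deterministic main term after applying Theorem \ref{thm_locallaw} versus which are absorbed into the error. The delicate step is justifying that the cross terms in which a single derivative hits an off-diagonal $(R_1 Y)_{ij}$ do not spoil the identification with $\mathfrak{g}_i(z_s)$; this requires a careful fluctuation-averaging argument in the spirit of Lemma \ref{lem:fa} and of the estimate (\ref{eq_errorgiB1i})--(\ref{eq_errorgiB2i}) used for $\mathcal E_{B,1}(i)$ and $\mathcal E_{B,2}(i)$, applied after multiplying by the weight $\tfrac{1}{z_2(1+\sigma_i\phi^{-1/2}m(z_2))}$ and summing over $i$.
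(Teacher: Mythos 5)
Your overall strategy is the same as the paper's: run the cumulant expansion through $l_0=4$, isolate the $\kappa_4$ contribution where two derivatives hit $\mathcal{P}$ and one hits $(R_1(z_2)Y)_{ij}$, and show everything else is error. The identification of the $\kappa_4$ main term and the mechanism (Leibniz on $\partial_{x_{ij}}^3$, then replace diagonals via Theorem~\ref{thm_locallaw}) matches the paper's argument. However, there are two concrete gaps in the bookkeeping.

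First, you state that ``only $k=2$ and $k=3$ need to be analyzed,'' discarding $k=4$ along with the remainder. This is not correct: with $l_0=4$ the sum runs over $k\in\{2,3,4\}$, and the $k=4$ (i.e.\ $\kappa_5$) term is \emph{not} negligible at the claimed level of precision. In the paper's proof it is analyzed separately (``Finally, for $\mathsf{L}_i(4)$\dots'') and is exactly the source of the error term $\phi^{-1/2}\sqrt{|\eta_2|}/\sqrt{n}=\sqrt{|\eta_2|}/\sqrt{p}$ appearing in the lemma's conclusion. If you drop $k=4$, that error term has no origin in your argument, and you would in fact be asserting a stronger bound than what the lemma states.

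Second, you attribute the $(p|\eta_1|)^{-1}$, $(p|\eta_2|)^{-1}$, and $\phi^{-1/2}\sqrt{|\eta_2|}/\sqrt{n}$ error contributions to ``sub-leading pieces'' of the $k=2$ ($\kappa_3$) analysis. This mis-assigns the sources. The $\kappa_3$ term produces only the $n^{1/4}p^{-3/4}\Psi(z_2)$ error (as in~\eqref{eq_errorgiL2i}). The $(p|\eta_s|)^{-1}$ errors come instead from the $\kappa_4$ step, specifically from the error $\mathcal{E}_{L,3}(i)$ incurred when you replace $-z_s(R_2)_{jj}(z_s)(R_1)_{ii}(z_s)-(R_1Y)_{ij}(z_s)(Y^*R_1)_{ji}(z_s)$ by its deterministic limit $-\sigma_i m(z_s)/(1+\phi^{-1/2}m(z_s)\sigma_i)$. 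Your sketch acknowledges performing this replacement but never tracks the resulting error after weighting by $\mathfrak{g}_i(z_2)$ and summing over $i$; without that you cannot justify the $(p|\eta_s|)^{-1}$ terms. Both gaps are repairable by adding the $k=4$ estimate and by explicitly bounding $\sum_i\mathfrak{g}_i(z_2)\mathcal{E}_{L,3}(i)$ along the lines of~\eqref{eq_errorgiB1i}, but as written the argument does not produce the stated error bound.
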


Inserting Lemma \ref{lem_controlofLi}, (\ref{eq_Bitwosplit}),  \eqref{eq_errorgiB1i} and \eqref{eq_errorgiB2i} into \eqref{eq_variancerough}, we readily obtain that 
\begin{align}\label{eq_mid_explainerror}
		&\mathbb{E}\left[((1-\mathbb{E})\operatorname{Tr}{R_1}(z_1))^{l_1}((1-\mathbb{E})\operatorname{Tr}{R_1}(z_2))^{l_2}\right] \nonumber \\
		=&\sum_{i=1}^p\frac{1}{z_2(1+\sigma_i\phi^{-1/2}m(z_2))}\left\{\frac{\phi^{1/2}\sigma_i}{p(1+\phi^{-1/2}m(z_2)\sigma_i)}\mathbb{E}\left\{((1-\mathbb{E})\operatorname{Tr}{R_1}(z_1))^{l_1}((1-\mathbb{E})\operatorname{Tr}{R_1}(z_2))^{l_2}\right\}\right. \nonumber\\ 
		& -\frac{2}{{\sqrt{pn}}} \mathbb{E}\left[l_1((1-\mathbb{E})\operatorname{Tr}{R_1}(z_1))^{l_1-1}((1-\mathbb{E})\operatorname{Tr}{R_1}(z_2))^{l_2-1} \frac{\partial}{\partial z_1}\left( \frac{z_1 \sigma_i(\mathfrak{g}_i(z_1)-\mathfrak{g_i}(z_2))}{z_1-z_2}\right) \right] \nonumber \\ 
		&-\frac{2}{\sqrt{pn}}\mathbb{E}\left[ (l_2-1)((1-\mathbb{E})\operatorname{Tr}{R_1}(z_1))^{l_1}((1-\mathbb{E})\operatorname{Tr}{R_1}(z_2))^{l_2-2} \frac{1}{2}\frac{\partial}{\partial z_2}\left((\sigma_i\mathfrak{g}_i(z_2)+\sigma_iz_2\mathfrak{g}_i^{\prime}(z_2))\right) \right] \nonumber \\ 
		&-\frac{\kappa_4}{p}\mathbb{E}\left[l_1((1-\mathbb{E})\operatorname{Tr}{R_1}(z_1))^{l_1-1}((1-\mathbb{E})\operatorname{Tr}{R_1}(z_2))^{l_2-1}\frac{\partial}{\partial z_1}\left(\frac{	-\sigma_im(z_1)}{1+\phi^{-1/2}m(z_1) \sigma_i}\frac{	-\sigma_im(z_2)}{1+\phi^{-1/2}m(z_2) \sigma_i}\right)\right. \nonumber\\ 
		&+\left.\left. (l_2-1)((1-\mathbb{E})\operatorname{Tr}{R_1}(z_1))^{l_1}((1-\mathbb{E})\operatorname{Tr}{R_1}(z_2))^{l_2-2}\frac{1}{2}\frac{\partial}{\partial z_2}\left(\frac{	-\sigma_im(z_2)}{1+\phi^{-1/2}m(z_2) \sigma_i}\right)^2\right] \right\} \\
		&+\mathrm{O}\left(\frac{1}{|\eta_1|^{l_1}|\eta_2|^{l_2}}\left(\frac{1}{\sqrt{np}|\eta_1|}+\frac{1}{\sqrt{np}|\eta_2|} \right)\right)+\mathrm{O}_{\prec}\left( \frac{1}{|\eta_1|^{l_1}|\eta_2|^{l_2}}\left(\frac{n^{1/4}}{p^{3/4}}\Psi(z_2)+\frac{1}{p|\eta_1|}+\frac{1}{p|\eta_2|}+\phi^{-1/2}\frac{\sqrt{|\eta_2|}}{\sqrt{n}}\right) \right). \nonumber
	\end{align}	
%\end{equation}
We can further use Lemma \ref{lem:mphizabs} to simplify \eqref{eq_mid_explainerror} and get 
	\begin{align}\label{eq_wickrough}
		&\mathbb{E}\left[((1-\mathbb{E})\operatorname{Tr}{R_1}(z_1))^{l_1}((1-\mathbb{E})\operatorname{Tr}{R_1}(z_2))^{l_2}\right] \nonumber \\
		=&\left(1-\frac{1}{p} \sum_{i=1}^p \frac{\phi^{1/2} \sigma_i}{z_2\left(1+\phi^{-1/2}{m}(z_2) \sigma_i\right)^2}\right)^{-1}\sum_{i=1}^p\frac{1}{z_2(1+\sigma_i\phi^{-1/2}m(z_2))} \nonumber \\ 
		&\left\{ -\frac{2}{{\sqrt{pn}}} \mathbb{E}\left[l_1((1-\mathbb{E})\operatorname{Tr}{R_1}(z_1))^{l_1-1}((1-\mathbb{E})\operatorname{Tr}{R_1}(z_2))^{l_2-1} \frac{\partial}{\partial z_1}\left( \frac{z_1 \sigma_i(\mathfrak{g}_i(z_1)-\mathfrak{g_i}(z_2))}{z_1-z_2}\right) \right]\right. \nonumber \\ 
		&\left.-\frac{2}{\sqrt{np}}\mathbb{E}\left[ (l_2-1)((1-\mathbb{E})\operatorname{Tr}{R_1}(z_1))^{l_1}((1-\mathbb{E})\operatorname{Tr}{R_1}(z_2))^{l_2-2} \frac{1}{2}\frac{\partial}{\partial z_2}\left((\sigma_i\mathfrak{g}_i(z_2)+\sigma_iz_2\mathfrak{g}_i^{\prime}(z_2))\right) \right] \right. \nonumber \\ 
		&\left.-\frac{\kappa_4}{p}\mathbb{E}\left[l_1((1-\mathbb{E})\operatorname{Tr}{R_1}(z_1))^{l_1-1}((1-\mathbb{E})\operatorname{Tr}{R_1}(z_2))^{l_2-1}\frac{\partial}{\partial z_1}\left(\frac{	-\sigma_im(z_1)}{1+\phi^{-1/2}m(z_1) \sigma_i}\frac{	-\sigma_im(z_2)}{1+\phi^{-1/2}m(z_2) \sigma_i}\right)\right.\right. \nonumber \\ 
		&\left.+\left. (l_2-1)((1-\mathbb{E})\operatorname{Tr}{R_1}(z_1))^{l_1}((1-\mathbb{E})\operatorname{Tr}{R_1}(z_2))^{l_2-2}\frac{1}{2}\frac{\partial}{\partial z_2}\left(\frac{	-\sigma_im(z_2)}{1+\phi^{-1/2}m(z_2) \sigma_i}\right)^2\right]\right\} \nonumber \\
		&+\mathrm{O}\left(\frac{1}{|\eta_1|^{l_1}|\eta_2|^{l_2}}\left(\frac{1}{n|\eta_1|}+\frac{1}{n|\eta_2|} \right)\frac{1}{\sqrt{\kappa_2+\eta_2}}\right)+\mathrm{O}_{\prec}\left(\frac{1}{n^{3/4}p^{1/4} \min_{i=1}^l\{\sqrt{|\eta_i|(\kappa_i+|\eta_i|)}\}}\right) \nonumber \\
		&+\mathrm{O}_{\prec}\left(\frac{1}{|\eta_1|^{l_1}|\eta_2|^{l_2}}\frac{1}{\sqrt{n}} \right). 
	\end{align}

The above equation is already in the form of (\ref{eq_wickl2case}).  Therefore, by Lemma \ref{lem_wicktheorem}, the Gaussianity of the resolvents follows. The rest of this section is devoted to 
 more explicit formulas for the covariance as in Lemma \ref{lem_wicktheorem}.  Let $J_1$ and $J_3$ denote the first and the third terms on the right hand side of \eqref{eq_wickrough}. In order to obtain $\omega(z_1,z_2),$ in view of (\ref{eq_wickl2case}), we only need to  calculate $J_1$ and $J_3$ explicitly. Using Lemma \ref{lem_elementaryidentityes}, we have that 
\begin{equation*}
	\begin{aligned}
		z_1 z_2 \frac{1}{p} \sum_{i=1}^p \sigma_i \mathfrak{g}_i\left(z_1\right) \mathfrak{g}_i\left(z_2\right) & =\frac{1}{p} \sum_{i=1}^n \frac{\sigma_i}{\left(1+\phi^{-1/2}{m}(z_1) \sigma_i\right)\left(1+\phi^{-1/2}{m}(z_2) \sigma_i\right)}=\frac{z_1 {m}(z_1)-z_2 {m}(z_2)}{\phi^{1/2}\left({m}(z_1)-{m}(z_2)\right)}  \\
		z_1^2 \frac{1}{p} \sum_{i=1}^p \sigma_i \mathfrak{g}_i^2\left(z_1\right) & =\frac{1}{p} \sum_{i=1}^n \frac{\sigma_i}{\left(1+\phi^{-1/2}{m}_1 \sigma_i\right)^2}=\frac{\left(z_1 {m}_1\right)^{\prime}}{\phi^{1/2} {m}_1^{\prime}}.
	\end{aligned}
\end{equation*}
Combining with the definitions of $J_1$ and $J_3,$ we see that 
\begin{equation*}
	\begin{aligned}
		J_1 =& -\frac{z_2 {m}^{\prime}(z_2)}{{m}(z_2)}\frac{2p}{{\sqrt{pn}}} \mathbb{E}\left[l_1((1-\mathbb{E})\operatorname{Tr}{R_1}(z_1))^{l_1-1}((1-\mathbb{E})\operatorname{Tr}{R_1}(z_2))^{l_2-1} \frac{\partial}{\partial z_1}\frac{1}{p}\sum_{i=1}^p\left(\mathfrak{g}_i(z_2) \frac{z_1 \sigma_i(\mathfrak{g}_i(z_1)-\mathfrak{g_i}(z_2))}{z_1-z_2}\right) \right] \\ 
		=&l_1\mathbb{E}\left[((1-\mathbb{E})\operatorname{Tr}{R_1}(z_1))^{l_1-1}((1-\mathbb{E})\operatorname{Tr}{R_1}(z_2))^{l_2-1} \right]  2\left(\frac{{m}^{\prime}(z_1) {m}^{\prime}(z_2)}{\left({m}(z_1)-{m}(z_2)\right)^2}-\frac{1}{\left(z_1-z_2\right)^2}\right),
	\end{aligned}
\end{equation*}
\begin{equation*}
	\begin{aligned}
		J_3= & \frac{z_2 {m}(z_2)^{\prime}}{{m}(z_2)} \sum_{i=1}^p \frac{1}{z_2\left(1+\phi^{-1/2}{m}(z_2) \sigma_i\right)}\frac{\kappa_4}{p}\mathbb{E}\left[l_1((1-\mathbb{E})\operatorname{Tr}{R_1}(z_1))^{l_1-1}((1-\mathbb{E})\operatorname{Tr}{R_1}(z_2))^{l_2-1}\right.\\ &\left.\frac{\partial}{\partial z_1}\left(\frac{	-\sigma_im(z_1)}{1+\phi^{-1/2}m(z_1) \sigma_i}\frac{	-\sigma_im(z_2)}{1+\phi^{-1/2}m(z_2) \sigma_i}\right)\right]\\ 
		=&l_1\frac{\kappa_4}{p}\mathbb{E}\left[((1-\mathbb{E})\operatorname{Tr}{R_1}(z_1))^{l_1-1}((1-\mathbb{E})\operatorname{Tr}{R_1}(z_2))^{l_2-1}\right]\frac{\partial}{\partial z_1}\left(\frac{\sigma_im(z_1)}{1+\phi^{-1/2}m(z_1) \sigma_i}\right)\frac{\partial}{\partial z_2}\left(\frac{\sigma_im(z_2)}{1+\phi^{-1/2}m(z_2) \sigma_i}\right)\\
		=&l_1 \mathbb{E}\left[((1-\mathbb{E})\operatorname{Tr}{R_1}(z_1))^{l_1-1}((1-\mathbb{E})\operatorname{Tr}{R_1}(z_2))^{l_2-1}\right]\\ 
		&\times\kappa_4 \phi\left(\frac{\partial^2}{\partial z_1 \partial z_2}\left[\frac{1}{p} \sum_{i=1}^p \frac{1}{\left(1+\phi^{-1/2}{m}(z_1) \sigma_i\right)\left(1+\phi^{-1/2}{m}(z_2) \sigma_i\right)}\right]\right). 
	\end{aligned}
\end{equation*}
This immediately yields that
 $$\omega(z_1,z_2)=2\left(\frac{{m}^{\prime}(z_1) {m}^{\prime}(z_2)}{\left({m}(z_1)-{m}(z_2)\right)^2}-\frac{1}{\left(z_1-z_2\right)^2}\right)+\kappa_4 \phi\frac{\partial^2}{\partial z_1 \partial z_2}\left(\frac{1}{p} \sum_{i=1}^p \frac{1}{\left(1+\phi^{-1/2}{m}(z_1) \sigma_i\right)\left(1+\phi^{-1/2}{m}(z_2) \sigma_i\right)}\right).$$
 
This concludes the proof of (\ref{eq_wickl2case}) when $l=2.$  For general $l,$ the discussion is similar except for some notional modifications. In fact, we can conclude the prove using induction since we can expand $\mathcal{Y}\left(z_1\right)$ as in \eqref{eq_resolventwick} to get
\begin{equation}\label{eq_referrefer}
\mathbb{E}\left[\mathcal{Y}\left(z_1\right) \cdots \mathcal{Y}\left( z_l\right)\right]=\sum_{s=2}^l \omega\left(z_1, z_s\right) \mathbb{E} \prod_{t \notin\{1, s\}} \mathcal{Y}\left( z_t\right)+\mathrm{O}_{\prec}\left(\mathsf{Re}\right) .
\end{equation}
We omit further details.

\subsection{Proof of Lemma \ref{lem_controlofLi}}	
Recall that $\mathsf{L}_{i}=\sum_{k=2}^{l_0}\mathsf{L}_{i}(k),$  where $l_0=4$ and
$$
\mathsf{L}_{i}(k):=\sum_{ j=1}^n\sqrt{\sigma_i}\frac{\kappa_{k+1}}{(np)^{(k+1)/4}k!} \mathbb{E} \frac{\partial^k\left((1-\mathbb{E})[{((1-\mathbb{E})\operatorname{Tr}{R_1}(z_1))^{l_1}((1-\mathbb{E})\operatorname{Tr}{R_1}(z_2))^{l_2-1}}]({R_1}(z_2)Y)_{i j}\right)}{\partial x_{ ij}^k} .
$$
First, when $k=2$, we have that 
\begin{equation}\label{eq_Li2a}
	\mathsf{L}_{i}(2):=\sum_{ j=1}^n\sqrt{\sigma_i}\frac{\kappa_3}{(np)^{3/4}2} \mathbb{E} \frac{\partial^2\left((1-\mathbb{E})[{((1-\mathbb{E})\operatorname{Tr}{R_1}(z_1))^{l_1}((1-\mathbb{E})\operatorname{Tr}{R_1}(z_2))^{l_2-1}}]({R_1}(z_2)Y)_{i j}\right)}{\partial x_{ ij}^2} .
\end{equation}
Due to similarity, we only focus on some representative terms. By chain rule, one of the terms in $\mathsf{L}_i(2)$ is  
$$\begin{aligned}
&\sum_{ j=1}^n\sqrt{\sigma_i}\frac{\kappa_3}{2(np)^{3/4}} \mathbb{E} \left((1-\mathbb{E})[{((1-\mathbb{E})\operatorname{Tr}{R_1}(z_1))^{l_1}((1-\mathbb{E})\operatorname{Tr}{R_1}(z_2))^{l_2-1}}]\frac{\partial^2({R_1}(z_2)Y)_{i j}}{\partial x_{ ij}^2} \right)\\
=&\sum_{ j=1}^n\sqrt{\sigma_i}\frac{\kappa_3}{2(np)^{3/4}} \mathbb{E} \left([{((1-\mathbb{E})\operatorname{Tr}{R_1}(z_1))^{l_1}((1-\mathbb{E})\operatorname{Tr}{R_1}(z_2))^{l_2-1}}](1-\mathbb{E})\frac{\partial^2({R_1}(z_2)Y)_{i j}}{\partial x_{ ij}^2} \right). 
\end{aligned}$$
Using \eqref{eq_resolventderi1}--\eqref{eq_resolventderi3}, we have that
\begin{equation}\label{eq_resolventderi4}\begin{aligned}
			\frac{\partial^2 ({R_1}Y)_{ij}}{\partial x_{ij}^2}=&\sigma_i\left\{-2({R_1})_{ii}({R_1}Y)_{ij}+2({R_1})_{ii}({R_1}Y)_{ij}(Y^*R_1Y)_{jj}\right.\\ 
			&-({R_1})_{ii}\left[ 2({R_1}Y)_{ij} -2({R_1}Y)_{ij}(Y^*{R_1}Y)_{jj}  \right]\\
		&\left.-2\left[
		({R_1})_{ii}-({R_1})_{ii}(Y^*R_1Y)_{jj}-(R_1Y)_{ij}(R_1Y)_{ij}	\right](R_1Y)_{ij}
		\right\}\\ 
		=&\sigma_i ( -6({R_1})_{ii}({R_1}Y)_{ij}+6({R_1})_{ii}({R_1}Y)_{ij}(Y^*R_1Y)_{jj}+2(({R_1}Y)_{ij})^3).
	\end{aligned}
\end{equation}
Before proceeding to the actually control, we summarize some important observations. Note that every time when we take the derivatives with respect to $x_{ij},$ it always generates some terms with one additional $i$ and one additional $j$ before further simplification. Consequently, we at least have one $({R_1}Y)_{ij}$ factor, which can be well controlled Theorem \ref{thm_locallaw} in the sense that $|(1-\mathbb{E})({R_1}Y)_{ij}|\prec \phi^{-1/4}\Psi(z)$. 

We now control these terms.  Recall (\ref{eq_gidefinition}). In view of \eqref{eq_variancerough}, it suffices to control  terms such as $\sum_{i=1}^p\mathfrak{g}_i(z_2)(1-\mathbb{E})\sigma_i\sum_{j=1}^n[({R_1})_{ii}({R_1}Y)_{ij}(Y^*R_1Y)_{jj}](z_2)$. Note that the diagonals can be replaced by their deterministic counterparts as in Theorem \ref{thm_locallaw}. With a continuity argument similar to the proof of Theorem \ref{thm_locallaw} (see Section 5.2 of \cite{Li2021} for more details), we can show that 
$$(1-\mathbb{E})\sum_{i=1}^p\mathfrak{g}_i(z_2)\sigma_i\sum_{j=1}^n\left[ \frac{({R_1})_{ii}}{\sqrt{p}}({R_1}Y)_{ij}\frac{(Y^*R_1Y)_{jj}}{\sqrt{n}}\right](z_2)=\phi^{-1/2}\Psi(z_2).$$
 Similarly we can handle the rest terms in \eqref{eq_resolventderi4}.  In summary, we can show that 
\begin{equation}\label{eq_errorgiL2i}
	\begin{aligned}
		&\sum_{i=1}^p\mathfrak{g}_i(z_2)\mathsf{L}_i(2)\\
		=&\sum_{i=1}^p\sum_{ j=1}^n\sqrt{\sigma_i}\mathfrak{g}_i(z_2)\frac{\kappa_3}{(np)^{3/4}2} \mathbb{E} \left([{((1-\mathbb{E})\operatorname{Tr}{R_1}(z_1))^{l_1}((1-\mathbb{E})\operatorname{Tr}{R_1}(z_2))^{l_2-1}}](1-\mathbb{E})\frac{\partial^2({R_1}(z_2)Y)_{i j}}{\partial x_{ ij}^2} \right)\\ 
		=&\mathrm{O}_{\prec}\left(\frac{ (np)^{-3/4}}{|\eta_1|^{l_1}|\eta_2|^{l_2-1}}\phi^{-1/2} \sqrt{np}\frac{1}{|\eta_2|}\Psi(z_2)\left(1+\frac{1}{(\sqrt{n|\eta_2|})^2}\right) \right)\\ 
		=&\mathrm{O}_{\prec}\left(\frac{1}{|\eta_1|^{l_1}|\eta_2|^{l_2}} \frac{n^{1/4}}{p^{3/4}}\Psi(z_2)\right).
	\end{aligned}
\end{equation}

Second, for $\mathsf{L}_{i}(3)$, by a discussion similar to (\ref{eq_errorgiL2i}), we have that
% follow exactly the same procedure as in $\mathsf{L}_{i}(2)$. Note that if we don't take derivatives on the term $(1-\mathbb{E})[((1-\mathbb{E})\operatorname{Tr}{R_1}(z_1))^{l_1}((1-\mathbb{E})\operatorname{Tr}{R_1}(z_2))^{l_2-1}]$, there will always be a $(1-\mathbb{E})$ which will make massive cancellation among the terms and their expected values. On the other hand, the factor $\frac{1}{np}$ will cancel the summation $\sum_{i,j}$ in order, so any terms induced by non-diagonal terms $(\cdot)_{ij}$ will be $\mathrm{O}\left(\frac{1}{|\eta_1|^{l_1}|\eta_2|^{l_2}}(\Psi(z_1)+\Psi(z_2)) \right)$with a simple application of the local law, thus minor. Therefore, $\mathsf{L}_i(3)$ can be greatly simplified due to the factor $\frac{1}{np}$. While the sum of diagonal terms $(\cdot)_{ii}(\cdot)_{jj}(\cdot)_{ii}(\cdot)_{jj}$ will  tend to have large magnitude hence potentially nonvanishing contribution, after repetitive and tedious calculations,  we have that 
\begin{equation*}
\begin{aligned}
	&\mathsf{L}_{i}(3)\\
	=&\sum_{j=1}^n\frac{\sqrt{\sigma_i}3\kappa_4}{3!np}\mathbb{E}\left\{ \frac{\partial^2\left((1-\mathbb{E})[{((1-\mathbb{E})\operatorname{Tr}{R_1}(z_1))^{l_1}((1-\mathbb{E})\operatorname{Tr}{R_1}(z_2))^{l_2-1}}]\right) }{\partial x_{ij}^2}\frac{\partial{({{R_1}(z_2)Y})_{i j}}}{\partial x_{ ij}}\right\}\\
	&+\mathrm{O}\left( \frac{\Psi(z_2)}{|\eta_1|^{l_1}|\eta_2|^{l_2-1}} \right)\\
	%		=&\sum_{i, j}\frac{\kappa_4\sqrt{\sigma_i}}{2np}\mathbb{E}\left\{ \frac{\partial^2[{((1-\mathbb{E})\operatorname{Tr}{R_1}(z_1))^{l_1}((1-\mathbb{E})\operatorname{Tr}{R_1}(z_2))^{l_2-1}}] }{\partial x_{ij}^2}\left(({R_1}(z_2))_{i i}\right.\right.\\ 
	%		&\left.\left.-\left(Y^* R_1(z_2)Y\right)_{j j} (R_1(z_2))_{i i}-(R_1(z_2)Y)_{i j}(R_1(z_2)Y)_{i j}\right)\right\}.\\
	=	&\sum_{j=1}^n\frac{\sqrt{\sigma_i}\kappa_4}{2np}\mathbb{E}\left\{ \frac{\partial^2[{((1-\mathbb{E})\operatorname{Tr}{R_1}(z_1))^{l_1}((1-\mathbb{E})\operatorname{Tr}{R_1}(z_2))^{l_2-1}}] }{\partial x_{ij}^2}\frac{\partial{({{R_1}(z_2)Y})_{i j}}}{\partial x_{ ij}}\right\}+\mathrm{O}\left(\frac{\Psi(z_2)}{|\eta_1|^{l_1}|\eta_2|^{l_2-1}} \right).\\
\end{aligned}	
\end{equation*}
For the first term on the right-hand side of the above equation, using \eqref{eq_resolventderi1} and \eqref{eq_resolventderi2}, we find that it suffices to handle the following term
$$\begin{aligned}
	%			&\sum_{i, j}\frac{\kappa_4}{2np}\mathbb{E}\left\{ \frac{\partial^2{((1-\mathbb{E})\operatorname{Tr}{R_1}(z_1))^{l_1}}}{\partial x_{ij}^2}((1-\mathbb{E})\operatorname{Tr}{R_1}(z_2))^{l_2-1} \frac{\partial{({{R_1}(z_2)Y})_{i j}}}{\partial x_{ ij}}\right.\\ &+\left. \frac{\partial^2((1-\mathbb{E})\operatorname{Tr}{R_1}(z_2))^{l_2-1}}{\partial X_{ij}^2}((1-\mathbb{E})\operatorname{Tr}{R_1}(z_1))^{l_1} \frac{\partial{({{R_1}(z_2)Y})_{i j}}}{\partial X_{ ij}}\right\}\\
	&\sum_{j=1}^n\frac{\sqrt{\sigma_i}\kappa_4}{2np}\mathbb{E}\left\{ l_1((1-\mathbb{E})\operatorname{Tr}{R_1}(z_1))^{l_1-1}\frac{\partial[-2\sqrt{\sigma_i}\frac{\partial (R_1(z_1)Y)_{ij}}{\partial z_1}]}{\partial X_{ij}}((1-\mathbb{E})\operatorname{Tr}{R_1}(z_2))^{l_2-1} \frac{\partial{({{R_1}(z_2)Y})_{i j}}}{\partial x_{ ij}}\right.\\ &+\left. (l_2-1)((1-\mathbb{E})\operatorname{Tr}{R_1}(z_1))^{l_1}((1-\mathbb{E})\operatorname{Tr}{R_1}(z_2))^{l_2-2}\frac{\partial[-2\sqrt{\sigma_i}\frac{\partial (R_2(z_2)Y)_{ij}}{\partial z_2}]}{\partial X_{ij}} \frac{\partial{({{R_1}(z_2)Y})_{i j}}}{\partial x_{ ij}}\right\}.\\		
\end{aligned}$$
Using 	\eqref{eq_resolventderi3}
and 	${R_1}Y=Y{R_2}$, $Y^* {R_1}={R_2} Y^*$, we can rewrite 	$$
({R_1})_{ii}-({R_1})_{ii}(Y^*R_1Y)_{jj}-(R_1Y)_{ij}(Y^*R_1)_{ji}=-z({R_2})_{jj}({R_1})_{ii}-(R_1Y)_{ij}(Y^*R_1)_{ji}.
$$
Together with Theorem \ref{thm_locallaw}, we have that 
$$\begin{aligned}
	&\sum_{j=1}^n\frac{\sqrt{\sigma_i}\kappa_4}{2np}\mathbb{E}\left\{ l_1((1-\mathbb{E})\operatorname{Tr}{R_1}(z_1))^{l_1-1}\frac{\partial[-2\sqrt{\sigma_i}\frac{\partial (R_1(z_1)Y)_{ij}}{\partial z_1}]}{\partial x_{ij}}((1-\mathbb{E})\operatorname{Tr}{R_1}(z_2))^{l_2-1} \frac{\partial{({{R_1}(z_2)Y})_{i j}}}{\partial x_{ ij}}\right.\\ &+\left. (l_2-1)((1-\mathbb{E})\operatorname{Tr}{R_1}(z_1))^{l_1}((1-\mathbb{E})\operatorname{Tr}{R_1}(z_2))^{l_2-2}\frac{\partial[-2\sqrt{\sigma_i}\frac{\partial (R_2(z_2)Y)_{ij}}{\partial z_2}]}{\partial x_{ij}} \frac{\partial{({{R_1}(z_2)Y})_{i j}}}{\partial x_{ ij}}\right\}\\	
	=&\frac{-\kappa_4}{p}\left\{l_1((1-\mathbb{E})\operatorname{Tr}{R_1}(z_1))^{l_1-1}((1-\mathbb{E})\operatorname{Tr}{R_1}(z_2))^{l_2-1}\frac{\partial}{\partial z_1}\left(\frac{	-\sigma_im(z_1)}{1+\phi^{-1/2}m(z_1) \sigma_i}\frac{	-\sigma_im(z_2)}{1+\phi^{-1/2}m(z_2) \sigma_i}\right)\right.\\ 
	&+\left. (l_2-1)((1-\mathbb{E})\operatorname{Tr}{R_1}(z_1))^{l_1}((1-\mathbb{E})\operatorname{Tr}{R_1}(z_2))^{l_2-2}\frac{1}{2}\frac{\partial}{\partial z_2}\left(\frac{	-\sigma_im(z_2)}{1+\phi^{-1/2}m(z_2) \sigma_i}\right)^2\right\}+\mathcal{E}_{L,3}(i),
\end{aligned}$$
where $\mathcal{E}_{L,3}(i)$ is defined as
$$\begin{aligned}
	&\mathcal{E}_{L,3}(i):\\
		=&\frac{-\kappa_4}{p}\left\{l_1((1-\mathbb{E})\operatorname{Tr}{R_1}(z_1))^{l_1-1}((1-\mathbb{E})\operatorname{Tr}{R_1}(z_2))^{l_2-1}\frac{\partial}{\partial z_1}\right.\\ &\left[\left(-z_1({R_2})_{jj}(z_1)({R_1})_{ii}(z_1)-(R_1Y)_{ij}(z_1)(Y^*R_1)_{ji}(z_1)-\frac{	-\sigma_im(z_1)}{1+\phi^{-1/2}m(z_1) \sigma_i}\right)\right.\\
		&\left.\left.\left(-z_2({R_2})_{jj}(z_2)({R_1})_{ii}(z_2)-(R_1Y)_{ij}(z_2)(Y^*R_1)_{ji}(z_2)-\frac{	-\sigma_im(z_2)}{1+\phi^{-1/2}m(z_2) \sigma_i}\right)\right]\right.\\ 
	&+ (l_2-1)((1-\mathbb{E})\operatorname{Tr}{R_1}(z_1))^{l_1}((1-\mathbb{E})\operatorname{Tr}{R_1}(z_2))^{l_2-2}\frac{1}{2}\frac{\partial}{\partial z_2}\left(-z_2({R_2})_{jj}(z_2)({R_1})_{ii}(z_2)\right.\\ 
	&\left.\left.-(R_1Y)_{ij}(z_2)(Y^*R_1)_{ji}(z_2)-\frac{	-\sigma_im(z_2)}{1+\phi^{-1/2}m(z_2) \sigma_i}\right)^2\right\}.\\ 
\end{aligned} $$

In view of \eqref{eq_variancerough}, we need to control $\sum_{i=1}^p\mathfrak{g}_i(z_2)\mathcal{E}_{L,3}(i)$. This can be done via an argument similar to (\ref{eq_errorgiB1i}). We have that 
\begin{equation*}
	\begin{aligned}			&\sum_{i=1}^p\mathfrak{g}_i(z_2)\mathcal{E}_{L,3}(i)\\ 
	&=\mathrm{O}_{\prec}\left(\frac{1}{|\eta_1|^{l_1-1}|\eta_2|^{l_2-1}}\frac{1}{p}\frac{1}{|\eta_1|}\left(\frac{1}{|\eta_1|}+\Psi(z_1)\right)\left(\frac{1}{|\eta_2|}+\Psi(z_2)\right)+\frac{1}{|\eta_1|^{l_1}|\eta_2|^{l_2-2}}\frac{1}{p}\frac{1}{|\eta_2|}\left(\frac{1}{|\eta_2|}+\Psi(z_2)\right)^2\right)
	\end{aligned}
\end{equation*}

Finally, for $\mathsf{L}_i(4),$ the discussions are analogous except that we need to handle higher order derivatives like $\partial^3 (R_1 Y)_{ij}/\partial x_{ij}^3.$ We have that 
\begin{equation*}
\sum_{i=1}^p\mathfrak{g}_i(z_2)\mathsf{L}_i(4)
		=\mathrm{O}_{\prec}\left(\frac{1}{|\eta_1|^{l_1}|\eta_2|^{l_2}} \frac{\sqrt{\eta_2}}{\sqrt{p}}\right).
\end{equation*}   
This completes our proof. 

\section{Proof of of the main theorems}\label{sec_proofofmainresult}

For notional convenience, we denote that for $f_i(x)$ in (\ref{eq_gxdefinition}) 
 	$$
	\mathcal{Z}_{{\eta_0},E}(f_i):=\operatorname{Tr} f_i\left(Q\right)-\mathbb{E} \operatorname{Tr} f_i\left(Q\right). 
	$$ 
Recall (\ref{eq_decomposition}).	In what follows, we will study the deterministic mean bias part $\mathcal{M}_{\eta_0, E}(f_i)$ and the random term $\mathcal{Z}_{\eta_0,E}(f_i).$ For simplicity, as before, we also denote that $Y:=\Sigma^{1/2}X$.  

We point out that the analysis of $\mathcal{Z}_{\eta_0,E}(f_i)$ is similar to that of Theorem \ref{them_CLTresolvent} and the key is to prove the following lemma which is an analog of Lemma \ref{lem:resolventwick}. Recall (\ref{eq_moreconvention}).

\begin{lem}\label{lem_CLTforgeneralfunction}
		Suppose   $X$ and $\Sigma$ satisfy Assumptions  \ref{assum:XSigma}. Then for  $E \in \mathbf{R}$ any $l \in \mathbb{N}$, we have that 
				$$
				\mathbb{E}\left[\prod_{j=1}^l\mathcal{Z}_{{\eta_0}, E}\left( f_{i_j}\right)\right]=\left\{\begin{array}{ll}
					\sum \prod \varpi\left(f_{i_s}, f_{i_t}\right)+\mathrm{O}_{\prec}\left(n^{-\mathsf{c}}\right), & \text { if } l \in 2 \mathbb{N} \\
					\mathrm{O}_{\prec}\left(n^{-\mathsf{c}}\right), & \text { otherwise }
				\end{array},\right.
				$$
				for some constant $c>0$, where ${i_j}\in\{1,\ldots,\mathsf{K}\}$, $1\le j\le l$ and 		$\sum \prod$ means summing over all distinct ways of partitions into pairs. Here $\varpi\left(f_i, f_j\right) \equiv \varpi_n\left(f_i, f_j\right)$ is defined as
				\begin{equation}\label{eq_importantform}
				\begin{aligned}
					 \begin{cases}
						-\frac{1}{4\pi^2} \iint f_i\left(x_1\right) f_j\left(x_2\right) \alpha\left(x_1,x_2\right) \mathrm{d} x_1 \mathrm{d} x_2  -\frac{1}{4\pi^2}\iint f_i\left(x_1\right) f_j\left(x_2\right)\beta\left(x_1, x_2\right) \mathrm{d} x_1 \mathrm{d} x_2,&\text{ when $\eta_0 \asymp 1$ }\\
						-\frac{1}{4\pi^2}\iint f_i\left(x_1\right) f_j\left(x_2\right)\beta\left(x_1, x_2\right) \mathrm{d} x_1 \mathrm{d} x_2,&\text{ when $\eta_0=\mathrm{o}(1)$ }
					\end{cases}. 
				\end{aligned}
				\end{equation}

		\end{lem}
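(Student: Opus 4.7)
The plan is to reduce the lemma to the resolvent CLT in Lemma \ref{lem:resolventwick} via the Helffer-Sj\"ostrand formula, and then extract the explicit covariance $\varpi(f_i,f_j)$ by converting the resulting regional integrals into real-line integrals via the complex Green's theorem.

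First, I pick an almost-analytic extension $\tilde f_i$ of degree $r=2$ together with a smooth cutoff $\chi \in \mathcal{C}_c^\infty(\mathbb{C};[0,1])$ that equals $1$ on a neighbourhood of $\operatorname{supp}\varrho \cap \mathbf{R}$ and whose support is contained in a $\tau$-neighbourhood of the real axis, so that Lemma \ref{lem_HSformula} gives
\[
\mathcal{Z}_{\eta_0,E}(f_i) \;=\; \frac{1}{\pi}\int_{\mathbb{C}} \bar\partial\bigl(\tilde f_i(z)\chi(z)\bigr)\,\mathcal{Y}(z)\,\mathrm{d}^2 z,
\]
where $\mathcal{Y}(z):=(1-\mathbb{E})\operatorname{Tr} R_1(z)$ as in \eqref{eq_mathcalY}. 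Plugging into the $l$-fold product and using Fubini, the $l$-th joint moment of $\mathcal{Z}_{\eta_0,E}(f_{i_j})$ becomes an $l$-fold regional integral whose integrand is $\mathbb{E}\prod_{j=1}^l \mathcal{Y}(z_j)$ weighted by $\prod_j \bar\partial(\tilde f_{i_j}\chi)(z_j)$. The crucial analytic input is that $\bar\partial\tilde f_i(x+\mathrm{i} y)$ vanishes to order $|y|^2$, producing a factor $|y|^2|f_i''(x)|$ that will cancel the $|\eta_i|^{-k}$ singularities from the resolvent estimates.

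Second, I invoke Lemma \ref{lem:resolventwick} (and its inductive extension \eqref{eq_referrefer}) to write
\[
\mathbb{E}\!\prod_{j=1}^{l}\mathcal{Y}(z_j) \;=\; \mathbf{1}_{l\in 2\mathbb{N}}\sum_{\text{pairings}}\prod_{(s,t)} \omega(z_s,z_t) \;+\; O_\prec(\mathsf{Re}),
\]
with $\omega = \widehat\alpha+\widehat\beta$. The odd-$l$ case then gives the claimed $O_\prec(n^{-\mathsf{c}})$ after checking, using the $|y|^2$-cancellation above and Lemma \ref{lem:mphizabs}, that the weighted integral of $\mathsf{Re}$ against $\prod_j \bar\partial(\tilde f_{i_j}\chi)$ remains polynomially small in $n$, uniformly for $\eta_0 \geq n^{-\tau_1}$. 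For even $l$, each pair contributes a double regional integral
\[
I_{ij} \;=\; \frac{1}{\pi^2}\iint \bar\partial\bigl(\tilde f_i(z_1)\chi(z_1)\bigr)\,\bar\partial\bigl(\tilde f_j(z_2)\chi(z_2)\bigr)\,\omega(z_1,z_2)\,\mathrm{d}^2 z_1\,\mathrm{d}^2 z_2,
\]
and it remains to identify $I_{ij}$ with $\varpi(f_i,f_j)$.

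Third, since $\omega(z_1,z_2)$ is separately holomorphic in $z_1,z_2$ away from $\mathbb{R}$, I apply the complex form of Green's theorem (Lemma \ref{lem_Greenthm}) successively in each variable to collapse each regional integral onto the real axis, converting $I_{ij}$ into a double integral of $f_i(x_1)f_j(x_2)$ against the jump combinations of $\omega$ across $\mathbb{R}$. Using the boundary values $m^\pm$ and $(m^\pm)'$ from Lemma \ref{lem:mphizabs} together with Sokhotski-Plemelj (Lemma \ref{lem_SPformula}), the four boundary choices $(\mathfrak{a},\mathfrak{b})\in\{+,-\}^2$ assemble exactly into $\alpha$ and $\beta$ of \eqref{eq_moreconvention}, so
\[
I_{ij} \;=\; -\frac{1}{4\pi^2}\iint f_i(x_1)f_j(x_2)\bigl[\alpha(x_1,x_2)+\beta(x_1,x_2)\bigr]\,\mathrm{d} x_1\,\mathrm{d} x_2.
\]
To get the regime dichotomy in \eqref{eq_importantform}, I perform the change of variables $x\mapsto E+\eta_0 x$ in $f_i,f_j$; the $\beta$-integrand is scale-invariant under this change because of the Poisson-kernel structure $(m(z_1)-m(z_2))^{-2}-(z_1-z_2)^{-2}$, but the $\alpha$-integrand, being a genuine product of $m$-derivatives times $\kappa_4$, picks up an overall factor of $\eta_0$. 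Hence $\alpha$ survives in the global regime $\eta_0 \asymp 1$ and disappears when $\eta_0 = \mathrm{o}(1)$, reproducing the two cases of \eqref{eq_importantform}.

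The main obstacle is the error book-keeping in step two. The factor $\mathsf{Re}$ in Lemma \ref{lem:resolventwick} deteriorates like $|\eta_i|^{-1}\prod_k|\eta_k|^{-1}$ while the Helffer-Sj\"ostrand weight only supplies $|y|^2$ per variable, so the gain is merely $\eta_0^{-(\text{deg})+2}$ in each integration variable. To make this work uniformly down to $\eta_0 \geq n^{-\tau_1}$, I split the $z$-integration into a bulk part $|\operatorname{Im} z|\geq n^{-1+\tau_2}$ where Theorem \ref{thm_locallaw} and Lemma \ref{lem:resolventwick} apply, and a microscopic part $|\operatorname{Im} z|<n^{-1+\tau_2}$ handled by a deterministic bound on $\bar\partial\tilde f_i$ together with the trivial bound $|\mathcal{Y}(z)|\leq 2n/|\operatorname{Im} z|$; the condition $\eta_0\sqrt{\kappa+\eta_0}\geq n^{-1+\tau_2}$ in Theorem \ref{thm_maintwo} is exactly what guarantees this microscopic part is negligible.
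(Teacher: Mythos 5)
Your high-level strategy matches the paper's: reduce to the resolvent CLT in Lemma \ref{lem:resolventwick} via Helffer--Sj\"ostrand, convert the resulting regional integral to a contour integral with Green's theorem, identify the four boundary combinations with $\alpha$ and $\beta$, and show the $\kappa_4$ part dies under the rescaling $x\mapsto E+\eta_0 x$. That scaffolding is right. But two of the technical choices you make would not close the argument.

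First, you take the almost-analytic extension $\tilde f_i$ to have degree $r=2$, so $\bar\partial\tilde f_i$ involves $f_i'''$, yet the test functions here are only $\mathcal{C}^2_c$ (see Lemma \ref{lem_HSformula}: degree $r$ requires $\mathcal{C}^{r+1}$). Your own statement that the error factor is $|y|^2|f_i''(x)|$ is also internally inconsistent: a degree-2 extension yields $|y|^2 |f_i'''(x)|$. The paper uses the degree-1 extension but \emph{scales} the cutoff, $\tilde f(z)=(f(x)+\mathrm{i} y f'(x))\chi(y/\tilde\eta)$ with $\tilde\eta=n^{-\varepsilon_0}\eta_0$; the factors $\chi'(y/\tilde\eta)/\tilde\eta$ in $\theta_f$ replace the higher-order term and keep everything at the $\mathcal{C}^2$ level. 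Using a fixed-width cutoff (support in a $\tau$-neighbourhood of $\mathbb{R}$), as you do, the integral over $|y|\in[\underline\eta,\tau]$ is not automatically negligible in the local regime, so the scaled cutoff is not a cosmetic choice.

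Second, and more seriously, you control the microscopic strip $|\operatorname{Im} z|<n^{-1+\tau_2}$ with the deterministic bound $|\mathcal{Y}(z)|\le 2n/|\operatorname{Im} z|$. That is off by a factor of $n$ from what is needed: with $\|f'''\|_1 \sim \eta_0^{-2}$ the resulting error is roughly $n\,(n^{-1+\tau_2})^2 \eta_0^{-2}$, which is small only when $\eta_0 \gg n^{-1/2}$, and so fails to cover the range $\eta_0\sqrt{\kappa+\eta_0}\ge n^{-1+\tau_2}$ allowed in Theorem \ref{thm_maintwo}. The paper instead proves and then uses the concentration estimate $|(1-\mathbb{E})\operatorname{Tr} R_1(z)|\prec 1/|\operatorname{Im} z|$ (the bound \eqref{eq_keyboundbound}, established in Appendix \ref{imporveconcentration} by propagating the local law at $\eta_c=n^{-1+\tau}$ down via the monotonicity of $\eta\mapsto\eta\,\operatorname{Im}G_{\mu\mu}$ and Cauchy--Schwarz on the spectral decomposition). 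Without this extra factor-of-$n$ gain, the bad-region estimate does not close; this bound is the real engine of the $\eta_0 = \mathrm{o}(1)$ case and is missing from your outline.
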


\subsection{Proof of Lemma \ref{lem_CLTforgeneralfunction}}\label{sec_goodandbadregion}

Throughout this section,  to avoid confusion with the scaling parameters $\eta$'s, in this section we use $y$ for $\operatorname{Im} z$ and write $z=x+\mathrm{i}y.$ According to the definition (\ref{eq_gixform}), we have that for $i=1,\ldots,\mathsf{K}$
		\begin{equation}\label{eq_fbounds}
					\|f_i\|_1 =\mathrm{O}(\eta_0),  \quad\left\|f_i^{\prime}\right\|_1=\mathrm{O}(1), \quad\left\|f_i^{\prime \prime}\right\|_1=\mathrm{O}(\eta_0^{-1}).
		\end{equation}
\begin{proof}[\bf Proof]
Due to similarity, we mainly focus on a single test function and discuss how to generalize the results to $\mathsf{K}>1$ in the end of the proof. In this case, it suffices to calculate 	$\mathbb{E}\left[\mathcal{Z}_{{\eta_0}, E}^l(f)\right], l \in \mathbb{N}. $ Similar to (\ref{eq_meanonereduceone}), the staring point is the following representation
\begin{equation}\label{eq_mathcalzformformformform}
\mathcal{Z}_{{\eta_0},E}(f)=\frac{1}{\pi} \int_{\mathbb{C}} \frac{\partial}{\partial \bar{z}} \tilde{f}(z)(\operatorname{Tr}({R_1}(z))-\mathbb{E} \operatorname{Tr} {R_1}(z)) \mathrm{d}^2 z. 
\end{equation}
In what follows, for notional simplicity, we write $\mathcal{Z}(f) \equiv \mathcal{Z}_{{\eta_0},E}(f).$ 	We let  \begin{equation}\label{eq_etatildeunderline}
			\tilde{{\eta}}:=n^{-\varepsilon_0} {\eta_0}\quad \underline{\eta}:=n^{-\varepsilon_1} {\eta_0}
		\end{equation} for some small constants $\varepsilon_1>\varepsilon_0$, which we will choose later. 	To further apply Lemma \ref{lem_HSformula}, we define the almost analytical extension of $f$ more concretely as follows $$\tilde{f}(z):= \left( f(x)+\mathrm{i}yf^{\prime}(x) \right)\chi\left(\frac{y}{\tilde{\eta}}\right).$$
We further denote that for $z=x+\mathrm{i}y$	
	   \begin{equation}\label{eq_thetafdefn}
			\theta_f(z):=\frac{1}{\pi}\frac{\partial \tilde{f}(z)}{\partial \bar{z}}=
			\frac{\mathrm{i}}{2 \pi}yf^{\prime\prime}(x) \chi(y / \tilde{\eta})-\frac{1}{2 \pi \tilde{\eta}}yf^{\prime}(x) \chi^{\prime}(y / \tilde{\eta})+\frac{\mathrm{i}}{2 \pi \tilde{\eta}} f(x) \chi^{\prime}(y / \tilde{\eta}).
		\end{equation}
Together with (\ref{eq_mathcalzformformformform}) and the definition in (\ref{eq_mathcalY}), it suffices to calculate 		
		\begin{equation}\label{eq:Zfpowers}
			\mathbb{E}[\mathcal{Z}(f)]^l=\int \theta_f\left(z_1\right) \cdots \theta_f\left(z_l\right)  \mathbb{E}\left[\mathcal{Y}\left(z_1\right) \cdots \mathcal{Y}\left(z_l\right)\right] \mathrm{d}^2 z_1 \cdots \mathrm{d}^2 z_l,	
		\end{equation}
		where $z_i:=x_i+\mathrm{i} y_i, \ 1 \le i \le l$. In what follows, we will calculate (\ref{eq:Zfpowers}) using Lemma \ref{lem:resolventwick} by splitting the regions into "good" one $\mathcal{R}$ and "bad" one $\mathcal{R}^c$ where 
\begin{equation*}
		\mathcal{R}:=\left\{z_1, z_2, \cdots, z_l \in \mathbb{C}:\left|y_1\right|, \cdots,\left|y_l\right| \in[\underline{\eta}, 2 \tilde{{\eta}}]\right\}.
\end{equation*}

\noindent{\bf Control on the "bad" region.} We now control the integral on the bad region $\mathcal{R}^c$ assuming the following bound holds 
\begin{equation}\label{eq_keyboundbound}
	|(1-\mathbb{E})\operatorname{Tr}{R_1}(z)|\prec \frac{1}{|\operatorname{Im}z|},
\end{equation}
whose proof will be given in Section \ref{imporveconcentration} after some necessary notations are introduced. We first bound on a single spectral parameter. In this case,  
using the definition of $\mathcal{R}^c,$ we need to bound the following two integrals
		$$
		\int_{|y| \le \underline{\eta}}\left|\theta_f(z)\frac{1}{|y| }\right| \mathrm{d}^2 z, \quad \int_{\underline{\eta} \le|y| \le 2 \tilde{{\eta}} }\left|\theta_f(z)\frac{1}{|y|}\right| \mathrm{d}^2 z .
		$$
		
For the first integral, note that by definition, we have $\theta_f(z)=0$ for $|y| \geqslant 2 \tilde{{\eta}}$ and $\chi^{\prime}(y / \tilde{{\eta}})=0$ for $|y| \le \tilde{{\eta}}$. Together with \eqref{eq_fbounds},  we get for some constant $C^{''}>0$
		\begin{equation}\label{eq_lowerlowerbandintegralbound}
			\begin{aligned}
				\int_{|y| \le \underline{\eta}}\left|\theta_f(z)\frac{1}{|y|}\right| \mathrm{d}^2 z \le \int_{|y| \le \underline{\eta}} |f^{\prime\prime}(x)| \mathrm{d}^2 z 
				\le{{\underline{\eta}}} \int_{|\tilde{y}| \le 1} C''/{\eta_0} \mathrm{d} {x} \mathrm{d} \tilde{y}\le n^{-\varepsilon_1}
			\end{aligned}
		\end{equation}
where in the second step we used change of variable that $
		\tilde{y}:=y/\underline{\eta} .$

For the second integral,	 by a discussion similar to (\ref{eq_lowerlowerbandintegralbound}), we can show that
		\begin{equation}\label{eq_lowerbandintegralbound}
					\int_{\underline{\eta} \le|y| \le \tilde{{\eta}}}\left|\theta_f(z)\frac{1}{|y|}\right| \mathrm{d}^2 z \le C ''n^{-\varepsilon_0} .
		\end{equation}
Moreover, we have that 
\begin{equation}\label{eq_upperbandintegralbound}
			\begin{aligned}
		\int_{\tilde{{\eta}} \le|y| \le 2 \tilde{{\eta}}}\left|\theta_f(z)\frac{1}{|y|}\right|  \mathrm{d}^2 z & \le n^{-\varepsilon_0}+\int_{\tilde{{\eta}} \le|y| \le 2 \tilde{{\eta}}}\frac{1}{|y|\tilde{{\eta}}}\left|\left(f(x)+\mathrm{i} y f^{\prime}(x)\right) \chi^{\prime}(y/\tilde{{\eta}})\right| \mathrm{d}x\mathrm{d}y \le Cn^{\varepsilon_0}.
	\end{aligned} 
\end{equation}
This provides the controls for a single spectral parameter. For the integral (\ref{eq:Zfpowers}), using the above estimates, it is not hard to see that 
\begin{equation}\label{eq_epsilon1constraint}
					\begin{aligned}
				&\int_{\mathcal{R}^c} \theta_f\left(z_1\right) \cdots \theta_f\left(z_l\right) \mathbb{E}\left[\mathcal{Y}\left(z_1\right) \cdots \mathcal{Y}\left(z_l\right)\right] \mathrm{d}^2 z_1 \cdots \mathrm{d}^2 z_l \\
				\prec 	&  \sum_{s=1}^l\int_{\left|y_s\right| \leqslant \underline{\eta}} \prod_{i=1}^l\left|\phi_f\left(z_i\right)\frac{1}{|y_i|}\right| \mathrm{d}^2 z_1 \cdots \mathrm{d}^2 z_l \lesssim n^{-\varepsilon_1} \cdot n^{(l-1) \varepsilon_0} \leqslant n^{-\varepsilon_1/2},
			\end{aligned}
		\end{equation}
by choosing $(2l+2) \varepsilon_0<\varepsilon_1<\tau_1/3$. This shows that the integral in (\ref{eq:Zfpowers}) over the "bad" region $\mathcal{R}^c$ is negligible.\vspace*{0.1in}

{\noindent \bf Calculations on the "good" region.} Combining \eqref{eq:Zfpowers} with (\ref{eq_epsilon1constraint}), we see that 
		\begin{equation}\label{eq:goodregion}
			\mathbb{E}[\mathcal{Z}(f)]^l= \int_{\mathcal{R}} \theta_f\left(z_1\right) \cdots \theta_f\left(z_l\right) \mathbb{E}\mathfrak{G}  \mathrm{d}^2z_1 \cdots \mathrm{d}^2 z_l+\mathrm{O}_{\prec}\left(n^{-\varepsilon_1/2 }\right),	
		\end{equation}
		where we abbreviate $\mathfrak{G}:=\mathcal{Y}\left(z_1\right) \cdots \mathcal{Y}\left(z_l\right)$. By 
	Lemma \ref{lem:resolventwick} and its proof (c.f. (\ref{eq_referrefer})), we have that 
		\begin{align}\label{eq:Grelaxeta}
		&\mathbb{E} \mathfrak{G}=\omega\left(z_1, z_s\right) \mathbb{E} \prod_{t \notin\{1, s\}} \mathcal{Y}\left(z_t\right)  \\
		& +\mathrm{O}_{\prec}\left(\frac{1}{\prod_{i=1}^l|y_i|}\left(\frac{1}{n \min_{i=1}^l\{|y_i|\sqrt{\kappa_i+|y_i|}\}}+\frac{1}{n^{3/4}p^{1/4} \min_{i=1}^l\{\sqrt{|\eta_i|(\kappa_i+|\eta_i|)}\}}+\frac{1}{\sqrt{n}}\right)\right) \nonumber 
\end{align}
Plugging \eqref{eq:Grelaxeta} in \eqref{eq:goodregion} and using  \eqref{eq_fbounds}, \eqref{eq_lowerbandintegralbound} and \eqref{eq_upperbandintegralbound} to  bound the error terms, we obtain that for $l \geqslant 2$,
\begin{equation} \label{eq_CLTinductionsingle}
		\begin{aligned}
			\mathbb{E}[\mathcal{Z}(f)]^l & =(l-1)\left( \int_{\underline{\eta} \le\left|y_1\right|,\left|y_s\right| \le 2 \tilde{{\eta}}} \theta_f\left(z_1\right) \theta_f\left(z_s\right) \omega\left(z_1, z_s\right) \mathrm{d}^2 z_1 \mathrm{~d}^2 z_s\right) \mathbb{E}[\mathcal{Z}(f)]^{l-2} \\
			& +\mathrm{O}_{\prec}\left(n^{-\varepsilon_1/2}+n^{l \varepsilon_0}(n \underline{\eta}\min_{i=1}^l\sqrt{\kappa_i+\underline{\eta}})^{-1}\right),
		\end{aligned}
\end{equation}
where $\underline{\eta}=n^{-\varepsilon_1} {\eta_0} \geqslant n^{-1+\tau_1-\varepsilon_1}$. Since we have chosen the constants such that $(2l+2)\varepsilon_0<\varepsilon_1<\tau_1/3$, we can bound $n^{l \varepsilon_0}(n \underline{\eta}\min_{i=1}^l\sqrt{\kappa_i+\underline{\eta}})^{-1} \le n^{l\varepsilon_0+3\varepsilon_1/2}(n \eta_0\min_{i=1}^l\sqrt{\kappa_i+\eta_0})^{-1}\le n^{2\varepsilon_1}n^{-\tau_1}\le n^{-\varepsilon_1}$. 
		It remains to study the factor
		\begin{equation}\label{eq_regionalintergal}
		\operatorname{Var}(\mathcal{Z}):= \int_{\underline{\eta} \le\left|y_1\right|,\left|y_2\right| \le 2 \tilde{{\eta}}} \theta_f\left(z_1\right) \theta_f\left(z_2\right) \omega\left(z_1, z_2\right) \mathrm{d}^2 z_1 \mathrm{~d}^2 z_2,
		\end{equation}
		where we have taken $s=2$ and abbreviated $$\omega\left(z_1, z_2\right) =\widehat{\alpha}\left(z_1, z_2\right)+\widehat{\beta}\left(z_1, z_2\right).$$

Next, we translate the regional integral (\ref{eq_regionalintergal}) into a contour integral which can be naturally done with Green's theorem. In order to apply Lemma \ref{lem_Greenthm}, note that by the fact that $m(z)$ is holomprphic in $\mathbb{C}\backslash[\gamma_-,\gamma_+]$, fix $z_1$, we can see that $\omega(z_1,z_2)$ is holomorphic with respect to $z_2$ except on $[\gamma_-,\gamma_+]$ and a neighborhood $\mathsf{B}_{\epsilon}(z_1)$ of $z_1$ for a small enough $\epsilon=\epsilon(n)$ such that $\mathsf{B}_{\epsilon}(z_1)$ will not cross any existing boundaries. This readily implies that 
		$$\frac{\partial }{\partial \bar{z}_2}\omega(z_1,z_2)=0.$$
Similarly, we have that 
		 $$\frac{\partial }{\partial \bar{z}_1}\omega(z_1,z_2)=0, \quad \frac{\partial }{\partial \bar{z}_1}\int \omega(z_1,z_2)\mathrm{d}z_2=0.$$

On the one hand, applying Lemma \ref{lem_Greenthm} to (\ref{eq_regionalintergal}) twice, we have that
$$
		 	\begin{aligned}
		 		\operatorname{Var}(\mathcal{Z})=-\frac{1}{4\pi^2}\oint\oint \tilde{f}(z_1)\tilde{f}(z_2)\omega(z_1,z_2) \mathrm{d}z_1\mathrm{d}z_2,
		 	\end{aligned}
$$
	 where the contour for $z_2$ is  $\partial \Omega_2^0\cup \partial \mathsf{B}_{\epsilon}(z_1)$ with 
\begin{equation}\label{eq_omegaregion}	 
\Omega_2^0:=\{z_2=x_2+\mathrm{i}y_2:\underline{\eta}\leqslant|y_2|\leqslant 2\tilde{\eta}\}. 
\end{equation}	 
	 On the other hand, we can rewrite $\omega(z_1,z_2)$ as  $$\omega(z_1,z_2)=\frac{2}{(z_2-z_1)^2}\left(  \frac{m^{\prime}(z_1)m^{\prime}(z_2) }{m(z_1)m(z_2)}-\frac{1}{p}\sum_{i=1}^p\frac{\sigma_i^2m^{\prime}(z_1)m^{\prime}(z_2)}{(1+\phi^{-1/2}\sigma_im(z_1))(1+\phi^{-1/2}\sigma_im(z_2))}   -1\right).$$ 
Note that there are no singularities of the first order in $\mathsf{B}_{\epsilon}(z_1)$. Moreover, by definition, $\tilde{f}(z)\equiv 0$ when $\operatorname{Im}z=2\tilde{\eta}$. Therefore, we can simply take the contour for $z_2$ as $\{z_2=x_2+\mathrm{i}y_2: |y_2|=\underline{\eta}\}$. Similarly, we can take the contour of $z_1$ as   $\{z_1=x_1+\mathrm{i}y_1: |y_1|=\underline{\eta}\}$. For simplicity, we choose the two contours which do not overlap. Without loss of generality, we denote $\underline{\eta}_2:=\underline{\eta}_1/2:=\underline{\eta}/2$ and $$\Gamma_s:=\{z_s=x_s+\mathrm{i}y_s: |y_s|=\underline{\eta}_s\}, \quad s=1,2.$$
Based on the above discussion, it is easy to see that we can rewrite 
	 \begin{equation}\label{eq_maincontourrough}
	 	\operatorname{Var}(\mathcal{Z})=		 		-\frac{1}{4\pi^2}\oint_{\Gamma_2}\oint_{\Gamma_1} \tilde{f}(z_1)\tilde{f}(z_2)\omega(z_1,z_2) \mathrm{d}z_1\mathrm{d}z_2.
	 \end{equation}
	 
To conclude the proof, it suffices to show the equivalence of (\ref{eq_maincontourrough}) with (\ref{eq_importantform}). The starting point is to rewrite \eqref{eq_maincontourrough} as follows
\begin{equation}\label{eq_reducedreducedreducedform}
  \operatorname{Var}(\mathcal{Z})=\mathcal{K}_{++}+\mathcal{K}_{--}-\mathcal{K}_{+-}-\mathcal{K}_{-+},
  \end{equation}
  where the term $\mathcal{K}_{++}$  contains the integral over the region $\mathcal{R}_{++}:=\left\{y_1 =\underline{{\eta}}_1, y_2 =\underline{{\eta}}_2\right\}$, $\mathcal{K}_{--}$  contains that of $\mathcal{R}_{--}:=\left\{y_1 =-\underline{{\eta}}_1, y_2 =-\underline{{\eta}}_2\right\}$ and $\mathcal{K}_{+-},\mathcal{K}_{-+}$ are defined similarly. Note that
  	\begin{align}\label{eq_mainalphabeta}
  		\mathcal{K}_{++}=&-\frac{1}{4 \pi^2} \int\int  \tilde{f}\left(z_1\right) \tilde{f}\left(z_2\right)\left(\widehat{\alpha}\left(x_1+\mathrm{i}\underline{\eta}_1, x_2+\mathrm{i}\underline{\eta}_2\right)+\widehat{\beta}\left(x_1+\mathrm{i}\underline{\eta}_1, x_2+\mathrm{i}\underline{\eta}_2\right)\right) \mathrm{d} z_1 \mathrm{d} z_2\\
  		=& -\frac{1}{4 \pi^2 } \int\int  \tilde{f}\left(x_1\right) \tilde{f}\left(x_2\right)\left(\widehat{\alpha}\left(x_1+\mathrm{i}\underline{\eta}_1, x_2+\mathrm{i}\underline{\eta}_2\right)+\widehat{\beta}\left(x_1+\mathrm{i}\underline{\eta}_1, x_2+\mathrm{i}\underline{\eta}_2\right)\right) \mathrm{d} x_1 \mathrm{d} x_2 \nonumber \\ 
  	   =& -\frac{1}{4 \pi^2 } \int\int  \left({f}\left(x_1\right) +\mathrm{i} \underline{\eta}_1f^{\prime}(x_1)\right) \left({f}\left(x_2\right) +\mathrm{i} \underline{\eta}_2f^{\prime}(x_2)\right)\left(\widehat{\alpha}\left(x_1+\mathrm{i}\underline{\eta}_1, x_2+\mathrm{i}\underline{\eta}_2\right)+\widehat{\beta}\left(x_1+\mathrm{i}\underline{\eta}_1, x_2+\mathrm{i}\underline{\eta}_2\right)\right) \mathrm{d} x_1 \mathrm{d} x_2 \nonumber \\ 
  	      =& -\frac{1}{4 \pi^2 } \int\int  {f}\left(x_1\right)  {f}\left(x_2\right)\left(\widehat{\alpha}\left(x_1+\mathrm{i}\underline{\eta}_1, x_2+\mathrm{i}\underline{\eta}_2\right)+\widehat{\beta}\left(x_1+\mathrm{i}\underline{\eta}_1, x_2+\mathrm{i}\underline{\eta}_2\right)\right) \mathrm{d} x_1 \mathrm{d} x_2 +\mathrm{O}(n^{-\varepsilon_1}\eta_0).  \nonumber
  	\end{align}
The other terms can be studied similarly. The main differences lie in the signs of the imaginary parts. Note that $\underline{\eta_1}\asymp \underline{\eta}_2\asymp \underline{\eta}=n^{-\varepsilon_1}\eta_0$ and $\mathrm{d}z_1=\mathrm{d}x_1,$  $\mathrm{d}z_2=\mathrm{d}x_2$. Then we find that when the imaginary part goes to zero and the limit exists, we have
$$
\operatorname{Var}(\mathcal{Z})=-\frac{1}{4 \pi^2} \iint_{\mathbb{R}^2} f\left(x_1\right) f\left(x_2\right) \alpha\left(x_1, x_2\right) \mathrm{d} x_1 \mathrm{d} x_2-\frac{1}{4 \pi^2} \iint_{\mathbb{R}^2} f\left(x_1\right) f\left(x_2\right) \beta\left(x_1, x_2\right) \mathrm{d} x_1 \mathrm{d} x_2.
$$
This concludes the proof for $\eta_0 \asymp 1.$	

For $\eta_0=\mathrm{o}(1),$ we apply the following change of variable  
\begin{equation}\label{eq_changeofvariable}
\tilde{x}_i=(x_i-E)/\eta_0, \ i=1,2,
\end{equation}
to the last step of \eqref{eq_mainalphabeta} and get 
  $$
  \begin{aligned}
      & -\frac{1}{4 \pi^2 } \int\int  {f}\left(x_1\right)  {f}\left(x_2\right)\left(\widehat{\alpha}\left(x_1+\mathrm{i}\underline{\eta}_1, x_2+\mathrm{i}\underline{\eta}_2\right)+\widehat{\beta}\left(x_1+\mathrm{i}\underline{\eta}_1, x_2+\mathrm{i}\underline{\eta}_2\right)\right) \mathrm{d} x_1 \mathrm{d} x_2\\  
 &= -\frac{\eta_0^2}{4 \pi^2 } \int\int  {g}\left(\tilde{x}_1\right)  {g}\left(\tilde{x}_2\right)\left(\widehat{\alpha}\left(E+\tilde{x}_1 {\eta_0}+\mathrm{i}\underline{\eta}_1, E+\tilde{x}_2 {\eta_0}+\mathrm{i}\underline{\eta}_2\right)+\widehat{\beta}\left(E+\tilde{x}_1 {\eta_0}+\mathrm{i}\underline{\eta}_1, E+\tilde{x}_2 {\eta_0}+\mathrm{i}\underline{\eta}_2\right)\right) \mathrm{d} \tilde{x}_1 \mathrm{d} \tilde{x}_2 \\
 & :=\left(\mathcal{K}_1\right)_{++}+	\left(\mathcal{K}_2\right)_{++}.
  \end{aligned}$$
For $(\mathcal{K}_1)_{++},$ we see that   
  \begin{align}\label{eq_controlcontrolsmall}
  	\left(\mathcal{K}_1\right)_{++} & =-\frac{{\eta_0}^2}{4 \pi^2} \iint g\left(\tilde{x}_1\right) g\left(\tilde{x}_2\right) \widehat{\alpha}_{++}\left(E+\tilde{x}_1 {\eta_0}, E+\tilde{x}_2 {\eta_0}\right)  \mathrm{d} \tilde{y}_1  \mathrm{~d} \tilde{y}_2+\mathrm{O}(\sqrt{\tilde{{\eta}}}) \nonumber \\
  	& =-\frac{{\eta_0}^2}{4 \pi^2} \iint g\left(\tilde{x}_1\right) g\left(\tilde{x}_2\right) \widehat{\alpha}_{++}\left(E+\tilde{x}_1 {\eta_0}, E+\tilde{x}_2 {\eta_0}\right) \mathrm{d} \tilde{x}_1 \mathrm{d} \tilde{x}_2+\mathrm{O}\left(n^{-\varepsilon_0 / 2}\right) \nonumber \\ 
  	&=\mathrm{O}\left(\frac{1}{\sqrt{\kappa_1+\underline{\eta}_1}\sqrt{\kappa_2+\underline{\eta}_2}}\eta_0^2\right)=\mathrm{O}\left(\frac{1}{\sqrt{\underline{\eta}_1}\sqrt{\underline{\eta}_2}}\eta_0^2\right).
  \end{align}
This indicates that when $\eta_0=\mathrm{o}(1)$,  only the $\beta$ terms will have non-vanishing contributions. Similarly,  we can calculate the integral in $\mathcal{K}_1$ over the other three regions: $\left(\mathcal{K}_1\right)_{+-}$for $\mathcal{R}_{+-}:=\left\{y_1 =\underline{{\eta}}_1, y_2 =-\underline{{\eta}}_2\right\}$, $\left(\mathcal{K}_1\right)_{-+}$for $\mathcal{R}_{-+}:=\left\{y_1 =-\underline{{\eta}}_1, y_2 =\underline{{\eta}}_2\right\}$, and $\left(\mathcal{K}_1\right)_{--}$for $\mathcal{R}_{--}:=\left\{y_1 =-\underline{{\eta}}_1, y_2 =-\underline{{\eta}}_2\right\}$ and obtain the same result.  This completes the proof for $\eta_0=\mathrm{o}(1)$ and hence that of  $\mathbb{E}[\mathcal{Z}(f)^l].$
   
   Finally, we discuss how to generalize the results to multiple functions. Similar to (\ref{eq_mathcalzformformformform}), we can write 
  $$
  \mathbb{E}\left[\mathcal{Z}_{{\eta_0}, E}\left(f_{i_1}\right) \cdots \mathcal{Z}_{{\eta_0}, E}\left( f_{i_l}\right)\right]=\int \theta_{f_{i_1}}\left(z_1\right) \cdots \theta_{f_{i_l}}\left(z_l\right) \mathbb{E}\left[\mathcal{Y}\left( z_1\right) \cdots \mathcal{Y}\left( z_l\right)\right] \mathrm{d}^2 z_1 \cdots \mathrm{d}^2 z_l .
  $$
Following the proof for a single function, it is not hard to see that 
  $$
  \mathbb{E}\left[\mathcal{Z}_{{\eta_0}, E}\left(f_{i_1}\right) \cdots \mathcal{Z}_{{\eta_0}, E}\left( f_{i_l}\right)\right]=\sum_{s=2}^l \varpi\left(f_{i_1}, f_{i_s}\right) \mathbb{E} \prod_{t \notin\{1, s\}} \mathcal{Z}_{{\eta_0}, E}\left( f_{i_t}\right)+\mathrm{O}_{\prec}\left(n^{-c}\right)
  $$
  for some constant $c>0$. Then the proof follows from an induction argument. 

\end{proof}

\subsection{Proof of  Theorems \ref{thm_mainone} and \ref{thm_maintwo}}\label{sec_subproofmaintheorem}

\noindent{\bf Control of $\mathcal{M}_{\eta_0, E}(f_i)$.} For 
simplicity, we first rewrite the quantity a little bit. Using the fact that $Q$ and $\mathcal{Q}$ share the same non-zero eigenvalues and $E\sim\sqrt{\phi}$ and $g_i\in\mathcal{C}^{2}_c\left(\mathbb{R}\right),$ we have that $f_i(0)=g_i(\frac{0-E}{\eta_0})$=0.  Combining (\ref{eq_transformtransferdefinition}), we have that   
$n \int_{\mathbb{R}} f_i(x) \mathrm{d} \varrho(x)=p \int_{\mathbb{R}} f_i(x) \mathrm{d} \varrho_p(x).$ Together with the definition of Stieltjes transform and Fubini's theorem, using Lemma \ref{lem_HSformula}, we have that 
\begin{equation}\label{eq_meanonereduce}
n \int_{\mathbb{R}} f_i(x) \mathrm{d} \varrho(x)=\frac{1}{\pi} \int_{\mathbb{C}} \frac{\partial}{\partial \bar{z}} \tilde{f}_i(z) p m_p(z) \mathrm{d}^2 z.
\end{equation}
Using Lemma \ref{lem_HSformula} again, we have that
\begin{equation}\label{eq_meanonereduceone}
\mathbb{E} \operatorname{Tr} f_i(Q)=\frac{1}{\pi}\int_{\mathbb{C}}\frac{\partial}{\partial \bar{z}}\tilde{f}_i(z)\mathbb{E}\operatorname{Tr} R_1(z)\mathrm{d}^2 z.
\end{equation}
Therefore, due to the regularity properties of $f_i,$ in order to study the quantity $\mathcal{M}_{\eta_0,E}$,  it suffices to study $\mathbb{E}\operatorname{Tr}{R_1}(z)-pm_p(z) $ and then apply a complex integral. The control of $\mathbb{E}\operatorname{Tr}{R_1}(z)-pm_p(z) $ is similar to the discussions in the proof of Lemma \ref{lem:resolventwick} using the cumulant expansion formula Lemma \ref{lem_cumulantexpansion}. Due to similarity, we only sketch the main ideas of the proof.  We start with $\mathbb{E}({R_1})_{ii}, 1 \leq i \leq p$.	 Using 
$$		\frac{\partial ({R_1})_{a b}}{\partial y_{ij}}=-({R_1})_{a i}\left(Y^* {R_1}\right)_{j b}-({R_1} Y)_{a j} ({R_1})_{i b}, \quad \frac{\partial ({R_1})_{a b}}{\partial x_{ij}}=\frac{\partial ({R_1})_{a b}}{\partial y_{ij}} \sqrt{\sigma_i}, 
$$
the relation $z{R_1}(z)=QR_1(z)-I$, Lemma \ref{lem_cumulantexpansion} and Theorem \ref{thm_locallaw}, we have that 
\begin{equation}\label{eq_biasdecomp}
	\begin{aligned}
		&z \mathbb{E} ({R_1})_{i i} 
		=\sqrt{\sigma_i} \sum_{j=1}^n \mathbb{E} X_{i j}({R_1} Y)_{i j}-1\\
		=&\frac{\sqrt{\sigma_i}}{\sqrt{np}} \mathbb{E} \sum_{j=1}^n  \frac{\partial({R_1} Y)_{i j}}{\partial x_{i j}}-1 +\frac{\sqrt{\sigma_i}}{2 ! (np)^{\frac{3}{4}}} \sum_{j=1}^n \kappa_3 \mathbb{E} \frac{\partial^2({R_1} Y)_{i j}}{\partial^2 x_{i j}}+\frac{\sqrt{\sigma_i}}{3 ! np} \sum_{j=1}^n \kappa_4 \mathbb{E} \frac{\partial^3({R_1} Y)_{i j}}{\partial^3 x_{i j}}+\mathrm{O}_{\prec}\left(n^{-\frac{1}{4}}p^{-\frac{5}{4}}\right) \\
		= & \frac{\sigma_i}{\sqrt{np}} \sum_{j=1}^n \mathbb{E} ({R_1})_{i i}-\frac{\sigma_i}{\sqrt{np}} \sum_{j=1}^n \mathbb{E}\left(Y^* ({R_1}) Y\right)_{j j} ({R_1})_{i i}-\frac{\sigma_i}{\sqrt{np}} \sum_{j=1}^n \mathbb{E}\left((({R_1}) Y)_{i j}\right)^2-1 \\ & +\frac{\sigma_i^{3 / 2}}{2 (np)^{\frac{3}{4}}} \sum_{j=1}^n \kappa_3 \mathbb{E}\left(-6 ({R_1})_{i i}(({R_1}) Y)_{i j}+6 ({R_1})_{i i}(({R_1}) Y)_{i j}\left(Y^* ({R_1}) Y\right)_{j j}+2\left((({R_1}) Y)_{i j}\right)^3\right) \\ & +\frac{\sigma_i^2}{6 np} \sum_{j=1}^n \kappa_4 \mathbb{E}\left(-6\left(({R_1})_{i i}\right)^2+12\left(({R_1})_{i i}\right)^2\left(Y^* ({R_1}) Y\right)_{j j}-6\left(({R_1})_{i i}\right)^2\left(\left(Y^* ({R_1}) Y\right)_{j j}\right)^2\right)+\mathrm{O}_{\prec}\left(p^{-1} \Psi(z)\right) \\
		:&=\mathsf{S}_{i,1}+\mathsf{S}_{i,2}+\mathsf{S}_{i,3}+\mathrm{O}_{\prec}\left(p^{-1}\Psi(z)\right).
		\end{aligned}
		\end{equation}
		
Then we deal with the terms $\mathsf{S}_{i,k}, k=1,2,3.$ First, by a discussion similar to the proof of Lemma \ref{lem_controlofLi}, using Theorem \ref{thm_locallaw}, we have that 
\begin{equation}\label{eq_bias2ndrough}
	\begin{aligned}
		\mathsf{S}_{i,1}	=& \phi^{-1/2}\sigma_i \mathbb{E} ({R_1})_{i i}-\frac{\sigma_i}{\sqrt{np}} \mathbb{E}\left[\operatorname{Tr}\left(Y^* {R_1} Y\right) ({R_1})_{i i}\right]-\frac{\sigma_i}{\sqrt{np}} \mathbb{E}(R_1Q R_1)_{i i}-1 \\
		=& \phi^{-1/2}\sigma_i \mathbb{E} ({R_1})_{i i}-\frac{\sigma_i}{\sqrt{np}} \mathbb{E}\left[\operatorname{Tr}\left((Q-z+z){R_1} \right) ({R_1})_{i i}\right]-\frac{\sigma_i}{\sqrt{np}} \mathbb{E}(R_1(Q-z+z) R_1)_{i i}-1 \\
		= & \sigma_i(\phi^{-1/2}-\phi^{1/2}) \mathbb{E} ({R_1})_{i i}-z \sigma_i \phi^{1/2} \mathbb{E}\left[p^{-1} \operatorname{Tr} ({R_1}) ({R_1})_{i i}\right]-\frac{\sigma_i}{\sqrt{np}}\mathbb{E}\left(\frac{\partial}{\partial z}z{R_1}(z)\right)_{ii}-1 \\
		=& \sigma_i(\phi^{-1/2}-\phi^{1/2}) \mathbb{E} ({R_1})_{i i}-z \sigma_i \phi^{1/2} m_{p} \mathbb{E}\left(({R_1})_{i i}+\frac{1}{z\left(1+\phi^{-1/2}m \sigma_i\right)}\right)\\ 
		&+\frac{\sigma_i \phi^{1/2}}{p\left(1+\phi^{-1/2}m \sigma_i\right)} \mathbb{E} \operatorname{Tr} ({R_1})-\frac{\sigma_i^2}{p} \frac{m^{\prime}}{\left(1+\phi^{-1/2}\sigma_i m\right)^2}-1+\tilde{\mathcal{E}}_{i,1},
	\end{aligned}
\end{equation}
where  we denote
 \begin{equation*}
 	\begin{aligned}
 		\tilde{\mathcal{E}}_{i,1}:=&\frac{z\sigma_i\phi^{1/2}}{p}\mathbb{E}\left[\left(\operatorname{Tr}({R_1(z)})-pm_p\right)\left(({R_1})_{ii}-\left(-\frac{1}{z(1+\phi^{-1/2}m\sigma_i)}\right)\right)\right]\\ &-\frac{\sigma_i}{\sqrt{np}}\left(\mathbb{E}\left( \frac{\partial}{\partial z}z{R_1}(z)\right)_{ii}-\frac{\phi^{-1/2}\sigma_im^{\prime}}{(1+\phi^{-1/2}m\sigma_i)^2}\right).
 	\end{aligned}
 \end{equation*} 
Here we used the identity $\frac{\partial(zR(z))}{\partial z}={R_1}(z)+z{R_1}^2(z)$ to estimate $\frac{\sigma_i}{\sqrt{np}}\mathbb{E}[({R_1}Q{R_1})_{ii}]$ in the second step, and used the identity $
	\mathbb{E}[AB]=\mathbb{E}[(A-a)(B-b)]+\mathbb{E}[aB-ab+Ab]
	$
	to control $-z \sigma_i \phi^{1/2} \mathbb{E}\left[p^{-1} \operatorname{Tr} ({R_1}) ({R_1})_{i i}\right]$ in the last step. Plugging \eqref{eq_bias2ndrough} into \eqref{eq_biasdecomp}, we have
\begin{equation*}
	\begin{aligned}
		z \mathbb{E} ({R_1})_{i i} =& \sigma_i(\phi^{-1/2}-\phi^{1/2}) \mathbb{E} ({R_1})_{i i}-z \sigma_i \phi^{1/2} m_{p} \mathbb{E}\left(({R_1})_{i i}+\frac{1}{z\left(1+\phi^{-1/2}m \sigma_i\right)}\right)\\ 
		&+\frac{\sigma_i \phi^{1/2}}{p\left(1+\phi^{-1/2}m \sigma_i\right)} \mathbb{E} \operatorname{Tr} ({R_1})-\frac{\sigma_i^2}{p} \frac{m^{\prime}}{\left(1+\phi^{-1/2}\sigma_i m\right)^2}-1+\tilde{\mathcal{E}}_{i,1}+\mathsf{S}_{i,2}+\mathsf{S}_{i,3}+\mathrm{O}_{\prec}\left(p^{-1}\Psi(z)\right).
	\end{aligned}
\end{equation*}
Using (\ref{eq_transformtransferdefinition}), we can rewrite the above equation as follows
\begin{equation}\label{eq_biaserrform}
	\begin{aligned}
%		&(z-\sigma_i(\phi^{-1/2}-\phi^{1/2}-z\phi^{1/2}m_p))\mathbb{E}({R_1})_{ii}\\ 
		& z(1+\sigma_i\phi^{-1/2}m)\mathbb{E}({R_1})_{ii}\\
		& =-\frac{ \sigma_i \phi^{1/2} m_{p}} {\left(1+\phi^{-1/2}m \sigma_i\right)}+\frac{\sigma_i \phi^{1/2}}{p\left(1+\phi^{-1/2}m \sigma_i\right)} \mathbb{E} \operatorname{Tr} ({R_1})-\frac{\sigma_i^2}{p} \frac{m^{\prime}}{\left(1+\phi^{-1/2}\sigma_i m\right)^2}-1 \\
		& +\tilde{\mathcal{E}}_{i,1}+\mathsf{S}_{i,2}+\mathsf{S}_{i,3}+\mathrm{O}_{\prec}\left(p^{-1}\Psi(z)\right).
	\end{aligned}
\end{equation}
This yields that 
\begin{equation}\label{eq_biassumrough}
		\begin{aligned}
		&\mathbb{E}\operatorname{Tr}({R_1})\\
		=&\sum_{i=1}^p\frac{1}{z(1+\sigma_i\phi^{-1/2}m)}\left(\frac{\sigma_i \phi^{1/2}}{p\left(1+\phi^{-1/2}m \sigma_i\right)} \mathbb{E} [\operatorname{Tr} ({R_1})-pm_p]-\frac{\sigma_i^2}{p} \frac{m^{\prime}}{\left(1+\phi^{-1/2}\sigma_i m\right)^2}-1 \right. \\
	& \left.	+\tilde{\mathcal{E}}_{i,1}+\mathsf{S}_{i,2}+\mathsf{S}_{i,3}+\mathrm{O}_{\prec}\left(p^{-1}\Psi(z)\right)\right).
	\end{aligned}
\end{equation}
For the error term $\tilde{\mathcal{E}}_{i,1}$, using $\frac{1}{z(1+\sigma_i\phi^{-1/2}m)}\asymp \phi^{-1/2}$ and a fluctuation averaging argument as in the proof of (\ref{eq_averagelocallawtwo}) (c.f. Lemma \ref{lem:fa}), we can obtain that  
\begin{equation}\label{eq_biaserr1}
	\sum_{i=1}^p\frac{1}{z(1+\sigma_i\phi^{-1/2}m)}\tilde{\mathcal{E}}_{i,1}=\mathrm{O}_{\prec}\left(
	n^{-\frac{1}{2}}p^{-\frac{1}{2}}\eta^{-2}\right).
\end{equation}
For the summation involving $\mathsf{S}_{i,2},$ among all the terms, we discuss the following representative term  
\begin{equation}\label{eq_ffgggg}
(np)^{-3/4} \sum_{i=1}^p \sum_{j=1}^n  ({R_1})_{i i}(({R_1}) Y)_{i j}\left(Y^* ({R_1}) Y\right)_{j j} \prec \phi^{1/2} (np)^{-3/4} (np)^{1/2} \phi^{-1/2} \Psi(z)=(np)^{-1/4} \Psi(z),
\end{equation}
where we used the identities ${R_1}Y=Y{R_2}$, $Y^* {R_1}={R_2} Y^*$ and (\ref{eq:entrywiselaw}). The other terms can be handled analogously (and more easily). This results in   
\begin{equation}\label{eq_biaserr2} 
	\sum_{i=1}^p\frac{1}{z(1+\sigma_i\phi^{-1/2}m)} \mathsf{S}_{i,2}=\mathrm{O}_{\prec}\left(n^{\frac{1}{4}}p^{-\frac{3}{4}} \Psi(z) \right).
\end{equation}
Finally, using Theorem \ref{thm_locallaw} and similar arguments as in (\ref{eq_ffgggg}), we see that 
\begin{equation}\label{eq_biaserr3}
	\begin{aligned}
		\mathsf{S}_{i,3}=-\kappa_4 \frac{\sigma_i^2}{p}\left(\frac{m^2}{\left(1+\phi^{-1/2}m \sigma_i\right)^2}\right)+\mathrm{O}_{\prec}\left(p^{-1}\Psi(z)\right) .
	\end{aligned}
\end{equation}
Inserting \eqref{eq_biaserr1} \eqref{eq_biaserr2} \eqref{eq_biaserr3} back into \eqref{eq_biassumrough} and use $z\left(1+\phi^{-1/2}m \sigma_i\right) \asymp \sqrt{\phi}$ again, we get
\begin{equation}\label{eq_biassum1}
	\begin{aligned}
		&\mathbb{E} \operatorname{Tr}({R_1}) -\sum_{i=1}^p\left(-\frac{1}{z\left(1+\phi^{-1/2}m \sigma_i\right)}\right)\\ =&\frac{1}{z\left(1+\phi^{-1/2}m \sigma_i\right)}\left[\frac{\sigma_i \phi^{1/2}}{1+\phi^{-1/2}m \sigma_i}  \mathbb{E}\left[p^{-1} \operatorname{Tr} ({R_1})-m_{p}\right]-{\frac{1}{p} \frac{ \sigma_i^2 m^{\prime}}{\left(1+\phi^{-1/2}\sigma_i m\right)^2} } -\kappa_4 \frac{1}{p} \frac{ \sigma_i^2 m^2}{\left(1+\phi^{-1/2}m \sigma_i\right)^2}\right]\\ 
		&+\mathrm{O}_{\prec}\left(
		n^{-\frac{1}{2}}p^{-\frac{1}{2}}\eta^{-2}+n^{\frac{1}{4}}p^{-\frac{3}{4}} \Psi(z)+\phi^{-1/2}\Psi(z)\right),
	\end{aligned}
\end{equation}
Using Lemma \ref{lem_elementaryidentityes}, we can rewrite the above equation as follows
\begin{equation}\label{eq_biassum2}
	\begin{aligned}
		&\left(1-\frac{1}{p} \sum_{i=1}^p \frac{\phi^{1/2} \sigma_i}{z\left(1+\phi^{-1/2}m(z) \sigma_i\right)^2}\right) \mathbb{E}\left(\operatorname{Tr} ({R_1})-p m_{p}\right)\\ 
		=&{-\frac{1}{p} \sum_{i=1}^p \frac{\sigma_i^2 m^{\prime}}{z\left(1+\phi^{-1/2}m \sigma_i\right)^3} }-\kappa_4 \frac{1}{p} \sum_{i=1}^p \frac{ \sigma_i^2 m^2}{z\left(1+\phi^{-1/2}m \sigma_i\right)^3}+\mathrm{O}_{\prec}\left(
		n^{-\frac{1}{2}}p^{-\frac{1}{2}}\eta^{-2}+n^{\frac{1}{4}}p^{-\frac{3}{4}} \Psi(z)+\phi^{-1/2}\Psi(z)\right).
	\end{aligned}
\end{equation}
Combining \eqref{eq_mmprimeeq1} and \eqref{eq_mmprimeeq3} with a straightforward calculation, we obtain that 
\begin{equation}\label{eq_biaserrb4integral}
	\mathbb{E}(\operatorname{Tr}({R_1})-pm_p)=b(z)+\mathrm{O}_{\prec}\left( \frac{n^{-1}\eta^{-2}+n^{-1/4}p^{-1/4}\Psi(z)+\Psi(z)}{\sqrt{\kappa+\eta}} \right), 
\end{equation}
where we used the control $zm^{\prime}/m\asymp \phi^{1/2}/\sqrt{\kappa+\eta}.$ Recall (\ref{eq_omegaregion}) and let $\Omega$ be defined exactly in the same way as $\Omega_2^0$ for $z.$ Combining (\ref{eq_meanonereduce}), (\ref{eq_meanonereduceone}) and (\ref{eq_biaserrb4integral}), by a discussion similar to (\ref{eq:Zfpowers}),  we readily obtain that 
\begin{equation*}
\mathcal{M}_{\eta_0,E}=\frac{1}{\pi} \int_{\Omega} \frac{\partial}{\partial \bar{z}} \tilde{f}_i(z) b(z) \mathrm{d}^2 z+\mathrm{O}_{\prec}\left( \frac{1}{\sqrt{n \eta_0 \sqrt{\kappa+\eta_0}}} \right), 
\end{equation*}
where we used Lemma \ref{lem:mphizabs}.  By Lemma \ref{lem_Greenthm}, we have that  
\begin{equation*}
\mathcal{M}_{\eta_0,E}=\frac{1}{2\pi\mathrm{i}} \int_{\partial \Omega} \tilde{f}_i(z) b(z) \mathrm{d}z+\mathrm{O}_{\prec}\left( \frac{1}{\sqrt{n \eta_0 \sqrt{\kappa+\eta_0}}} \right).  
\end{equation*}
Finally, we can follow lines of the arguments between (\ref{eq_maincontourrough}) and (\ref{eq_controlcontrolsmall}) to conclude the proof of (\ref{eq_meanfunction}) and (\ref{eq_meanlocal}).

\vspace{0.1in}
\noindent{\bf Distribution of $\mathcal{Z}_{\eta_0, E}(f_i)$.} Using Lemma \ref{lem_CLTforgeneralfunction}, by a discussion similar to the proof of Theorem \ref{them_CLTresolvent}, we can use Lemma \ref{lem_wicktheorem} to conclude that $\{\mathcal{Z}_{\eta_0, E}(f_i)\}$ follows a multivariate Gaussian distribution asymptotically whose mean is $\mathbf{0}$ and covariance matrices satisfy (\ref{eq_varianceglobal}) for $\eta_0 \asymp 1$ and (\ref{eq_covariancelocal}) for $\eta_0=\mathrm{o}(1).$ This completes the proof of Theorems \ref{thm_mainone} and \ref{thm_maintwo} using (\ref{eq_decomposition}).

\section{Proof of the corollaries} \label{sec_subcorollaryproof}

\subsection{Proof of Corollary \ref{cor_globalexamples}}\label{proofofcorollary34}
In this subsection, we prove Corollary \ref{cor_globalexamples} using Theorem \ref{thm_mainone}. Note that when $\Sigma=I,$ \eqref{eq:lsd} becomes
\begin{equation}	\label{eq_MPlawrecudedone}		
z=-\frac{1}{m(z)}+\frac{\phi^{1/2}}{1+\phi^{-1/2}m(z)},\end{equation}
			or equivalently, 
			\begin{equation}\label{eq_MPlawrecudedtwo}
			m(z):=\frac{\phi^{1 / 2}-\phi^{-1 / 2}-z+\mathrm{i} \sqrt{\left(z-\gamma_{-}\right)\left(\gamma_{+}-z\right)}}{2 \phi^{-1 / 2} z}.
			\end{equation}

\noindent {\bf Proof of Part (1).} Following \cite[Section 3]{yao2015large}, for some $a,r$  and $\xi,$ we consider a change of variable that
\begin{equation}\label{eq_basictransform} 
m=-\frac{1}{a+r\xi}, \ |\xi|=1.
\end{equation}
			Together with (\ref{eq_MPlawrecudedone}), we see that  $$\frac{\phi^{1/2}}{1+\phi^{-1/2}m}=\frac{1}{1+\frac{-\phi^{-1/2}}{a+r\xi}}=\frac{\phi^{1/2}(a+r\xi)}{-\phi^{-1/2}+a+r\xi}.$$
Set $a=\phi^{-1/2}$, we further have
					$$\frac{\phi^{1/2}}{1+\phi^{-1/2}m}=r^{-1}\bar\xi+\phi^{1/2}.$$ 
This yields that 
\begin{equation}\label{eq_conformal}
					z=-\frac{1}{m}+\frac{\phi^{1/2}}{1+\phi^{-1/2}m}=\phi^{1/2}+\phi^{-1/2}+r\xi+r^{-1}\bar{\xi}, \ \mathrm{d}z=(r-r^{-1}\xi^{-2})\mathrm{d}\xi, 
\end{equation}
and
$$\begin{aligned}
						m'(z)=-\frac{m^2+\phi^{1/2}m}{2zm+z\phi^{1/2}+1-\phi}=\frac{r^2\xi^2}{(\phi^{-1/2}+r\xi)^2(r^2\xi^2-1)}
					\end{aligned}.$$ 
Together with the chain rule that $m^{\prime \prime}(z)\mathrm{d}z=\left(\frac{\partial m'}{\partial \xi}\right)\mathrm{d}\xi,$ it is easy to see that  
\begin{equation}\label{eq_changeofvariablebig}
\frac{m^{\prime\prime}(z)}{2m^{\prime}(z)}\mathrm{d}z=\left[\frac{2}{\xi}-\frac{2r}{\phi^{-1/2}+r\xi}-\frac{2r^2\xi}{r^2\xi^2-1}\right]\mathrm{d}\xi. 
\end{equation} 

We first with the mean part. According to Remark \ref{rem_contourform} and (\ref{eq_b(zdefinition)}), we decompose that 
\begin{equation}\label{eq_decompositionmean}
\mathsf{M}(f) \equiv \mathbb{E} \mathscr{G} :=\mathsf{M}_1(f)+\mathsf{M}_2(f), 
\end{equation}
where $\mathsf{M}_k, k=1,2,$ correspond to the $b_k$ part in (\ref{eq_b(zdefinition)}). On the one hand, using (\ref{eq_conformal}), (\ref{eq_changeofvariablebig}) and Remark \ref{rem_contourform} with calculations similar to  \cite[Section 3.2]{yao2015large}, we see that
			\begin{align}
				\mathsf{M}_1(f)=&\frac{-1}{2\pi\mathrm{i}}\oint_{\Gamma}f(z)\left(\frac{m^{\prime\prime}(z)}{2m^{\prime}(z)}-\frac{m^{\prime}(z)}{m(z)}\right)\mathrm{d} z \nonumber \\ 
				=&\frac{-1}{2\pi\mathrm{i}}\oint_{|\xi|=1}f({\phi^{1/2}+\phi^{-1/2} }+r\xi+\frac{1}{r\xi})\left(\frac{1}{\xi}-\frac{1}{2}\frac{1}{\xi+\frac{1}{r}}-\frac{1}{2}\frac{1}{\xi-\frac{1}{r}}\right)\mathrm{d} \xi  \nonumber \\
				=& \lim_{r \downarrow 1} \frac{-1}{2\pi\mathrm{i}}\oint_{|\xi|=1}f({\phi^{1/2}+\phi^{-1/2} }+\xi+\frac{1}{\xi})\left(\frac{1}{\xi}-\frac{1}{2}\frac{1}{\xi+\frac{1}{r}}-\frac{1}{2}\frac{1}{\xi-\frac{1}{r}}\right)\mathrm{d} \xi.
				 \label{m1fgeneralformula}
					\end{align}   
									
On the other hand, for $\mathsf{M}_2(f),$ with a discussion similar to (\ref{eq_changeofvariablebig}), we have that  
					$$
					\begin{aligned}
					\left[\frac{m^2(z)m''(z)}{2{m'(z)}^2}-m(z)\right]\mathrm{d}z=&\left[ \frac{1}{\xi}-\frac{1}{r^2\xi^3}-\frac{1}{\frac{\phi^{-1/2}}{r}+\xi} -\frac{1}{\xi}+\frac{1}{r^2\xi^2}\frac{1}{\xi+\frac{\phi^{-1/2}}{r}}+\frac{1}{\xi+\frac{\phi^{-1/2}}{r}}-\frac{1}{r^2\xi^2}\frac{1}{\xi+\frac{\phi^{-1/2}}{r}}\right]\mathrm{d}\xi\\ 
					=&-\frac{1}{r^2\xi^3}\mathrm{d}\xi.
					\end{aligned}$$					 	
Together with a discussion similar to the $\mathsf{M}_1(f)$ part, we see that   					
	\begin{align}\label{m2fgeneralformula}
						\mathsf{M}_2(f)
					& = \kappa_4 \frac{-1}{2\pi\mathrm{i}}\oint_{|\xi|=1}f({ \phi^{1/2}+\phi^{-1/2} }+r\xi+\frac{1}{r\xi})\left(-\frac{1}{r^2\xi^3}\right)\mathrm{d} \xi \nonumber \\
					& =  \kappa_4 \frac{-1}{2\pi\mathrm{i}}\oint_{|\xi|=1}f({ \phi^{1/2}+\phi^{-1/2} }+\xi+\frac{1}{\xi})\left(-\frac{1}{\xi^3}\right)\mathrm{d} \xi .
					\end{align}

We then work with the variance part. Similar to (\ref{eq_decompositionmean}), we also decompose that 
\begin{equation}\label{eq_vdecomposition}
\mathsf{V}(f) \equiv \operatorname{Var}(\mathscr{G}):=\mathsf{V}_1(f)+\mathsf{V}_2(f). 
\end{equation}
We again follow the ideas of \cite[Section 3]{yao2015large} and consider two non-overlapping contours defined by $z_j=\phi^{1/2}+\phi^{-1/2}+r_j\xi_j+r_j^{-1}\bar{\xi}_j, j=1,2,$ with  $r_2>r_1>1$. According to Remark \ref{rem_contourform} and (\ref{eq_basictransform}) that $m^{\prime}(z_j)\mathrm{d}z_j=\left(\frac{\partial }{\partial \xi_j}m(z_j)\right)\mathrm{d}\xi_j=\frac{r_j}{(\phi^{-1/2}+r_j\xi_j)^2}\mathrm{d}\xi_j,$ by a discussion similar to (\ref{m1fgeneralformula}),  we have that
				\begin{align}
			\mathsf{V}_1(f)=&-\frac{1}{2\pi^2}\oint_{|\xi_1|=1}\oint_{|\xi_2|=1}\frac{f(z_1)f(z_2)}{(m(z_1)-m(z_2))^2}\frac{r_1}{(\phi^{-1/2}+r_1\xi_1)^2}\frac{r_2}{(\phi^{-1/2}+r_2\xi_2)^2}\mathrm{d}\xi_1\mathrm{d}\xi_2 \nonumber  \\
			=&-\frac{1}{2\pi^2}\oint_{|\xi_1|=1}\oint_{|\xi_2|=1}\frac{f(z_1)f(z_2)}{(r_1\xi_1-r_2\xi_2)^2}r_1r_2\mathrm{d}\xi_1\mathrm{d}\xi_2 \label{eq_v1partnotice} \\ 
			=&-\frac{1}{2\pi^2}\oint_{|\xi_1|=1}\oint_{|\xi_2|=1}\frac{\frac{r_2}{r_1}f(\phi^{1/2}+\phi^{-1/2}+{r_1}\xi_1+\frac{1}{{r_1}}\xi_1^{-1})f(\phi^{1/2}+\phi^{-1/2}+{r_2}\xi_2+\frac{1}{{r_2}}\xi_2^{-1})}{(\xi_1-\frac{r_2}{r_1}\xi_2)^2}\mathrm{d}\xi_1\mathrm{d}\xi_2 \nonumber  \\
			=&-\frac{1}{2\pi^2} \lim_{\substack{r_2>r_1 \\ r_1, r_2 \downarrow 1}} \oint_{|\xi_1|=1}\oint_{|\xi_2|=1}\frac{f(\phi^{1/2}+\phi^{-1/2}+\xi_1+\xi_1^{-1})f(\phi^{1/2}+\phi^{-1/2}+\xi_2+\xi_2^{-1})}{(\xi_1-\frac{r_2}{r_1}\xi_2)^2}\mathrm{d}\xi_1\mathrm{d}\xi_2. \nonumber
						\end{align}
Similarly, using Remark \ref{rem_contourform} and the fact 
$$\frac{\partial}{\partial z_j}\left(\frac{1}{1+\phi^{-1/2}m(z_j)}\right)\mathrm{d}z_j=\frac{\partial}{\partial \xi_j}\left(\frac{1}{1+\phi^{-1/2}m(z_j)}(\xi_j)\right)\mathrm{d}\xi_j=-\frac{\phi^{-1/2}r_j^{-1}}{\xi_j^2}\mathrm{d}\xi_j, \quad j=1,2,$$
we readily obtain that 					
				\begin{align}\label{eq_v2fpart}
				\mathsf{V}_2(f)=&-\frac{\kappa_4}{4\pi^2}\oint_{|\xi_1|=1}f(\phi^{1/2}+\phi^{-1/2}+{r_1}\xi_1+\frac{1}{{r_1}}\xi_1^{-1})\frac{r_1^{-1}}{\xi_1^2}\mathrm{d}\xi_1\oint_{|\xi_2|=1}f(\phi^{1/2}+\phi^{-1/2}+{r_2}\xi_2+\frac{1}{{r_2}}\xi_2^{-1})\frac{r_2^{-1}}{\xi_2^2}\mathrm{d}\xi_2 \nonumber \\
				=& -\frac{\kappa_4}{4\pi^2}\left(\oint_{|\xi|=1}f(\phi^{1/2}+\phi^{-1/2}+\xi+\xi^{-1})\frac{1}{\xi^2}\mathrm{d}\xi\right)^2. 
						\end{align}
This completes the proof of part (1). \\

\noindent {\bf Proof of Part (2).} Armed with the results from Part (1), we now proceed to the calculations for the concrete examples. First, for $f_1(x)=x-(\phi^{1/2}+\phi^{-1/2}-c)$ and its mean part,  using  (\ref{m1fgeneralformula}) and (\ref{m2fgeneralformula}) with straightforward calculations and Cauchy's integral theorem, we see that 		
\begin{equation}\label{eq_M1fmeancontrol}
		\begin{aligned}
								\mathsf{M}_1(f_1)&=\frac{-1}{2\pi\mathrm{i}}\oint_{|\xi|=1}(c+r\xi+\frac{1}{r\xi})\left(\frac{1}{\xi}-\frac{1}{2}\frac{1}{\xi+\frac{1}{r}}-\frac{1}{2}\frac{1}{\xi-\frac{1}{r}}\right)\mathrm{d} \xi=0,\\ 
								\mathsf{M}_2(f_1)&=\frac{-1}{2\pi\mathrm{i}}\kappa_4\oint_{|\xi|=1}(c+r\xi+\frac{1}{r\xi})\left(-\frac{1}{r^2\xi^3}\right)\mathrm{d} \xi=0.
		\end{aligned}
		\end{equation}
For its variance part, according to (\ref{eq_v1partnotice}), we see that 
		\begin{equation*}
		\begin{aligned}
							\mathsf{V}_1(f_1)=-\frac{1}{2\pi^2}\oint_{|\xi_2|=1}(c+{r_2}\xi_2+\frac{1}{{r_2}}\xi_2^{-1})\cdot\oint_{|\xi_1|=1}\frac{c+{r_1}\xi_1+\frac{1}{{r_1}}\xi_1^{-1}}{(\xi_1-\frac{r_2}{r_1}\xi_2)^2}\mathrm{d}\xi_1\mathrm{d}\xi_2.
		\end{aligned}
	\end{equation*}
Moreover, it is easy to see that 
$$\begin{aligned}
			&\frac{1}{2 \pi\mathrm{i}}\oint_{|\xi_1|=1}\frac{c+{r_1}\xi_1+\frac{1}{{r_1}}\xi_1^{-1}}{(\xi_1-\frac{r_2}{r_1}\xi_2)^2}\mathrm{d}\xi_1\\ 
			=&\frac{1}{2 \pi\mathrm{i}}\oint_{|\xi_1|=1}\frac{c\xi_1+{r_1}\xi_1^2+\frac{1}{{r_1}}}{\xi_1(\xi_1-\frac{r_2}{r_1}\xi_2)^2}\mathrm{d}\xi_1
			=&\frac{1}{2 \pi\mathrm{i}}\left[\oint_{|\xi_1|=1}\frac{c}{(\xi_1-\frac{r_2}{r_1}\xi_2)^2}\mathrm{d}\xi_1+\oint_{|\xi_1|=1}\frac{{r_1}\xi_1}{(\xi_1-\frac{r_2}{r_1}\xi_2)^2}\mathrm{d}\xi_1+\oint_{|\xi_1|=1}\frac{\frac{1}{{r_1}}}{\xi_1(\xi_1-\frac{r_2}{r_1}\xi_2)^2}\mathrm{d}\xi_1 \right]\\ 
				=&\frac{1}{{r_1}(\frac{r_2}{r_1})^2\xi_2^2}.
			\end{aligned}$$
This yields that 
\begin{equation}\label{eq_v1f1}
	\begin{aligned}
			\mathsf{V}_1(f_1)=&\frac{2\frac{r_2}{r_1}}{2 \pi\mathrm{i} (\frac{r_2}{r_1})^2r_1}\oint_{|\xi_2|=1}\frac{c\xi_2+{r_2}\xi_2^2+\frac{1}{{r_2}}}{\xi_2^{3}}\mathrm{d}\xi_2={2}.
			\end{aligned}
\end{equation}					
Similarly, using (\ref{eq_v2fpart}), we can obtain that  		
\begin{equation*}
							\begin{aligned}
								&\mathsf{V}_2(f_1)\\ 
								=&-\frac{\kappa_4}{4\pi^2}\oint_{|\xi_1|=1}f(\phi^{1/2}+\phi^{-1/2}+{r_1}\xi_1+\frac{1}{{r_1}}\xi_1^{-1})\frac{r_1^{-1}}{\xi_1^2}\mathrm{d}\xi_1\oint_{|\xi_2|=1}f(\phi^{1/2}+\phi^{-1/2}+{r_2}\xi_2+\frac{1}{{r_2}}\xi_2^{-1})\frac{r_2^{-1}}{\xi_2^2}\mathrm{d}\xi_2\\ 
								=&\kappa_4.
							\end{aligned}
\end{equation*}
This concludes the proof for $f_1(x).$

Second, for $f_2(x)=[x-(\phi^{1/2}+\phi^{-1/2}-c)]^2,$ by a discussion similar to (\ref{eq_M1fmeancontrol}) using residual theorem, we see that 
\begin{equation*}
\mathsf{M}_1(f_2)=1, \ \mathsf{M}_2(f_2)=\kappa_4.
\end{equation*}
For its variance part, using an argument similar to (\ref{eq_v1f1}), it is not hard to see that 
		\begin{equation*}
							\begin{aligned}
								\mathsf{V}_1(f_2)&=-\frac{\frac{r_2}{r_1}}{2\pi^2}\oint_{|\xi_2|=1}((c+{r_2}\xi_2+\frac{1}{{r_2}}\xi_2^{-1})^2)\cdot\oint_{|\xi_1|=1}\frac{(c+{r_1}\xi_1+\frac{1}{{r_1}}\xi_1^{-1})^2}{(\xi_1-\frac{r_2}{r_1}\xi_2)^2}\mathrm{d}\xi_1\mathrm{d}\xi_2 \\
								&=2\frac{r_2}{r_1}\left[\frac{2c}{r_1(\frac{r_2}{r_1})^2}{2cr_2}+\frac{2}{r_1^2(\frac{r_2}{r_1})^{3}}{r_2^2}\right]=8c^2+4.
							\end{aligned}
						\end{equation*}
Analogously, we have that  						
\begin{equation*}
							\begin{aligned}
								\mathsf{V}_2(f_2)=&-\frac{\kappa_4}{4\pi^2}\oint_{|\xi_1|=1}f(\phi^{1/2}+\phi^{-1/2}+{r_1}\xi_1+\frac{1}{{r_1}}\xi_1^{-1})\frac{r_1^{-1}}{\xi_1^2}\mathrm{d}\xi_1\oint_{|\xi_2|=1}f(\phi^{1/2}+\phi^{-1/2}+{r_2}\xi_2+\frac{1}{{r_2}}\xi_2^{-1})\frac{r_2^{-1}}{\xi_2^2}\mathrm{d}\xi_1\\ 
								=&{4c^2\kappa_4}.
							\end{aligned}
\end{equation*}
This concludes the proof for $f_2(x).$				

Finally, for $f_3(x)=\log(x-\phi^{1/2}-\phi^{-1/2}+t+t^{-1}),$ for the mean part, we first decompose that	$$	\begin{aligned}
							\mathsf{M}_1(f_3)=&-\frac{1}{2\pi\mathrm{i}}\oint_{|\xi|=1}	\log \left( (t+{r} \xi)(1+\frac{1}{{r}t\xi}) \right)\left[\frac{1}{\xi}-\frac{1}{2}\frac{1}{\xi+\frac{1}{r}}-\frac{1}{2}\frac{1}{\xi-\frac{1}{r}}\right]\mathrm{d}\xi\\
							=&-\frac{1}{2\pi\mathrm{i}}\oint_{|\xi|=1}	\left[\log  (t+{r} \xi)+\log(1+\frac{1}{{r}t\xi}) \right]\left[\frac{1}{\xi}-\frac{1}{2}\frac{1}{\xi+\frac{1}{r}}-\frac{1}{2}\frac{1}{\xi-\frac{1}{r}}\right]\mathrm{d}\xi\\ 
							:=&\mathsf{M}_{11}(f_3)+\mathsf{M}_{12}(f_3),
						\end{aligned}$$
where 
						$$	\begin{aligned}
							\mathsf{M}_{11}(f_3)
							=&-\frac{1}{2\pi\mathrm{i}}\oint_{|\xi|=1}	\log  (t+{r} \xi)\left[\frac{1}{\xi}-\frac{1}{2}\frac{1}{\xi+\frac{1}{r}}-\frac{1}{2}\frac{1}{\xi-\frac{1}{r}}\right]\mathrm{d}\xi\\ 
							=&-\left[\log(t)-\frac{1}{2}\log(t-1)-\frac{1}{2}\log(t+1)\right]=\frac{1}{2}\log(1-\frac{1}{t^2}),
						\end{aligned}\\ 
						$$
						and 
						$$	\begin{aligned}
							\mathsf{M}_{12}(f_3)
							=&-\frac{1}{2\pi\mathrm{i}}\oint_{|\xi|=1}	\log(1+\frac{1}{{r}t\xi}) \left[\frac{1}{\xi}-\frac{1}{2}\frac{1}{\xi+\frac{1}{r}}-\frac{1}{2}\frac{1}{\xi-\frac{1}{r}}\right]\mathrm{d}\xi\\ 
							=&-\frac{1}{2\pi\mathrm{i}}\oint_{|z|=1}	\log(1+\frac{z}{{r}t}) \left[z-\frac{1}{2}\frac{1}{\frac{1}{z}+\frac{1}{r}}-\frac{1}{2}\frac{1}{\frac{1}{z}-\frac{1}{r}}\right]\frac{1}{z^2}\mathrm{d}z=0.
						\end{aligned}$$
This results in 	$\mathsf{M}_1(f_3)=\frac{1}{2}\log(1-\frac{1}{t^2}).$ Similarly, one can show that 	$$	\begin{aligned}
							\mathsf{M}_{2}(f_3)
							=-\frac{\kappa_4}{2\pi\mathrm{i}}\oint_{|\xi|=1}	\log  (t+{r} \xi)\left[-\frac{1}{r^2\xi^3}\right]\mathrm{d}\xi-\frac{1}{2\pi\mathrm{i}}\oint_{|\xi|=1}	\log(1+\frac{1}{{r}t\xi}) \left[-\frac{1}{r^2\xi^3}\right]\mathrm{d}\xi=-\frac{\kappa_4}{2t^2}.
						\end{aligned} 
						$$
For its variance part, using a straightforward calculation with Cauchy's differentiation formula and residual theorem, we have that  						
		$$			\begin{aligned}
							\mathsf{V}_1(f_3)& =2 \left[\frac{\frac{r_2}{r_1}}{2  \pi\mathrm{i} (\frac{r_2}{r_1})^2} \oint_{\left|\xi_2\right|=1} \frac{\log \left(t+{r_2} \xi_2\right)}{\xi_2\left(\frac{1}{\frac{r_2}{r_1}}+{r_1}t\xi_2\right)} \mathrm{d} \xi_2+\frac{\frac{r_2}{r_1}}{2  \pi\mathrm{i} \frac{r_2}{r_1}^2} \oint_{\left|\xi_2\right|=1} \frac{\log \left(1+ \xi_2^{-1}/({r_2}t)\right)}{\xi_2\left(\frac{1}{\frac{r_2}{r_1}}+{r_1}t\xi_2\right)} \mathrm{d} \xi_2 \right]  \\
							&=2 \left( \log t- \log (t-t^{-1}) \right). 
						\end{aligned}
						$$
Similarly, we have that 						
						\begin{equation*}
							\begin{aligned}
								\mathsf{V}_2(f_3)=&-\frac{\kappa_4}{4\pi^2}\oint_{|\xi_1|=1}f(\phi^{1/2}+\phi^{-1/2}+{r_1}\xi_1+\frac{1}{{r_1}}\xi_1^{-1})\frac{r_1^{-1}}{\xi_1^2}\mathrm{d}\xi_1\oint_{|\xi_2|=1}f(\phi^{1/2}+\phi^{-1/2}+{r_2}\xi_2+\frac{1}{{r_2}}\xi_2^{-1})\frac{r_2^{-1}}{\xi_2^2}\mathrm{d}\xi_1\\ 
								=&\kappa_4/t^2.
							\end{aligned}
						\end{equation*}
This completes the proof. 

\subsection{Proof of Corollary \ref{cor_localexample}}

As in Section \ref{sec_goodandbadregion}, we focus our discussion on a single function. The multiple functions setting can be handled in the same ways as in the end of Section \ref{sec_goodandbadregion}. \vspace*{0.1in}

\noindent{\bf Simplification of variance.} Similar to (\ref{eq_reducedreducedreducedform}), we rewrite 
\begin{equation*}
\operatorname{Var}(\mathcal{Z}_{\mathrm{l}})=(\mathcal{K}_2)_{++}+(\mathcal{K}_2)_{--}-(\mathcal{K}_2)_{+-}-(\mathcal{K}_2)_{-+}.
\end{equation*}
Here we use $\mathcal{Z}_{\mathrm{l}} \equiv \mathcal{Z}_{\eta_0, E}$ when $\eta_0=\mathrm{o}(1).$ In view of (\ref{eq_mainalphabeta}) and (\ref{eq_controlcontrolsmall}), we can write 
  \begin{align}\label{eq)fafafafa}
  	(\mathcal{K}_2)_{++}	  &= -\frac{\eta_0^2}{4 \pi^2 } \int\int  {g}\left(\tilde{x}_1\right)  {g}\left(\tilde{x}_2\right)\widehat{\beta}\left(E+\tilde{x}_1 {\eta_0}+\mathrm{i}\underline{\eta}_1, E+\tilde{x}_2 {\eta_0}+\mathrm{i}\underline{\eta}_2\right) \mathrm{d} \tilde{x}_1 \mathrm{d} \tilde{x}_2  \nonumber \\ 
  &	=	-\frac{{\eta_0}^2}{2 \pi^2} \iint_{1 \le \tilde{y}_{1,2} \le 2} g\left(\tilde{x}_1\right) g\left(\tilde{x}_2\right) \left[ \frac{m^{\prime}\left(\left(E+\tilde{x}_1 {\eta_0}\right)+\mathrm{i}  \underline{\eta}_1\right)m^{\prime}\left(\left(E+\tilde{x}_2 {\eta_0}\right)+\mathrm{i}  \underline{\eta}_2\right)}{\left(m\left(\left(E+\tilde{x}_1 {\eta_0}\right)+\mathrm{i}  \underline{\eta}_1\right)-m\left(\left(E+\tilde{x}_2 {\eta_0}\right)+\mathrm{i}  \underline{\eta}_2\right)\right)^2}\right. \nonumber \\
  			& \left.  -\frac{1}{\left(\left(\tilde{x}_1-\tilde{x}_2\right){\eta_0}+\mathrm{i}\frac{1}{2}{\eta_0} n^{-\varepsilon_1}\right)^2} \right] \mathrm{d} \tilde{x}_1  \mathrm{d} \tilde{x}_2 \nonumber \\
  			&:= \left(\mathcal{K}_{2,1}\right)_{++}+\left(\mathcal{K}_{2,2}\right)_{++},
  \end{align}
  where we used the definition of $\widehat{\beta}$ as in (\ref{eq_moreconvention}). 
  
 We first reformulate the case in the bulk $E \in\left(\gamma_{-}+\tau^{\prime}, \gamma_{+}-\tau^{\prime}\right)$ for some fixed small constant $\tau^{\prime}>0$. Note that $m$ is holomorphic in $\mathbb{C}\backslash [\gamma_-,\gamma_+]$. Since we are dealing with $z_1,z_2$ in the same upper half plane, we can use the taylor expansion at $z_1$ that
 \begin{equation}\label{eq_kerneltaylor}
  	\begin{aligned}
  		\frac{m'(z_1)m'(z_2)}{(m(z_1)-m(z_2))^2}-\frac{1}{(z_1-z_2)^2}=&\frac{m'(z_1)(m'(z_1)+(z_2-z_1)m''(z_1)+\mathrm{O}(|z_1-z_2|^2))}{((z_2-z_1)m'(z_1)+\mathrm{O}(|z_1-z_2|^2))^2}-\frac{1}{(z_1-z_2)^2}\\
  		=&\frac{m''(z_1)}{(z_2-z_1)m'(z_1)+\mathrm{O}(|z_1-z_2|^2)}+\mathrm{O}(1).
  	\end{aligned}
  \end{equation}
  Similarly we have
  $$\begin{aligned}
  	\frac{m'(z_1)m'(z_2)}{(m(z_1)-m(z_2))^2}-\frac{1}{(z_1-z_2)^2}
  	=&\frac{m'(z_2)(m'(z_2)+(z_1-z_2)m''(z_2)+\mathrm{O}(|z_1-z_2|^2))}{((z_1-z_2)m'(z_2)+\mathrm{O}(|z_1-z_2|^2))^2}-\frac{1}{(z_1-z_2)^2}\\
  	=&\frac{m''(z_2)}{(z_1-z_2)m'(z_2)+\mathrm{O}(|z_1-z_2|^2)}+\mathrm{O}(1).
  \end{aligned}$$
 This implies that 
 \begin{equation}\label{eq_samehalfplanelocalcontrol}
  	\widehat{\beta}(z_1,z_2)\sim \frac{1}{z_2-z_1}\left(\frac{m''(z_1)}{m'(z_1)}-\frac{m''(z_2)}{m'(z_2)}\right).
  \end{equation}
Using the above control,  we can conclude that $${\eta_0}^2\left[ \frac{m^{\prime}\left(\left(E+\tilde{x}_1 {\eta_0}\right)+\mathrm{i}  \underline{\eta}_1\right)m^{\prime}\left(\left(E+\tilde{x}_2 {\eta_0}\right)+\mathrm{i}  \underline{\eta}_2\right)}{\left(m\left(\left(E+\tilde{x}_1 {\eta_0}\right)+\mathrm{i}  \underline{\eta}_1\right)-m\left(\left(E+\tilde{x}_2 {\eta_0}\right)+\mathrm{i}  \underline{\eta}_2\right)\right)^2} -\frac{1}{\left(\left(\tilde{x}_1-\tilde{x}_2\right){\eta_0}+\mathrm{i}\frac{1}{2}{\eta_0} n^{-\varepsilon_1}\right)^2} \right]=\mathrm{O}({\eta_0}^2).$$  
This shows that $(\mathcal{K}_2)_{++}$ is negligible and similar discussions and results apply to $(\mathcal{K}_2)_{--}.$ 

When $z_1$ and $z_2$ are in different half planes, we have that $$\left|m^{\prime}\left(\left(E+\tilde{x}_1 {\eta_0}\right)+\mathrm{i}  \underline{\eta}_1\right)\right|, \left|m^{\prime}\left(\left(E+\tilde{x}_2 {\eta_0}\right)+\mathrm{i}  \underline{\eta}_2\right) \right|\lesssim \frac{1}{\sqrt{\tau^{\prime}}},$$ and $$\left| \left(m\left(\left(E+\tilde{x}_1 {\eta_0}\right)+\mathrm{i}  \underline{\eta}_1\right)-m\left(\left(E+\tilde{x}_2 {\eta_0}\right)+\mathrm{i}  \underline{\eta}_2\right)\right) \right|\sim \rho(E+\tilde{x}_1\eta_0)+\rho(E+\tilde{x}_2\eta_0)\sim 1.$$ 
We can therefore conclude that $(\mathcal{K}_{2,1})_{+-}=\mathrm{O}(\eta_0^2/\tau^{\prime})$ and $(\mathcal{K}_{2,1})_{-+}=\mathrm{O}(\eta_0^2/\tau^{\prime}).$   For the remaining term $(\mathcal{K}_{2,2})_{+-}+(\mathcal{K}_{2,2})_{-+}$, we have  $$-\frac{1}{\pi^2}\iint g\left(\tilde{x}_1\right) g\left(\tilde{x}_2\right)  \frac{1}{\left(\tilde{x}_1-\tilde{x}_2\pm \mathrm{i}\frac{3}{2}\eta_0n^{-\varepsilon_1}\right)^2} \mathrm{d} \tilde{x}_1 \mathrm{d} \tilde{x}_2.$$
Note that we can add additional $\frac{1}{2\pi^2}\iint   \frac{g\left(\tilde{x}_1\right)^2}{\left(\tilde{x}_1-\tilde{x}_2\pm \mathrm{i}\frac{3}{2}n^{-\varepsilon_1}\right)^2} \mathrm{d} \tilde{x}_1 \mathrm{d} \tilde{x}_2$ and $\frac{1}{2\pi^2}\iint   \frac{g\left(\tilde{x}_2\right)^2}{\left(\tilde{x}_1-\tilde{x}_2\pm \mathrm{i}\frac{3}{2}n^{-\varepsilon_1}\right)^2} \mathrm{d} \tilde{x}_1 \mathrm{d} \tilde{x}_2$ to the above terms since they are negligible. Then we can conclude the proof using dominated convergence theorem. 

Then we reformulate the case at the edge. Due to similarity, we focus on the right edge $\gamma_+.$  Note that near the edge, for some constants $c_1, c_2$ and $c_3=c_1/2,$ we have the following approximation
\begin{equation}\label{eq_m(z)elementarybound}
  	m(z)=m(\gamma_+)+c_1\sqrt{z-\gamma_+}+\mathrm{O} \left( |z-\gamma_+|\right), \  m'(z)=c_2+\frac{c_3}{\sqrt{z-\gamma_+}}+\mathrm{O}(\sqrt{|z-\gamma_+|}). 
\end{equation}
For the terms in (\ref{eq)fafafafa}) and counterparts in $(\mathcal{K}_2)_{--}, (\mathcal{K}_2)_{-+}$ and $(\mathcal{K}_2)_{+-}$, it is not hard to see that 
\begin{equation*}
(\mathcal{K}_{2,2})_{++}+(\mathcal{K}_{2,2})_{--}+(\mathcal{K}_{2,2})_{+-}+(\mathcal{K}_{2,2})_{-+}=0.
\end{equation*}
Then it suffices to control $(\mathcal{K}_{2,1})_{++}, (\mathcal{K}_{2,1})_{--}, (\mathcal{K}_{2,1})_{-+}$ and $(\mathcal{K}_{2,1})_{+-}.$ Note that
  	\begin{equation*}
  		\begin{aligned}
  			&(\mathcal{K}_{2,1})_{++}=-\frac{1}{8\pi^2}\iint\frac{g(\tilde{x_1})g(\tilde{x}_2)\left(\frac{1}{\sqrt{\tilde{x}_1+\mathrm{i}n^{-\varepsilon_1}}}+\mathrm{O}(\sqrt{{\eta_0}})\right) \left(\frac{1}{\sqrt{\tilde{x}_2+\mathrm{i}n^{-\varepsilon_1}/2}}+\mathrm{O}(\sqrt{{\eta_0}})\right)}{\left(\sqrt{\tilde{x}_1+\mathrm{i}n^{-\varepsilon_1}}-\sqrt{\tilde{x}_2+\mathrm{i}n^{-\varepsilon_1}/2}+\mathrm{O}(\sqrt{{\eta_0}})\right)^2}\mathrm{d}\tilde{x}_1\mathrm{d}\tilde{x}_2\\ 
  			&=-\frac{1}{8\pi^2}\iint\frac{g(\tilde{x_1})g(\tilde{x}_2)}{\sqrt{\tilde{x}_1+\mathrm{i}n^{-\varepsilon_1}}\sqrt{\tilde{x}_2+\mathrm{i}n^{-\varepsilon_1}/2}\left(\sqrt{\tilde{x}_1+\mathrm{i}n^{-\varepsilon_1}}-\sqrt{\tilde{x}_2+\mathrm{i}n^{-\varepsilon_1}}\right)^2}\mathrm{d}\tilde{x}_1\mathrm{d}\tilde{x}_2+\mathrm{O}(\sqrt{{\eta_0}}n^{3\varepsilon_1}).
  		\end{aligned}
  	\end{equation*}
  	Similar to the discussion in the bulk, we could add vanishing $g(\tilde{x}_1)^2$ and $g(\tilde{x}_1)^2$ terms by consider the corresponding complex integral and use dominated convergence theorem to show that 
  	\begin{equation}
  		\begin{aligned}
  		(\mathcal{K}_{2,1})_{++}=&\frac{1}{16\pi^2}\iint\frac{(g(\tilde{x}_1)-g(\tilde{x}_2))^2}{\sqrt{\tilde{x_1}+\mathrm{i}0}\sqrt{\tilde{x}_2+\mathrm{i}0}(\sqrt{\tilde{x_1}+\mathrm{i}0}-\sqrt{\tilde{x_2}+\mathrm{i}0})^2}\mathrm{d}\tilde{x}_1\mathrm{d}\tilde{x}_2+\mathrm{o}(1)\\ 
  			=&\frac{1}{4 \pi^2} \int_{\psi(\mathbb{R}+\mathrm{i} 0)} \int_{\psi(\mathbb{R}+\mathrm{i} 0)} \frac{\left(g\left(w_1^2\right)-g\left(w_2^2\right)\right)^2}{\left(w_1-w_2\right)^2} \mathrm{~d} w_1 \mathrm{~d} w_2+\mathrm{o}(1),
  		\end{aligned}
  	\end{equation}
  	where we used the change the variable $\psi(z):=\sqrt{z} $ with branch cut such that $\psi: \mathbb{C}^{+} \rightarrow \mathbb{C}^{+}$. Similarly, we also have
  	$$
  	\begin{aligned}
  		(\mathcal{K}_{2,1})_{--} & =\frac{1}{4 \pi^2} \int_{\psi(\mathbb{R}-\mathrm{i} 0)} \int_{\psi(\mathbb{R}-\mathrm{i} 0)} \frac{\left(g\left(w_1^2\right)-g\left(w_2^2\right)\right)^2}{\left(w_1-w_2\right)^2} \mathrm{d} w_1 \mathrm{d} w_2+\mathrm{o}(1), \\
  		(\mathcal{K}_{2,1})_{+-} & =\frac{1}{4  \pi^2} \int_{\psi(\mathbb{R}+\mathrm{i} 0)} \int_{\psi(\mathbb{R}-\mathrm{i} 0)} \frac{\left(g\left(w_1^2\right)-g\left(w_2^2\right)\right)^2}{\left(w_1-w_2\right)^2} \mathrm{d} w_1 \mathrm{d} w_2+\mathrm{o}(1), \\
  		(\mathcal{K}_{2,1})_{-+} &=\frac{1}{4 \pi^2} \int_{\psi(\mathbb{R}-\mathrm{i} 0)} \int_{\psi(\mathbb{R}+\mathrm{i} 0)} \frac{\left(g\left(w_1^2\right)-g\left(w_2^2\right)\right)^2}{\left(w_1-w_2\right)^2} \mathrm{d} w_1 \mathrm{d} w_2 +\mathrm{o}(1).
  	\end{aligned}
  	$$
This yields that $$ \mathcal{K}_{2,1}=\frac{1}{4 \pi^2} \int_{-\mathrm{i}\infty}^{\mathrm{i}\infty} \int_{-\mathrm{i}\infty}^{\mathrm{i}\infty} \frac{\left(g\left(w_1^2\right)-g\left(w_2^2\right)\right)^2}{\left(w_1-w_2\right)^2} \mathrm{d} w_1 \mathrm{d} w_2 +\mathrm{o}(1)=\frac{1}{4 \pi^2} \int_{\mathbb{R}}\int_{\mathbb{R}}  \frac{\left(g\left(-w_1^2\right)-g\left(-w_2^2\right)\right)^2}{\left(w_1-w_2\right)^2} \mathrm{d} w_1 \mathrm{d} w_2+\mathrm{o}(1). $$

\noindent {\bf Simplification of mean.}  When $\operatorname{Re}z$ is in the bulk, we have that $|m(z)|\sim 1,\ {m}^{\prime}(z) \sim (\kappa+{\eta})^{-1/2}.$  Using the change of variable as in (\ref{eq_changeofvariable}) and a discussion similar to (\ref{eq_controlcontrolsmall}), we have that   
	$$\frac{1}{2\pi\mathrm{i}}\left(\int_{\mathbb{R}} f(x)b_1^{\mathfrak{+}}(x) \mathrm{d} x-\int_{\mathbb{R}} f(x)b_1^{\mathfrak{-}}(x) \mathrm{d} x\right)=\frac{\eta_0}{2\pi\mathrm{i}}\left(\int_{\mathbb{R}} g(\tilde{x})b_1^{\mathfrak{+}}(E+\tilde{x}\eta_0) \mathrm{d} \tilde{x}-\int_{\mathbb{R}} g(\tilde{x})b_1^{\mathfrak{-}}(E+\tilde{x}\eta_0) \mathrm{d} \tilde{x}\right).$$
Furthermore, by definition and Lemma \ref{lem:mphizabs}, we have that $b_1(x)=\mathrm{O}\left(1+n^{\varepsilon_1/2}\eta_0^{-1/2}\right).$ This shows that the mean is asymptotically $0$ in the bulk. 

Next, at the  edge, using (\ref{eq_m(z)elementarybound}) and  
$$m^{\prime \prime}(z)=-\frac{c_{3}}{2\left(\sqrt{z-\gamma_+}\right)^3}+\mathrm{O}\left(\left|z-\gamma_+\right|^{-1/2}\right).$$
Consequently, we have that 
			$$\left(\frac{m^{\prime\prime}(z)}{2m^{\prime}(z)}-\frac{m^{\prime}(z)}{m(z)}\right)+\kappa_4 \left(\frac{m^2(z)m''(z)}{2({m'}(z))^2}-m(z)\right)=\frac{1}{4(z-\gamma_+)}+\mathrm{O}\left(\left|z-\gamma_+\right|^{-1/2}\right).$$
Inserting the above estimate into (\ref{eq_meanlocal}), by Lemma \ref{lem_SPformula}, we have that the mean is asymptotically $g(0)/4.$

%\end{proof}

\subsection{Proof of Corollary \ref{coro_statisticalapplications}}
%\stackrel{d}{\rightarrow} 
%\begin{proof}[\bf Proof]

 For the global statistics, (\ref{eq_globalnullone}) and (\ref{eq_globalnulltwo}) follow directly from (2) of Corollary \ref{cor_globalexamples} and (\ref{eq_generalmeanform}). For (\ref{eq_globalnullthree}), we can prove it using Delta method, i.e., if $\mathfrak{g}^{\prime}(\mathbf{a})$ exists and $n^b\left(\bm{x}_n-\mathbf{a}\right) \Rightarrow \bm{x}$ for some fixed constant $b>0$, then $n^b\left( \mathfrak{g} \left(\bm{x}_n\right)-\mathfrak{g}(\mathbf{a})\right) \Rightarrow [\mathfrak{g}^{\prime}(\mathbf{a})]^* \bm{x}.$  In light of (\ref{eq_globalnullone}) and the definition of $\mathcal{T}_4^{\mathrm{g}},$ we set
$$\mathfrak{g}(y_1,y_2)=\frac{y_2}{y_1^2}, \ \bm{x}_n=(\mathcal{T}_1^{\mathrm{g}}, \mathcal{T}_2^{\mathrm{g}})'.$$ 
Therefore, in order to apply Delta method, it suffices to find the distribution of $\bm{x}_n$ using Theorem \ref{thm_mainone} under $\Sigma=I.$ Recall (\ref{eq_vdecomposition}). Following lines of the arguments in Section \ref{proofofcorollary34}, we have that 
					$$\begin{aligned}
						\mathsf{V}_1(f_1,f_2)=&4c,\quad
						\mathsf{V}_2(f_1,f_2)=2c\kappa_4,
					\end{aligned}$$ 
so that $\mathsf{V}(f_1,f_2)=4c+2c \kappa_4.$ Then together with Theorem \ref{thm_mainone} and (2) of Corollary \ref{cor_globalexamples}, we see from (\ref{eq_globalnullone}) that the distribution of $\bm{x}_n$ follows
\begin{equation*}
\begin{pmatrix}
\mathcal{T}_1^{\mathrm{g}}-n(c-\phi^{-1/2}) \\
\mathcal{T}_2^{\mathrm{g}}-n (1+c^2-2c \phi^{-1/2}+\phi^{-1})
\end{pmatrix}
\Rightarrow 
\mathcal{N}_2 \left(
\begin{pmatrix}
0 \\
1+\kappa_4
\end{pmatrix},  
\begin{pmatrix}
2+\kappa_4 & 4c+2c \kappa_4 \\
4c+2c \kappa_4 &4+4c^2(\kappa_4+2)
\end{pmatrix}
\right). 
\end{equation*}						
Then the proof follows from Delta method with straightforward calculations.

For the local statistics, (\ref{eq_locallnullone}) follows directly from (2) of Corollary \ref{cor_localexample}. The proof of (\ref{eq_locallnulltwo}) follows from the Delta method as for (\ref{eq_globalnullthree}). This completes the proof.   					
%\end{proof}

\subsection{Proof of Corollary  \ref{cor_poweranalysis}}

Note that (\ref{eq_realevalue}), the centering depends on $\Sigma$ under $\mathbf{H}_a$ in (\ref{alternative}) so that the definitions in (\ref{eq_gxdefinition}) need to be modified accordingly. In the proof, we focus on a single test function and let $f_0$ be defined as in (\ref{eq_gxdefinition}) under $\mathbf{H}_0$ and $f_a$ be that under $\mathbf{H}_a.$ Moreover, under $\mathbf{H}_a,$ we denote 
\begin{equation*}
\mathrm{m}_{\mathfrak{d}}:=\mathbb{E}\left( Z_{\eta_0,E}(f_\mathfrak{d})  \right), \ \mathrm{v}_\mathfrak{d}=\operatorname{Var} \left( Z_{\eta_0,E}(f_\mathfrak{d}) \right),  \ \mathfrak{d}=0, \  a.  
\end{equation*}
We also denote $\mathrm{m}':=\mathbb{E}\left( Z_{\eta_0,E}(f_0)  \right)$ under $\mathbf{H}_0.$
\begin{proof}[\bf Proof]
Note that under $\mathbf{H}_a,$ we have that 
\begin{equation*}
\frac{Z_{\eta_0,E}(f_a)-\mathrm{m}_a}{\sqrt{\mathrm{v}_a}} \Rightarrow \mathcal{N}(0,1). 
\end{equation*}
Moreover, under the nominal level $\alpha,$ we shall reject $\mathbf{H}_a$ if 
\begin{equation*}
\frac{Z_{\eta_0,E}(f_0)-\mathrm{m}_0}{\sqrt{\mathrm{v}_0}}>z_{1-\alpha/2} \ \text{or} \ \frac{Z_{\eta_0,E}(f_0)-\mathrm{m}_0}{\sqrt{\mathrm{v}_0}}<z_{\alpha/2}.
\end{equation*}
Due to similarity, we focus on the left tail and the power can be written as 
\begin{equation*}
\mathsf{Po}:=\mathbb{P} \left(  \frac{Z_{\eta_0,E}(f_0)-\mathrm{m}_0}{\sqrt{\mathrm{v}_0}}<z_{\alpha/2}| \mathbf{H}_a \right).
\end{equation*}
After a straightforward calculation, we can rewrite 
\begin{equation*}
\mathsf{Po}=\mathbb{P}\left( \frac{Z_{\eta_0,E}(f_a)-\mathrm{m}_a}{\sqrt{\mathrm{v}_a}}  < T_1+T_2+T_3+T_4 | \mathbf{H}_a \right),
\end{equation*}
where $T_i, 1 \leq i \leq 4$ are defined as 
\begin{equation*}
T_1:=z_{\alpha/2}\frac{\sqrt{\mathrm{v}_0}}{\sqrt{\mathrm{v}_a}}, \ T_2:=\frac{\mathrm{m}'-\mathrm{m}_0}{\sqrt{\mathrm{v}_a}}, \ T_3:=-\frac{ \sum_{i=1}^n(f_{0}(\lambda_i)-f_{a}(\lambda_i)) +     n(\int f_{a}\mathrm{d}\varrho_{a}-\int f_{0}\mathrm{d}\varrho_{a})+\mathrm{m}_a-\mathrm{m}_0}{\sqrt{\mathrm{v}_a}},
\end{equation*}
\begin{equation*}
T_4:= -\frac{n(\int f_{0}\mathrm{d}\varrho_{a}-\int f_{0}\mathrm{d}\varrho_{0})}{\sqrt{\mathrm{v}_a}}.
\end{equation*}
It is not hard to see that $T_i=\mathrm{O}(1), 1 \leq i \leq 3.$ Consequently, to have $\mathsf{Po} \rightarrow 1,$ we require $T_4 \rightarrow \infty$ for various test functions. Due to similarity, we only focus on the setting when $\mathsf{sy}=\mathrm{g}$ and $\ell=1,2.$ 

When $\ell=1,$ if $(\mathfrak{m}_1(\pi_{a})-1)\sqrt{\phi}\gg1,$ we a=have that $\int f_{0}\mathrm{d}\varrho_{a}-\int f_{0}\mathrm{d}\varrho_{0}=\int (f_{0}-f_{a})\mathrm{d}\varrho_{a}+(\int f_{a}\mathrm{d}\varrho_{a}-\int f_{0}\mathrm{d}\varrho_{0})=(\mathfrak{m}_1(\pi_{a})-1)\sqrt{\phi}+\mathrm{O}(1).$ This yields that 
\begin{equation*}
T_4 \asymp n \phi^{1/2} (\mathfrak{m}_1(\pi_{a})-1). 
\end{equation*}
Furthermore, $(\mathfrak{m}_1(\pi_{a})-1)\sqrt{\phi}=\mathrm{O}(1),$ we still have $\int f_{0}\mathrm{d}\varrho_{a}-\int f_{0}\mathrm{d}\varrho_{0}=\int (f_{0}-f_{a})\mathrm{d}\varrho_{a}+(\int f_{a}\mathrm{d}\varrho_{a}-\int f_{0}\mathrm{d}\varrho_{0})=(\mathfrak{m}_1(\pi_{a})-1)\sqrt{\phi}+\mathrm{O}((\mathfrak{m}_1(\pi_{a})-1)).$ This verifies (\ref{eq_conditiononeoeoneonenenen}) for $\ell=1$. 
		
When $\ell=2,$ we decompose that  $\int f_{0}\mathrm{d}\varrho_{a}-\int f_{0}\mathrm{d}\varrho_{0}=\int (f_{0}-f_{a})\mathrm{d}\varrho_{a}+(\int f_{a}\mathrm{d}\varrho_{a}-\int f_{0}\mathrm{d}\varrho_{0}).$ Using 
		$$ \mathrm{d}\varrho_{a}(x)= \left( \frac{\sqrt{4\mathfrak{m}_2(\pi_{a})^2-(x-\mathfrak{m}_1(\pi_{a})\sqrt{\phi})^2}}{2\pi\mathfrak{m}_2(\pi_{a})}+\mathrm{O}(1/\sqrt{\phi})\right) \mathrm{d}x,$$
		we have that 
		$$\int x\mathrm{d}\varrho_{a}=\mathfrak{m}_1(\pi_{a})\sqrt{\phi}+\mathrm{O}(\mathfrak{m}_1(\pi_{a})),$$
		$$\int x^2\mathrm{d}\varrho_{a}=\mathfrak{m}_2(\pi_{a})^2+2(\mathfrak{m}_1(\pi_{a})\sqrt{\phi})^2-(\mathfrak{m}_1(\pi_{a})\sqrt{\phi})^2+\mathrm{O}(\mathfrak{m}_1(\pi_{a})^2\sqrt{\phi})=\mathfrak{m}_2(\pi_{a})^2+\mathfrak{m}_1(\pi_{a})^2\phi+\mathrm{O}(\mathfrak{m}_1(\pi_{a})^2\sqrt{\phi}),$$
		$$\begin{aligned}
			&\int(x-\sqrt{\phi}-1/\sqrt{\phi}+c)^2\mathrm{d}\varrho_{a}\\ =&\mathfrak{m}_1(\pi_{a})^2\phi-2\sqrt{\phi}\mathfrak{m}_1(\pi_{a})\sqrt{\phi}+\phi+(c-1/\sqrt{\phi})^2+{ \mathfrak{m}_2(\pi_{a})^2}+\mathrm{O}((\mathfrak{m}_1(\pi_{a})-1)^2\sqrt{\phi})\\ 
			=&(\mathfrak{m}_1(\pi_{a})-1)^2\phi+(c-1/\sqrt{\phi})^2+{ \mathfrak{m}_2(\pi_{a})^2}+2c(\mathfrak{m}_1(\pi_{a})-1)\sqrt{\phi}+\mathrm{O}((\mathfrak{m}_1(\pi_{a})-1)^2\sqrt{\phi}).
		\end{aligned} $$  
		Meanwhile, we have for $\ell=2,$ 
		$$\int f_{0}\mathrm{d}\varrho_{0}=1+c^2-2c\phi^{-1/2}+\phi^{-1}.$$
Combining all the above controls, we have that 
		$$\begin{aligned}
			T_4 \asymp n \left((\mathfrak{m}_1(\pi_{a})-1)^2{\phi}+2c(\mathfrak{m}_1(\pi_{a})-1)\sqrt{\phi}+(\mathfrak{m}_2(\pi_{a})^2-1)+\mathrm{O}((\mathfrak{m}_1(\pi_{a})-1)^2\sqrt{\phi})\right).
		\end{aligned} $$
This verifies (\ref{eq_conditiontwo}) for $\ell=2.$ The other cases can be handled similarly. This completes our proof.

\end{proof}

\appendix

\section{Proof of Theorem \ref{thm_locallaw}}\label{appendix_proof54}
In this section, we prove the main ingredient, Theorem \ref{thm_locallaw} following \cite{LL, AILL} with necessary modifications. In Section \ref{app_sec_tool}, we summarize the basic tools. In Section \ref{app_sec_entrywiselocalaw}, we prove the entrywise local law when $\mathbf{u}_k, \mathbf{v}_k$ in (\ref{eq_locallawequationone}) and (\ref{eq_locallawequationtwo}) are standard basis. We also prove (\ref{eq_averagelocallawone}), (\ref{eq_averagelocallawtwo}) and (\ref{eq:entrywiselaw}).  In Section \ref{app_sec_prooflocallaw}, armed with the entrywise local law, we prove (\ref{eq_locallawequationone}) and (\ref{eq_locallawequationtwo}) under the general setting.

\subsection{Some tools}\label{app_sec_tool}
In this section, we provide some technical preparation. Recall (\ref{eq_twoblockmatrices}). We denote 
\begin{equation}\label{eq_G1G2}
G_1(z):=z \Sigma^{1/2} R_1(z) \Sigma^{1/2}, \ G_2(z):=R_2(z). 
\end{equation}
It is not hard to see that by Schur's complement, 
	\begin{equation}\label{eq:Gblock}
		G=\left(\begin{array}{cc}
			\Sigma X G_2 X^* \Sigma-\Sigma & \Sigma X G_2 \\
			G_2 X^* \Sigma & G_2
		\end{array}\right)=\left(\begin{array}{cc}
			G_1 & z^{-1} G_1 X \\
			z^{-1} X^* G_1 & z^{-2} X^* G_1 X-z^{-1}
		\end{array}\right) .
	\end{equation}
Following Definition 4.1 of \cite{AILL}, for matrices of the form $A=(A_{st}: s \in l(A), t \in r(A)),$ whose entries are indexed by arbitrary finite subsets of $l(A), r(A) \subset \mathbb{N},$ then the matrix multiplication $AB$ is defined for $s \in l(A)$ and $t \in r(B)$ by 
\begin{equation*}
(AB)_{st}:=\sum_{q \in l(A) \cap r(B)} A_{sq} B_{qt}. 
\end{equation*}  
Recall (\ref{eq_BIGG}). Let $H \equiv H(z):=G^{-1}(z).$ For $S \subset \mathcal{I}$ we define the minor $H^{(S)}:=\left(H_{s t}: s, t \in \mathcal{I} \backslash S\right).$ Note that $H^{(S)}$ is also an $(p+n) \times (p+n)$ matrix where the undefined entries are zeros. We also write $G^{(S)}:=\left(H^{(S)}\right)^{-1}.$ The matrices $G_1^{(S)}$ and $G_2^{(S)}$ can be defined similarly.  Throughout the paper, we abbreviate $(\{s\}) \equiv(s)$ and $(\{s, t\}) \equiv(s t)$. The following identities will be frequently used in our calculations. 
{
\begin{lem}\label{lem_schurcomplement} Recall Definition \ref{eq_indexset}, (\ref{eq_BIGG}), (\ref{eq_G1G2}) and the assumption that $\Sigma$ is diagonal.  We have that 
\begin{enumerate}
\item[(1).] For $\mu \in \mathcal{I}_2$ we have
	\begin{equation*}
		\frac{1}{G_{\mu \mu}}=-z-\left(X^* G^{(\mu)} X\right)_{\mu \mu},
	\end{equation*}
	and for $\mu \neq \nu \in \mathcal{I}_2$
	\begin{equation*}
		G_{\mu \nu}=-G_{\mu \mu}\left(X^* G^{(\mu)}\right)_{\mu \nu}=-G_{\nu \nu}\left(G^{(\nu)} X\right)_{\mu \nu}=G_{\mu \mu} G_{\nu \nu}^{(\mu)}\left(X^* G^{(\mu \nu)} X\right)_{\mu \nu} .
	\end{equation*}
\item[(2).] For $i \in \mathcal{I}_1$ we have
	\begin{equation}\label{eq:3.8}
		\frac{1}{G_{i i}}=-\frac{1}{\sigma_i}-\left(X G^{(i)} X^*\right)_{i i},
	\end{equation}
	and for $i \neq j \in \mathcal{I}_1$
	\begin{equation}\label{eq:3.9}
		G_{i j}=-G_{i i}\left(X G^{(i)}\right)_{i j}=-G_{j j}\left(G^{(j)} X^*\right)_{i j}=G_{i i} G_{j j}^{(i)}\left(X G^{(i j)} X^*\right)_{i j} .
	\end{equation}
	\item[(3).] For $i \in \mathcal{I}_1$ and $\mu \in \mathcal{I}_2$ we have,
	\begin{equation*}
		G_{i \mu}=-G_{\mu \mu}\left(G^{(\mu)} X\right)_{i \mu}, \quad G_{\mu i}=-G_{\mu \mu}\left(X^* G^{(\mu)}\right)_{\mu i},
	\end{equation*}
and
	\begin{equation*}
		\begin{aligned}
			& G_{i \mu}=-G_{i i}\left(X G^{(i)}\right)_{i \mu}=G_{i i} G_{\mu \mu}^{(i)}\left(-x_{i \mu}+\left(X G^{(i \mu)} X\right)_{i \mu}\right) \\
			& G_{\mu i}=-G_{i i}\left(G^{(i)} X\right)_{\mu i}=G_{\mu \mu} G_{i i}^{(\mu)}\left(-x_{\mu i}+\left(X^* G^{(\mu i)} X^*\right)_{\mu i}\right) .
		\end{aligned}
	\end{equation*}
\item[(4).]	 For $r \in \mathcal{I}$ and $s, t \in \mathcal{I} \backslash\{r\}$ we have
	\begin{equation}		\label{eq:3.7}
		G_{s t}^{(r)}=G_{s t}-\frac{G_{s r} G_{r t}}{G_{r r}},\quad
\frac{1}{G_{ss}}=\frac{1}{G_{ss}^{( r)}}-\frac{G_{sr} G_{rs}}{G_{ss} G_{ss}^{( r)} G_{r r}}.
	\end{equation}
	\item[(5).] All of the identities from (1)-(4) hold for $G^{(S)}$ if $S \subset \mathcal{I}_2$ or $S \subset \mathcal{I}.$
\end{enumerate}
\end{lem}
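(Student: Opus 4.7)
The plan is to derive each identity as a direct application of Schur's complement formula to the block matrix $H = G^{-1}$ in (\ref{eq_BIGG}), making essential use of the fact that both diagonal blocks $-\Sigma^{-1}$ (by the diagonality assumption on $\Sigma$) and $-zI_n$ are themselves diagonal. For any $r \in \mathcal{I}$, the matrix $H^{(r)}$ has the same two‑block structure as $H$ (with one row/column removed from either $\mathcal{I}_1$ or $\mathcal{I}_2$), so the Schur complement at $\{r\}$ yields both a formula for $1/G_{rr}$ and the minor identity $G^{(r)}_{st} = G_{st} - G_{sr}G_{rt}/G_{rr}$ in (\ref{eq:3.7}); the second equation in (\ref{eq:3.7}) is then obtained by algebraic rearrangement of the first in the case $s=t$.

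For part (1), fix $\mu \in \mathcal{I}_2$. Since $H_{\mu\mu} = -z$ and the row of $H$ indexed by $\mu$ has entries $X_{i\mu}$ for $i \in \mathcal{I}_1$ and vanishes on $\mathcal{I}_2 \setminus \{\mu\}$ (the $-zI_n$ block is diagonal), Schur's complement at $\{\mu\}$ collapses to
\[
\frac{1}{G_{\mu\mu}} = -z - \sum_{i,j \in \mathcal{I}_1} X_{i\mu}\, G^{(\mu)}_{ij}\, X_{j\mu} = -z - (X^* G^{(\mu)} X)_{\mu\mu}.
\]
The off‑diagonal formulas for $\mu \neq \nu$ follow either by applying Schur's complement to the $2\times 2$ block at $\{\mu,\nu\}$, or equivalently by combining (\ref{eq:3.7}) with the already proved diagonal identity. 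Part (2) is derived in exactly the same way for $i \in \mathcal{I}_1$: here $H_{ii} = -1/\sigma_i$ and the $i$‑th row of $H$ vanishes on $\mathcal{I}_1 \setminus \{i\}$ precisely because $\Sigma^{-1}$ is diagonal, so the Schur complement reduces to $-1/\sigma_i - (X G^{(i)} X^*)_{ii}$, giving (\ref{eq:3.8}); the off‑diagonal identities (\ref{eq:3.9}) follow from the $2\times 2$ block Schur complement at $\{i,j\}$.

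For part (3), the cross‑block entries $G_{i\mu}$ and $G_{\mu i}$ come out of the $2\times 2$ Schur complement at $\{i,\mu\}$: writing the inverse of the corresponding $2\times 2$ block of $H^{(i\mu)^{-1}}$‑adjusted matrix explicitly produces the two equivalent expressions, the first in terms of $G_{\mu\mu}$ and the second in terms of $G_{ii}$; the additional raw entry $-x_{i\mu}$ appears because the row of $H$ at $i$ has a nonzero entry $X_{i\mu}$ in the $\mu$‑column, which must be pulled out before expanding. Part (4) is the standard minor identity as explained above, and part (5) is immediate from the observation that when $S \subset \mathcal{I}_1$ or $S \subset \mathcal{I}_2$, the matrix $H^{(S)}$ still has the $2\times 2$ block form with diagonal blocks $-\Sigma^{-1}$ restricted to $\mathcal{I}_1 \setminus S$ and $-zI$ restricted to $\mathcal{I}_2 \setminus S$; hence the proofs of (1)--(4) apply verbatim with $G$ replaced by $G^{(S)}$.

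The only technical obstacle is confirming that all of the matrix inverses appearing in these Schur complements are well‑defined almost surely for $z \in \mathbb{C}_+$, so that $G^{(S)}$ makes sense for every admissible $S$; this is standard since $\operatorname{Im} z > 0$ guarantees invertibility of the $-zI$ block and, inductively, of the Schur complements that arise. Beyond that point, the entire lemma is a bookkeeping exercise in block matrix algebra following the template laid out in \cite{AILL, DY, Yang2020}, and no probabilistic input is required.
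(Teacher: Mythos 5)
Your proposal is correct, and it is precisely the Schur‑complement derivation that the paper delegates to Lemma 4.4 of \cite{AILL}; the key structural observation you use (that both diagonal blocks of $H=G^{-1}$ in (\ref{eq_BIGG}) are diagonal, so all Schur complements reduce to the stated quadratic forms in $X$) is exactly what underlies that reference. The only cosmetic remark is that part (5) holds for \emph{any} $S\subset\mathcal{I}$ (not just $S\subset\mathcal I_1$ or $S\subset\mathcal I_2$) precisely because deleting rows and columns of a diagonal block keeps it diagonal, so the two‑block structure is preserved for every admissible minor; this is consistent with, and slightly stronger than, what you wrote.
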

\begin{proof}
See Lemma 4.4 of \cite{AILL}. 
\end{proof}
Next, we introduce the spectral decomposition of $G$. Denote the singular value decomposition of $\Sigma^{1/2}X$ as $
\Sigma^{1 / 2} X=\sum_{k=1}^{n} \sqrt{\lambda_k} \boldsymbol{\xi}_k \boldsymbol{\zeta}_k^*,$ where$
\sqrt{\lambda_1} \geqslant \sqrt{\lambda_2} \geqslant \cdots \geqslant \sqrt{\lambda_{n}}>0$ are the singular values and  $\left\{\boldsymbol{\xi}_k\right\} $ and $\left\{\boldsymbol{\zeta}_k\right\}$ the associated singular vectors. Then according to (\ref{eq:Gblock}), for $\mu, \nu \in \mathcal{I}_2,$ we have
\begin{equation}\label{eq:Gspectraldecom}
	G_{\mu \nu}(z)=\sum_{k=1}^n \frac{\boldsymbol{\zeta}_k(\mu) \overline{\boldsymbol{\zeta}_k(\nu)}}{\lambda_k-z}.
\end{equation}
Similarly, by defining 
$$
\mathbf{u}_k:=\left(\begin{array}{c}
	\mathbf{1}(k \leqslant p) \sqrt{\lambda_k} \boldsymbol{\xi}_k \\
	\mathbf{1}(k \leqslant n) \zeta_k
\end{array}\right) \in \mathbb{R}^{\mathcal{I}}, \ 
\overline{\Sigma}:=\left(\begin{array}{cc}
	\Sigma & 0 \\
	0 & I_n
\end{array}\right) \in \mathbb{R}^{\mathcal{I} \times \mathcal{I}},
$$
we have (see equation (4.15) of \cite{AILL})
\begin{equation}\label{eq:G+Sigma}
	G=-\overline{\Sigma}+\overline{\Sigma}^{1 / 2} \sum_{k=1}^{p} \frac{\mathbf{u}_k \mathbf{u}_k^*}{\lambda_k-z} \overline{\Sigma}^{1 / 2} .
\end{equation}
Next, we provide some preliminary controls for the resolvent $G.$ For the deterministic vectors $\bv \in \mathbb{R}^{\mathcal{I}_1}$ and $\bw \in \mathbb{R}^{\mathcal{I}_2},$ we identity them with their natural embeddings $(\bv \ \ 0)^*$ and $(0 \ \ \bw)^*$ in $\mathbb{R}^{\mathcal{I}}.$ Moreover, for an $\mathcal{I} \times \mathcal{I}$ matrix $A,$ we use the following notations for their generalized entries
	$$
	A_{\mathbf{v w}}:=\bv^*A\bw, \quad A_{\mathbf{v} s}:=\bv^* A \mathbf{e}_s, \quad A_{s \mathbf{v}}:=\mathbf{e}_s^*A \mathbf{v},
	$$
	where $\mathbf{e}_s$ denotes the standard unit vector in the coordinate direction $s$ in $\mathbb{R}^{\mathcal{I}}$. The following deterministic controls will be frequently used in our proofs.  
\begin{lem}\label{lem_someelementarybound}
For $z \in \mathbf{S}$ uniformly in (\ref{eq_spectralparameterset}), we have that for some constant $C>0,$ with high probability 
	\begin{equation}\label{eq:GpartialzG}
		\left\|\overline{\Sigma}^{-1 / 2} G \overline{\Sigma}^{-1 / 2}\right\| \leqslant C{ (\phi^{1/2}+\phi^{-1/2})}  \eta^{-1}, \quad\left\|\overline{\Sigma}^{-1 / 2} \partial_z G \overline{\Sigma}^{-1 / 2}\right\| \leqslant C{ (\phi^{1/2}+\phi^{-1/2})} \eta^{-2} .
	\end{equation}
	Furthermore, for unit vectors $\mathbf{v} \in \mathbb{R}^{\mathcal{I}_1}$ and $\mathbf{w} \in \mathbb{R}^{\mathcal{I}_2}$, then for some constant $C>0$
	\begin{equation}\label{eq:waldid1}
		\sum_{\mu \in \mathcal{I}_2}\left|G_{\mathbf{w} \mu}\right|^2 =\frac{\operatorname{Im} G_{\mathbf{w} \mathbf{w}}}{\eta}, \\
	\end{equation}
	\begin{equation}\label{eq:waldid2}
		\sum_{i \in \mathcal{I}_1}\left|G_{\mathbf{v} i}\right|^2  \leqslant \frac{C(\phi^{1/2}+\phi^{-1/2})}{\eta} \operatorname{Im} G_{\mathbf{v} \mathbf{v}}+2\left(\Sigma^2\right)_{\mathbf{v} \mathbf{v}}, \\
	\end{equation}
	\begin{equation}\label{eq:waldid3}
		\sum_{i \in \mathcal{I}_1}\left|G_{\mathbf{w} i}\right|^2  \leqslant C(\phi^{1/2}+\phi^{-1/2}) \sum_{\mu \in \mathcal{I}_N}\left|G_{\mathbf{w} \mu}\right|^2, \\
	\end{equation}
	\begin{equation}\label{eq:waldid4}
		\sum_{\mu \in \mathcal{I}_2}\left|G_{\mathbf{v} \mu}\right|^2 \leqslant C(\phi^{1/2}+\phi^{-1/2}) \sum_{i \in \mathcal{I}_1}\left|G_{\mathbf{v} i}\right|^2 .
	\end{equation}
	Finally, the estimates  \eqref{eq:waldid1}-\eqref{eq:waldid4} remain true for $G^{(S)}$ if $S \subset \mathcal{I}_2$ or $S \subset \mathcal{I}$.
\end{lem}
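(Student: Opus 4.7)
The plan is to reduce every estimate in the lemma to two ingredients: the spectral representation \eqref{eq:G+Sigma}, together with the orthonormality of $\{\xi_k\}\subset\mathbb{R}^{p}$ and $\{\zeta_k\}\subset\mathbb{R}^{n}$, and one probabilistic input --- the a~priori norm bound $\lambda_1\leq C(\phi^{1/2}+\phi^{-1/2})$, which holds with high probability. The latter is a direct consequence of the classical Bai--Yin type operator norm estimate for $X$ in the scaling $\mathbb{E} x_{ij}^{2}=(np)^{-1/2}$, combined with $\|\Sigma\|\leq\tau^{-1}$; every "with high probability" modifier in the lemma derives from this single input.

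For the operator-norm estimates \eqref{eq:GpartialzG}, I would rewrite \eqref{eq:G+Sigma} in the form
\[
\overline{\Sigma}^{-1/2}G(z)\overline{\Sigma}^{-1/2}=-I+\sum_{k=1}^{p}\frac{u_k u_k^{*}}{\lambda_k-z}
\]
and note that $u_k^{*}u_j=(\lambda_k+1)\delta_{kj}$, a direct consequence of $\langle\xi_k,\xi_j\rangle=\langle\zeta_k,\zeta_j\rangle=\delta_{kj}$. Thus $\{u_k\}$ is an orthogonal family, and the operator norm of the sum equals $\max_k(\lambda_k+1)/|\lambda_k-z|\leq(\lambda_1+1)/\eta$, which the a~priori bound converts to $C(\phi^{1/2}+\phi^{-1/2})\eta^{-1}$. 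Differentiating in $z$ replaces $(\lambda_k-z)^{-1}$ by $(\lambda_k-z)^{-2}$ and hence produces the extra $\eta^{-1}$ claimed in the derivative estimate.

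The Ward-type identities \eqref{eq:waldid1}--\eqref{eq:waldid4} are obtained by expanding each generalized entry via \eqref{eq:G+Sigma}. The identity \eqref{eq:waldid1} is the classical Ward identity $G_2G_2^{*}=\operatorname{Im}G_2/\eta$ read off the $(\mathbf{w},\mathbf{w})$ entry after \eqref{eq:Gblock} identifies $G_{\mathbf{w}\mu}=(G_2)_{\mathbf{w}\mu}$. Introducing $\alpha_k:=\langle\Sigma^{1/2}\mathbf{v},\xi_k\rangle$ for $\mathbf{v}\in\mathbb{R}^{\mathcal{I}_1}$, orthonormality collapses the remaining quadratic sums to
\[
\sum_{\mu\in\mathcal{I}_2}|G_{\mathbf{v}\mu}|^{2}=\sum_k\frac{\lambda_k|\alpha_k|^{2}}{|\lambda_k-z|^{2}}=\frac{\operatorname{Im}G_{\mathbf{v}\mathbf{v}}}{\eta},\qquad
\sum_{i\in\mathcal{I}_1}|G_{\mathbf{v}i}+\Sigma_{\mathbf{v}i}|^{2}\asymp\sum_k\frac{\lambda_k^{2}|\alpha_k|^{2}}{|\lambda_k-z|^{2}},
\]
together with the analogous identity $\sum_i|G_{\mathbf{w}i}|^{2}\leq\tau^{-1}\sum_k|\langle\mathbf{w},\zeta_k\rangle|^{2}\lambda_k/|\lambda_k-z|^{2}$ for $\mathbf{w}\in\mathbb{R}^{\mathcal{I}_2}$, coming from $G_{\mathbf{w}i}=(G_2X^{*}\Sigma)_{\mathbf{w}i}$. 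Bounds \eqref{eq:waldid2} and \eqref{eq:waldid3} then follow by absorbing one factor $\lambda_k\leq\lambda_1\lesssim\phi^{1/2}+\phi^{-1/2}$ inside the sum and using $|G_{\mathbf{v}i}|^{2}\leq 2|G_{\mathbf{v}i}+\Sigma_{\mathbf{v}i}|^{2}+2|\Sigma_{\mathbf{v}i}|^{2}$; estimate \eqref{eq:waldid4} requires the reverse comparison $\sum_i|G_{\mathbf{v}i}|^{2}\gtrsim\tau|z|^{2}\sum_k|\alpha_k|^{2}/|\lambda_k-z|^{2}$ (a consequence of $\Sigma\succeq\tau I$) combined with the lower bound $|z|\geq\tau$ on $\mathbf{S}$. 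Extension to the minor resolvents $G^{(S)}$ is immediate, since removing rows/columns preserves the block and spectral structure.

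The main obstacle is the $\phi$-bookkeeping: to pin the uniform factor $C(\phi^{1/2}+\phi^{-1/2})$ across the regimes $\phi\geq 1$ and $\phi<1$, both the upper bound $\lambda_1\lesssim\phi^{1/2}+\phi^{-1/2}$ (for \eqref{eq:waldid2}, \eqref{eq:waldid3}) and the lower bound $|z|\geq\tau$ on $\mathbf{S}$ (for \eqref{eq:waldid4}) are needed simultaneously. Once these two inputs are in place, everything else is purely algebraic.
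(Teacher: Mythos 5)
Your proposal is correct, and it establishes the lemma by a route that is recognizably the same in spirit as the paper's but different in presentation. The paper proves each of \eqref{eq:waldid2}--\eqref{eq:waldid4} by a short chain of operator inequalities on the blocks of $G$ in \eqref{eq:Gblock}: for instance, $\sum_i|G_{\mathbf{v}i}|^2 = \sum_i|(\Sigma X G_2 X^*\Sigma)_{\mathbf{v}i}-\Sigma_{\mathbf{v}i}|^2$ is bounded by squaring, pulling out a factor $\|X^*X\|$, and invoking the Ward identity $G_2G_2^*=\operatorname{Im}G_2/\eta$. You instead pass to the explicit spectral coordinates $\alpha_k=\langle\Sigma^{1/2}\mathbf{v},\xi_k\rangle$, $\beta_k=\langle\mathbf{w},\zeta_k\rangle$ coming from \eqref{eq:G+Sigma} and reduce every sum to $\sum_k\lambda_k^{a}|\alpha_k|^2/|\lambda_k-z|^2$ (or $\beta_k$), after which absorbing one factor $\lambda_k\leq\lambda_1$ and using $\tau I\preceq\Sigma\preceq\tau^{-1}I$ gives each estimate. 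The two proofs share exactly the same ingredients: the a~priori bound $\lambda_1\leq\|\Sigma\|\,\|X^*X\|\lesssim\phi^{1/2}+\phi^{-1/2}$ (which is the one probabilistic input and the source of every \emph{with high probability}), the Ward identity, and $\Sigma\asymp I$. Your version is more explicit and makes the origin of each $\phi$-factor transparent, at the price of carrying the diagonalized coordinates around; the paper's is terser. For \eqref{eq:GpartialzG} your exact orthogonality computation --- the vectors $u_k$ are mutually orthogonal with $\|u_k\|^2 = 1+\lambda_k$ --- is arguably cleaner than the paper's, which only asserts the operator-norm bound via $\|A\|=\sup|\langle\mathbf x,A\mathbf y\rangle|$ without spelling out the diagonalization. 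One cosmetic caveat: the identity $u_k^*u_j=(\lambda_k+1)\delta_{kj}$ holds as stated only for $k,j\leq n$; for $n<k\leq p$ one has $u_k=0$ (since $\lambda_k=0$), so those indices contribute nothing to the sum in \eqref{eq:G+Sigma} --- the conclusion is unchanged, but the statement as written is not literally correct for all $k\leq p$. Your remark on the minors $G^{(S)}$ is also fine: for $S\subset\mathcal{I}_2$ one removes columns of $X$, for $S\cap\mathcal{I}_1\neq\emptyset$ one removes rows of $X$ and the corresponding diagonal entries of $\Sigma$, and in either case the minor is again of the form $G(z,\Sigma^{(S)},X^{(S)})$, so \eqref{eq:G+Sigma} applies with the new SVD.
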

\begin{proof}
Note that according to \cite{bai2010spectral} and the scaling (\ref{eq:normalization}), we find that with high probability, for some constant $C>0,$
\begin{equation*}
 \left\| X^* X \right\| \leq C(\phi^{1/2}+\phi^{-1/2}).  
\end{equation*} 
The estimates \eqref{eq:GpartialzG} follow from \eqref{eq:G+Sigma}, using $\|A\|=\sup \{|\langle\mathbf{x}, A \mathbf{y}\rangle|:|\mathbf{x}|,|\mathbf{y}| \leqslant 1\}$, and since {we are in the spectral domain $\mathbf{S}$ so $|\operatorname{Re}z-\lambda_k|\asymp1$},  $\left|\lambda_k\right| /\left|\lambda_k-z\right| \leqslant C{(\phi^{1/2}+\phi^{-1/2})} \eta^{-1}$. Moreover, \eqref{eq:waldid1} directly follows from \eqref{eq:Gblock} and \eqref{eq:Gspectraldecom}.
In order to prove \eqref{eq:waldid2}, we use \eqref{eq:Gblock} to write
$$
\begin{aligned}
	&\sum_{i \in \mathcal{I}_1}\left|G_{\mathbf{v} i}\right|^2=\sum_{i \in \mathcal{I}_1}\left|\left(\Sigma X G X^* \Sigma\right)_{\mathbf{v} i}-\Sigma_{\mathbf{v} i}\right|^2 \leqslant 2\left(\Sigma X G X^* \Sigma^2 X G^* X^* \Sigma\right)_{\mathbf{v v}}+2\left(\Sigma^2\right)_{\mathbf{v v}} \\
	&\leqslant C\left\|X^* X\right\|\left(\Sigma X G_N G_N^* X^* \Sigma\right)_{\mathbf{v v}}+2\left(\Sigma^2\right)_{\mathbf{v v}}=\frac{C\left\|X^* X\right\|}{\eta} \operatorname{Im}\left(\Sigma X G X^* \Sigma\right)_{\mathbf{v v}}+2\left(\Sigma^2\right)_{\mathbf{v v}} \\
	&=\frac{C\left\|X^* X\right\|}{\eta} \operatorname{Im} G_{\mathbf{v v}}+2\left(\Sigma^2\right)_{\mathbf{v v}}.
\end{aligned}$$
In order to prove \eqref{eq:waldid3}, we use \eqref{eq:Gblock} and \eqref{eq:Gspectraldecom} to get
$$
\sum_{i \in \mathcal{I}_1}\left|G_{\mathbf{w} i}\right|^2=\left(G X^* \Sigma^2 X G^*\right)_{\mathbf{w w}} \leqslant C\left(G X^* X G^*\right)_{\mathbf{w w}} \leqslant C\left\|X^* X\right\|\left(G_2 G_2^*\right)_{\mathbf{w w}} .
$$
The estimate \eqref{eq:waldid4} can be proved similarly in the sense that
$$
\sum_{\mu \in \mathcal{I}_N}\left|G_{\mathbf{v} \mu}\right|^2=|z|^{-2}\left(G X X^* G^*\right)_{\mathrm{vv}} \leqslant C\left\|X^* X\right\|\left(G_1 G_1^*\right)_{\mathbf{v v}}.
$$
Finally, the same estimates for $G^{(S)}$ instead of $G$ using minor modifications of the above arguments.
\end{proof}

Finally, we provide some large  deviation estimates. 
\begin{lem}\label{lem_largedervation}
Let $(\xi_i)$ and $(\zeta_i)$ be two independent families satisfying that $\mathbb{E} \xi_i=\mathbb{E} \zeta_i=0, \ \mathbb{E}|\xi_i|^2=\mathbb{E} |\zeta_i|^2=1$, $(\mathbb{E}|\xi_i|^q)^{1/q} \leq C$ and $(\mathbb{E}|\zeta_i|^q)^{1/q} \leq C$ for all $q \in \mathbb{N}$ and some constant $C>0.$ Then for deterministic sequences $(a_{ij})$ and $(b_j),$ we have that 
\begin{align*}
\sum_i b_i \xi_i  \prec \left( \sum_i |b_i|^2 \right)^{1/2}, \  \sum_{i,j} a_{ij} \xi_i \zeta_j \prec \left( \sum_{i,j}|a_{ij}|^2 \right)^{1/2}, \ \sum_{i \neq j} a_i a_j \xi_i \xi_j \prec  \left( \sum_{i \neq j}|a_{ij}|^2 \right)^{1/2}. 
\end{align*}
\end{lem}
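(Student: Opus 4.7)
The three estimates are standard large deviation bounds whose proofs rely on the moment method combined with Markov's inequality. The plan is, for each of the three statements, to show that for every positive integer $p$ there exists a constant $C_p$ depending only on $p$ and on the moment constants of $(\xi_i),(\zeta_i)$ such that
\begin{equation*}
\mathbb{E}\Bigl|\,\text{left-hand side}\,\Bigr|^{2p}\;\le\;C_p\,B^{2p},
\end{equation*}
where $B$ is the claimed bound on the right-hand side. Once such $L^{2p}$ bounds are established for all $p$, Markov's inequality and a standard choice $p=p(\epsilon,D)$ translate them into the stochastic domination statement $\prec$.

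For the first estimate, I would expand $\mathbb{E}|\sum_i b_i \xi_i|^{2p}$ as a sum over $2p$-tuples of indices. Using $\mathbb{E}\xi_i=0$, $\mathbb{E}|\xi_i|^2=1$ and independence, every index must appear at least twice, so the non-vanishing contributions are indexed by partitions of $\{1,\dots,2p\}$ into blocks of size $\ge 2$. The pairings (partitions into exactly $p$ pairs) contribute $C_p(\sum_i|b_i|^2)^p$ with $C_p$ the number of such pairings; blocks of size $\ge 3$ can be bounded by $(\sum_i|b_i|^2)^p$ as well using Cauchy--Schwarz together with the moment bound $(\mathbb{E}|\xi_i|^q)^{1/q}\le C$. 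Summing gives the desired $L^{2p}$ estimate.

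For the second estimate, the cleanest route is to condition on $(\zeta_j)$ and apply the first estimate to the linear form $\sum_i\bigl(\sum_j a_{ij}\zeta_j\bigr)\xi_i$, which yields
\begin{equation*}
\mathbb{E}\Bigl|\sum_{i,j}a_{ij}\xi_i\zeta_j\Bigr|^{2p}\;\le\;C_p\,\mathbb{E}\Bigl(\sum_i\Bigl|\sum_j a_{ij}\zeta_j\Bigr|^2\Bigr)^p,
\end{equation*}
after which a second application of the linear bound to each $\sum_j a_{ij}\zeta_j$ (or a direct moment expansion) produces $C_p'(\sum_{i,j}|a_{ij}|^2)^p$. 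For the third estimate (which should read $\sum_{i\neq j}a_{ij}\xi_i\xi_j$, with the mass on the off-diagonal part so as to be mean-zero), the same moment expansion produces a sum over partitions, and the off-diagonal restriction removes the diagonal blocks, leaving only contributions controlled by pairings of the off-diagonal entries, yielding $C_p(\sum_{i\neq j}|a_{ij}|^2)^p$.

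The only place that requires care is tracking the combinatorial constants $C_p$ and showing they depend only on $p$ and on the universal moment bounds assumed in the hypothesis, not on the dimension or on $(a_{ij}),(b_i)$. This is routine and follows from the fact that the number of partitions of $\{1,\dots,2p\}$ depends only on $p$. Since all three statements are classical Hanson--Wright / Khintchine-type inequalities under sub-exponential-style moment assumptions, I expect no genuine obstacle; the proof is a direct moment computation.
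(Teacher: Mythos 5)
Your proposal is correct: the paper offers no proof of its own and simply cites Lemma 3.1 of the reference [LL], and the moment-method argument you outline — expanding $\mathbb{E}|\cdot|^{2p}$, controlling the combinatorics of index partitions via independence and the uniform moment bounds, then converting to $\prec$ with Markov's inequality, together with the conditioning trick for the bilinear form — is precisely how such Khintchine/Hanson--Wright-type bounds are established under these hypotheses. You also correctly identified that the third sum should read $\sum_{i\ne j} a_{ij}\xi_i\xi_j$ (the off-diagonal restriction being exactly what makes the quadratic form centered), matching the right-hand side of the displayed inequality.
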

\begin{proof}
See Lemma 3.1 of \cite{LL}. 
\end{proof}

\subsection{Entrywise local law and proof of (\ref{eq_averagelocallawone}), (\ref{eq_averagelocallawtwo}) and (\ref{eq:entrywiselaw})}\label{app_sec_entrywiselocalaw}
In this section, we prove the entrywise local law and (\ref{eq_averagelocallawone}), (\ref{eq_averagelocallawtwo}) and (\ref{eq:entrywiselaw}). More specifically, we will prove the following proposition. 
\begin{prop}\label{prop_reducedresult} Suppose the assumptions of Theorem \ref{thm_locallaw} hold. Then we have that (1) of Theorem \ref{thm_locallaw} holds when $\mathbf{u}_k$ and $\mathbf{v}_k$ are the standard basis in $\mathbb{R}^p$ and $\mathbb{R}^n,$ respectively, and (2) and (3) of Theorem \ref{thm_locallaw} hold. 
\end{prop}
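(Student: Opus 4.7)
The plan is to adapt the linearization-based local law framework of \cite{AILL} (for $\alpha=1$) and \cite{LL} (for $\Sigma=I$) to the joint setup of a nontrivial $\Sigma$ under (\ref{eq_ratioassumption}). All arguments operate on the linearized block resolvent $G(z)$ from (\ref{eq_BIGG}), whose block structure (\ref{eq:Gblock}) directly expresses the entries of $R_1$, $R_2$ and the cross-blocks in (\ref{eq_G1G2}). The three ingredients are Schur's complement identities (Lemma \ref{lem_schurcomplement}), deterministic norm and Ward-type bounds (Lemma \ref{lem_someelementarybound}), and large deviation estimates for linear and quadratic forms (Lemma \ref{lem_largedervation}).

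First, I would derive approximate self-consistent equations for the diagonal entries of $G$. Applying Lemma \ref{lem_schurcomplement} to $G_{\mu\mu}$ with $\mu\in\mathcal{I}_2$ expresses $1/G_{\mu\mu}=-z-(X^*G^{(\mu)}X)_{\mu\mu}$; expanding the quadratic form, its conditional mean equals $(pn)^{-1/2}\sum_i G^{(\mu)}_{ii}$ and its fluctuation is bounded by Lemma \ref{lem_largedervation} combined with the Ward identities (\ref{eq:waldid1})--(\ref{eq:waldid4}). Running the same computation on $G_{ii}$ for $i\in\mathcal{I}_1$ via Lemma \ref{lem_schurcomplement}(2), and using the algebraic identities of Lemma \ref{lem_elementaryidentityes}, leads to the defining equation (\ref{eq_otherformoflsd}) satisfied by the deterministic target $m(z)$ on the $\mathcal{I}_2$-block together with $G_{ii}\approx -\sigma_i/(z(1+\phi^{-1/2}m\sigma_i))$ on the $\mathcal{I}_1$-block, matching $\Pi(z)$ after the $z\Sigma^{1/2}(\cdot)\Sigma^{1/2}$ rescaling of (\ref{eq_G1G2}).

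Second, I would establish the entrywise laws and the cross-block bound (\ref{eq:entrywiselaw}) by the standard bootstrap on $\eta$ over the spectral domain $\mathbf{S}$ of (\ref{eq_spectralparameterset}). At $\eta\asymp 1$ the a priori operator bound in Lemma \ref{lem_someelementarybound} renders the estimate trivial. Decreasing $\eta$ in $n^{-1}$ steps, stability of the self-consistent equation -- a consequence of Lemma \ref{lem:mphizabs}, in particular (\ref{eq_derivativecontrol}) -- converts the fluctuation bound from Step 1 into the claimed $G_{\mu\mu}-m\prec\Psi$ and $(R_1)_{ii}-\Pi_{ii}\prec\phi^{-1}\Psi$, the extra $\phi^{-1}$ arising through the rescaling in (\ref{eq_G1G2}). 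The cross-block bound follows directly from Lemma \ref{lem_schurcomplement}(3), which gives $G_{i\mu}=-G_{\mu\mu}(G^{(\mu)}X)_{i\mu}$; applying Lemma \ref{lem_largedervation} with the moment scaling (\ref{eq_momentassumption}) and the Ward identity (\ref{eq:waldid1}) yields the factor $(pn)^{-1/4}(\operatorname{Im}G^{(\mu)}_{\mu\mu}/\eta)^{1/2}$, which rearranges to the advertised $\phi^{-1/4}\Psi(z)$.

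Third, for the averaged laws (\ref{eq_averagelocallawone})--(\ref{eq_averagelocallawtwo}), the entrywise bound alone gives only $\Psi$ per diagonal entry whereas the targets $(p\eta)^{-1}$ and $(n\eta)^{-1}$ require an extra square-root gain. I would obtain this by the fluctuation averaging method: expand a high moment of the averaged quantity via the cumulant expansion of Lemma \ref{lem_cumulantexpansion}, exploit the cancellation among terms whose indices are not all pairwise identified, and close the loop with the entrywise laws from Step 2. This combinatorial step, rather than Steps 1--2, is where I expect the main technical obstacle: the $\phi$-powers of the various matrix entries and of the kernel $\phi^{1/2}\sigma_i/(1+\phi^{-1/2}m\sigma_i)$ must be tracked through every term of the expansion so that the promised improvement by $\phi^{-1}$ in (\ref{eq_averagelocallawone}) over the bound in (\ref{eq_averagelocallawtwo}) emerges correctly. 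That Assumption \ref{assum:XSigma} forces $\Sigma$ diagonal with spectrum in $[\tau,\tau^{-1}]$ and Lemma \ref{lem_proofgloballawresult} guarantees a single bulk component removes the multi-component stability subtleties of \cite{AILL}, so the argument reduces to a direct adaptation of the fluctuation averaging of \cite{LL} with $\phi$-weighted bookkeeping to carry the $\Sigma$-dependence.
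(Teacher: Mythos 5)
Your proposal follows the same framework the paper uses: self-consistent equations from the Schur complement identities, a bootstrap in $\eta$ over $\mathbf{S}$, and fluctuation averaging to gain a power of $(n\eta)^{-1/2}$ in the averaged quantities. The ingredients (Lemma \ref{lem_schurcomplement}, Lemma \ref{lem_someelementarybound}, Lemma \ref{lem_largedervation}, the stability input from Lemma \ref{lem:mphizabs}) and the $\phi$-bookkeeping observations are all correct.

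However, the ordering of Steps 2 and 3 contains a gap. In Step 2 you claim the bootstrap alone delivers the strong entrywise bound $G_{\mu\mu}-m\prec\Psi$, and only then in Step 3 invoke fluctuation averaging to improve the averaged quantities. This cannot work as stated. The self-improving estimate from Step 1 is of the form $\Lambda_o+|Z_s|\prec\Psi_{\Theta,2}$ with $\Psi_{\Theta,2}=\sqrt{(\operatorname{Im}m+\Theta)/(n\eta)}$, and without averaging the best bound one can extract on $\Theta$ from the self-consistent equation plus stability is $\Theta\prec(n\eta)^{-1/4}$ near the edge (Proposition \ref{weaklocallaw}), not $\Theta\prec(n\eta)^{-1}$. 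Substituting the weak bound into $\Psi_{\Theta,2}$ therefore gives an entrywise estimate strictly worse than $\Psi$. The strong entrywise bound is a \emph{consequence} of the strong averaged bound, not a prerequisite for it: the correct chain is weak entrywise law $\Lambda\prec(n\eta)^{-1/4}$ (bootstrap only), then fluctuation averaging via Lemma \ref{lem:fa} applied iteratively as in Lemma \ref{lem_recursiveestimate} to push $\Theta\prec(n\eta)^{-1}$, and only then $\Lambda\prec\Psi_{\Theta,2}\prec\Psi$. Your Step 3 correctly identifies fluctuation averaging as the place where the square-root gain comes from, but by deferring it until after you have already claimed $\prec\Psi$ entrywise, you have made the argument circular. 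To repair, weaken the conclusion of Step 2 to the weak law $\Lambda\prec(n\eta)^{-1/4}$ and have Step 3 produce both the strong averaged and (as a corollary) the strong entrywise estimates.
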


We first prepare some notations. For each $i \in \mathcal{I}_1,$ we define
\begin{equation}\label{eq_miform}
	m_i:=\frac{-\sigma_i}{1+\phi^{-1/2} m \sigma_i} .
\end{equation}
Recall \eqref{eq:lsd}. We find that the functions $m$ and $m_i$ satisfy
$$
\frac{1}{m}=-z-{\frac{\phi^{1/2} }{p}} \sum_{i \in \mathcal{I}_1} m_i, \quad \frac{1}{m_i}=-\frac{1}{\sigma_i}-{\phi^{-1/2} }m.
$$
Using (\ref{eq_ratioassumption}), (\ref{eq_constantbound}) and (\ref{eq_miform}), we readily obtain that 
\begin{equation}\label{eq_normestimate}
\left|m_i\right| \asymp \sigma_i \quad \text { for } z \in \mathbf{S} \text { and } i \in \mathcal{I}_1 \text {. }
\end{equation}
For notational simplicity, we extend the definitions of $\sigma_i$ and $m_i$ by setting $\sigma_\mu:=1$ and   $m_\mu:=m$ for $\mu \in \mathcal{I}_2$. Recall (\ref{eq_BIGG}). We define the blockwise random control parameters as follows 
$$
\Lambda_{2}:=\max_{\mu, \nu \in \mathcal{I}_2} \frac{\left|G_{\mu \nu}-\delta_{\mu \nu} m_\mu\right|}{\sigma_\mu \sigma_\nu}, \quad \Lambda_{o,2}=\max _{\mu \neq \nu \in \mathcal{I}_2} \frac{\left|G_{\mu\nu}\right|}{\sigma_\mu \sigma_\nu}, \ 
\Lambda_{1}=\max_{i, j \in \mathcal{I}_1} \frac{\left|G_{i j}-\delta_{i j} m_i\right|}{\sigma_i \sigma_j}, \quad \Lambda_{o,1}=\max _{i \neq j \in \mathcal{I}_1} \frac{\left|G_{ij}\right|}{\sigma_i \sigma_j},
$$
and
$$
\Lambda_{o,12}=\max _{i\in \mathcal{I}_1, \mu \in \mathcal{I}_2} \frac{\left|G_{i\mu }\right|}{\sigma_i \sigma_\mu}.
$$
Armed with the above notations, we further denote 
$$
\Theta_1:=\left|\frac{1}{p} \sum_{i \in \mathcal{I}_1}\left(G_{i i}-m_i\right)\right|, \quad \Theta_2:=\left|\frac{1}{n} \sum_{\mu \in \mathcal{I}_2}\left(G_{\mu \mu}-m\right)\right|=\left|m_{2n}-m\right|.
$$
Moreover, we define { 
	\begin{equation}\label{eq_parameterkeydefinition}
	\Lambda:=\phi^{1/2}\Lambda_{1}+\phi^{1/4}\Lambda_{o,12}+\Lambda_{2}, \ \Lambda_o:=\phi^{1/2}\Lambda_{o,1}+\phi^{1/4}\Lambda_{o,12}+\Lambda_{o,2}, \ \Theta:=\phi^{1/2} \Theta_1+\Theta_2.
	\end{equation}
}
Note that all the above parameters depend on $z$ implicitly and we have the trivial bound $\Theta=\rO\left(\Lambda \right).$ With (\ref{eq_parameterkeydefinition}), we further denote the parameters   
{$$
	\Psi_{\Theta,2}:=\sqrt{\frac{\operatorname{Im} m+\Theta}{n \eta}}, \
	\Psi_{\Theta,1}:=\sqrt{\frac{\operatorname{Im} m+\Theta}{p \eta}}, \
	\Psi_{\Theta,12}:=\sqrt{\frac{\operatorname{Im} m+\Theta}{\sqrt{p n} \eta}},
	$$ }
and the $z$-dependent probability event $\Xi \equiv \Xi(z)$
\begin{equation}\label{eq_probabilityevent}
\Xi=\left\{\Lambda(z) \leqslant(\log n)^{-1}\right\}.
\end{equation}

To prove Proposition \ref{prop_reducedresult}, we basically follow the approach of \cite[Section 5]{AILL} or Section A.2 of \cite{DY} and focus on the details that departs significantly from that of \cite{DY,AILL}.

\subsubsection{Weak entrywise local law}
In this section, we prove a weaker version of entrywise local law. It is an analog of \cite[Proposition 5.1]{AILL} or \cite[Lemma A.12]{DY}. 
\begin{prop}\label{weaklocallaw}
Suppose that the assumptions of Theorem \ref{thm_locallaw} hold. Then $\Lambda \prec(n \eta)^{-1 / 4}$ uniformly in $z \in \mathbf{S}$.
\end{prop}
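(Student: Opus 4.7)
The plan is to follow the standard Schur-complement plus continuity (bootstrap) strategy that has been developed for local laws of sample covariance matrices, adapting it to the scaling $(pn)^{-1/2}$ and to the weighted parameter $\Lambda$ in \eqref{eq_parameterkeydefinition} so that the disparate block sizes are tracked correctly.

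First I would set up the a priori (trivial) input. For $\eta$ of order one, the deterministic bounds in Lemma \ref{lem_someelementarybound}, together with the representation \eqref{eq:G+Sigma}, give $\Lambda \leq (\log n)^{-1}$ with high probability. Then the self-consistent estimates are derived block by block using the Schur complement identities of Lemma \ref{lem_schurcomplement}. For $\mu\in\mathcal{I}_2$,
\[
\frac{1}{G_{\mu\mu}}=-z-\bigl(X^{*}G^{(\mu)}X\bigr)_{\mu\mu}
=-z-\frac{1}{\sqrt{pn}}\sum_{i\in\mathcal{I}_1}\sigma_i^{-1/2}\,G^{(\mu)}_{ii}\cdot\sigma_i^{1/2}\,\bigl(\sqrt{pn}\,|x_{i\mu}|^2\bigr)
-\sum_{i\neq j}x_{i\mu}G^{(\mu)}_{ij}x_{j\mu},
\]
and similarly for $i\in\mathcal{I}_1$,
\[
\frac{1}{G_{ii}}=-\frac{1}{\sigma_i}-\bigl(XG^{(i)}X^{*}\bigr)_{ii}.
\]
On the event $\Xi$ defined in \eqref{eq_probabilityevent}, Lemma \ref{lem_largedervation} together with the Ward identities \eqref{eq:waldid1}--\eqref{eq:waldid4} (applied to $G^{(\mu)}$ and $G^{(i)}$) controls the off-diagonal sums by $\Psi_{\Theta,2}$ up to $\sigma_\mu$-weights; the concentration of $\sqrt{pn}\,|x_{i\mu}|^{2}$ around $1$ is handled analogously. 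One can then replace each $G^{(\mu)}_{ii}$ and $G^{(i)}_{\mu\mu}$ by their full counterparts using the resolvent identity \eqref{eq:3.7}. The outcome is the approximate self-consistent system
\[
\frac{1}{G_{\mu\mu}}=-z-\frac{\phi^{1/2}}{p}\sum_{i}G_{ii}+\rO_{\prec}(\Psi_{\Theta,2}),\qquad
\frac{1}{G_{ii}}=-\frac{1}{\sigma_i}-\phi^{-1/2}\,m_{2n}+\rO_{\prec}(\phi^{-1/2}\Psi_{\Theta,2}),
\]
where the explicit powers of $\phi$ in the error terms come from the $(pn)^{-1/2}$ scaling of $X$ and from the Ward identities, and they are exactly what the weights $\phi^{1/2},\phi^{1/4}$ in the definition of $\Lambda$ are designed to absorb.

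Next I would average the second identity and combine with the first to derive, on $\Xi$, an approximate version of the self-consistent equation \eqref{eq:lsd} for the normalized trace $m_{2n}$, of the form
\[
\frac{1}{m_{2n}}+z-\frac{1}{p}\sum_{i}\frac{\phi^{1/2}\sigma_i}{1+\phi^{-1/2}m_{2n}\sigma_i}=\rO_{\prec}\bigl(\Psi_{\Theta,2}\bigr).
\]
The stability of this self-consistent equation, which can be read off from the derivative bound \eqref{eq_mmprimeeq1} together with the regularity properties $m\asymp 1$, $\operatorname{Im} m\gtrsim\sqrt{\kappa+\eta}$ in Lemma \ref{lem:mphizabs}, will then translate the approximate identity into
$|m_{2n}-m|\prec \Psi_{\Theta,2}$, and similarly $\phi^{1/2}|\tfrac{1}{p}\sum_i(G_{ii}-m_i)|\prec \Psi_{\Theta,2}$. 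Plugging these averaged bounds back into the two Schur formulas controls the individual diagonal entries, and the off-diagonal entries are then handled by items (1)--(3) of Lemma \ref{lem_schurcomplement}, which express $G_{st}$ ($s\neq t$) as products of diagonal entries times small quadratic forms controlled once again by Lemma \ref{lem_largedervation}. This yields $\Lambda\prec \Psi_{\Theta,2}$ on $\Xi$, which after solving the resulting quadratic inequality in $\Lambda$ (using $\Theta=\rO(\Lambda)$) gives $\Lambda\prec(n\eta)^{-1/4}$.

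Finally, I would run the usual continuity/bootstrap argument in $\eta$: starting from $\eta\asymp 1$, where $\Xi$ holds trivially, one uses the Lipschitz continuity in $z$ of the resolvent entries (from \eqref{eq:GpartialzG}) on a fine grid in $\mathbf{S}$ to propagate the inclusion $\Lambda\leq (\log n)^{-1}$ down to smaller $\eta$, using the gap between the self-improving bound $(n\eta)^{-1/4}$ and the a priori threshold $(\log n)^{-1}$. The main technical obstacle, which is also where this proof departs from \cite{AILL}, is bookkeeping the $\phi$-weights: the off-diagonal blocks of $G$ decay faster than the diagonal blocks (one factor of $\phi^{-1/4}$), and the $(1,1)$-block decays faster still (by $\phi^{-1/2}$), so one must verify that at every application of Lemma \ref{lem_largedervation} and of the Ward identities \eqref{eq:waldid2}--\eqref{eq:waldid4} the $\phi$-powers line up so that $\Lambda$, and not merely the individual pieces $\Lambda_1,\Lambda_{o,12},\Lambda_2$, satisfies the same bound $(n\eta)^{-1/4}$ uniformly.
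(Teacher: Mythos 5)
Your proposal follows essentially the same route as the paper: Schur-complement expansion of the diagonal entries (the paper's Lemma~\ref{lem_priorcontrol}), an approximate self-consistent equation for $m_{2n}$ (the paper's identity~\eqref{eq_bound2xi}), a stability estimate (Lemma~\ref{lem_stabilitybound}), and a continuity/bootstrap argument in $\eta$, all while tracking the $\phi$-weights encoded in the definition of $\Lambda$ in~\eqref{eq_parameterkeydefinition}. The overall architecture is correct.

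However, there is a genuine imprecision in the stability step that obscures where the exponent $1/4$ actually comes from. You claim that the stability of the self-consistent equation gives $|m_{2n}-m|\prec\Psi_{\Theta,2}$. That is not what the stability lemma yields. Lemma~\ref{lem_stabilitybound} gives the weaker bound
\[
|m_{2n}(z)-m(z)|\;\lesssim\;\frac{\delta(z)}{\operatorname{Im} m(z)+\sqrt{\delta(z)}},
\]
with $\delta\asymp\Psi_{\Theta,2}$. In the bulk, where $\operatorname{Im} m\asymp 1$, this is indeed $\asymp\Psi_{\Theta,2}$, but near the edges (where $\operatorname{Im} m\lesssim\sqrt{\Psi_{\Theta,2}}$) it degrades to $\sqrt{\Psi_{\Theta,2}}$. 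This square-root loss is precisely the source of the $(n\eta)^{-1/4}$ exponent: on $\Xi$ one has $\Theta\lesssim 1$, hence $\Psi_{\Theta,2}\lesssim(n\eta)^{-1/2}$, and the edge contribution $\sqrt{\Psi_{\Theta,2}}\lesssim(n\eta)^{-1/4}$ dominates the final estimate on $\Theta$. If your stated chain $\Lambda\prec\Psi_{\Theta,2}$ with $\Theta=\rO(\Lambda)$ were literally correct, solving the resulting quadratic would give $\Lambda\prec\Psi\lesssim(n\eta)^{-1/2}$---that is, the \emph{strong} local law---which cannot be established at this stage since it requires the additional fluctuation-averaging input (Lemma~\ref{lem:fa}). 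So the ``quadratic inequality'' is not where the exponent $1/4$ is produced; it is produced by the square-root denominator in Lemma~\ref{lem_stabilitybound}. With this correction, your sketch aligns with the paper's proof.
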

The rest of this subsection is devoted to the proof of Proposition \ref{weaklocallaw}. For $G$ in (\ref{eq_BIGG}), recall that we set $H=G^{-1}.$ For $s \in \mathcal{I},$ we introduce the conditional expectation
$$
\mathbb{E}_s[\cdot]:=\mathbb{E}\left[\cdot | H^{(s)}\right].
$$
Recall (\ref{eq_G1G2}). According to Lemma \ref{lem_schurcomplement},  we have that for $i \in \mathcal{I}_1$
\begin{equation}\label{eq_firstcontrol}
\frac{1}{G_{i i}}=-\frac{1}{\sigma_i}-{\frac{\phi^{-1/2}}{n}} \operatorname{Tr} G_2^{(i)}-Z_i, \quad Z_i:=\left(1-\mathbb{E}_i\right)\left(X G^{(i)} X^{*}\right)_{i i},
\end{equation}
and  for $\mu \in \mathcal{I}_2$
\begin{equation}\label{eq_secondcontrol}
\frac{1}{G_{\mu \mu}}=-z-{\frac{\phi^{1/2}}{p}} \operatorname{Tr} G_1^{(\mu)}-Z_\mu, \quad Z_\mu:=\left(1-\mathbb{E}_\mu\right)\left(X^{*} G^{(\mu)} X\right)_{\mu \mu}.
\end{equation}

In order to prove Proposition \ref{weaklocallaw}, we start with the following estimate which is analogous to Lemma 5.2 of \cite{AILL} or \cite[Lemma A.9]{DY}. Recall (\ref{eq_probabilityevent}).  
\begin{lem} \label{lem_priorcontrol}
Suppose the assumptions of Proposition \ref{weaklocallaw} hold.	Then for $s \in \mathcal{I}$ and $z \in \mathbf{S},$ we have
	\begin{equation}\label{eq:seq}
		\mathbf{1}(\Xi)\left(\left|Z_s\right|+\Lambda_o\right) \prec \Psi_{\Theta,2},
	\end{equation}	
	and
	\begin{equation}\label{eq:inil}
		\mathbf{1}(\eta \geqslant 1)\left(\left|Z_s\right|+\Lambda_o\right) \prec \Psi_{\Theta,2}.	
	\end{equation}
\end{lem}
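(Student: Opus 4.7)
The plan is to establish both bounds by systematically combining the large-deviation estimate of Lemma \ref{lem_largedervation}, the Ward-type identities of Lemma \ref{lem_someelementarybound}, and the Schur-complement formulas of Lemma \ref{lem_schurcomplement}. Throughout, the role of the event $\Xi$ is to guarantee that the diagonals $G_{ss}$ are bounded away from zero and close to $m_s$, so that passing from minor resolvents $G^{(s)}$ to the full resolvent $G$ costs only negligibly small corrections. The bound under $\{\eta\ge 1\}$ will be obtained from the same computation, but using the unconditional operator-norm control \eqref{eq:GpartialzG} in place of $\Xi$.

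First I would treat $Z_s$. For $s=i\in\mathcal{I}_1$, expanding
\[
Z_i=\sum_{\mu\in\mathcal{I}_2}\bigl(x_{i\mu}^{2}-(pn)^{-1/2}\bigr)G^{(i)}_{\mu\mu}+\sum_{\mu\ne\nu\in\mathcal{I}_2}x_{i\mu}x_{i\nu}G^{(i)}_{\mu\nu},
\]
and applying Lemma \ref{lem_largedervation} to the rescaled variables $\xi_\mu:=(pn)^{1/4}x_{i\mu}$ (which have unit variance and all moments bounded by Assumption \ref{assum:XSigma}) gives $|Z_i|\prec (pn)^{-1/2}\bigl(\sum_{\mu,\nu\in\mathcal{I}_2}|G^{(i)}_{\mu\nu}|^{2}\bigr)^{1/2}$. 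The Ward identity \eqref{eq:waldid1} collapses this sum to $\eta^{-1}\sum_{\mu}\mathrm{Im}\,G^{(i)}_{\mu\mu}$. Using \eqref{eq:3.7} on $\Xi$, where $|G_{\mu i}G_{i\mu}/G_{ii}|=O(\sigma_i^{2}\Lambda_o^{2})$ is negligible, one replaces $G^{(i)}_{\mu\mu}$ by $G_{\mu\mu}$, and $\tfrac{1}{n}\sum_\mu\mathrm{Im}\,G_{\mu\mu}\le \mathrm{Im}\,m+\Theta_2$ by the definition of $\Theta_2$. Hence $|Z_i|^{2}\prec \tfrac{n}{p}\cdot\tfrac{\mathrm{Im}\,m+\Theta}{n\eta}\le \Psi_{\Theta,2}^{2}$ since $n/p\le 1$ in the regime \eqref{eq_ratioassumption}. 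For $s=\mu\in\mathcal{I}_2$ the same strategy, applied with Lemma \ref{lem_largedervation} and the Ward bound \eqref{eq:waldid2} summed over $j\in\mathcal{I}_1$, yields $\sum_{i,j}|G^{(\mu)}_{ij}|^{2}\le C\phi^{1/2}\eta^{-1}\sum_i \mathrm{Im}\,G^{(\mu)}_{ii}+C\,\mathrm{Tr}\,\Sigma^{2}$. On $\Xi$, $\mathrm{Im}\,G_{ii}\asymp \sigma_i^{2}\phi^{-1/2}\mathrm{Im}\,m$ plus $O(\sigma_i^{2}\Theta)$ errors, using \eqref{eq_miform} and Lemma \ref{lem:mphizabs}, so $\tfrac{\phi^{1/2}}{pn\eta}\sum_i\mathrm{Im}\,G^{(\mu)}_{ii}\prec\Psi_{\Theta,2}^{2}$. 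The $\mathrm{Tr}\,\Sigma^{2}=O(p)$ contribution produces a harmless $1/n$, absorbed into $\Psi_{\Theta,2}^{2}$ via the lower bound $\mathrm{Im}\,m\ge c\eta$ from Lemma \ref{lem:mphizabs}.

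For $\Lambda_o$, I would handle the three off-diagonal blocks separately using Lemma \ref{lem_schurcomplement}. In the $\mathcal{I}_1\times\mathcal{I}_1$ block, the identity \eqref{eq:3.9} gives $G_{ij}=G_{ii}G^{(i)}_{jj}(XG^{(ij)}X^{*})_{ij}$ for $i\ne j$; on $\Xi$ the first two factors are $\asymp\sigma_i\sigma_j$, and the quadratic form is centered (since $i\ne j$), so Lemma \ref{lem_largedervation} combined with Ward \eqref{eq:waldid2} produces $|(XG^{(ij)}X^{*})_{ij}|\prec\phi^{-1/2}\Psi_{\Theta,2}$, hence $\phi^{1/2}\Lambda_{o,1}\prec \Psi_{\Theta,2}$. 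The $\mathcal{I}_2\times\mathcal{I}_2$ block is treated analogously via item (1) of Lemma \ref{lem_schurcomplement} and \eqref{eq:waldid1}, yielding $\Lambda_{o,2}\prec \Psi_{\Theta,2}$. The mixed block requires both estimates simultaneously: using item (3) one writes $G_{i\mu}=-G_{ii}(XG^{(i)})_{i\mu}$ and the linear form in $\{x_{i\nu}\}_{\nu\ne\mu}$ is centered, so Lemma \ref{lem_largedervation} together with \eqref{eq:waldid1} for $G^{(i)}$ gives $\phi^{1/4}\Lambda_{o,12}\prec \Psi_{\Theta,2}$.

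The bound \eqref{eq:inil} under $\{\eta\ge 1\}$ follows from the same computation with $\Xi$ removed: the deterministic norm bound \eqref{eq:GpartialzG} and the obvious estimate $\|G_j\|\le\eta^{-1}\le 1$ for $j=1,2$ supply, without probabilistic conditioning, the same control on $G_{ss}$ and the same Ward-type sums. The main bookkeeping obstacle will be matching the three $\phi$-scales $(\phi^{1/2},\phi^{1/4},1)$ that weight $\Lambda_{o,1}$, $\Lambda_{o,12}$ and $\Lambda_{o,2}$ to the single target $\Psi_{\Theta,2}$; this is forced by the anisotropy $|m_i|\asymp\sigma_i$ together with the ultra-high-dimensional ratio $\phi\gg 1$, and it is precisely what prevents a direct rescaling of the $p\asymp n$ arguments from \cite{AILL} and requires the careful structural identities recorded in Lemmas \ref{lem:mphizabs}--\ref{lem_elementaryidentityes}.
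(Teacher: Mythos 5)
Your proposal misses the key structural step that makes the $\Lambda_o$ bound close. When you pass from the minor-resolvent entries $G^{(ij)}_{\mu\mu}$ (or $G^{(\mu\nu)}_{ii}$, $G^{(i\mu)}_{\nu\nu}$) back to the full-resolvent entries via \eqref{eq:3.7}, you necessarily pick up a correction of order $\Lambda_{o,12}^2$ (resp.\ $\Lambda_{o,2}^2$), and you assert this is ``negligible'' on $\Xi$. It is not: the event $\Xi$ only gives $\Lambda\le(\log n)^{-1}$, so $\Lambda_o^2\lesssim(\log n)^{-2}$, which is \emph{not} a priori bounded by $\Psi_{\Theta,2}^2\gtrsim 1/n$ --- in particular, for $1<\alpha<2$ this comparison fails outright. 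Consequently the direct bounds $\phi^{1/2}\Lambda_{o,1}\prec\Psi_{\Theta,2}$, $\Lambda_{o,2}\prec\Psi_{\Theta,2}$, $\phi^{1/4}\Lambda_{o,12}\prec\Psi_{\Theta,2}$ that you claim do not follow from a single application of Schur/Ward/large-deviation. What the paper actually proves are three inequalities of the form $\phi^{1/2}\Lambda_{o,1}\prec\Psi_{\Theta,2}+(n\eta)^{-1/2}\Lambda_{o,12}$, $\Lambda_{o,2}\prec\Psi_{\Theta,2}+(n\eta)^{-1/2}\Lambda_{o,12}$, $\phi^{1/4}\Lambda_{o,12}\prec\Psi_{\Theta,2}+(n\eta)^{-1/2}\Lambda_{o,2}$, with the cross-terms kept explicit; the system then closes by a self-consistent substitution using the smallness of $(n\eta)^{-1/2}$. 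This bootstrap is precisely the mechanism that turns the weak prior $\Lambda\le(\log n)^{-1}$ into the sharp bound $\Lambda_o\prec\Psi_{\Theta,2}$, and your write-up omits it entirely.

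The same issue propagates to your $Z_s$ estimate: replacing $\sum_\mu\operatorname{Im}G^{(i)}_{\mu\mu}$ by $\sum_\mu\operatorname{Im}G_{\mu\mu}$ produces a $\Lambda_o^2/(p\eta)$ correction, which can only be absorbed into $\Psi_{\Theta,2}^2$ \emph{after} the bound $\Lambda_o\prec\Psi_{\Theta,2}$ has been established, because then $\Lambda_o^2/(p\eta)\prec\Psi_{\Theta,2}^2/(p\eta)\ll\Psi_{\Theta,2}^2$. The paper therefore bounds $\Lambda_o$ first and $Z_s$ second; you do the reverse and declare the correction negligible without justification. A further minor slip: in the $\mathcal{I}_1\times\mathcal{I}_1$ block the quadratic form $(XG^{(ij)}X^*)_{ij}$ is summed over $\mu,\nu\in\mathcal{I}_2$, so the relevant Ward identity is \eqref{eq:waldid1}, not \eqref{eq:waldid2} as you cite.
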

\begin{proof}
Due to similarity, we mainly focus on the proof of (\ref{eq:seq}). Using (\ref{eq_normestimate}), (\ref{eq:3.7}) and a simple induction argument, it is not hard to see that 
\begin{equation}\label{eq)nativevound}
\mathbf{1}(\Xi)\left|G_{t t}^{(S)}\right| \asymp \sigma_t,\end{equation}
for any $S \subset \mathcal{I}$ and $t \in \mathcal{I} \backslash S$ satisfying $|S| \leqslant C$. We first work with $\Lambda_o$ in (\ref{eq_parameterkeydefinition}). It suffices to prove that
\begin{equation}\label{eq:priorbound_blockwise}
	\begin{aligned}
		&\mathbf{1}(\Xi)\left|G_{ij}\right| \prec \sigma_i \sigma_j\left(\frac{\operatorname{Im} m+\Theta_2+\Lambda_{o,12}^2}{  p \eta}\right)^{1 / 2},\\ 
		&\mathbf{1}(\Xi)\left|G_{\mu \nu}\right| \prec \sigma_\mu \sigma_\nu\left(\frac{\operatorname{Im} m+\sqrt{\phi}\Theta_1+\Lambda_{o,12}^2}{  n \eta}\right)^{1 / 2},\\ 
		&\mathbf{1}(\Xi)\left|G_{i\mu}\right| \prec \sigma_i\sigma_\mu \left(\frac{\operatorname{Im} m+\Theta_2+\Lambda_{o,2}^2}{  \sqrt{pn} \eta}\right)^{1 / 2}.\\ 
	\end{aligned}
\end{equation}
In fact, using the second estimate in (\ref{eq:priorbound_blockwise}), it is not hard to see that on $\Xi$
$$
\Lambda_{o,2} \prec \Psi_{\Theta,2}+(n \eta)^{-1 / 2} \Lambda_{o,12}.
$$
Moreover, using the first estimate in (\ref{eq:priorbound_blockwise}), we see that on $\Xi$ 
\begin{equation*}
\phi^{1/2} \Lambda_{o,1} \prec \Psi_{\Theta,2}+(n \eta)^{-1/2}\Lambda_{o,12}.   
\end{equation*}
Similarly, using the last estimate in (\ref{eq:priorbound_blockwise}), we have that on $\Xi$ 
\begin{equation*}
\phi^{1/4} \Lambda_{o,12} \prec \Psi_{\Theta,2}+(n \eta)^{-1/2}\Lambda_{o,2}.   
\end{equation*}
Combining the above bounds with the definition $\Lambda_o$ in (\ref{eq_parameterkeydefinition}), we see that  
$\mathbf{1}(\Xi) \Lambda_o \prec \Psi_{\Theta,2}.$ Now we prove (\ref{eq:priorbound_blockwise}).  Let us start with $G_{i j}$ for $i \neq j \in \mathcal{I}_1$. By (\ref{eq:3.9}), (\ref{eq)nativevound}) and Lemma \ref{lem_largedervation}, we find on $\Xi$
\begin{align*}
\left|G_{i j}\right| & \leqslant \left|G_{i i} G_{j j}^{(i)}\right|\left|\sum_{\mu, \nu \in \mathcal{I}_2} x_{i \mu} G_{\mu \nu}^{(i j)} x_{\nu j}\right| \prec  \sigma_i \sigma_j\left(\frac{1}{  np    } \sum_{\mu, \nu \in \mathcal{I}_2}\left|G_{\mu \nu}^{(i j)}\right|^2\right)^{1 / 2} \\
& =\sigma_i \sigma_j \left( \frac{1}{  np  \eta} \sum_{\mu \in \mathcal{I}_2} \operatorname{Im} G_{\mu \mu}^{(i j)} \right)^{1/2} \prec  \left(\frac{1}{   np \eta} \sum_{\mu \in \mathcal{I}_2} \operatorname{Im} G_{\mu \mu}+\frac{\Lambda_{o,12}^2}{  p \eta}\right)^{1/2} \prec \sqrt{\frac{\operatorname{Im} m+\Theta_2+\Lambda_{o,12}^2}{   p \eta}},
\end{align*}
where in the third step we used (\ref{eq:waldid1}) and the fourth step we used (\ref{eq:3.7}). This completes the proof for the first estimate of (\ref{eq:priorbound_blockwise}). Similarly, for 
$ G_{\mu \nu}$ with $\mu \neq \nu \in \mathcal{I}_2,$ using (1) of Lemma \ref{lem_schurcomplement} and (\ref{eq:waldid2}), as well as the bound  $\operatorname{Im} m_i =\rO({\phi^{-1/2}}\sigma_i^2 \operatorname{Im} m)$ for all $i \in \mathcal{I}_1$ which follows from (\ref{eq_miform}), we have that on $\Xi$
\begin{align*}
\left|G_{\mu \nu}\right| & \leqslant \left|G_{\mu \mu} G_{\nu \nu }^{(\mu)}\right|\left|\sum_{i,j\in \mathcal{I}_1} x_{\mu i} G_{ij}^{(\mu \nu)} x_{j \nu}\right| \prec  \sigma_{\mu} \sigma_{\nu}\left(\frac{1}{   np    } \sum_{i, j \in \mathcal{I}_1}\left|G_{ij}^{(\mu\nu)}\right|^2\right)^{1 / 2} \\
& \prec \left( \frac{1}{np} \frac{\phi^{1/2}}{\eta} \sum_{k\in \mathcal{I}_1} \operatorname{Im} G_{kk}^{(\mu\nu)}+\frac{1}{np}\sum_{k\in \mathcal{I}_1}((\Sigma^{(\mu \nu)})^2)_{kk} \right)^{1/2}  \prec \sqrt{\frac{\operatorname{Im} m+\phi^{1/2}\Theta_1+\phi^{1/2}\Lambda_{o,12}^2}{ n \eta}}. 
\end{align*}
This completes the proof for the second estimate of (\ref{eq:priorbound_blockwise}). The last estimate on $\mathbf{1}(\Xi) G_{i \mu}$ with $i \in \mathcal{I}_1$ and $\mu \in \mathcal{I}_N$ can be estimated similarly using (3) of Lemma \ref{lem_schurcomplement} and (\ref{eq:waldid3}) in the sense that 
\begin{align*}
	\left|G_{ i\mu}\right| & =|G_{i i} G_{\mu \mu}^{(i)}|\left|-x_{i \mu}+\left(X G^{(i \mu)} X\right)_{i \mu}\right|\prec \left(\frac{1}{   np   } \sum_{\nu\in \mathcal{I}_2,j\in\mathcal{I}_1}\left|G_{\nu j}^{(i\mu)}\right|^2\right)^{1 / 2}.\\
	\prec & \left( \frac{\phi^{1/2}}{np} \sum_{\nu\in\mathcal{I}_2}\frac{\operatorname{Im} G_{\nu\nu}^{(i\mu)}}{\eta}\right)^{1/2}\prec{\frac{\phi^{1/2}\operatorname{Im}m +\phi^{1/2}\Theta_2+\phi^{1/2}\Lambda_{o,2}^2}{p\eta}}=\frac{\operatorname{Im}m +\Theta_2+\Lambda_{o,2}^2}{\sqrt{np}\eta}. 
\end{align*}
This completes the proof of (\ref{eq:priorbound_blockwise}). 

Then we proceed to the control of $Z_s.$ On the one hand, when $s=i \in \mathcal{I}_1,$ we decompose that 
$$
\left|Z_i\right| \leqslant \left|\sum_{\mu\in\mathcal{I}_2}^{(i)}\left(x_{i \mu}^2-\frac{1}{\sqrt{np}}\right) G_{\mu\mu}^{(i)}\right|+\left|\sum_{\mu \neq \nu}^{(i)} x_{i \mu} G_{\mu \nu}^{(i)} x_{\nu i}\right| .
$$
By Lemma \ref{lem_largedervation} and (\ref{eq:waldid1}), on $\Xi,$ we have that the first term of the above equation can be stochastically dominated by $\left(\sum_\mu^{(i)} (pn)^{-1}\left|G_{\mu \mu}^{(i)}\right|^2\right)^{1 / 2} \prec \frac{1}{\sqrt{p\eta}}$. For the second term, we have
$$
\sum_{\mu, \nu}^{(i)} \frac{1}{np}\left|G_{\mu \nu}^{(i)}\right|^2  \leqslant  \frac{1}{np} \sum_{\mu, \nu}^{(i)} \left|G_{\mu \nu}^{(i)}\right|^2= \frac{1}{np\eta} \sum_\mu^{(i)} \operatorname{Im} G_{\mu\mu}^{(i)} \prec \frac{\operatorname{Im} m+\Theta_2+\Lambda_{o}^2}{p \eta}.
$$
Since $p \gg n,$ this results in 
$$
\left|Z_i\right| \prec \sqrt{\frac{\operatorname{Im} m+\Theta+\Lambda_o^2}{p \eta}} \prec \Psi_{\Theta,2}.
$$
Similarly, for $s=\mu \in \mathcal{I}_2,$ we can show that  
%
%$$
%\left|Z_\mu\right| \leqslant \left|\sum_{i\in\mathrm{I}_1}^{(\mu)}\left(\left|X_{i \mu}\right|^2-\frac{1}{\sqrt{np}}\right) G_{ii}^{(\mu)}\right|+\left|\sum_{i \neq j}^{(i)} X_{i \mu} G_{ij}^{(\mu)} X_{j\mu}\right| .
%$$
%$$
%\begin{aligned}
%	&\mathbf{1}(\Xi)\frac{1}{ {\color{black}  np}    } \sum_{i, j \in \mathcal{I}_1}\left|G_{ij}^{(\mu)}\right|^2\le
%	\frac{1}{np} \frac{C\|X^{\top}X\|}{\eta}\operatorname{Im} \sum_{v\in \mathcal{I}_1}G_{vv}^{(\mu)}+\frac{1}{np}2\sum_{v\in \mathcal{I}_1}(\Sigma^2)_{vv}^{(\mu\nu)}\\ 
%	&\prec{\color{blue} \frac{\operatorname{Im} m+\phi^{1/2}\Theta_1+\Lambda_o^2}{ n \eta}},
%\end{aligned}
%$$
%
$$|Z_\mu|\prec \sqrt{\frac{\operatorname{Im} m+\phi^{1/2}\Theta_1+\Lambda_o^2}{n\eta}} \prec \Psi_{\Theta,2}.$$
This completes the proof of (\ref{eq:seq}).

Finally, for the proof of  (\ref{eq:inil}), the argument is similar except we need to use the bound $\| G^{(S)} \|=\rO(1)$ which follows from Lemma \ref{lem_someelementarybound} and $\eta \geq 1$ whenever it is necessary instead of (\ref{eq)nativevound}). This completes our proof. 

\end{proof}

The following estimate is an analog of \cite[Lemma A.10]{DY} or \cite[Lemma 5.3]{AILL}. 
\begin{lem}
Suppose the assumptions of Proposition \ref{weaklocallaw} hold. 
	 Denote
	\begin{equation}\label{eq_Z1Z2definition}
	[Z]_2:=\frac{1}{n} \sum_{\mu \in \mathcal{I}_2} Z_\mu, \quad[Z]_1:=\frac{\phi^{1/2}}{p} \sum_{i \in \mathcal{I}_1} \frac{\sigma_i^2}{\left(1+\phi^{-1/2}m_{2n} \sigma_i\right)^2} Z_i.
	\end{equation}
	Then for $z \in \mathbf{S},$ we have for $f$ defined in \eqref{eq:discrete}
	\begin{equation}\label{eq_bound2xi}
	\mathbf{1}(\Xi)\left(f\left(m_{2n}\right)-z\right)=\mathbf{1}(\Xi)\left([Z]_2+[Z]_1+\mathrm{O}_{\prec}\left(\Psi_{\Theta,2}^2\right)\right),
	\end{equation}
	and 
		\begin{equation}\label{eq_bound2xi2}
	\mathbf{1}(\eta \geq 1)\left(f\left(m_{2n}\right)-z\right)=\mathbf{1}(\eta \geq 1)\left([Z]_2+[Z]_1+\mathrm{O}_{\prec}\left(n^{-1}\right)\right).
	\end{equation}
\end{lem}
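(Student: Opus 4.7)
The plan is to derive (\ref{eq_bound2xi}) by combining the two Schur-complement identities (\ref{eq_firstcontrol}) and (\ref{eq_secondcontrol}) into a single perturbed version of the self-consistent equation $z = f(m)$, with the fluctuations $[Z]_1 + [Z]_2$ appearing as the forcing term. Throughout I work on the event $\Xi$ of (\ref{eq_probabilityevent}), which provides the a priori sizes $|G_{tt}^{(S)}| \asymp \sigma_t$ and $|G_{\mu\mu}|\asymp 1$ as well as the bounds $|Z_s| + \Lambda_o \prec \Psi_{\Theta,2}$ from Lemma \ref{lem_priorcontrol}.

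First I would process the $\mathcal{I}_1$-diagonal entries by rewriting (\ref{eq_firstcontrol}) using the minor-trace identity $n^{-1}\operatorname{Tr}G_2^{(i)} = m_{2n} + \mathrm{O}_{\prec}(\phi^{-1/2}\Psi_{\Theta,2}^2)$, which follows from (\ref{eq:3.7}) together with the Ward-type identity $\sum_{\mu\in\mathcal{I}_2} |G_{i\mu}|^2 = \eta^{-1}\operatorname{Im}G_{ii}$ and the bound $\operatorname{Im}G_{ii} = \mathrm{O}(\phi^{-1/2}(\operatorname{Im}m + \Lambda))$ available on $\Xi$. This gives
\begin{equation*}
1/G_{ii} = -1/\sigma_i - \phi^{-1/2}m_{2n} - Z_i + \mathrm{O}_{\prec}(\phi^{-1}\Psi_{\Theta,2}^2).
\end{equation*}
Introducing $D_i := 1+\phi^{-1/2}\sigma_i m_{2n}$ and $\widetilde m_i := -\sigma_i/D_i$, I invert and Taylor-expand on $\Xi$ to obtain
\begin{equation*}
G_{ii} \;=\; \widetilde m_i + \frac{\sigma_i^2 Z_i}{D_i^2} + \mathrm{O}_{\prec}\!\left(\phi^{-1}\Psi_{\Theta,2}^2 + Z_i^2\right),
\end{equation*}
and average against the weight $\phi^{1/2}/p$ to conclude
\begin{equation*}
\frac{\phi^{1/2}}{p}\sum_{i\in\mathcal{I}_1} G_{ii} \;=\; \frac{\phi^{1/2}}{p}\sum_i \widetilde m_i + [Z]_1 + \mathrm{O}_{\prec}(\Psi_{\Theta,2}^2).
\end{equation*}

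Substituting this into (\ref{eq_secondcontrol}) (the discrepancy $p^{-1}\sum_i (G_{ii}^{(\mu)}-G_{ii})$ from (\ref{eq:3.7}) contributes only $\mathrm{O}_{\prec}(\Psi_{\Theta,2}^2)$ after multiplication by $\phi^{1/2}$, by (\ref{eq:waldid3}) and $|G_{\mu\mu}|\asymp 1$) produces
\begin{equation*}
-1/G_{\mu\mu} \;=\; A + B_\mu, \quad A := z + \tfrac{\phi^{1/2}}{p}\sum_i\widetilde m_i, \quad B_\mu := [Z]_1 + Z_\mu + \mathrm{O}_{\prec}(\Psi_{\Theta,2}^2),
\end{equation*}
and a direct calculation using the explicit form of $\widetilde m_i$ and (\ref{eq:discrete}) yields the crucial algebraic identity
\begin{equation*}
A \;=\; z - f(m_{2n}) - 1/m_{2n}, \qquad\text{so that}\qquad m_{2n}A + 1 \;=\; -m_{2n}\bigl(f(m_{2n}) - z\bigr).
\end{equation*}
Since $|A|,|m_{2n}|\asymp 1$ and $|B_\mu|\prec\Psi_{\Theta,2}$ on $\Xi$, the geometric series $G_{\mu\mu} = -A^{-1}+A^{-2}B_\mu+\mathrm{O}(A^{-3}B_\mu^2)$ converges; averaging over $\mu\in\mathcal{I}_2$ and invoking $n^{-1}\sum_\mu B_\mu = [Z]_1+[Z]_2 + \mathrm{O}_{\prec}(\Psi_{\Theta,2}^2)$ together with $n^{-1}\sum_\mu B_\mu^2 = \mathrm{O}_{\prec}(\Psi_{\Theta,2}^2)$ yields $m_{2n} + 1/A = ([Z]_1+[Z]_2)/A^2 + \mathrm{O}_{\prec}(\Psi_{\Theta,2}^2)$. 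Multiplying by $A$ and substituting the algebraic identity gives $-m_{2n}(f(m_{2n})-z) = ([Z]_1+[Z]_2)/A + \mathrm{O}_{\prec}(\Psi_{\Theta,2}^2)$; the a priori bound $|m_{2n}A+1|\leq (\log n)^{-1}$ inherited from $\Xi$ then lets me write $-1/(m_{2n}A) = 1+\mathrm{O}(|m_{2n}A+1|)$ and absorb the resulting cross-term $\mathrm{O}((\log n)^{-1})\cdot([Z]_1+[Z]_2)$ via one short iteration, yielding (\ref{eq_bound2xi}). The variant (\ref{eq_bound2xi2}) follows by the same scheme but with the $\eta\geq 1$ estimates of Lemma \ref{lem_priorcontrol} and the deterministic bound $\|G\|=\mathrm{O}(1)$ from Lemma \ref{lem_someelementarybound}, which replace every $\Psi_{\Theta,2}^2$ error by $\mathrm{O}_{\prec}(1/n)$.

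The hard part will be the careful bookkeeping of $\phi$-factors in the high-aspect-ratio regime $\phi\to\infty$: the minor-trace discrepancies pick up factors of $\phi^{1/2}$ through (\ref{eq:waldid3}) and (\ref{eq:waldid4}) that must cancel precisely against the $\phi^{-1/2}$ weighting in $\widetilde m_i$ and in $[Z]_1$, while the $Z_i^2$ contribution to the Taylor remainder — a priori of size $\phi^{1/2}\Psi_{\Theta,2}^2$ from a naive bound — must be shown to be absorbed into $\mathrm{O}_{\prec}(\Psi_{\Theta,2}^2)$ by exploiting the averaging over $i$ together with the $\phi^{-1/2}$-smallness of $\operatorname{Im}G_{ii}$ from the first block. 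Verifying these cancellations uniformly over the whole spectral domain $\mathbf{S}$ forces repeated use of the edge/bulk scaling $\operatorname{Im}m\asymp\sqrt{\kappa+\eta}$, $\operatorname{Im}m\asymp\eta/\sqrt{\kappa+\eta}$ from Lemma \ref{lem:mphizabs} so that each error truly falls below $\Psi_{\Theta,2}^2$.
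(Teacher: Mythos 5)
Your proposal is correct and reaches the same endpoint, but it is organized slightly less efficiently than the paper. The paper's route is: (i) Taylor-expand $1/G_{\mu\mu}$ around $1/m_{2n}$ and average over $\mu$, so that the linear term $n^{-1}\sum_\mu(G_{\mu\mu}-m_{2n})$ vanishes \emph{identically} by definition of $m_{2n}$, giving $n^{-1}\sum_\mu 1/G_{\mu\mu} = 1/m_{2n} + \mathrm{O}_\prec(\Psi_{\Theta,2}^2)$; (ii) independently compute the same average via the Schur complement, which after inserting the expansion of $G_{ii}$ produces $-z + f(m_{2n}) + 1/m_{2n} - [Z]_1 - [Z]_2 + \mathrm{O}_\prec(\Psi_{\Theta,2}^2)$; (iii) equate. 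No cross-term ever arises. You instead invert $1/G_{\mu\mu} = -(A+B_\mu)$, expand $G_{\mu\mu}$ geometrically around $-A^{-1}$ (which is close to but not equal to $m_{2n}$), and average; because the linear term does not cancel, you are left with the factor $-1/(m_{2n}A)$ multiplying $[Z]_1+[Z]_2$ and must rearrange and iterate once to replace it by $1 + \mathrm{O}_\prec(\Psi_{\Theta,2})$. Both routes work, and your algebraic identity $m_{2n}A + 1 = -m_{2n}(f(m_{2n})-z)$ is exactly the observation that makes the iteration close; you also correctly flag the two real technical points (the $\phi$-factor bookkeeping in the minor-trace discrepancies and the fact that $\phi^{1/2}p^{-1}\sum_i Z_i^2$ requires the sharper block bound $|Z_i|\prec\Psi_{\Theta,1}=\phi^{-1/2}\Psi_{\Theta,2}$, not merely $\prec\Psi_{\Theta,2}$). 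The paper's version buys you one fewer inversion and avoids the iteration, at no extra cost; yours is marginally more explicit about where $A$ and $f(m_{2n})$ enter.
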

\begin{proof}
As before, due to similarity, we focus our proof on (\ref{eq_bound2xi}). Recall that $m_{2n}=n^{-1} \sum_{\mu \in \mathcal{I}_2}G_{\mu \mu}(z).$ Using $G_{\mu \mu}=m_{2n}+\left(G_{\mu \mu}-m_{2n}\right),$ we see that
\begin{equation}\label{eq_spectraldecomposition} 
\frac{1}{G_{\mu \mu}}=\frac{1}{m_{2n}}-\frac{1}{m_{2n}^2}\left(G_{\mu \mu}-m_{2n}\right)+\frac{1}{m_{2n}^2}\left(G_{\mu \mu}-m_{2n}\right)^2 \frac{1}{G_{\mu \mu}} .
\end{equation} 
Moreover, according to (\ref{eq_secondcontrol}), using Lemma \ref{lem_priorcontrol} and  (\ref{eq:3.7}), we see that  
\begin{equation}\label{eq:1Gmu}
	\mathbf{1}(\Xi) \frac{1}{G_{\mu \mu}}=\mathbf{1}(\Xi)\left(-z-\frac{\phi^{1/2}}{p} \sum_{i \in \mathcal{I}_1} G_{i i}-Z_\mu+\mathrm{O}_{\prec}\left({\phi^{1/2}}\Psi_{\Theta,{12}}^2\right)\right),
\end{equation}
and 
\begin{equation}\label{eq_ccccccccccccc}
\mathbf{1}(\Xi)\left|G_{\mu \mu}-m_{2n}\right| \prec \Psi_{\Theta,2}. 
\end{equation}
Similarly, according to (\ref{eq_firstcontrol}), we have that 
\begin{equation}\label{eq:1Gmi}
	\mathbf{1}(\Xi) G_{i i}=\mathbf{1}(\Xi) \frac{-\sigma_i}{1+\phi^{-1/2}m_{2n} \sigma_i+\sigma_i Z_i+\mathrm{O}_{\prec}\left(\sigma_i \phi^{-1/2}\Psi_{\Theta,{12}}^2\right)} .
\end{equation}

On the one hand, combining (\ref{eq_spectraldecomposition}) and (\ref{eq_ccccccccccccc}), we see that 
\begin{equation}\label{eq_formone}
\mathbf{1}(\Xi) \frac{1}{n} \sum_{\mu \in \mathcal{I}_2} \frac{1}{G_{\mu \mu}}=\mathbf{1}(\Xi) \frac{1}{m_{2n}}+\mathrm{O}_{\prec}\left(\Psi_{\Theta,2}^2\right) .
\end{equation}
On the other hand,  plugging \eqref{eq:1Gmi} into \eqref{eq:1Gmu}, we see that 
$$\begin{aligned}
	&\mathbf{1}(\Xi)\frac{1}{n} \sum_{\mu \in \mathcal{I}_2} \frac{1}{G_{\mu \mu}} \\ 
	=&\mathbf{1}(\Xi)\left(-z+\frac{\phi^{1/2}}{p} \sum_{i \in \mathcal{I}_1} \frac{\sigma_i}{1+\phi^{-1/2}m_{2n} \sigma_i+\sigma_i Z_i+\mathrm{O}_{\prec}\left(\sigma_i \phi^{1/2}\Psi_{\Theta,12}^2\right)}-\frac{1}{n} \sum_{\mu \in \mathcal{I}_2} Z_\mu+\mathrm{O}_{\prec}\left(\Psi_{\Theta,2}^2\right)\right) .
\end{aligned}
$$
Moreover, using (\ref{eq)nativevound}) and the assumption $p \gg n$, we see that $1+\phi^{-1/2} m_{2n} \sigma_i \asymp 1.$
Therefore, together with (\ref{eq_formone}), we can conclude the proof of (\ref{eq_bound2xi}). 

The proof of (\ref{eq_bound2xi2}) is similar as discussed in the end of the proof of Lemma \ref{lem_priorcontrol}. We omit further details.  

\end{proof}

The following lemma provides the stability condition for  \eqref{eq:lsd}. Roughly, it says that if $f(m_{2n})-z$ is small and $m_{2n}(\tilde{z})-m(\tilde{z})$ is small for $\tilde{z}:=z+\mathrm{i}{n^{-5}} $, then $m_{2n}(z)-m(z)$ is small. For $z \in \mathbf{S},$ we introduce the discrete set
$$
L(z):=\{z\} \cup\left\{w \in \mathbf{S}: \operatorname{Re} w=\operatorname{Re} z, \operatorname{Im} w \in[\operatorname{Im} z, 1] \cap\left(n^{-5}{ \phi^{-1/2}} \mathbb{N}\right)\right\} .
$$
Thus, if $\operatorname{Im} z \geqslant 1$ then $L(z)=\{z\}$ and if $\operatorname{Im} z \leqslant 1$ then $L(z)$ is a one-dimensional lattice with spacing $n^{-5}{\phi^{-1/2}}$ plus the point $z$. Clearly, we have the bound $|L(z)| \leqslant n^5{\phi^{1/2}}$.
\begin{lem}\label{lem_stabilitybound}
Suppose that $\delta: \mathbf{S} \rightarrow(0, \infty)$ satisfies $n^{-2} \leqslant \delta(z) \leqslant(\log n)^{-1}$ for $z \in \mathbf{S}$ and that $\delta$ is Lipschitz continuous with Lipschitz constant $n^2{\phi^{1/2} }$. Suppose moreover that for each fixed $E$, the function $\eta \mapsto \delta(E+\mathrm{i} \eta)$ is nonincreasing for $\eta>0$. Suppose that $u: \mathbf{S} \rightarrow \mathbb{C}$ is the Stieltjes transform of a probability measure supported in $[0, C]$ for some large enough constant $C>0$. Let $w \in L(z)$ and suppose that
	$$
	|f(u(w))-w| \leqslant \delta(w) .
	$$
Then we have that
	\begin{equation}\label{eq:stability}
		|u(z)-m(z)| \leqslant \frac{C' \delta(z)}{\operatorname{Im} m(z)+\sqrt{\delta(z)}},
	\end{equation}
	for some constant $C'>0$ independent of $z$ and $n.$ 
\end{lem}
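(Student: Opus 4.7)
My plan is to prove \eqref{eq:stability} by combining a Taylor expansion of $f$ at $m(w)$ with a bootstrap/continuity argument along the discrete lattice $L(z)$. This is the standard route for stability bounds of self-consistent equations in random matrix theory, adapted here to the regime $p \gg n$.

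First, since $f(m(w)) = w$ by definition \eqref{eq_otherformoflsd}, and $f$ is smooth on a neighborhood of $m(w)$ with $f''$ uniformly bounded (by Lemma \ref{lem_asymptoticlaw} and the bounds on $\sigma_i$ in Assumption \ref{assum:XSigma}), I would Taylor-expand
\begin{equation*}
f(u(w)) - w = f'(m(w))\bigl(u(w)-m(w)\bigr) + \mathrm{O}\bigl(|u(w)-m(w)|^2\bigr).
\end{equation*}
Combined with the hypothesis $|f(u(w))-w| \le \delta(w)$, this yields the quadratic dichotomy
\begin{equation*}
\bigl|f'(m(w))\bigr|\cdot|u(w)-m(w)| \;\le\; \delta(w) + C\,|u(w)-m(w)|^2.
\end{equation*}
The next ingredient is a lower bound on $|f'(m(w))|$. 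Differentiating $z = f(m(z))$ gives $f'(m) = 1/m'(z)$, so by \eqref{eq_derivativecontrol}, $|f'(m(w))| \asymp \sqrt{\kappa(w)+\eta_w}$. Comparing with \eqref{eq_squarerootbehavior} shows $|f'(m(w))| \gtrsim \operatorname{Im} m(w)$ across all of $\mathbf{S}$. Solving the quadratic inequality then produces two branches: either the ``small solution'' $|u(w)-m(w)| \le C\delta(w)/(\operatorname{Im} m(w) + \sqrt{\delta(w)})$, which is exactly the bound we want, or the ``large solution'' $|u(w)-m(w)| \gtrsim \operatorname{Im} m(w) + \sqrt{\delta(w)}$, separated from the first by a gap of comparable size.

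Second, I would use a continuity/bootstrap along $L(z)$ to rule out the large-solution branch at $w=z$. At the top of the lattice, where $\operatorname{Im} w \asymp 1$, both $u(w)$ and $m(w)$ are of order one and their difference is small, so $u(w)$ lies in the small-solution branch. Moving down the lattice in steps of size $n^{-5}\phi^{-1/2}$, the elementary Stieltjes-transform bound $|\partial_\eta u|, |\partial_\eta m| \le \eta^{-2} \le n^2\phi$ forces $|u(w)-m(w)|$ to vary by at most $n^{-3}\phi^{-1/2}$ between adjacent lattice points; the assumed Lipschitz constant $n^2\phi^{1/2}$ on $\delta$ and its monotonicity in $\eta$ guarantee that the gap between the two branches does not collapse faster than $|u-m|$ can change. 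Hence $u(w)$ remains in the small branch for all $w \in L(z)$ with $\operatorname{Im} w \geq \operatorname{Im} z$, and in particular at $w=z$, which gives \eqref{eq:stability}.

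The main obstacle I anticipate is the careful verification of the lower bound $|f'(m(w))| \gtrsim \operatorname{Im} m(w) + \sqrt{\delta(w)}$ uniformly in $\mathbf{S}$, because near the edges $\gamma_\pm$ the derivative $f'(m)$ degenerates with the same square-root rate as $\operatorname{Im} m$; this requires invoking the critical-point structure from Lemma \ref{lem_proofgloballawresult} and blending the bulk and edge regimes. Once that dichotomy is in hand, the continuity argument is essentially routine, provided the lattice spacing $n^{-5}\phi^{-1/2}$ was chosen small enough precisely so that no single step can carry $u(w)$ across the spectral gap between the branches.
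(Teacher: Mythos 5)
Your sketch follows exactly the route the paper intends: the paper's own ``proof'' is just a citation to Lemma~4.5 of \cite{LL} and Appendix~A.2 of \cite{AILL}, and the Taylor expansion of $f$ at $m(w)$, the resulting quadratic dichotomy using $|f'(m)|=1/|m'|\asymp\sqrt{\kappa+\eta}$ together with Lemma~\ref{lem:mphizabs}, and the continuity bootstrap along $L(z)$ is precisely the argument carried out there, with Lemma~\ref{lem_proofgloballawresult} supplying the single-interval/edge structure needed in the near-edge regime $\kappa+\eta\lesssim\delta$. Minor imprecisions (e.g.\ the $\phi$ factor you attach to the crude bound $|\partial_\eta u|\le\eta^{-2}$ is not there; one simply gets $\eta^{-2}\le n^{2-2\tau}$, which is already enough against the lattice spacing $n^{-5}\phi^{-1/2}$) do not affect the structure of the argument.
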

\begin{proof}
This lemma can proved with the same method as in e.g. \cite[Lemma 4.5]{LL} and \cite[Appendix A.2]{AILL}. The main inputs are Lemmas \ref{lem_proofgloballawresult} and \ref{lem:mphizabs} and some estimates in their proofs. We omit further details. 
\end{proof}

Armed with the above results, we can follow \cite{LL, DY,AILL} to complete the proof.

\begin{proof}[\bf Proof of Proposition \ref{weaklocallaw}]
When $\eta \geq 1,$ using the above controls and Lemma \ref{lem:mphizabs}, following the same arguments as in \cite[Lemma 4.6]{LL} or \cite[(A.44)-(A.46)]{DY}, we find that  $\Lambda \prec n^{-1/4}. $ It remains to deal with the small $\eta$ case. One can prove this lemma using a continuity argument as in e.g. \cite[Section 4.1]{LL}. The key inputs are Lemmas \ref{lem_priorcontrol}--\ref{lem_stabilitybound}, and the estimates in the $\eta \geq 1$ case. All
the other parts of the proof are essentially the same. We omit the details. 
\end{proof}

\subsubsection{Proof of Proposition \ref{prop_reducedresult}}

To get strong laws as in Proposition \ref{prop_reducedresult}, we need stronger bounds on $[Z]_2$ and $[Z]_1$ in (\ref{eq_Z1Z2definition}). The following lemma is a counterpart of Lemma A.13 of \cite{DY} or Lemma 5.6 of \cite{AILL} or Lemma 4.9 of \cite{LL}. Due to similarity, we only sketch the proof. 

\begin{lem}\label{lem:fa}
Suppose the assumptions of Proposition \ref{prop_reducedresult} hold. Moreover, if $\Upsilon$ is a positive, $n$-dependent, deterministic function on $\mathbf{S}$ satisfying $n^{-1 / 2} \leqslant \Upsilon \leqslant n^{-c}$ for some constant $c>0$. In addition, if we assume that that $\Lambda_{2} \prec n^{-c}$ and $\Lambda_{o,2} \prec \Upsilon$ on $\mathbf{S}$. Then on $\mathbf{S},$ we have
	$$
	\frac{1}{n} \sum_{\mu \in \mathcal{I}_2}\left(1-\mathbb{E}_\mu\right) \frac{1}{G_{\mu \mu}}=\mathrm{O}_{\prec}\left(\Upsilon^2\right).
	$$
	Similarly, for some other positive deterministic function on $\mathbf{S}$ satisfying  $p^{-1 / 2} \leqslant \widetilde{\Upsilon} \leqslant p^{-c}$, suppose that $\Lambda_{1} \prec p^{-c}$ and $\Lambda_{o,1} \prec \widetilde{\Upsilon}$ on $\mathbf{S}$. Then we have 
	$$
	\frac{1}{p} \sum_{i \in \mathcal{I}_1} \frac{\sigma_i^2}{\left(1+\phi^{-1/2}m_{2n} \sigma_i\right)^2}\left(1-\mathbb{E}_i\right) \frac{1}{G_{i i}}=\mathrm{O}_{\prec}\left(\widetilde{\Upsilon}^2\right) .
	$$
\end{lem}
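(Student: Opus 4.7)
\textbf{Proof plan for Lemma \ref{lem:fa}.} This is a fluctuation averaging estimate of the type pioneered in \cite{erdHos2017dynamical}, and I would follow the high-moment strategy used in \cite[Section 4.3]{AILL} or \cite[Lemma 4.9]{LL}, adapted to the block structure imposed by the scaling $p \gg n$. The two statements are symmetric in the sense that one averages over $\mathcal{I}_2$-indices in the first and over $\mathcal{I}_1$-indices (with the bounded weights $\sigma_i^2/(1+\phi^{-1/2}m_{2n}\sigma_i)^2$) in the second, so I would prove the first one in detail and reduce the second to it by conditioning on rows of $X$ rather than columns.

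The plan is to bound, for arbitrary $p \in \mathbb{N}$, the moment
\begin{equation*}
M_{2p} \;:=\; \mathbb{E} \Big| \frac{1}{n} \sum_{\mu \in \mathcal{I}_2} Q_{\mu} G_{\mu\mu}^{-1} \Big|^{2p}, \qquad Q_\mu := 1 - \mathbb{E}_\mu,
\end{equation*}
and show $M_{2p} \prec \Upsilon^{4p}$; the claim then follows from Markov's inequality and the arbitrariness of $p$. Expanding the modulus, $M_{2p}$ is a sum over tuples $\bm{\mu} = (\mu_1, \ldots, \mu_{2p}) \in \mathcal{I}_2^{2p}$ of expectations of a product $\prod_{k=1}^{2p} Q_{\mu_k} Y^{(k)}_{\mu_k}$, where $Y^{(k)}_{\mu_k}$ is $G_{\mu_k \mu_k}^{-1}$ or its conjugate. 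For each tuple I would use the minor expansion from item (4) of Lemma \ref{lem_schurcomplement}, iterated over $T_k := \{\mu_1,\ldots,\mu_{2p}\}\setminus\{\mu_k\}$, to write
\begin{equation*}
\frac{1}{G_{\mu_k \mu_k}} \;=\; \frac{1}{G_{\mu_k \mu_k}^{(T_k)}} \;+\; (\text{sum of terms each carrying} \geq 2 \text{ off-diagonal factors } G^{(S)}_{s\nu}),
\end{equation*}
with $|S| \leq 2p$. Since $G^{(T_k)}_{\mu_k \mu_k}$ is independent of column $\mu_k$ of $X$ \emph{and} of all columns in $T_k$, the fully-minored contributions cancel once all of the $Q_{\mu_k}$'s are distributed: any leftover factor of the form $Q_{\mu_k} F$ with $F$ independent of column $\mu_k$ vanishes under expectation.

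What survives the cancellation is a combinatorial sum of terms, each of which contains at least $4p$ off-diagonal resolvent entries $G^{(S)}_{s\nu}$ with $s\neq\nu$; by the hypothesis $\Lambda_{o,2} \prec \Upsilon$ together with a short minor-interlacing argument (using \eqref{eq:3.7} to compare $G^{(S)}$ with $G$ for $|S|\leq 2p$ on the event $\Lambda_2 \prec n^{-c}$), each such factor is $\prec \Upsilon$. Counting the $n^{2p}$ tuples against the $1/n^{2p}$ prefactor then yields $M_{2p} \prec \Upsilon^{4p}$, as desired. The only genuinely nontrivial step — and the one I expect to be the main obstacle — is verifying that \emph{every} surviving term really carries at least $4p$ off-diagonal factors and not merely $2p$; this is handled by the graph-theoretic accounting of \cite[Lemma 4.9]{LL} or \cite[Lemma 5.6]{AILL}, in which one associates each expansion to a bipartite multigraph on $\{\mu_1,\ldots,\mu_{2p}\}$ and argues that each $Q_{\mu_k}$ forces the local degree of $\mu_k$ to be at least two.

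For the second statement, the argument is structurally identical with $\mathcal{I}_1$ replacing $\mathcal{I}_2$: one uses the Schur-complement identity (2) of Lemma \ref{lem_schurcomplement} for $1/G_{ii}$, expands in minors indexed by a tuple in $\mathcal{I}_1^{2p}$, and carries the deterministic weights $\sigma_i^2/(1+\phi^{-1/2}m_{2n}\sigma_i)^2$ — which are $\mathrm{O}(1)$ uniformly on $\mathbf{S}$ by Lemma \ref{lem:mphizabs} and the assumption $\Lambda_1 \prec p^{-c}$ — through each estimate. The same graph-counting argument, with the hypothesis $\Lambda_{o,1} \prec \widetilde{\Upsilon}$, gives $\widetilde{\Upsilon}^{4p}$ after averaging, completing the proof.
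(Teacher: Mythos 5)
Your high-moment fluctuation-averaging plan is the right general strategy and matches the route the paper takes (it simply refers to Lemma 4.9 of \cite{LL}, its Appendix B, and Lemma 5.6 of \cite{AILL} for the bulk of the argument). However, there is one genuine gap in your treatment of the second estimate, and it is precisely the one step the paper's proof spends its only nontrivial line on.

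You describe the weights $\sigma_i^2/(1+\phi^{-1/2}m_{2n}\sigma_i)^2$ as ``deterministic'' and propose to carry them through the moment expansion. They are not deterministic: $m_{2n}(z)=\tfrac{1}{n}\sum_{\mu\in\mathcal I_2}G_{\mu\mu}(z)$ is a random quantity depending on every entry of $X$. The fluctuation-averaging mechanism you outline — distributing the $Q_{\mu_k}$'s and using that a factor $Q_{\mu_k}F$ with $F$ independent of column $\mu_k$ vanishes — requires the coefficients multiplying $Q_i\,G_{ii}^{-1}$ to be nonrandom (or at least independent of row $i$), otherwise the minor expansion also has to resolve the weight and the cancellation structure breaks. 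The paper's fix is to split
$$
\frac{\sigma_i^2}{(1+\phi^{-1/2}m_{2n}\sigma_i)^2}
=\frac{\sigma_i^2}{(1+\phi^{-1/2}m\sigma_i)^2}
-\frac{\sigma_i^2\bigl[\phi^{-1}(m_{2n}^2-m^2)\sigma_i^2+2\phi^{-1/2}(m_{2n}-m)\sigma_i\bigr]}
{(1+\phi^{-1/2}m_{2n}\sigma_i)^2(1+\phi^{-1/2}m\sigma_i)^2},
$$
run the fluctuation-averaging argument only on the first, genuinely deterministic, piece, and bound the second piece crudely: the weak law of Proposition \ref{weaklocallaw} gives $|m_{2n}-m|\prec(n\eta)^{-1/4}$, the bracket is therefore $\prec\phi^{-1/2}(n\eta)^{-1/4}$, and multiplying by the trivial entrywise bound $(1-\mathbb{E}_i)G_{ii}^{-1}\prec\Psi_{\Theta,2}$ already yields a term of order $\phi^{-1/2}(n\eta)^{-1/2}$, which is absorbed into $\widetilde{\Upsilon}^2$. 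Without this splitting, your plan as written does not close. The first estimate (weight $\equiv 1$) is unaffected and your sketch is fine there.
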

%Proof. The first estimate is a trivial extension of the classic cumulant expansion scheme. The second estimate may be proved using exactly the same method. Note further that with the weak bounds for $m_N(z)$, we have that $\frac{\sigma_i^2}{\left(1+\phi^{-1/2}m_N \sigma_i\right)^2}= \sigma_i^2-{\color{blue}2\phi^{-1/2}m_N}\sigma_i^3+3{\color{blue}  \phi^{-1}m_N^2\sigma_i^4}+\cdots$ {\color{red} *}. Running the same argument over $
%\frac{1}{M} \sum_{i \in \mathcal{I}_M}{\sigma_i^k}\left(1-\mathbb{E}_i\right) \frac{1}{G_{i i}}=\mathrm{O}_{\prec}\left(\bar{\Upsilon}^2\right), k=2,3,4,\ldots
%$ is trivial. Further, note that though the factors $\left(\frac{m_N}{\sqrt{\phi}}\right)^k$ are random, they are common factors and we have $|m_N|\le |m_N-m|+|m|\prec (N\eta)^{-1/4} +C$. Therefore the result follows. \qed
\begin{proof}
As discussed in the proof of Lemma 5.6 of \cite{AILL}, the proof of the first estimate follows from a straightforward extension of Lemma 4.9 of \cite{LL}. For the second estimate, it can be proved similarly using the discussions as explained in Appendix B of \cite{LL}. The only complication is the term $$\frac{\sigma_i^2}{\left(1+\phi^{-1/2}m_{2n} \sigma_i\right)^2}= \frac{\sigma_i^2}{\left(1+\phi^{-1/2}m \sigma_i\right)^2}-\frac{\sigma_i^2[\phi^{-1}(m_{2n}^2-m^2)\sigma_i^2+2\phi^{-1/2}(m_{2n}-m)\sigma_i]}{(1+\phi^{-1/2}m_{2n}\sigma_i)^2(1+\phi^{-1/2}m\sigma_i)^2}.$$
On the one hand, running the argument on $
	\frac{1}{p}\sum_{i\in\mathcal{I}_1}\frac{\sigma_i^2}{\left(1+\phi^{-1/2}m \sigma_i\right)^2}\left(1-\mathbb{E}_i\right) \frac{1}{G_{i i}}=\mathrm{O}_{\prec}\left(\widetilde{\Upsilon}^2\right)
	$ is a straightforward extension of the proof of the first estimate. On the other hand, for the second term, since we have $|m_{2n}-m|\prec (n\eta)^{-1/4}$ from Proposition \ref{weaklocallaw},  we  see that  $$\frac{\sigma_i^2[\phi^{-1}(m_{2n}^2-m^2)\sigma_i^2+2\phi^{-1/2}(m_{2n}-m)\sigma_i]}{(1+\phi^{-1/2}m_{2n}\sigma_i)^2(1+\phi^{-1/2}m\sigma_i)^2}\prec \phi^{-1/2}(n\eta)^{-1/4}.$$ Using Proposition \ref{weaklocallaw}, we see that  
	$$ \frac{1}{p} \sum_{i \in \mathcal{I}_1} \frac{\sigma_i^2[\phi^{-1}(m_{2n}^2-m^2)\sigma_i^2+2\phi^{-1/2}(m_{2n}-m)\sigma_i]}{(1+\phi^{-1/2}m_{2n}\sigma_i)^2(1+\phi^{-1/2}m\sigma_i)^2} (1-\mathbb{E}_i) \frac{1}{G_{ii}} \prec  \phi^{-1/2} (n \eta)^{-1/2}.$$ This completes the proof. For more details, we refer the readers to \cite[Appendix B]{LL}, \cite[Lemma 5.6]{AILL} and \cite[Lemma A.13]{DY}. 
\end{proof}
Armed with Lemma \ref{lem:fa}, we can prove the following iterative estimates which is an analog of Lemma 4.8 of \cite{LL}.

\begin{lem}\label{lem_recursiveestimate}
	 Let $t \in(0,1)$ and suppose that $	\Theta \prec(n \eta)^{-t}	$
	uniformly in $z \in \mathbf{S}$. Then we have that uniformly in $z \in \mathbf{S}$
	$$
	|[Z]_1|+|[Z]_2| \prec \frac{\operatorname{Im} m+(n \eta)^{-t}}{n \eta}. 
	$$
\end{lem}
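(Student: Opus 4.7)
The plan is to reduce the estimates on $[Z]_1$ and $[Z]_2$ to the two statements of the fluctuation-averaging Lemma~\ref{lem:fa}, feeding in the improved off-diagonal control $\Lambda_o \prec \Psi_{\Theta,2}$ made available by the hypothesis $\Theta \prec (n\eta)^{-t}$. First I would observe, using the Schur-complement identity \eqref{eq_secondcontrol}, that $z + \frac{\phi^{1/2}}{p}\operatorname{Tr} G_1^{(\mu)}$ is $\mathbb{E}_\mu$-measurable, so that
\begin{equation*}
Z_\mu \;=\; -\,\bigl(1-\mathbb{E}_\mu\bigr)\frac{1}{G_{\mu\mu}},
\end{equation*}
and hence $[Z]_2 = -\frac{1}{n}\sum_{\mu\in\mathcal{I}_2}(1-\mathbb{E}_\mu)\,G_{\mu\mu}^{-1}$. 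Entirely analogously, \eqref{eq_firstcontrol} gives $Z_i = -(1-\mathbb{E}_i)\,G_{ii}^{-1}$, so $[Z]_1$ is exactly $-\phi^{1/2}/p$ times the weighted sum to which the second assertion of Lemma~\ref{lem:fa} applies (the $m_{2n}$-dependent weights are the same in both expressions).

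Next I would verify the hypotheses of Lemma~\ref{lem:fa}. The weak entrywise law (Proposition~\ref{weaklocallaw}) supplies $\Lambda \prec (n\eta)^{-1/4}$ uniformly on $\mathbf{S}$, which in particular gives $\Lambda_{1},\Lambda_{2} \prec n^{-c}$ for some $c>0$ and shows that the event $\Xi$ in \eqref{eq_probabilityevent} holds with high probability. This lets me invoke Lemma~\ref{lem_priorcontrol} on $\Xi$ to conclude $\Lambda_o \prec \Psi_{\Theta,2}$. Because the assumption $\Theta \prec (n\eta)^{-t}$ makes $\Psi_{\Theta,2}^2 \asymp (\operatorname{Im} m + (n\eta)^{-t})/(n\eta)$ into a deterministic bound, no circular bootstrap arises: I can legitimately choose $\Upsilon := \Psi_{\Theta,2}$ and $\widetilde{\Upsilon} := \phi^{-1/2}\Psi_{\Theta,2}$, both of which sit in the admissible range $[n^{-1/2},n^{-c}]$ (respectively $[p^{-1/2},p^{-c}]$) for $z\in\mathbf{S}$, and both of which dominate the corresponding off-diagonal parameters since by the definition \eqref{eq_parameterkeydefinition} one has $\Lambda_{o,2}\le \Lambda_o$ and $\phi^{1/2}\Lambda_{o,1}\le \Lambda_o$.

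With these choices Lemma~\ref{lem:fa} yields $[Z]_2 = \mathrm{O}_{\prec}(\Psi_{\Theta,2}^2)$ directly, and $[Z]_1 = \phi^{1/2}\cdot \mathrm{O}_{\prec}(\widetilde{\Upsilon}^2) = \mathrm{O}_{\prec}(\phi^{-1/2}\Psi_{\Theta,2}^2)$, where crucially the $\phi^{1/2}$ prefactor in $[Z]_1$ is cancelled by the $\phi^{-1}$ coming from $\widetilde{\Upsilon}^2$. Combining these two bounds and using $\Theta \prec (n\eta)^{-t}$ inside $\Psi_{\Theta,2}^2$ gives
\begin{equation*}
|[Z]_1| + |[Z]_2| \;\prec\; \frac{\operatorname{Im} m + \Theta}{n\eta} \;\prec\; \frac{\operatorname{Im} m + (n\eta)^{-t}}{n\eta},
\end{equation*}
as claimed, with uniformity in $z\in\mathbf{S}$ following from the uniformity of all inputs.

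The main obstacle is not the algebra but the careful bookkeeping of the $\phi$-powers: because we are in the ultra-high-dimensional regime $p\gg n$, the parameters $\Lambda_1,\Lambda_{o,1},\Lambda_{o,12},\Lambda_2,\Lambda_{o,2}$ appear at different scales in the composite parameter $\Lambda$ in \eqref{eq_parameterkeydefinition}, and the cancellation of the $\phi^{1/2}$ weight in $[Z]_1$ against the $\phi^{-1/2}$ gain from $\widetilde{\Upsilon}$ must be tracked to obtain a bound that is \emph{the same} for $[Z]_1$ and $[Z]_2$. In the event this matching fails at some intermediate step, the alternative is to run the $2k$-th moment method of \cite[Lemma~4.9]{LL} directly on $[Z]_1$, iteratively expanding $(1-\mathbb{E}_i)G_{ii}^{-1}$ via \eqref{eq:3.7}, bounding the resulting resolvent entries on minors through \eqref{eq:waldid1}--\eqref{eq:waldid4} and harvesting the required powers of $n\eta$ from each independent conditional expectation; this route replicates the bookkeeping in \cite[Appendix~B]{LL} and in the proof of Lemma~\ref{lem:fa} itself.
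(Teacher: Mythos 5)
Your proposal is correct and follows essentially the same route as the paper: rewrite $Z_\mu = -(1-\mathbb{E}_\mu)G_{\mu\mu}^{-1}$ and $Z_i = -(1-\mathbb{E}_i)G_{ii}^{-1}$ via Schur's complement, verify the hypotheses of the fluctuation-averaging Lemma~\ref{lem:fa} using Proposition~\ref{weaklocallaw} and Lemma~\ref{lem_priorcontrol} on the high-probability event $\Xi$, and feed in $\Upsilon = \sqrt{(\operatorname{Im} m + (n\eta)^{-t})/(n\eta)}$, $\widetilde\Upsilon = \sqrt{(\operatorname{Im} m + (n\eta)^{-t})/(p\eta)}$ (equivalently $\phi^{-1/2}\Upsilon$, as you write). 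Two small points of hygiene: $[Z]_1$ equals $-\phi^{1/2}$ (not $-\phi^{1/2}/p$) times the sum appearing in the second assertion of Lemma~\ref{lem:fa}, a slip you tacitly correct in the final computation; and since Lemma~\ref{lem:fa} requires a \emph{deterministic} $\Upsilon$, one should set $\Upsilon$ equal to the deterministic majorant of $\Psi_{\Theta,2}$ rather than $\Psi_{\Theta,2}$ itself, which is exactly what you intend by "the assumption makes $\Psi_{\Theta,2}^2$ into a deterministic bound" but should be stated as a definition rather than an identification.
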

\begin{proof}
Due to similarity, we focus on $[Z]_2.$ According to (\ref{eq_secondcontrol}) and (1) of Lemma \ref{lem_schurcomplement}, we see that 
\begin{equation}\label{eq_basicidentity}
-Z_u=(1-\mathbb{E}_\mu) \frac{1}{G_{\mu \mu}}.
\end{equation}
We now apply Lemma \ref{lem:fa} to the above identity. According to Proposition \ref{weaklocallaw} and its proof, we conclude that 
$$\Lambda_{o,2} \prec \Upsilon, \quad \Lambda \prec \Upsilon', \quad \Upsilon:=\sqrt{\frac{\operatorname{Im} m+(n \eta)^{-t}}{n \eta}}, \quad \Upsilon':=(n \eta)^{-1 / 4}.$$
Using Lemma \ref{lem:mphizabs}, it is easy to see that the conditions of $\Upsilon$ and $\Upsilon'$ satisfy the conditions of Lemma \ref{lem:fa} so that the result follows from Lemma \ref{lem:fa} and (\ref{eq_basicidentity}). Similar discussions apply to $[Z]_1$ using 
$$\widetilde{\Upsilon}:=\sqrt{\frac{\operatorname{Im} m+ (n\eta)^{-t}}{p\eta}},\quad \widetilde{\Upsilon}'=(p\eta)^{-1/4}.$$
This completes the proof. 
\end{proof}

Armed with Lemma \ref{lem_recursiveestimate} and Proposition \ref{weaklocallaw}, we can prove Proposition \ref{prop_reducedresult} following the arguments in \cite[Section 4.2]{LL} verbatim. Due to similarity, we only sketch the proof.

\begin{proof}[\bf Proof of Proposition \ref{prop_reducedresult}]
We notice that Lemma \ref{lem_recursiveestimate} implies that if $\Theta \prec (n \eta)^{-\tau},$ one may invoke 
%Further, we have
%$$1(\Xi)|f(m_N)-z|\prec \frac{\operatorname{Im} m+(n \eta)^{-\tau}}{n \eta}.$$
\eqref{eq:stability} to obtain that  for some constant $C>0$
$$\Theta_2 \prec \frac{\operatorname{Im} m}{n \eta} \frac{1}{\sqrt{\kappa+\eta}}+\sqrt{\frac{(n \eta)^{-\tau}}{n \eta}}  \leqslant C(n \eta)^{-1 / 2-\tau / 2}.$$
According to Proposition \ref{weaklocallaw}, we can iterate the above results and  push the rate all the way down to 
$\Theta_2 \prec (n\eta)^{-1}$ on $\mathbf{S}$. Similarly, we can obtain the control for $\phi^{1/2} \Theta_1$ and show that $\Theta \prec (n \eta)^{-1}$ using the definition (\ref{eq_parameterkeydefinition}). This completes the proof of (\ref{eq_averagelocallawone}) and (\ref{eq_averagelocallawtwo}). Once the averaged local law is proved, we can prove the entrywise local law (\ref{eq:entrywiselaw}) and (\ref{eq_locallawequationone}) and (\ref{eq_locallawequationtwo}) with standard basis using Proposition \ref{weaklocallaw} and Lemma \ref{lem_schurcomplement}; see Section Section 4.2 of \cite{LL} for more details. This concludes our proof.  
\end{proof}

{
	\subsection{Proof of (\ref{eq_locallawequationone}) and (\ref{eq_locallawequationtwo})}\label{app_sec_prooflocallaw} 
	
It remains to prove (\ref{eq_locallawequationone}) and (\ref{eq_locallawequationtwo}) for general projections $\mathbf{u}$ and $\mathbf{v}.$ Our argument basically follows that of Section 5 of \cite{LL}. We only sketch the proof and refer the readers to \cite{LL} for more details. Due to similarity, we only provide the arguments for the proof of (\ref{eq_locallawequationone}). By linearity and polarization, we see that it suffices to prove that for any fix deterministic unit vector $\mathbf{v} \in \mathbb{R}^p,$ $\mathbf{v}^* R_1 \mathbf{v}=\mathbf{v}^* \Pi \mathbf{v}+\rO_{\prec}(\phi^{-1} \Psi).$ Furthermore, using the definitions  in (\ref{eq:pidefi}), (\ref{eq_twoblockmatrices}), (\ref{eq_G1G2}) and (\ref{eq_miform}), it suffices to prove 
\begin{equation}\label{eq_mainproofquadratic}
\mathbf{v}^* G_1(z) \mathbf{v}=\mathbf{v}^* M \mathbf{v}+\rO_{\prec} \left( \phi^{-1/2} \Psi \right), 
\end{equation}
where for $m_i$ in (\ref{eq_miform}) we denote
\begin{equation*}
M=\operatorname{diag}\{m_1, m_2, \cdots, m_p\}. 
\end{equation*}
The rest of this section is devoted to the proof of (\ref{eq_mainproofquadratic}). Throughout the proof of this section, to make more explicit connections with Section 5 of \cite{LL}, with a little abuse of notations, for (\ref{eq_G1G2}), we use
\begin{equation}\label{eq_lazyconvention}
G \equiv G_1 \in \mathbb{R}^{p \times p}, \ R \equiv G_2 \in \mathbb{R}^{n \times n}. 
\end{equation}
Moreover, we will use $a,b$ for indices valued in $\{1,2,\ldots,p\}$ and $\mathbf{v}=(v_a).$ We decompose that 
\begin{equation*}
\mathbf{v}^* G \mathbf{v}-\mathbf{v}^* M \mathbf{v}=\sum_{a} v_a^2(G_{aa}-m_i)+\mathcal{Z},
\end{equation*} 
where 
\begin{equation}\label{eq_defnZ}
\mathcal{Z}:=\sum_{a \neq b} v_a G_{ab} v_b. 
\end{equation}
Note that the first part can be can be controlled using Proposition \ref{prop_reducedresult} in the following way
\begin{equation}\label{eq_diagonalcontrol}
|\sum_{a} v_a^2(G_{aa}-m_a)| \leq \max_a |G_{aa}-m_a| \prec \phi^{-1/2} \Psi. 
\end{equation} 
It suffices to control $\mathcal{Z}.$ Throughout this section, we will follow Section 5 of \cite{LL} to prove that the following result. Note that due to our linearization as in (\ref{eq_G1G2}) and (\ref{eq:Gblock}), we do not need to rescale the resolvent as in Section 5.1 of \cite{LL}. 
\begin{lem}\label{lem_momentcontrol} Suppose the assumptions of Theorem \ref{thm_locallaw} hold. Then we have that for all $q \in \mathbb{N},$
$$
	\mathbb{E}|\mathcal{Z}|^q \prec \left(\phi^{-1/2} \Psi\right)^q .
	$$
\end{lem}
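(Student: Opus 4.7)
The plan is to bound $\mathbb{E}|\mathcal{Z}|^{2q}$ for all positive integers $q$ and then invoke Markov's inequality together with the definition of stochastic domination. The structure mirrors Section 5 of \cite{LL}. I would write
$$
\mathbb{E}|\mathcal{Z}|^{2q} \;=\; \sum_{a\neq b} v_a v_b\, \mathbb{E}\big[G_{ab}\,\mathcal{Z}^{q-1}\bar{\mathcal{Z}}^{\,q}\big],
$$
and single out the one factor $G_{ab}$ via the Schur complement identity from Lemma \ref{lem_schurcomplement}. Since $a,b\in\mathcal{I}_1$ are distinct, the identification $G=G_1=z\Sigma^{1/2}R_1\Sigma^{1/2}$ together with (\ref{eq:Gblock}) lets me expand $G_{ab}$ as a quadratic expression in row $a$ of $X$, times a minor resolvent that is independent of row $a$. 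I would then apply the cumulant expansion (Lemma \ref{lem_cumulantexpansion}) in the $X_{a\mu}$ variables.

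Next I would classify the resulting terms. The leading $\kappa_2$ term pairs $X_{a\mu}$ with another copy of $X_{a\mu}$ lying inside one of the remaining $\mathcal{Z}$ or $\bar{\mathcal{Z}}$ factors; by the resolvent derivative rule (e.g.\ (\ref{eq_resolventderi1})) this pairing produces an additional off-diagonal resolvent entry. Using (\ref{eq:entrywiselaw}), (\ref{eq:priorbound_blockwise}) and the averaged laws (\ref{eq_averagelocallawone})--(\ref{eq_averagelocallawtwo}) from Proposition \ref{prop_reducedresult}, together with the Ward-type identities in Lemma \ref{lem_someelementarybound}, each such pairing contributes a factor of order $(\phi^{-1/2}\Psi)^2$ after summation over $a,b$. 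Higher cumulants $\kappa_k$ with $k\geq 3$ come with the prefactor $(np)^{-(k-2)/4}$ guaranteed by (\ref{eq_momentassumption}), so truncating at $k_0$ sufficiently large leaves a residual absorbed into the error. Putting the pieces together yields the recursive estimate
$$
\mathbb{E}|\mathcal{Z}|^{2q} \;\prec\; (\phi^{-1/2}\Psi)^{2}\cdot\mathbb{E}|\mathcal{Z}|^{2q-2} \;+\; (\phi^{-1/2}\Psi)^{2q},
$$
which gives $\mathbb{E}|\mathcal{Z}|^{2q} \prec (\phi^{-1/2}\Psi)^{2q}$ by induction on $q$, and hence the claim.

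The main obstacle will be the combinatorial bookkeeping: one must check that every type of pairing generated by differentiating $\mathcal{Z}^{q-1}\bar{\mathcal{Z}}^{\,q}$ through the cumulant expansion does in fact gain the required factor $(\phi^{-1/2}\Psi)^2$ rather than merely $\Psi^2$. This is delicate because the scaling $\mathbb{E}|X_{ij}|^2=(np)^{-1/2}$ in (\ref{eq:normalization}) distributes powers of $\phi$ asymmetrically across the blocks of $G$, so one must carefully distinguish $\Psi_{\Theta,1}$, $\Psi_{\Theta,2}$ and $\Psi_{\Theta,12}$ when estimating sums such as $\sum_{\mu\nu}|\mathcal{G}^{(a)}_{\mu\nu}|^2$. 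Once these scalings are tracked, replacing diagonal minors $\mathcal{G}^{(a)}_{\mu\mu}$ by their deterministic limits via (\ref{eq:3.7}) and Proposition \ref{prop_reducedresult} incurs only negligible corrections, and the recursion closes. This parallels the arguments in Section 5 of \cite{LL} and Section A of \cite{DY}, with scalings adapted to the regime $p\gg n$.
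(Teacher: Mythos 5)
The paper does not actually spell out a proof of this lemma: it refers to Sections 5.5--5.15 of \cite{LL}, which implement the polynomialization/graphical-expansion method for isotropic local laws. There one extracts a single off-diagonal entry $G_{ab}$ from $\mathbb{E}|\mathcal{Z}|^{2q}$, repeatedly expands it using the Schur-complement identities of Lemma~\ref{lem_schurcomplement} into a sum of ``maximal'' monomials, organizes the monomials by a tree/graph structure, and bounds each class via the large-deviation estimates of Lemma~\ref{lem_largedervation} together with the entrywise bounds of Proposition~\ref{prop_reducedresult}. Your proposal substitutes the cumulant expansion (Lemma~\ref{lem_cumulantexpansion}) for that maximal-expansion machinery. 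That is a genuinely different route --- closer in style to the CLT computations in Section~\ref{sec_CLTresolvent} of this very paper --- and it is in principle equally viable: the $\kappa_2$ term closes the recursion by pairing the extracted entry with the remaining $\mathcal{Z}$ and $\bar{\mathcal{Z}}$ factors, while higher cumulants are suppressed by the prefactor $(np)^{-(k-2)/4}$ coming from~(\ref{eq_momentassumption}). What polynomialization buys is very explicit combinatorial control over every monomial (which is what makes the $\phi$-tracking transparent); what the cumulant route buys is cleaner algebra and a recursion in $q$ that you can write down directly. The part you flag as the ``main obstacle'' is precisely where the real content lies, and you do not carry it out: because of the asymmetric normalization~(\ref{eq:normalization}) and the block structure in~(\ref{eq:Gblock}), each contraction must gain a factor $\phi^{-1/2}\Psi$, not merely $\Psi$. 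This forces you to distinguish the index ranges $\mathcal{I}_1$, $\mathcal{I}_2$ and the three control parameters $\Psi_{\Theta,1}$, $\Psi_{\Theta,2}$, $\Psi_{\Theta,12}$ (compare the unequal rates in the three estimates of~(\ref{eq:priorbound_blockwise})); a missed $\phi^{-1/4}$ anywhere in the audit would degrade the bound to $\Psi^q$. You should also note that self-contractions --- derivatives that hit the resolvent factors inside the extracted $G_{ab}$ rather than a separate $\mathcal{Z}$ factor --- do not reduce the power of $\mathcal{Z}$, and must be shown to be smaller by an additional $\Psi$ (or absorbed via the self-consistent equation) before the recursion you write is justified. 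So the strategy is sound and offers a viable alternative to the paper's citation, but the recursion should not be asserted until the $\phi$-bookkeeping and the self-contraction terms are worked through.
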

\begin{proof}
The proof is almost the same as Sections 5.5-5.15 of \cite{LL}. We omit the details. 
\end{proof}
Armed with Lemma \ref{lem_momentcontrol}, we see from Chebyshev's inequality that $\mathcal{Z} \prec \phi^{-1/2} \Psi.$ Combining with (\ref{eq_diagonalcontrol}), we can complete the proof of (\ref{eq_mainproofquadratic}) and hence (\ref{eq_locallawequationone}). 

{

\section{Additional technical proofs}\label{sec_appedixb}

In this section, we prove Lemmas \ref{lem_proofgloballawresult} and \ref{lem:mphizabs}. 

\subsection{Proof of (\ref{eq_constantbound})}\label{appendix_section_priorbound}

Taking imaginary part and real part on both sides of \eqref{eq:lsd}, we have
\begin{align}\label{eq_initialboundargument}
&	\frac{\operatorname{Re} m}{|m|^2}=-E+\int \frac{\phi(\phi^{1/2}x^{-1}+\operatorname{Re} m)}{|\phi^{1/2}x^{-1}+m|^2}\pi(\mathrm{d}x), \nonumber \\ 
&	\frac{-\operatorname{Im} m }{|m|^2}=-\eta+	\int \frac{\phi(-\operatorname{Im} m)}{|\phi^{1/2}x^{-1}+m|^2}\pi(\mathrm{d}x).
\end{align}
For simplicity, we denote $R:=\operatorname{Re}m$ and $I:=\operatorname{Im}m$. We can further rewrite the above equations as 
%\begin{align*}
%	R_{\phi}\left[\frac{1}{R_{\phi}^2+I_{\phi}^2}-\int \frac{\phi}{(\phi^{1/2}x^{-1}+R_{\phi})^2+I_{\phi}^2}\pi(dx)\right]&=-E+\int \frac{\phi^{3/2}x^{-1}}{(\phi^{1/2}x^{-1}+R_{\phi})^2+I_{\phi}^2}\pi(dx),\\ 
%	I_{\phi}\left[\frac{1}{R_{\phi}^2+I_{\phi}^2}-\int \frac{\phi}{(\phi^{1/2}x^{-1}+R_{\phi})^2+I_{\phi}^2}\pi(dx)\right]&=\eta,
%\end{align*}
\begin{equation}\label{eq:realimag}
	\begin{aligned}
		R \int\frac{(\phi^{1/2}x^{-1}+R_{})^2+I_{}^2-\phi(R_{}^2+I_{}^2)}{(R_{}^2+I_{}^2)[(\phi^{1/2}x^{-1}+R_{})^2+I_{}^2]}\pi(\mathrm{d}x)&=-E+\int \frac{\phi^{3/2}x^{-1}}{(\phi^{1/2}x^{-1}+R_{})^2+I_{}^2}\pi(\mathrm{d}x),\\ 
		I \int\frac{(\phi^{1/2}x^{-1}+R_{})^2+I_{}^2-\phi(R_{}^2+I_{}^2)}{(R_{}^2+I_{}^2)[(\phi^{1/2}x^{-1}+R_{})^2+I_{}^2]}\pi(\mathrm{d}x)&=\eta.
	\end{aligned}
\end{equation}
Recall that the Stieltjes transform should map $\mathbb{C}_+$ to $\mathbb{C}_+.$ Therefore, for all $z \in \mathbb{C}_+,$ we see from the second equation of (\ref{eq:realimag}) that 
$$\int\frac{(\phi^{1/2}x^{-1}+R_{})^2+I_{}^2-\phi(R_{}^2+I_{}^2)}{(R_{}^2+I_{}^2)[(\phi^{1/2}x^{-1}+R_{})^2+I_{}^2]}\pi(\mathrm{d}x)>0.$$
This implies that  there must exist $x_0 \in \operatorname{supp}(\pi)$ so that 
$$(\phi^{1/2}x_0^{-1}+R_{})^2+I_{}^2-\phi(R_{}^2+I_{}^2)>0.$$
Set  $t:=|m_{}|=\sqrt{R_{}^2+I_{}^2}$. We see that for all $\phi>0$
$$\begin{aligned}
	(\phi-1)t^2-2\phi^{1/2}x_0^{-1}t-\phi x_0^{-2}&<0.\\ 
\end{aligned}$$ 
Consequently, for $\phi \gg 1,$ we shall have that $\forall z\in\mathbb{C}_+$ 
$$   t<\frac{2\phi^{1/2}x_0^{-1}+2\sqrt{\phi x_0^{-2}+\phi(\phi-1)x_0^{-2}}}{2(\phi-1)}\sim x_0^{-1}(1+\phi^{-1/2})=\mathrm{O}(1).$$
This proves the upper bound for $|m|.$ For the lower bound, equipped with the upper bound, according to (2) of Assumption \ref{assum:XSigma},  it is easy to see from  \eqref{eq:realimag} that when $\phi \gg 1$ 
$$\begin{aligned}
	R_{}\left(\frac{1}{R_{}^2+I_{}^2}-\int x^{2}\pi(\mathrm{d}x)+\mathrm{O}(\phi^{-1/2})\right)&=-E+\int \frac{\phi^{3/2}x^{-1}}{(\phi^{1/2}x^{-1}+R_{})^2+I_{}^2}\pi(\mathrm{d}x),\\ 
	I_{}\left(\frac{1}{R_{}^2+I_{}^2}-\int x^{2}\pi(\mathrm{d}x)+\mathrm{O}(\phi^{-1/2})\right)&=\eta.\\ 
\end{aligned}$$%=-(E-\phi^{1/2}\int x d\pi(x))+R_{\phi}\mathrm{O}(1)
%
%It's straightforward that we expect $I_{\phi}$ to be minor if $|-E+\phi^{1/2}\int x\pi(dx)|>>1$, therefore we expect $\varrho$ to be supported on $[\phi^{1/2}\int x\pi(dx)-O(1),\phi^{1/2}\int x\pi(dx)+O(1)]$.
This yields that 
$$\begin{aligned}
(R_{}^2+I_{}^2)\left(\frac{1}{R_{}^2+I_{}^2}-\int x^{2}\pi(\mathrm{d} x)+\mathrm{O}(\phi^{-1/2})\right)^2=\left(-E+\int \frac{\phi^{3/2}x^{-1}}{(\phi^{1/2}x^{-1}+R_{})^2+I_{}^2}\pi(\mathrm{d}x)\right)^2+\eta^2.
\end{aligned}$$
Using the upper bound for $t,$ when $z \in \mathbf{S}$ in (\ref{eq_spectralparameterset}), we see that the right-hand side of the above equation can be bounded by $\mathrm{O}(1).$ Moreover, the left-hand side of the above equation can be rewritten as $\frac{(t^{-2}-\int x^2\pi( \mathrm{d}x))^2}{t^{-2}}.$ It  shows that $t$ is bounded from both above and below so that $m(z) \asymp 1$.  Together with (\ref{eq_initialboundargument}), we see that $\operatorname{Im} m$ is bounded from below by $c\eta.$ This concludes the proof.

\subsection{Proof of Lemma \ref{lem_proofgloballawresult} and (\ref{eq_squarerootbehavior})}\label{appendix_sectionlemma25} 
The proofs are similar to those in \cite{AILL}. Recall $f$ defined in (\ref{eq:discrete}) and the intervals $I_k, 0 \leq k \leq 2p,$ defined below it. We abbreviate $r_i:=\phi\pi(\{s_i\})$. Then we can rewrite $$f(x)=-\frac{1}{x}+\sum_{i=1}^p \frac{r_i}{x+s_i^{-1}}.$$
%Properties of $\varrho$ and proof of Lemmas 2.4-2.6. In this subsection we establish the basic properties of $\varrho$ and prove Lemmas 2.4-2.6.
%
%\textbf{Proof of lemma \ref{lem:2.4}}
By definition of $f$ we have
\begin{equation}\label{eq:derivative1}
	f^{\prime}(x)=\frac{1}{x^2}-\sum_{i=1}^p \frac{r_i}{\left(x+s_i^{-1}\right)^2}, \quad f^{\prime \prime}(x)=\frac{-2}{x^3}+2 \sum_{i=1}^p \frac{r_i}{\left(x+s_i^{-1}\right)^3} .
\end{equation}

Denote the multiset $\mathcal{C} \subset \mathbb{R}$ of the critical points of $f,$ using the conventions that a nondegenerate
critical point is counted once and a degenerate critical point twice. According to Lemma 2.4 of \cite{AILL}, we have that $|\mathcal{C} \cap I_0|=|\mathcal{C} \cap I_1|=1$ and $|\mathcal{C} \cap I_i| \in \{0,2\}$ for $i=2,\cdots,p.$ This implies that $|\mathcal{C}|=2q$ is even and we denote by $x_1 \geq x_2 \geq \cdots \geq x_{2q-1} \geq x_{2q}$ be these critical points. Moreover, when $\phi \gg 1,$ we find that  for $0<t \leq 1,$
$$\partial_t\partial_xf^t(x)=-\sum_{i=1}^p\frac{\phi t \pi(\{\sigma_i\})x}{(x+(\phi t)^{1/2}\sigma_i^{-1})^3}<0, \ x \in I_2 \cup I_3 \cup \cdots \cup I_p. $$ 
Following lines of the proof of Lemma 2.5 of \cite{AILL}, we have that for $a_k=f(x_k),$ $a_1 \geq a_2 \geq \cdots \geq a_{2q}$ and $x_k=m(a_k).$ 

Note that by definition, $x_{k}\asymp\phi^{1/2}$ for $k\in\{2,\cdots,2q-1\}.$ However, according to (\ref{eq_constantbound}) that $m(z)$ is bounded above and the relation that $f(m(z))=z$, this shows that $q=1$ and we only have two critical points in $I_1$ and $I_0,$ denoted as $x_1$ and $x_2.$ Following lines of the proof of Lemma 2.6 of \cite{AILL}, we can complete the first part of Lemma \ref{lem_proofgloballawresult}. To prove the second part of Lemma \ref{lem_proofgloballawresult} and  (\ref{eq_squarerootbehavior}), we need the following lemma which is an analog of Lemma A.3 of \cite{AILL}.

\begin{lem}
There exists a $\tau>0,$ such that for $k=1,2$ and some constant $C>0$
	$$
	\left|x_{k}\right| \asymp 1, \quad\left|f^{\prime \prime}\left(x_{k}\right)\right| \asymp 1, \quad\left|f^{\prime \prime \prime}(\zeta)\right| \leqslant C \quad \text { for } \quad\left|\zeta-x_{k}\right| \leqslant \tau^{\prime}. 
	$$
\end{lem}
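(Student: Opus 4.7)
The strategy is to exploit the scale separation between the critical points (which we will show satisfy $|x_k|\asymp 1$) and the poles of $f$ (which sit at $-s_i^{-1}$ with $s_i^{-1}=\phi^{1/2}/\sigma_i\asymp\phi^{1/2}$, so are much farther away). Once $|x_k|\asymp 1$ is established, the bounds on $f''(x_k)$ and $f'''(\zeta)$ fall out of the explicit formulas in \eqref{eq:derivative1} by direct size estimation.

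\textbf{Step 1 (Localization of $x_k$).} Recall $r_i=\phi\pi(\{s_i\})=1/n$, so $\sum_i r_i=p/n=\phi$. The critical point equation $f'(x_k)=0$ reads
\begin{equation*}
\frac{1}{x_k^2}=\sum_{i=1}^p\frac{r_i}{(x_k+s_i^{-1})^2}.
\end{equation*}
I will prove that $|x_k|\asymp 1$ by a balancing argument. If we test a candidate $x$ with $|x|\asymp 1$, then $s_i^{-1}-|x|\asymp s_i^{-1}\asymp\phi^{1/2}$ uniformly in $i$, so using $r_i=1/n$ and $s_i=\phi^{-1/2}\sigma_i$, a Taylor expansion yields
\begin{equation*}
\sum_{i=1}^p\frac{r_i}{(x+s_i^{-1})^2}=\frac{1}{n}\sum_{i=1}^p\frac{\sigma_i^2}{\phi}\Bigl(1+\rO(|x|\phi^{-1/2})\Bigr)=\mathfrak{m}_2(\pi)+\rO(\phi^{-1/2}),
\end{equation*}
which is $\asymp 1$ by Assumption~\ref{assum:XSigma}. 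Thus the equation $1/x_k^2=\mathfrak{m}_2(\pi)+\rO(\phi^{-1/2})$ forces $|x_k|=1/\sqrt{\mathfrak{m}_2(\pi)}+\rO(\phi^{-1/2})\asymp 1$. To rule out $x_k$ approaching a pole or zero, note that as $x_1\to 0^-$ the left side blows up while the right side stays bounded, and as $x_1\to -s_1^{-1}$ the right side blows up while the left stays bounded; a similar dichotomy handles $x_2\in I_0$. Alternatively, one may simply invoke $x_k=\lim_{\eta\downarrow 0}m(\gamma_k+\mathrm{i}\eta)$ together with Lemma~\ref{lem:mphizabs} (noting $\gamma_\pm+\mathrm{i}\eta\in\mathbf{S}$ by Lemma~\ref{lem_proofgloballawresult}) and continuity to conclude $|x_k|\asymp 1$.

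\textbf{Step 2 ($|f''(x_k)|\asymp 1$).} From \eqref{eq:derivative1},
\begin{equation*}
f''(x_k)=-\frac{2}{x_k^3}+2\sum_{i=1}^p\frac{r_i}{(x_k+s_i^{-1})^3}.
\end{equation*}
The first term is $\asymp 1$ by Step~1. For the second term, using $|x_k+s_i^{-1}|\asymp\phi^{1/2}$ and $\sum r_i=\phi$,
\begin{equation*}
\Bigl|\sum_{i=1}^p\frac{r_i}{(x_k+s_i^{-1})^3}\Bigr|\lesssim\frac{\phi}{\phi^{3/2}}=\phi^{-1/2}=\mathrm{o}(1).
\end{equation*}
Hence $|f''(x_k)|=2/|x_k|^3+\rO(\phi^{-1/2})\asymp 1$.

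\textbf{Step 3 (Bound on $f'''$).} Choose $\tau'>0$ small and fixed (independent of $n,\phi$) so that $|\zeta-x_k|\leq\tau'$ guarantees $|\zeta|\asymp 1$; since $|\zeta+s_i^{-1}|\asymp\phi^{1/2}$ for all $i$ in this range, the formula $f'''(\zeta)=6/\zeta^4-6\sum_i r_i/(\zeta+s_i^{-1})^4$ gives $|f'''(\zeta)|\leq 6/|\zeta|^4+6\phi/\phi^{2}\leq C$, for some absolute constant $C>0$.

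\textbf{Main obstacle.} The only nontrivial point is Step~1: establishing $|x_k|\asymp 1$ directly from $f'(x_k)=0$ requires the scale-separation argument above, since a priori $x_k$ ranges over intervals of length $\asymp\phi^{1/2}$. Everything else reduces to plugging $|x_k|\asymp 1$ into the explicit rational expressions in \eqref{eq:derivative1} and observing that the contributions from the poles $-s_i^{-1}$ are negligible because those poles are at distance $\asymp\phi^{1/2}$.
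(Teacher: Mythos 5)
Your proof is correct and follows essentially the same strategy as the paper's: establish the scale separation $|x_k|\asymp 1$ versus poles at $-s_i^{-1}\asymp\phi^{1/2}$, then read off the bounds for $f''$ and $f'''$ from the explicit rational formulas. One minor cosmetic difference: for $|f''(x_k)|\asymp 1$ the paper uses the critical point relation $f'(x_k)=0$ to combine the two terms into the single signed sum $f''(x_k)=-\tfrac{2}{x_k}\sum_i \tfrac{s_i^{-1}r_i}{(x_k+s_i^{-1})^3}$, whose size is immediately $\asymp 1$ (each summand $\asymp \phi^{1/2}\cdot n^{-1}\phi^{-3/2}$, $p$ terms), whereas you estimate the leading term $-2/x_k^3\asymp 1$ and show the pole contribution is a subdominant $\rO(\phi^{-1/2})$ correction — both are fine. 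Your Step 1 balancing argument (evaluating the critical-point equation on the window $|x|\asymp 1$, invoking $\mathfrak{m}_2(\pi)\asymp 1$ and the intermediate value theorem to locate the unique zero of $f'$ in each of $I_1$ and $I_0$) is a valid self-contained alternative to the paper's shortcut of deducing the upper bound $|x_k|\lesssim 1$ from $x_k=m(\gamma_k)$ and Lemma~\ref{lem:mphizabs} and then the lower bound from $f'(x_k)=0$; your parenthetical "alternatively" remark is in fact what the paper does.
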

\begin{proof}
First, the above bounds for $x_k$ are clear. The lower bound $\left|x_{k}\right| \geqslant {c} $ follows from $f^{\prime}\left(x_{k}\right)=0$ and \eqref{eq:derivative1}. Second, using 
$$
f^{\prime \prime}\left(x_{k}\right)=-\frac{2}{x_{k}} \sum_{i} \frac{s_{i}^{-1} r_{i}}{\left(x_{k}+s_{i}^{-1}\right)^{3}},
$$
and $|x_k| \asymp 1,$ we see that $|f^{\prime \prime}(x_k)| \asymp 1. $ Finally, the last control follows from 
$$f^{\prime\prime\prime}(x)=\frac{6}{x^4}-6\sum_{i=1}^p\frac{\phi \pi(\{\sigma_i\})}{(x+\phi^{1/2}\sigma_i^{-1})^{4}}.$$
\end{proof}

Using (\ref{eq:discrete}), $x_1, x_2 \asymp 1$ and $\gamma_+=f(x_1), \gamma_-=f(x_2),$ when $\phi \geq 1,$ it is easy to see that $\gamma_+, \gamma_- \asymp \phi^{1/2}.$ Similarly, we can show that $\gamma_+-\gamma_-=f(x_1)-f(x_2)=\mathrm{O}(1).$ This completes the second part of Lemma \ref{lem_proofgloballawresult}. Finally, (\ref{eq_squarerootbehavior}) follows from the following expansion
$$
z-a_{k}=\frac{f^{\prime \prime}\left(x_{k}\right)}{2}\left(m(z)-x_{k}\right)^{2}+\mathrm{O}\left(\left|m(z)-x_{k}\right|^{3}\right) \quad \text { for } \quad\left|z-a_{k}\right| \leqslant \tau^{} .
$$
Due to similarity, we omit more details and refer the readers to Section A.2 of \cite{AILL} or Lemma C.5 of \cite{DYAOS} for more details. 

Finally, we prove (\ref{eq_derivativecontrol}) following that of Lemma 8.5 of \cite{Li2021}.} Using (\ref{eq_eeeeeee}), we obtain that 
	$$E\operatorname{Im}m +\eta\operatorname{Re}m=\frac{\phi^{1/2}}{p}\sum_{i=1}^p\frac{\sigma_i\operatorname{Im} m}{|1+\phi^{-1/2}m\sigma_i|^2},$$
	and
		$$E\operatorname{Re}m -\eta\operatorname{Im}m=\phi-1-\frac{\phi}{p}\sum_{i=1}^p\frac{1+\phi^{-1/2}\sigma_i\operatorname{Im} m}{|1+\phi^{-1/2}m\sigma_i|^2}.$$
This yields that 
		$$\left|z-\frac{\phi^{1/2}}{p}\sum_{i=1}^p\frac{\sigma_i}{|1+\phi^{-1/2}m\sigma_i|^2}\right|=\left| \mathrm{i}\eta-\frac{\operatorname{Re}m}{\operatorname{Im} m}\eta\right|=\frac{|m|\eta}{\operatorname{Im}m}.$$
	Together with (\ref{eq_constantbound}) and \eqref{eq_squarerootbehavior}, we obtain an uppper bound $C\sqrt{\kappa+\eta}$ for the above term.
		Note that
		\begin{align*}
			&\left| \frac{1}{p}\sum_{i=1}^p\frac{\sigma_i}{|1+\phi^{-1/2}m\sigma_i|^2}-\frac{1}{p}\sum_{i=1}^p\frac{\sigma_i}{(1+\phi^{-1/2}m\sigma_i)^2} \right|\\
			=&2\left|  \frac{1}{p}\sum_{i=1}^p \frac{ \sigma_i[(\operatorname{Im}(1+\phi^{-1/2}m\sigma_i))^2 +  \mathrm{i}(\operatorname{Im}(1+\phi^{-1/2}m\sigma_i))(\operatorname{Re}(1+\phi^{-1/2}m\sigma_i))  ]}{|1+\phi^{-1/2}m\sigma|^4}  \right|\le C\phi^{-1/2}\sqrt{\kappa+\eta}.
		\end{align*}
	Combining the above bounds  and (\ref{eq_mmprimeeq1}), we get the upper bound  $C\sqrt{\kappa+\eta}$ for $|\frac{m}{m^{\prime}}|$, it remains to derive a lower bound. 
	When $E\in [\gamma_-,\gamma_+]$, we have
	\begin{align*}
		&\left|z-\frac{\phi^{1/2}}{p}\sum_{i=1}^p\frac{\sigma_i}{(1+\phi^{-1/2}m\sigma_i)^2}\right|\ge  \left|\operatorname{Im}\left(z-\frac{\phi^{1/2}}{p}\sum_{i=1}^p\frac{\sigma_i}{(1+\phi^{-1/2}m\sigma_i)^2}\right)\right|\\ 
		\ge& \left|\eta+\frac{2}{p}\sum_{i=1}^p\frac{\sigma_i(1+\phi^{-1/2}\operatorname{Re}m \sigma_i)\sigma_i\operatorname{Im}m}{|1+\phi^{-1/2}m\sigma_i|^4} \right|\ge \frac{2}{p}\sum_{i=1}^p\frac{\sigma_i^2(1+\phi^{-1/2}\operatorname{Re}m \sigma_i)\operatorname{Im}m}{|1+\phi^{-1/2}m\sigma_i|^4}\ge C{\sqrt{\kappa+\eta}}.
	\end{align*}
	When $E\notin [\gamma_-,\gamma_+]$, we have
	\begin{align*}
		&\left| z-\frac{\phi^{1/2}}{p}\sum_{i=1}^p\frac{\sigma_i}{(1+\phi^{-1/2}m\sigma_i)^2}\right|\ge \left||z|-\frac{\phi^{1/2}}{p}\sum_{i=1}^p\frac{\sigma_i}{|1+\phi^{-1/2}m\sigma_i|^2} \right|\\
		=&\left|\sqrt{E^2+\eta^2}-E-\eta\frac{\operatorname{Re}m}{\operatorname{Im}m}  \right|=\left|\eta\frac{\operatorname{Re}m}{\operatorname{Im}m}-\frac{\eta}{E+\sqrt{E^2+\eta^2}} \right|\ge C\sqrt{\kappa+\eta}.
	\end{align*} 
This completes our proof. 	
\subsection{Proof of (\ref{eq_keyboundbound})}\label{imporveconcentration}
Let $z=E+\mathrm{i} \eta$ and $E \in \mathbf{R}.$ First, when  $\eta \geq \eta_c:=n^{-1+\tau},$ we see that the result follows from Theorem \ref{thm_locallaw}. Then we handle the case when $\eta \leq \eta_c.$ Denote $z_c:=E+\mathrm{i} {\eta}_c$. Since $\operatorname{Tr}R_1(z)=(p-n)/z+\operatorname{Tr}R_2(z),$ it is equivalent to establishing a bound for $\operatorname{Tr}R_2(z)$. In view of \eqref{eq:Gblock}, it suffices to study $(1-\mathbb{E})\sum_{\mu\in\mathcal{I}_2}G_{\mu\mu}$. Using \eqref{eq:Gspectraldecom}, Lemma \ref{lem:mphizabs} and Theorem \ref{thm_locallaw},  we have that 
		
		$$
		\begin{aligned}
			& \left|{G}_{\mu\mu}(z)-{G}_{\mu\mu}\left(z_c\right)\right| \lesssim \sum_{j=1}^n \frac{{\eta}_c\left|\left\langle \mathbf{e}_\mu, \boldsymbol{\zeta}_j\right\rangle\right|\left|\left\langle \mathbf{e}_\mu, \boldsymbol{\zeta}_j\right\rangle\right|}{\left|\left(\lambda_j-E-\mathrm{i} {\eta}\right)\left(\lambda_j-E-\mathrm{i} {\eta}_c\right)\right|} \\
		\leqslant	&  {\eta}_c\left(\sum_{j=1}^n \frac{\left|\left\langle \mathbf{e}_\mu, \boldsymbol{\zeta}_j\right\rangle\right|^2}{\left|\left(\lambda_j-E-\mathrm{i} {\eta}\right)\right|^2}\right)^{1 / 2}\left(\sum_{j=1}^n \frac{\left|\left\langle \mathbf{e}_\mu, \boldsymbol{\zeta}_j\right\rangle\right|^2}{\left|\left(\lambda_j-E-\mathrm{i} {\eta}_c\right)\right|^2}\right)^{1 / 2} \\
		 \leqslant	& {\eta}_c\left(\frac{{\eta}_c^2}{{\eta}^2} \sum_{j=1}^n \frac{\left|\left\langle \mathbf{e}_\mu, \boldsymbol{\zeta}_j\right\rangle\right|^2}{\left|\left(\lambda_j-E-\mathrm{i} {\eta}_c\right)\right|^2}\right)^{1 / 2} \sqrt{\frac{\operatorname{Im}\left[ G_{\mu\mu}\left(z_c\right)\right]}{{\eta}_c}} \\
		=	& \frac{{\eta}_c}{{\eta}} \sqrt{\operatorname{Im}\left[ G_{\mu\mu}\left(z_c\right)\right] \operatorname{Im}\left[G_{\mu\mu}\left(z_c\right)\right]}\prec \frac{1}{n {\eta}}. 
		\end{aligned}
		$$
We readily obtain that		
		$$\begin{aligned}
		&|(1-\mathbb{E})\operatorname{Tr}{R_1}(z)|
		=		|(1-\mathbb{E})\operatorname{Tr}{R_2}(z)|\\  =&|(1-\mathbb{E})\operatorname{Tr}({R_2}(z)-{R_2}(z_c))+ (1-\mathbb{E})\operatorname{Tr}({R_2}(z_c))|\\ 
		\le &\sum_{\mu\in\mathcal{I}_2}
		\left|{G}_{\mu\mu}(z)-{G}_{\mu\mu}\left(z_c\right)\right| +|(1-\mathbb{E})\operatorname{Tr}({R_2}(z_c))| \prec \frac{1}{{\eta_c}}+\frac{1}{|\operatorname{Im}(z)|}\prec\frac{1}{|\operatorname{Im}(z)|}.	
		\end{aligned} $$
This completes our proof.

%-----------------------------------------------------------------
\bibliographystyle{abbrv}
\bibliography{boot,boot1}

\begin{thebibliography}{10}

\bibitem{Ahmad2015}
M.~R. Ahmad and D.~von Rosen.
\newblock Tests for high-dimensional covariance matrices using the theory of
  {$U$}-statistics.
\newblock {\em J. Stat. Comput. Simul.}, 85(13):2619--2631, 2015.

\bibitem{anderson2009introduction}
T.~Anderson.
\newblock {\em An Introduction to Multivariate Statistical Analysis}.
\newblock John Wiley \& Sons, 3d edition, 2009.

\bibitem{BaiJiangYaoZheng2009}
Z.~Bai, D.~Jiang, J.-F. Yao, and S.~Zheng.
\newblock Corrections to {LRT} on large-dimensional covariance matrix by {RMT}.
\newblock {\em Ann. Statist.}, 37(6B):3822--3840, 2009.

\bibitem{Bai2019}
Z.~Bai, H.~Li, and G.~Pan.
\newblock Central limit theorem for linear spectral statistics of large
  dimensional separable sample covariance matrices.
\newblock {\em Bernoulli}, 25(3):1838--1869, 2019.

\bibitem{bai2010spectral}
Z.~Bai and J.~W. Silverstein.
\newblock {\em Spectral analysis of large dimensional random matrices},
  volume~20.
\newblock Springer, 2010.

\bibitem{Bai2010}
Z.~Bai, X.~Wang, and W.~Zhou.
\newblock Functional {CLT} for sample covariance matrices.
\newblock {\em Bernoulli}, 16(4):1086--1113, 2010.

\bibitem{BMP}
Z.~D. Bai, B.~Q. Miao, and G.~M. Pan.
\newblock {On asymptotics of eigenvectors of large sample covariance matrix}.
\newblock {\em The Annals of Probability}, 35(4):1532 -- 1572, 2007.

\bibitem{Bai2004}
Z.~D. Bai and J.~W. Silverstein.
\newblock C{LT} for linear spectral statistics of large-dimensional sample
  covariance matrices.
\newblock {\em Ann. Probab.}, 32(1A):553--605, 2004.

\bibitem{Debapratimma2022}
D.~Banerjee and Z.~Ma.
\newblock {Optimal signal detection in some spiked random matrix models:
  Likelihood ratio tests and linear spectral statistics}.
\newblock {\em The Annals of Statistics}, 50(4):1910 -- 1932, 2022.

\bibitem{BAObao}
Z.~Bao.
\newblock On asymptotic expansion and central limit theorem of linear
  eigenvalue statistics for sample covariance matrices when $ n/m\rightarrow0
  $.
\newblock {\em Theory of Probability \& Its Applications}, 59(2):185--207,
  2015.

\bibitem{bao2022statistical}
Z.~Bao, X.~Ding, J.~Wang, and K.~Wang.
\newblock Statistical inference for principal components of spiked covariance
  matrices.
\newblock {\em The Annals of Statistics}, 50(2):1144--1169, 2022.

\bibitem{BDW}
Z.~Bao, X.~Ding, and K.~Wang.
\newblock {Singular vector and singular subspace distribution for the matrix
  denoising model}.
\newblock {\em The Annals of Statistics}, 49(1):370 -- 392, 2021.

\bibitem{benaych2017advanced}
F.~Benaych-Georges, C.~Bordenave, M.~Capitaine, C.~Donati-Martin, and
  A.~Knowles.
\newblock {\em Advanced Topics in Random Matrices}.
\newblock Panoramas et synth{\`e}ses - Soci{\'e}t{\'e} math{\'e}matique de
  France. Soci{\'e}t{\'e} math{\'e}matique de France, 2017.

\bibitem{LL}
A.~Bloemendal, L.~Erdős, A.~Knowles, H.-T. Yau, and J.~Yin.
\newblock {Isotropic local laws for sample covariance and generalized Wigner
  matrices}.
\newblock {\em Electronic Journal of Probability}, 19:1 -- 53, 2014.

\bibitem{bloemendal2016principal}
A.~Bloemendal, A.~Knowles, H.-T. Yau, and J.~Yin.
\newblock On the principal components of sample covariance matrices.
\newblock {\em Probability theory and related fields}, 164(1-2):459--552, 2016.

\bibitem{Bodnar2019}
T.~Bodnar, H.~Dette, and N.~Parolya.
\newblock Testing for independence of large dimensional vectors.
\newblock {\em Ann. Statist.}, 47(5):2977--3008, 2019.

\bibitem{Butucea2016}
C.~Butucea and R.~Zgheib.
\newblock Sharp minimax tests for large covariance matrices and adaptation.
\newblock {\em Electron. J. Stat.}, 10(2):1927--1972, 2016.

\bibitem{Cai2017}
T.~T. Cai.
\newblock Global testing and large-scale multiple testing for high-dimensional
  covariance structures.
\newblock {\em Annual Review of Statistics and Its Application}, 4(1):423--446,
  2017.

\bibitem{CaiMa2013}
T.~T. Cai and Z.~Ma.
\newblock Optimal hypothesis testing for high dimensional covariance matrices.
\newblock {\em Bernoulli}, 19(5B):2359--2388, 2013.

\bibitem{Chen2015}
B.~Chen and G.~Pan.
\newblock C{LT} for linear spectral statistics of normalized sample covariance
  matrices with the dimension much larger than the sample size.
\newblock {\em Bernoulli}, 21(2):1089--1133, 2015.

\bibitem{Chen2020}
J.~Chen, X.~Wang, S.~Zheng, B.~Liu, and N.-Z. Shi.
\newblock Tests for high-dimensional covariance matrices.
\newblock {\em Random Matrices: Theory and Applications}, 09(03):2050009, 2020.

\bibitem{Chen2010}
S.~X. Chen, L.-X. Zhang, and P.-S. Zhong.
\newblock Tests for high-dimensional covariance matrices.
\newblock {\em J. Amer. Statist. Assoc.}, 105(490):810--819, 2010.

\bibitem{davies1995functional}
E.~B. Davies.
\newblock The functional calculus.
\newblock {\em Journal of the London Mathematical Society}, 52(1):166--176,
  1995.

\bibitem{DT}
X.~{Ding} and T.~{Trogdon}.
\newblock {A Riemann--Hilbert approach to the perturbation theory for
  orthogonal polynomials: Applications to numerical linear algebra and random
  matrix theory}.
\newblock {\em International Mathematics Research Notices (in press)}, 2023.

\bibitem{DY}
X.~Ding and F.~Yang.
\newblock {A necessary and sufficient condition for edge universality at the
  largest singular values of covariance matrices}.
\newblock {\em The Annals of Applied Probability}, 28(3):1679 -- 1738, 2018.

\bibitem{DYAOS}
X.~Ding and F.~Yang.
\newblock {Spiked separable covariance matrices and principal components}.
\newblock {\em The Annals of Statistics}, 49(2):1113 -- 1138, 2021.

\bibitem{erdHos2017dynamical}
L.~Erd{\H{o}}s and H.-T. Yau.
\newblock {\em A dynamical approach to random matrix theory}.
\newblock American Mathematical Soc., 2017.

\bibitem{Fisher2012}
T.~J. Fisher.
\newblock On testing for an identity covariance matrix when the dimensionality
  equals or exceeds the sample size.
\newblock {\em J. Statist. Plann. Inference}, 142(1):312--326, 2012.

\bibitem{Gao2017}
J.~Gao, X.~Han, G.~Pan, and Y.~Yang.
\newblock High dimensional correlation matrices: the central limit theorem and
  its applications.
\newblock {\em J. R. Stat. Soc. Ser. B. Stat. Methodol.}, 79(3):677--693, 2017.

\bibitem{He2017}
Y.~He and A.~Knowles.
\newblock Mesoscopic eigenvalue statistics of {W}igner matrices.
\newblock {\em Ann. Appl. Probab.}, 27(3):1510--1550, 2017.

\bibitem{Hu2019}
J.~Hu, W.~Li, Z.~Liu, and W.~Zhou.
\newblock High-dimensional covariance matrices in elliptical distributions with
  application to spherical test.
\newblock {\em Ann. Statist.}, 47(1):527--555, 2019.

\bibitem{Jiang2016}
D.~Jiang.
\newblock Tests for large-dimensional covariance structure based on {R}ao's
  score test.
\newblock {\em J. Multivariate Anal.}, 152:28--39, 2016.

\bibitem{JiangYang2013}
T.~Jiang and F.~Yang.
\newblock Central limit theorems for classical likelihood ratio tests for
  high-dimensional normal distributions.
\newblock {\em Ann. Statist.}, 41(4):2029--2074, 2013.

\bibitem{JONSSON19821}
D.~Jonsson.
\newblock Some limit theorems for the eigenvalues of a sample covariance
  matrix.
\newblock {\em Journal of Multivariate Analysis}, 12(1):1--38, 1982.

\bibitem{AILL}
A.~Knowles and J.~Yin.
\newblock Anisotropic local laws for random matrices.
\newblock {\em Probability Theory and Related Fields}, 169:257--352, 2017.

\bibitem{Landon2020}
B.~Landon and P.~Sosoe.
\newblock Applications of mesoscopic {CLTs} in random matrix theory.
\newblock {\em Annals of Applied Probability}, 30:2769--2795, 12 2020.

\bibitem{LedoitWolf2002}
O.~Ledoit and M.~Wolf.
\newblock Some hypothesis tests for the covariance matrix when the dimension is
  large compared to the sample size.
\newblock {\em Ann. Statist.}, 30(4):1081--1102, 2002.

\bibitem{LS}
J.~O. Lee and K.~Schnelli.
\newblock {Tracy–{W}idom distribution for the largest eigenvalue of real
  sample covariance matrices with general population}.
\newblock {\em The Annals of Applied Probability}, 26(6):3786 -- 3839, 2016.

\bibitem{LiYinZheng2021}
H.~Li, Y.~Yin, and S.~Zheng.
\newblock Central limit theorem for linear spectral statistics of general
  separable sample covariance matrices with applications.
\newblock {\em J. Statist. Plann. Inference}, 211:80--89, 2021.

\bibitem{Li2018}
W.~Li, Z.~Li, and J.~Yao.
\newblock Joint central limit theorem for eigenvalue statistics from several
  dependent large dimensional sample covariance matrices with application.
\newblock {\em Scand. J. Stat.}, 45(3):699--728, 2018.

\bibitem{Li2014}
W.~Li and Y.~Qin.
\newblock Hypothesis testing for high-dimensional covariance matrices.
\newblock {\em J. Multivariate Anal.}, 128:108--119, 2014.

\bibitem{LiYao2018}
W.~Li and J.~Yao.
\newblock On structure testing for component covariance matrices of a high
  dimensional mixture.
\newblock {\em J. R. Stat. Soc. Ser. B. Stat. Methodol.}, 80(2):293--318, 2018.

\bibitem{Li2021}
Y.~Li, K.~Schnelli, and Y.~Xu.
\newblock Central limit theorem for mesoscopic eigenvalue statistics of
  deformed {W}igner matrices and sample covariance matrices.
\newblock {\em Ann. Inst. Henri Poincar\'{e} Probab. Stat.}, 57(1):506--546,
  2021.

\bibitem{LiXu2021}
Y.~Li and Y.~Xu.
\newblock On fluctuations of global and mesoscopic linear statistics of
  generalized {W}igner matrices.
\newblock {\em Bernoulli}, 27(2):1057--1076, 2021.

\bibitem{LiWangLi2021}
Z.~Li, Q.~Wang, and R.~Li.
\newblock Central limit theorem for linear spectral statistics of large
  dimensional {K}endall's rank correlation matrices and its applications.
\newblock {\em Ann. Statist.}, 49(3):1569--1593, 2021.

\bibitem{Li2016}
Z.~Li and J.~Yao.
\newblock Testing the sphericity of a covariance matrix when the dimension is
  much larger than the sample size.
\newblock {\em Electron. J. Stat.}, 10(2):2973--3010, 2016.

\bibitem{LPAOP}
A.~Lytova and L.~Pastur.
\newblock {Central limit theorem for linear eigenvalue statistics of random
  matrices with independent entries}.
\newblock {\em The Annals of Probability}, 37(5):1778 -- 1840, 2009.

\bibitem{Lytova2009b}
A.~Lytova and L.~Pastur.
\newblock Central limit theorem for linear eigenvalue statistics of the
  {W}igner and the sample covariance random matrices.
\newblock {\em Metrika}, 69(2-3):153--172, 2009.

\bibitem{Mao2017}
G.~Mao.
\newblock Variance-corrected tests for covariance structures with
  high-dimensional data.
\newblock {\em J. Multivariate Anal.}, 162:71--81, 2017.

\bibitem{muskhelishvili2010some}
N.~Muskhelishvili.
\newblock {\em Some Basic Problems of the Mathematical Theory of Elasticity}.
\newblock Springer Netherlands, 2010.

\bibitem{Nagao1973}
H.~Nagao.
\newblock On some test criteria for covariance matrix.
\newblock {\em Ann. Statist.}, 1:700--709, 1973.

\bibitem{Najim2016}
J.~Najim and J.~Yao.
\newblock Gaussian fluctuations for linear spectral statistics of large random
  covariance matrices.
\newblock {\em Ann. Appl. Probab.}, 26(3):1837--1887, 2016.

\bibitem{QiuPreprint}
J.~Qiu, Z.~Li, and J.~Yao.
\newblock Asymptotic normality for eigenvalue statistics of a general sample
  covariance matrix when $p/n \to \infty$ and applications.
\newblock {\em arXiv preprint arXiv 2109.06701}, 2021.

\bibitem{Shcherbina2011}
M.~Shcherbina.
\newblock Central limit theorem for linear eigenvalue statistics of the
  {W}igner and sample covariance random matrices.
\newblock {\em J. Math. Phys. Anal. Geom.}, 7(2):176--192, 197, 199, 2011.

\bibitem{silverstein1995empirical}
J.~W. Silverstein and Z.~Bai.
\newblock On the empirical distribution of eigenvalues of a class of large
  dimensional random matrices.
\newblock {\em Journal of Multivariate analysis}, 54(2):175--192, 1995.

\bibitem{spiegel2009schaum}
M.~Spiegel, S.~Lipschutz, J.~Schiller, and D.~Spellman.
\newblock {\em Schaum's Outline of Complex Variables}.
\newblock Schaum's Outline Series. McGraw Hill LLC, second edition, 2009.

\bibitem{Srivastava2005}
M.~S. Srivastava.
\newblock Some tests concerning the covariance matrix in high dimensional data.
\newblock {\em J. Japan Statist. Soc.}, 35(2):251--272, 2005.

\bibitem{Srivastava2011}
M.~S. Srivastava, T.~o. Kollo, and D.~von Rosen.
\newblock Some tests for the covariance matrix with fewer observations than the
  dimension under non-normality.
\newblock {\em J. Multivariate Anal.}, 102(6):1090--1103, 2011.

\bibitem{Wang2014}
Q.~Wang, J.~W. Silverstein, and J.-f. Yao.
\newblock A note on the {CLT} of the {LSS} for sample covariance matrix from a
  spiked population model.
\newblock {\em J. Multivariate Anal.}, 130:194--207, 2014.

\bibitem{xia2015functional}
N.~Xia and Z.~Bai.
\newblock Functional clt of eigenvectors for large sample covariance matrices.
\newblock {\em Statistical Papers}, 56(1):23--60, 2015.

\bibitem{Yang2020}
F.~Yang.
\newblock Linear spectral statistics of eigenvectors of anisotropic sample
  covariance matrices.
\newblock {\em arXiv preprint arXiv 2005.00999}, 2020.

\bibitem{yao2015large}
J.~Yao, S.~Zheng, and Z.~Bai.
\newblock {\em Large Sample Covariance Matrices and High-Dimensional Data
  Analysis}.
\newblock Cambridge Series in Statistical and Probabilistic Mathematics.
  Cambridge University Press, 2015.

\bibitem{Yin2023}
Y.~Yin, S.~Zheng, and T.~Zou.
\newblock Central limit theorem of linear spectral statistics of
  high-dimensional sample correlation matrices.
\newblock {\em Bernoulli}, 29(2):984--1006, 2023.

\bibitem{ZhangHuBai2019}
Q.~Zhang, J.~Hu, and Z.~Bai.
\newblock Invariant test based on the modified correction to {LRT} for the
  equality of two high-dimensional covariance matrices.
\newblock {\em Electron. J. Stat.}, 13(1):850--881, 2019.

\bibitem{Zhang2022}
Z.~Zhang, S.~Zheng, G.~Pan, and P.-S. Zhong.
\newblock Asymptotic independence of spiked eigenvalues and linear spectral
  statistics for large sample covariance matrices.
\newblock {\em Ann. Statist.}, 50(4):2205--2230, 2022.

\bibitem{Zheng2012}
S.~Zheng.
\newblock Central limit theorems for linear spectral statistics of large
  dimensional {$F$}-matrices.
\newblock {\em Ann. Inst. Henri Poincar\'{e} Probab. Stat.}, 48(2):444--476,
  2012.

\bibitem{ZBY}
S.~Zheng, Z.~Bai, and J.~Yao.
\newblock {Substitution principle for CLT of linear spectral statistics of
  high-dimensional sample covariance matrices with applications to hypothesis
  testing}.
\newblock {\em The Annals of Statistics}, 43(2):546 -- 591, 2015.

\bibitem{Zheng2017}
S.~Zheng, Z.~Bai, and J.~Yao.
\newblock C{LT} for eigenvalue statistics of large-dimensional general {F}isher
  matrices with applications.
\newblock {\em Bernoulli}, 23(2):1130--1178, 2017.

\bibitem{ZhengZhu2019}
S.~Zheng, G.~Cheng, J.~Guo, and H.~Zhu.
\newblock Test for high-dimensional correlation matrices.
\newblock {\em Ann. Statist.}, 47(5):2887--2921, 2019.

\bibitem{zinn2004path}
J.~Zinn-Justin.
\newblock {\em Path integrals in quantum mechanics}.
\newblock OUP Oxford, 2004.

\bibitem{Zou2022}
T.~Zou, S.~Zheng, Z.~Bai, J.~Yao, and H.~Zhu.
\newblock C{LT} for linear spectral statistics of large dimensional sample
  covariance matrices with dependent data.
\newblock {\em Statist. Papers}, 63(2):605--664, 2022.

\end{thebibliography}

%
%\end{thebibliography}

\end{document}